\newtheorem{theorem}{Theorem}[section]
\newtheorem{lemma}[theorem]{Lemma}
\newtheorem{corollary}[theorem]{Corollary}
\newtheorem{proposition}[theorem]{Proposition}
\newtheorem{remark}[theorem]{Remark}
\newtheorem{definition}[theorem]{Definition}
\newtheorem{maintheorem}{Theorem}
\def\ind{{\mathbf 1}}
\def\N{\mathbb{N}}
\def\L{\mathbb{L}}
\def\P{\mathbb{P}}
\def\Z{\mathbb{Z}}
\def\R{\mathbb{R}}
\def\E{\mathbb{E}}
\def\l{\ell}
\DeclarePairedDelimiter\floor{\lfloor}{\rfloor}
\newcommand{\cF}{\mathcal{F}}
\newcommand{\cf}{\mathcal{F}}
\newcommand{\cA}{\mathcal{A}}
\newcommand{\cB}{\mathcal{B}}
\newcommand{\ce}{\mathcal{E}}
\begin{document}
\title{Invariant Measures for TASEP with a Slow Bond}

\author{Riddhipratim Basu
\thanks{Department of Mathematics, Stanford University. Email: rbasu@stanford.edu}
 \and
Sourav Sarkar
\thanks{Department of Statistics, University of California, Berkeley. Supported by Lo\'{e}ve Fellowship. Email: souravs@berkeley.edu}
\and
Allan Sly
\thanks{Department of Mathematics, Princeton University and Department of Statistics, University of California, Berkeley. Email: asly@math.princeton.edu}
}

\date{}
\maketitle
           
\begin{abstract}
Totally Asymmetric Simple Exclusion Process (TASEP) on $\Z$ is one of the classical exactly solvable models in the KPZ universality class. We study the ``slow bond" model, where TASEP on $\Z$ is imputed with a slow bond at the origin. The slow bond increases the particle density immediately to its left and decreases the particle density immediately to its right. Whether or not this effect is detectable in the macroscopic current started from the step initial condition has attracted much interest over the years and this question was settled recently in \cite{BSS14} where it was shown that the current is reduced even for arbitrarily small strength of the defect. Following non-rigorous physics arguments in \cite{JL1,JL2} and some unpublished works by Bramson, a conjectural description of properties of invariant measures of TASEP with a slow bond at the origin was provided by Liggett in \cite{Lig99}. We establish Liggett's conjectures and in particular show that TASEP with a slow bond at the origin, starting from step initial condition, converges in law to an invariant measure that is asymptotically close to product measures with different densities far away from the origin towards left and right. Our proof exploits the correspondence between TASEP and the last passage percolation on $\Z^2$ with exponential weights and uses the understanding of geometry of maximal paths in those models. 
\end{abstract} 

\section{Introduction}
Totally Asymmetric Simple Exclusion Process (TASEP) is a classical interacting particle system in statistical mechanics. On the line, the dynamics is as follows: each particle jumps to the right at rate one provided the site to its right is empty. This process has been studied in detail for more than past forty years on both statistical physics and probability literature, and a rich understanding of its behaviour has emerged. Stationary measures for TASEP was identified by Liggett \cite{Lig76} as early as in 1976 when he showed that product Bernoulli measures are all non-trivial extremal stationary measures for the TASEP dynamics. This and a sequence of works \cite{Lig73, Lig75, Lig77,BLM02} has characterised the stationary measures as well has proved ergodic theorems for symmetric and asymmetric exclusion processes for various different settings. Utilizing this progress, Rost \cite{Ro81} in 1981 evaluated the asymptotic current and hydrodynamic density profile when the process starts from the step initial condition, i.e., with one particle each at every nonpositive site of $\Z$ and no particles at positive sites. More recently, TASEP was identified to be \cite{Jo99} one of the canonical \emph{exactly solvable} models in the so-called KPZ universality class, and thus very fine information about the process was obtained using exact determinantal formulae that included the Tracy-Widom scaling limits for the current fluctuations.

It has been a topic of contemporary interest in equilibrium and non-equilibrium statistical mechanics to understand how the macroscopic behaviour of a system changes if some local, microscopic defect of arbitrarily small strength is introduced. A specific such model was introduced in the context of TASEP by Janowsky and Lebowitz \cite{JL1,JL2} who considered TASEP with a slow bond at the origin where a particle jumping from the origin jumps at some rate $r<1$. It is easy to see that for $r$ small this model has smaller asymptotic current started from the step initial condition, and Janowsy and Lebowitz asked whether the same happens for an arbitrarily small strength of defect, i.e., values of $r$ arbitrarily close to one. Over two decades, there were disagreements among physicists about what the answer should be with different groups predicting different answers, and this problem came to be known as the ``slow bond problem". As is typical for exactly solvable models, much of the detailed analysis of TASEP is non-robust, i.e., the analysis breaks down under minor modifications to the model. In particular, the study of the model with a slow bond is no longer facilitated by the exact formulae, and even the stationary distributions are non-explicit, making the study of the model much harder. This question was settled very recently in \cite{BSS14} where it was shown using a geometric approach together with the exactly solvable ingredients from the TASEP (without the slow bond) that the local current is restricted for any arbitrary small blockage parameter. That is, for any value of $r<1$, it was established that the limiting current is strictly less than $\frac{1}{4}$, which is the corresponding value for regular TASEP.

In this paper, we develop further the geometric techniques introduced in \cite{BSS14} to study the stationary measures for TASEP with a slow bond. Following the works \cite{JL1,JL2} and some unpublished works by Bramson, the conjectural picture that emerged (modulo the affirmative answer to the slow bond problem which has now been established) is described in Liggett's 1999 book \cite[p. 307]{Lig99}. The distribution of regular TASEP started with the step initial condition converges to the invariant product Bernoulli measure with density $\frac{1}{2}$. The slowdown due to the slow bond implies that there is a long range effect near the origin where the region to the right of origin is sparser and there is a traffic jam to the left of the slow bond with particle density higher than a half. However, it was conjectured that as one moves far away from the origin, the distribution becomes close to a product measure albeit with a different density $\rho <\frac{1}{2}$ to the right of the origin and $\rho' >\frac{1}{2}$ to the left of the origin. Our contribution in this paper is to establish this picture rigorously and thus answering Liggett's question described above; see Theorem \ref{t:mainst}, Theorem \ref{t:otherrho} and Corollary \ref{c:partc} below.

As in \cite{BSS14} our argument is also based on the connection between TASEP and directed last passage percolation (DLPP) on $\Z^2$ with Exponential passage times, which will be recalled below in Subsection \ref{s:lpp}. We study the geometry of the geodesics (maximal paths) in the last passage percolation models corresponding to both TASEP and TASEP with a slow bond. We use the result from \cite{BSS14} to establish quantitative estimates about pinning of certain point-to-point geodesics in the slow bond model. This establishes certain correlation decay and mixing properties for the average occupation measures which implies the existence of a limiting invariant measure. The heart of the argument showing that this invariant measure is close to product measure far away from the origin is another analysis of the geometry of the geodesics in the Exponential directed last passage percolation model, together with a coupling between the slow bond process and a stationary TASEP. We use crucially a result about coalescence of geodesic in Exponential LPP, obtained in the companion paper \cite{BSS17++}.

We now move towards formal definitions and precise statement of our results.

\subsection{Formal Definitions and Main Result}
Formally TASEP is defined as a continuous time Markov process with the state space $\{0,1\}^{\Z}$. Let $\{\eta_{t}\}_{t\geq 0}$ denote the particle configuration at time $t$, i.e., for $t\geq 0$ and $x\in \Z$, let $\eta_t(x)=1$ or $0$ depending on whether there is a particle at time $t$ on site $x$ or not. Let $\delta_{x}$ denote the particle configuration with a single particle at site $x$. For a particle configuration $\eta=(\eta(x):x\in \Z)$, denote by $\eta^{x,x+1}$ the particle configuration $\eta-\delta_{x}+\delta_{x+1}$, i.e., where a particle has jumped from the site $x$ to the site $x+1$. TASEP dynamics defines a Markov process with the generator given by 
$$\mathscr{L} f(\eta)= \sum_{x\in \Z} \eta(x)(1-\eta(x+1))(f(\eta^{x,x+1})-f(\eta)).$$  

Let $\{P_{t}\}_{t\geq 0}$ denote the corresponding semigroup. A probability measure $\nu$ on $\{0,1\}^{\Z}$ is called an \emph{invariant measure} or \emph{stationary measure} for TASEP if $\E_{\nu}(f)=\E_{\nu} (P_{t}f)$ for all $t\geq 0$ and for all bounded continuous functions $f$ on the state space. Denoting the distribution of $\eta_{t}$ when $\eta_{0}$ is distributed according to $\nu$ by $\nu P_t$ the above says that for an invariant measure $\nu$ one has $\nu P_t= \nu$. The invariant measures for TASEP can be characterised, see Section \ref{s:bg} below.

\subsubsection{TASEP with a Slow Bond at the Origin}
We shall consider the TASEP dynamics in presence of the following microscopic defect: for a fixed $r<1$ consider the exclusion dynamics where every particle jumping out of the origin jumps at a slower rate $r<1$. Formally this is a Markov process with the generator 
$$\mathscr{L}^{(r)} f(\eta)=r\eta(0)(1-\eta(1))(f(\eta^{0,1})-f(\eta)) +\sum_{x\neq 0} \eta(x)(1-\eta(x+1))(f(\eta^{x,x+1})-f(\eta)).$$
Denote the corresponding semigroup by $\{P^{(r)}_{t}\}_{t\geq 0}$. For the rest of this paper we shall treat $r$ as a fixed quantity arbitrarily close to one. It turns out even a microscopic defect of arbitrarily small strength has a macroscopic effect to the system, see Section \ref{s:sb} for more details of this model. In particular, their is long range correlation near the origin and the invariant measures for this model does not admit any explicit description unlike regular TASEP. As mentioned above, following \cite{JL1,JL2} and unpublished works by Bramson, Liggett \cite[p.307]{Lig99} described the conjectural behaviour for the invariant measures for TASEP with a slow bond at the origin. Our main result in this paper is to confirm this conjecture. We now introduce notations and definitions necessary for stating our results.

The initial condition $\eta_0= \mathbf{1}_{(-\infty,0]}$ (i.e., one particle each at all sites $x\leq 0$ and no particles at sites $x>0$) is particularly important to study of TASEP and is called \emph{step initial condition}. For $0<\rho <1$, let $\nu_{\rho}$ denote the product Bernoulli measure on the space of particle configurations $\{0,1\}^\Z$ with density $\rho$; i.e., $\nu_{\rho}(\eta(x)=1)=\rho$ independently for all $x\in \Z$. For measures $\nu$ on $\{0,1\}^{\Z}$, $E\subseteq \Z$ and $A\subseteq \{0,1\}^{E}$ we shall set without loss of generality $\nu(A)=\nu(A\times \{0,1\}^{\Z\setminus E})$. Also for $x\in Z$, let $A^x$ denote the subset of $\{0,1\}^{x+E}$ obtained by a co-ordinate wise translation by $x$.
We shall need the following definition. 

\begin{definition}
A probability measure $\nu$ on the configuration space  $\{0,1\}^{\Z}$ is said to be asymptotically equivalent to $\nu_{\rho}$ at $\infty$ (resp.\ at $-\infty$) if the following holds: for every finite $E$ and a subset $A$ of $\{0,1\}^{E}$ we have 
$$\nu (A^{k}) \to \nu_{\rho}(A)$$
as $k\to \infty$ (resp.\ $k\to -\infty$).  
\end{definition}

We are now ready to state our main result which solves part (a) of the sequence of questions about the invariant measures for TASEP with a slow bond in \cite[p.307]{Lig99}. The other parts of the conjecture also follow from this work and have been outlined in this paper, see Theorem \ref{t:otherrho} and Corollary \ref{c:partc} for the statements of parts (b) and (c) of the question respectively.

\begin{maintheorem}
\label{t:mainst}
For every $r<1$, there exists a measure $\nu_{*}$ on $\{0,1\}^{\Z}$ and $\rho< \frac{1}{2}$ both depending on $r$ such that $\nu_{*}$ is an invariant measure for the  Markov process with generator $\mathscr{L}^{(r)}$ and $\nu_{*}$ is asymptotically equivalent to $\nu_{\rho}$ (resp.\ $\nu_{1-\rho}$) at $\infty$ (resp.\ $-\infty$). Furthermore, started from the step initial condition $\eta_0$, the process converges weakly to $\nu_{*}$, i.e., 
$\delta_{\eta_0} P_t^{(r)}\Rightarrow \nu_{*}$ as $t\to \infty$. 
\end{maintheorem}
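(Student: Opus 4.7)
The plan is to pass to the last passage percolation (LPP) picture on $\Z^2$ with i.i.d.\ Exponential weights, in which the slow bond corresponds to reduced weights on the main diagonal, and to read off the structure of $\nu_*$ from the geometry of geodesics. By \cite{BSS14} the perturbed LPP model has asymptotic time constant strictly larger than the unperturbed one, so the slow-bond current satisfies $j^{(r)}<\tfrac14$. Define $\rho\in(0,\tfrac12)$ by $\rho(1-\rho)=j^{(r)}$; this will be the density governing the asymptotics of $\nu_*$ at $+\infty$. Since $(1-\rho)\rho=j^{(r)}$ as well, and by particle-hole symmetry of the slow-bond dynamics, $1-\rho$ is the natural candidate at $-\infty$.

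First I would construct $\nu_*$ and prove weak convergence from the step initial condition. Tightness of $\{\delta_{\eta_0}P_t^{(r)}\}_{t\geq 0}$ on the compact space $\{0,1\}^{\Z}$ is automatic, so a standard Krylov--Bogolyubov argument produces an invariant subsequential limit of the Ces\`aro averages. To upgrade this to ordinary convergence and to show all subsequential limits coincide, I would use attractiveness of the slow-bond dynamics together with the pinning phenomenon of \cite{BSS14}: in the LPP picture the point-to-point geodesics that decode $\eta_t(x)$ are attracted to an $O(1)$-neighborhood of the defect line, so for each fixed $x$ the law of $\eta_t(x)$ becomes asymptotically insensitive to the far-field initial data, forcing a unique limit.

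The hard part, and the step I expect to be the main obstacle, is the asymptotic equivalence $\nu_*(A^k)\to\nu_\rho(A)$ as $k\to+\infty$. The strategy is to couple the slow-bond model to a stationary Exponential LPP with Rost--Sepp\"al\"ainen boundary weights tuned so that the induced TASEP has bulk density $\rho$. The two weight fields agree off the defect diagonal, so every geodesic that avoids that diagonal has identical passage time in both environments. Using the coalescence estimates for Exponential LPP obtained in the companion paper \cite{BSS17++}, I would show that for a finite window $E$ translated by $k$, the geodesics decoding $(\eta_t(k+y))_{y\in E}$ in the slow-bond model coalesce, with high probability, with the corresponding geodesics in the stationary model at a distance sublinear in $k$ from the target and strictly on the side away from the defect. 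Consequently the finite-dimensional marginal of $\nu_*$ on $k+E$ differs from that of $\nu_\rho$ on $E$ by a total-variation error that vanishes as $k\to\infty$, which is exactly the required asymptotic equivalence. A parallel coupling with a stationary density-$(1-\rho)$ model, or equivalently particle-hole symmetry applied to the same argument, handles the limit at $-\infty$.

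The principal technical hurdle is quantifying the coalescence in the perturbed environment: one needs sharp transversal fluctuation bounds for the slow-bond geodesic, precise enough that its merger with the unperturbed stationary geodesic occurs on a scale $o(k)$ and on the correct side of the defect so that the modified weights never enter the comparison; and one needs to upgrade the resulting in-probability agreement of the decoding geodesics to a total-variation statement for the joint law on $k+E$. The delicate interplay between the pinning estimates of \cite{BSS14} and the coalescence estimates of \cite{BSS17++} is what makes this step the heart of the argument. Once it is in hand, combining invariance of $\nu_*$ from Step 1 with the asymptotic equivalence from Step 2 completes the proof.
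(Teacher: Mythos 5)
Your second step is, in substance, the paper's own route: tune $\rho$ by $\rho(1-\rho)=\frac{1}{4+\varepsilon}$ so that the characteristic slope of the stationary model matches the slope of the unpinned part of the slow-bond geodesic to $(n+k,n)$, and use the coalescence input from \cite{BSS17++} to argue that pairwise differences of passage times to a finite window near $(n+k,n)$ are determined by the environment away from the defect, hence can be matched with the stationary model. The paper implements the coupling with two separate weight fields identified by a translation (precisely to keep the reinforced diagonal out of the comparison, the issue you flag), needs Proposition \ref{reinforcedx1}-type localisation of the departure point from the diagonal, a size-biasing argument (Lemma \ref{l:stapprox}) because the stationary occupation measure is sampled between jump times at a site, and Lemma \ref{LHSlimit} plus Theorem \ref{t:exist} to tie what the coupling controls (time-averaged occupation measures near $(n+k,n)$) to the marginals of $\nu_*$ themselves. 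These are points you gesture at rather than supply, but they are fillable along your lines.

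The genuine gap is in your first step. Krylov--Bogolyubov gives an invariant measure only as a subsequential limit of Ces\`aro averages, while the theorem asserts $\delta_{\eta_0}P_t^{(r)}\Rightarrow\nu_*$ along deterministic times, and your proposed upgrade via ``attractiveness plus pinning'' does not meet the actual difficulty. The initial condition is fixed, so ``insensitivity to far-field initial data'' is beside the point: what must be excluded is oscillation of the law of $\eta_t(I)$ in $t$ (convergence of time averages does not imply convergence at fixed times), and the classical attractiveness arguments for convergence from the step condition lean on translation invariance, which the slow bond destroys. The paper's treatment is structurally different: it first shows, using pinning and the simultaneous coalescence of geodesics on the diagonal (Theorem \ref{meetondiagonal}, Corollary \ref{c:meetdiagpar}), that the occupation measures $\frac{1}{(4+\varepsilon)k}\int_{T_n}^{T_{n+k}}\ind(\eta_t(I)\in A)\,dt$ are, up to exponentially small errors, averages of i.i.d.\ block variables and hence converge to $Q_I(A)$ (Theorem \ref{t:exist}); it then converts this into convergence of $\P(\eta_t(I)\in A)$ at deterministic $t$ by a smoothing step: conditioning on all weights except $n^{1/6}$ well-separated diagonal vertices, the conditional fluctuation of $T_n$ is a sum of i.i.d.\ terms, a local central limit theorem makes its law flat at scale $n^{1/12}$, and this flatness allows the fixed time $t$ to be replaced by an average over a window to which Theorem \ref{t:exist} applies (Section \ref{s:conv}). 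Without something playing the role of this local-limit smoothing, your argument yields at best invariance of a Ces\`aro limit point, not the weak convergence claimed in the theorem, and it also leaves unconnected the measure produced in step one and the occupation-measure quantities your coupling in step two actually controls.
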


For the rest of this paper we shall keep $r<1$ fixed. The density $\rho$ in the statement of the theorem is not an explicit function of $r$; however, we can evaluate $\rho$ explicitly in terms of the asymptotic current in the process with a slow bond; see Remark \ref{r:rho} below.

\subsection{Background}
\label{s:bg}
As mentioned before, studying the invariant measures is crucial for understanding many particle systems such as TASEP especially at infinite volume. It is a well known fact that the set of all invariant measures is a compact convex subset of the set of all probability measures in the topology of weak convergence and hence it suffices to study the extremal invariant measures. In \cite{Lig76}, Liggett identified the set of all invariant measures for TASEP, and showed that apart from a few trivial measures, the extremal invariant measures for TASEP are the Bernoulli product measures $\{\nu_{\rho}: \rho\in (0,1)\}$. Notice that these are all translation invariant stationary measures for TASEP, and the measure $\nu_{\rho}$ corresponds to the stationary current $J=\rho(1-\rho)$ where $J$ denotes the rate at which particles cross a bond. Hence the maximum possible value of the stationary current is $\frac{1}{4}$. In particular, started with the step initial condition $\eta_0= \mathbf{1}_{(-\infty,0]}$ TASEP converges in distribution to the stationary measure $\nu_{\frac{1}{2}}$ (cf.\ Theorem 3.29 in part III of \cite{Lig99}). Rost \cite{Ro81} established the hydrodynamic density profile and asymptotic current for TASEP started with step initial condition. Let $L_{n}$ denote time it takes for $n$ particles to cross the origin. Rost \cite{Ro81} established
\begin{equation}
\label{e:lln}
\lim_{n\to \infty}\frac{\E L_{n}}{n}=4. 
\end{equation}
Observe that the reciprocal of the number on the right hand side above denotes the asymptotic rate at which particles cross the origin, which in this case matches the stationary current of the limiting measure $\nu_{\frac{1}{2}}$. 

\subsubsection{The Slow Bond Problem}
\label{s:sb}
TASEP with a slow bond at the origin was introduced by Janowsky and Lebowitz \cite{JL1,JL2} in an attempt to understand non-equilibrium steady states. Recall that $r<1$ is the rate at which particles jump at the origin in the slow bond model. For values of $r$ close to $1$ this model can be viewed as introducing a small microscopic defect in TASEP. The decrease in jump rate at the origin will increase particle density to the immediate left of the slow bond whereas decrease particle density to its immediate right. It is not difficult to see that in addition to this local effect, for values of $r$ sufficiently small, the introduction of the slow bond has a macroscopic effect and reduces the value of the asymptotic current. What is not clear, however, is whether for arbitrarily small strength of this defect (i.e., for values of $r$ arbitrarily close to $1$) the local effect of the slow bond is destroyed by the fluctuations in the bulk thus making the slow bond macroscopically undetectable. Specifically, one asks the following. Let $L_{n}^{(r)}$ denote the time it takes for the $n$ particles to cross the origin; is $\lim_{n\to \infty} \frac{L_n^{(r)}}{n}$ strictly larger that $4$ for all values of $r<1$, or there is a critical value $r_c<1$ below which this is observed? This question came to be known as the \emph{slow bond problem} in the statistical physics literature. 

For more than two decades this question had proved controversial with various groups of physicists arriving at competing conclusions based on empirical simulation studies and heuristic argument. In \cite{JL2}, a mean field approximation argument was given suggesting that indeed the critical value $r_c=1$, whereas others e.g.\ \cite{htm03,mmmthn} argued the opposite. Despite several progress in rigorous analysis of the model \cite{crez,timo} (see also \cite{clst13} for a fuller description of the history of the problem) this question remained unanswered until very recently. One difficulty in analyzing such systems comes from the fact that effect of any local perturbation is felt at all scales because the system carries conserved quantities. Very recently, this question was settled in \cite{BSS14} using a geometric approach together with estimates coming from the exactly solvable nature of TASEP  (see Section \ref{s:int} below). 

\begin{theorem}[\cite{BSS14}, Theorem 2]
\label{t:lln}
For any $r<1$, there exists $\varepsilon=\varepsilon(r)>0$ such that 
$$\lim_{n\to \infty} \frac{L_n^{(r)}}{n}=4+\varepsilon.$$
\end{theorem}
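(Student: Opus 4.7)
\textbf{Proof Proposal for Theorem \ref{t:lln}.}

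The plan is to translate the slow bond problem to directed last passage percolation (DLPP) with exponential weights, as will be recalled in Subsection \ref{s:lpp}. Under this correspondence, $L_n^{(r)}$ is distributed as the last passage time $T_n^{(r)}$ from $(0,0)$ to $(n,n)$ in an environment of independent Exp$(1)$ weights off the diagonal $D := \{(i,i)\}$ and independent Exp$(r)$ weights on $D$. Since Exp$(r)$ has mean $1/r > 1$, the perturbed LPP is stochastically larger; Rost's \eqref{e:lln} becomes $\E T_n/n \to 4$ for the unperturbed model, while exact solvability supplies one-point Tracy--Widom deviations at scale $n^{1/3}$ and transversal fluctuations of geodesics at scale $n^{2/3}$. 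The target is $\liminf_n \E T_n^{(r)}/n \ge 4+\varepsilon(r)$.

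The core of the argument is to show that the perturbed LPP geodesic $\pi_n^{(r)}$ is attracted to the diagonal and visits $\Omega(n)$ diagonal vertices on average, each contributing excess mean $1/r-1$. Couple the two environments through a common Exp$(1)$ family $(\xi_v)_{v \in \Z^2}$, setting the slow weights equal to $\xi_v/r$ on $D$. Evaluating the perturbed cost functional on the unperturbed geodesic $\pi_n$ gives the pointwise bound $T_n^{(r)} \ge T_n + (1/r - 1)\sum_{v \in \pi_n \cap D}\xi_v$, whence $\E T_n^{(r)} \ge \E T_n + (1/r - 1)\,\E[|\pi_n \cap D|]$; however, by KPZ scaling the unperturbed geodesic visits only $O(n^{1/3})$ diagonal vertices, so this bound alone falls short. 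The goal is therefore to argue that after re-optimization in the perturbed environment, $D_n := |\pi_n^{(r)} \cap D|$ satisfies $\E D_n = \Omega(n)$, which upon a slightly more careful decomposition of the perturbed passage time delivers the linear deficit.

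The \emph{diagonal pinning} claim $\E D_n = \Omega(n)$ is established at two scales. Locally, fix a large constant $L$ and consider an $L \times L$ box straddling the diagonal; one shows that the optimal slow-bond crossing uses at least $\delta(r) L$ diagonal vertices on average, for some $\delta(r) > 0$. This comes from exhibiting an explicit zigzag family of near-diagonal candidate crossings and using the Tracy--Widom tail asymmetry (upper tail $e^{-cs^{3/2}}$ versus lower tail $e^{-cs^3}$) to show that the best zigzag beats every strictly off-diagonal competitor by a positive margin with positive probability. Globally, tile a tube of width $n^{2/3}$ around the diagonal by $n^{1/3}$ disjoint $n^{2/3}$-boxes; transversal fluctuation bounds confine $\pi_n^{(r)}$ to this tube with high probability, and superadditivity of passage times concatenates the per-box diagonal deposits into $\E D_n = \Omega\!\left(n^{1/3} \cdot \delta(r) \cdot n^{2/3}\right) = \Omega(n)$.

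The main obstacle is the local pinning statement, because at finite $L$ exponential LPP is not exactly solvable and a positive density must be extracted by an asymptotic argument. The route is scale invariance: one shows the rescaled density of diagonal vertices used by the perturbed crossing tends to a positive limit as $L \to \infty$, via the Airy process scaling of the LPP profile along the diagonal. This is where genuinely geometric input --- estimates on competition interfaces, Busemann functions, and the coalescence structure of point-to-line geodesics --- becomes critical, since vanilla Tracy--Widom computations alone do not preclude the degenerate possibility of a vanishing limit density. Once positivity is secured at one scale, the globalization step and the passage from diagonal visit counts back to a lower bound on $\E T_n^{(r)}$ follow from standard concentration and superadditivity arguments, yielding the constant $\varepsilon(r) > 0$ in the statement.
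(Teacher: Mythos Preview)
This theorem is not proved in the paper. It is quoted verbatim from \cite{BSS14} (as Theorem~2 there) and used throughout as a black-box input; see Lemma~\ref{i:davoid}, Proposition~\ref{reinforcedx1}, and Remark~\ref{r:rho}. There is therefore no ``paper's own proof'' for your proposal to be compared against. In fact the logical flow in this paper runs in the \emph{opposite} direction to your sketch: Theorem~\ref{t:lln} is assumed, and the pinning of the reinforced geodesic to the diagonal is \emph{deduced} from it (Lemma~\ref{i:davoid} compares $T_m$ with $T_m^0$ and invokes $\varepsilon>0$ directly).

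Viewed instead as an attempted reconstruction of the argument in \cite{BSS14}, your sketch has a genuine gap at the ``local pinning'' step. You ask to show that in an $L\times L$ box the perturbed optimal crossing uses $\delta(r)L$ diagonal vertices in expectation; but this statement, once combined with superadditivity, is equivalent to Theorem~\ref{t:lln} at scale $L$, so you have not reduced the difficulty. Your proposed resolution --- ``the rescaled density \dots tends to a positive limit via the Airy process scaling'' together with ``competition interfaces, Busemann functions, and the coalescence structure'' --- names relevant tools but supplies no mechanism for why the limit density is strictly positive for \emph{every} $r<1$; this is precisely the heart of the slow bond problem, and it is not resolved by Airy-type one-point or profile estimates alone. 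The globalization step is also not as written: confining the perturbed geodesic to an $n^{2/3}$-tube does not force it to realize the optimal diagonal deposit in each $n^{2/3}$-box, so ``superadditivity concatenates the per-box deposits'' does not follow. The actual proof in \cite{BSS14} proceeds by a delicate multiscale construction of candidate paths and a renormalization/percolation-style argument that produces a strictly better-than-$4n$ lower bound directly, rather than by first establishing linear diagonal occupation.
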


This result does not readily yield any information about the invariant measures for TASEP with a slow bond. Because of the long range effect of the slow bond, it is expected that any invariant measure for this process must have complicated correlation structure near the origin. Theorem \ref{t:mainst} establishes that starting from the step initial condition the process converges to such an invariant measure $\nu_{*}$ which, as conjectured, is asymptotically equivalent to product measures $\nu_{\rho}$ at $\infty$ and $\nu_{1-\rho}$ at $-\infty$ for some $\rho=\rho(r)$ strictly smaller than $\frac{1}{2}$.
Although we do not have an explicit formula for $\rho$ in terms of $r$, it can be related to $\varepsilon$ in a simple manner. 

\begin{remark}
\label{r:rho}
Let $\varepsilon=\varepsilon(r)$ be as in Theorem \ref{t:lln}. Then $\rho$ in Theorem \ref{t:mainst} is given by the unique real number less than $\frac{1}{2}$ satisfying $$\rho(1-\rho)=\frac{1}{4+\varepsilon}.$$
\end{remark}
It is easy to see why the above should be true. Since the current at any site under the stationary measure $\nu_{*}$ is the same; the current at the origin (equal to $\frac{1}{4+\varepsilon}$ by Theorem \ref{t:lln}) should be same as the current at some far away site both to the left and right. Recall that the stationary current under $\nu_{\rho}$ is $\rho(1-\rho)$ and thus Theorem \ref{t:mainst} suggests that $\rho(1-\rho)=\frac{1}{4+\varepsilon}$. We shall see from our proof of Theorem \ref{t:mainst} that this is indeed the case. Similar considerations give the following easy corollary of Theorem \ref{t:lln} answering part (c) of Liggett's question in  \cite[p.307]{Lig99}.

\begin{corollary}
\label{c:partc}
Let $\rho=\rho(r)$ be as above. For $p>\rho$ there does not exist an invariant measure of the Markov process with generator $\mathscr{L}^{(r)}$ that is equivalent to $\nu_{p}$ at $\infty$ (resp.\ $\nu_{1-p}$ at $-\infty$). 
\end{corollary}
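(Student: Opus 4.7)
The plan is to combine current conservation under any invariant measure with the current bound coming from Theorem~\ref{t:lln}. Suppose for contradiction that $\mu$ is an invariant measure for $\mathscr{L}^{(r)}$ asymptotically equivalent to $\nu_p$ at $+\infty$ for some $p>\rho$. Stationarity forces the mean flux across every bond to equal a single constant $J$: for each $x\neq 0$, $J=\mu(\eta(x)=1,\eta(x+1)=0)$, while at the slow bond $J=r\,\mu(\eta(0)=1,\eta(1)=0)$. This follows by testing invariance against the observable $\sum_{y\le x}\eta(y)$ (suitably truncated) and equating gains and losses at a single site. Applying the asymptotic-equivalence hypothesis to the two-site cylinder event $A=\{\eta(0)=1,\eta(1)=0\}$, and using that sites $0$ and $1$ are independent under $\nu_p$, gives $\mu(A^k)\to \nu_p(A)=p(1-p)$ as $k\to+\infty$. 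Combined with the constancy of $J$ across bonds, this forces $J=p(1-p)$.

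The crucial input from Theorem~\ref{t:lln} is then the upper bound $J\le \tfrac{1}{4+\varepsilon}=\rho(1-\rho)$ for any invariant $\mu$. Since the step initial condition provides an effectively infinite reservoir of particles from the left and therefore loads the slow bond maximally, its asymptotic current $\tfrac{1}{4+\varepsilon}$ should dominate the currents achievable by any invariant law. I would make this precise either via a monotone/second-class-particle coupling between the $\mu$-process and the step process, or by passing to the LPP representation used throughout the paper, where the current corresponds to the reciprocal of the growth rate of passage times across the slow diagonal from the southwest corner, which is extremized by the step boundary data. Combining with $J=p(1-p)$ yields $p(1-p)\le\rho(1-\rho)$; since $\rho<\tfrac12$ and $x(1-x)$ is strictly concave, this forces $p\in[0,\rho]\cup[1-\rho,1]$, immediately ruling out the range $(\rho,1-\rho)$.

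For the remaining range $p\ge 1-\rho$, an additional argument is needed, because current conservation alone cannot distinguish the densities $p$ and $1-p$; I would lean on Theorem~\ref{t:otherrho} together with the directional asymmetry of the slow bond (a stationary profile with right-asymptotic density above $1-\rho$ would require an incompatible left-asymptotic structure) to close this remaining case. The parenthetical ``resp.''\ statement about $\nu_{1-p}$-equivalence at $-\infty$ is handled symmetrically: asymptotic equivalence at $-\infty$ again gives $J=(1-p)p=p(1-p)$, and the same dichotomy applies. The main obstacle I foresee is the current upper bound itself: since the step configuration is not pointwise comparable to a generic $\mu$-sample under the basic coupling, this bound is not a direct consequence of attractivity and must be obtained through the LPP/flow-based machinery developed in the paper.
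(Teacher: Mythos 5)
Your main argument is exactly the paper's argument: the corollary is only sketched there (in the discussion following Remark \ref{r:rho}) as a consequence of current conservation under an invariant measure together with the bound on the current through the slow bond supplied by Theorem \ref{t:lln}. You correctly identify the one step that the sketch leaves implicit, namely why an \emph{arbitrary} invariant measure has current at most $\frac{1}{4+\varepsilon}$, and your proposed fix through the LPP representation is the right one and stays within the paper's machinery: for any initial configuration $\eta$ one has $(0,0)\in S_{\eta}$, so by Proposition \ref{p:couplegeneral} the time of the $n$-th jump at the origin dominates the point-to-point passage time $T^{(r)}_n\sim(4+\varepsilon)n$, whence the long-run crossing rate (and, by a routine uniform-integrability/ergodic averaging step, the stationary current $J$) is at most $\frac{1}{4+\varepsilon}=\rho(1-\rho)$. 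Combined with $J=p(1-p)$, obtained from asymptotic equivalence to $\nu_p$ at $+\infty$ (or to $\nu_{1-p}$ at $-\infty$), this rules out all $p$ with $p(1-p)>\rho(1-\rho)$, i.e.\ $\rho<p<1-\rho$.

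Your final paragraph, however, is misguided: there is no ``remaining range $p\ge 1-\rho$'' to close, and it cannot be closed, because Theorem \ref{t:otherrho} asserts precisely that for $p>1-\rho$ there \emph{is} an invariant measure $\nu^*_p$ asymptotically equivalent to $\nu_p$ at $+\infty$ (and, applying it to the density $1-p<\rho$, one equivalent to $\nu_{1-p}$ at $-\infty$). The corollary is therefore to be read, as in part (c) of Liggett's question, as concerning the supercritical-current densities, i.e.\ those $p$ with $p(1-p)>\frac{1}{4+\varepsilon}$, equivalently $\rho<p<1-\rho$, where the current argument you gave already suffices; any ``additional argument'' extending non-existence past $1-\rho$ would contradict the paper's own Theorem \ref{t:otherrho}. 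Apart from this misreading of the statement's scope (which leads you to propose proving something false), your proof is correct and coincides with the paper's intended one.
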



Arguments similar to our proof of Theorem \ref{t:mainst} can also be used to settle part (b) of Liggett's question in  \cite[p.307]{Lig99} where Liggett asks if for any $p<\rho(r)$ (resp.\ for any $p> 1-\rho(r)$) there exists an invariant measure of $\mathscr{L}^{(r)}$ that is equivalent to $\nu_{p}$ at $\pm \infty$ (resp.\ $\nu_{1-p}$ at $\pm \infty$). We show that such an invariant measure is obtained in the limit if the process is started from  product $\mbox{Ber}(p)$ stationary initial condition. 

\begin{theorem}
\label{t:otherrho}
Let $p<\rho (r)$ or $p> 1-\rho(r)$ be fixed. Then there exists an invariant measure $\nu^*_{p}$ of the Markov process with generator $\mathscr{L}^{(r)}$ such that $\nu_{p}P_{t}^{(r)}\Rightarrow \nu^*_{p}$ and $\nu^*_p$ is asymptotically equivalent to $\nu_{p}$ at $\pm \infty$.
\end{theorem}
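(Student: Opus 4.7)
The plan is to follow the strategy behind Theorem \ref{t:mainst}, with the Bernoulli product measure $\nu_p$ replacing the step initial condition, exploiting the sub-critical current comparison $p(1-p)<\rho(1-\rho)=\frac{1}{4+\varepsilon}$ that holds whenever $p<\rho(r)$ or $p>1-\rho(r)$ (by Theorem \ref{t:lln} and Remark \ref{r:rho}). Heuristically, the slow bond can absorb the flux produced by $\nu_p$ without saturating, so its influence remains localized near the origin; this is precisely the mechanism producing a limiting invariant measure that is asymptotically equivalent to $\nu_p$ at $\pm\infty$. By the particle--hole symmetry of TASEP it suffices to treat $p<\rho(r)$.

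The first step is to build the graphical coupling of the slow-bond process $\eta_t^{(r)}$ and the ordinary TASEP $\eta_t$, both started from the same sample of $\nu_p$, sharing every Poisson clock except the one at the bond $(0,1)$, which is thinned to rate $r$ for $\eta^{(r)}$. Since $\nu_p$ is stationary for ordinary TASEP, $\eta_t\sim\nu_p$ for all $t\geq 0$. Passing to the exponential LPP representation, both processes correspond to the same stationary LPP field on $\Z^2$ with boundary data encoding the common initial configuration, the slow-bond variant having inflated weights along the main diagonal. The central analytic task is then to prove that for every finite $E\subset\Z$, every $A\subseteq\{0,1\}^E$ and every $t\geq 0$,
\[
\bigl|\nu_p P_t^{(r)}(A^k)-\nu_p(A)\bigr|\longrightarrow 0 \qquad\text{as }|k|\to\infty,
\]
uniformly on compact time intervals. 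The occupation variables on $k+E$ at time $t$ are determined by a finite coalescing tree of geodesics whose backbone lies in the characteristic direction associated with density $p$. Because $p(1-p)<\frac{1}{4+\varepsilon}$, this direction is strictly sub-critical for the slow diagonal; combining the coalescence estimates of \cite{BSS17++} with the non-pinning inputs from \cite{BSS14}, one argues that with probability tending to one all relevant geodesics stay at macroscopic transversal distance from the diagonal, hence avoid every slow bond, and consequently $\eta_t^{(r)}$ and $\eta_t$ agree on $k+E$ for $|k|$ large.

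Once the above estimate is in hand, tightness of $\{\nu_p P_t^{(r)}\}_{t\geq 0}$ is immediate, every subsequential weak limit is $\mathscr{L}^{(r)}$-invariant by a standard continuity-in-time argument, and the limit is asymptotically equivalent to $\nu_p$ at $\pm\infty$. Uniqueness, and hence the full convergence $\nu_p P_t^{(r)}\Rightarrow \nu_p^\ast$, follows from a basic-coupling argument between any two such limits: monotonicity of the coupled discrepancies together with their common asymptotic distribution at $\pm\infty$ forces them to coincide, just as in the proof of Theorem \ref{t:mainst}. The main obstacle will be the geodesic-geometry step above: quantifying, in stationary exponential LPP with boundary slope corresponding to density $p<\rho(r)$, that the coalescing tree of geodesics rooted far from the origin avoids the slow diagonal with high probability. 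This is where the sub-critical current condition $p(1-p)<\frac{1}{4+\varepsilon}$ is essential, and the technical work lies in adapting the transversal-fluctuation and coalescence tools from \cite{BSS14,BSS17++}, originally designed for the step initial condition, to the stationary setting.
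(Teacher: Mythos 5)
There is a genuine gap, and it sits exactly where the real work of Theorem \ref{t:otherrho} lies. Your central estimate is only claimed \emph{uniformly on compact time intervals}, and at that level it is essentially trivial: in the basic coupling, discrepancies created at the origin bond spread at finite speed, so for fixed $t$ the two processes agree on $k+E$ for $|k|\gg t$ without any geodesic input. But this says nothing about the invariant measure, because the statement to be proved concerns the iterated limit $t\to\infty$ first and then $|k|\to\infty$; the slow bond's influence at time $t$ reaches distances of order $t$, so you need control at times growing much faster than $k$ (the paper works at times of order $T_n$ with $n\gg k\gg 1$ via average occupation measures). Moreover, the geometric mechanism you invoke to upgrade the estimate is false on the right of the origin: for $p<\rho<\tfrac12$ the boundary curve $S_\eta\approx\mathbb{L}$ of the stationary LPP representation starts \emph{above} the diagonal, while the endpoints $(n+k,n)$ relevant for sites near $+k$ lie \emph{below} it, so the geodesics must cross the reinforced diagonal and cannot ``stay at macroscopic transversal distance from the diagonal and avoid every slow bond.'' The correct statement, which is what the paper's sketch in Subsection \ref{s:otherrho} uses, is that because $p(1-p)<\tfrac1{4+\varepsilon}$ these geodesics spend only $O(1)$ time on the diagonal, in a window of width $O(k^{2/3})$ around a deterministic crossing point; one then compares with an unreinforced/stationary LPP by coalescence (Theorem \ref{pathsmeet}) and a local coupling of the weights in a box near $(n+k,n)$, obtaining closeness of occupation measures in total variation, not exact agreement of configurations under the graphical coupling.

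Two further steps in your plan do not hold as stated. Subsequential weak limits of $\nu_p P_t^{(r)}$ are not automatically $\mathscr{L}^{(r)}$-invariant (only limits of Ces\`aro averages are), so ``a standard continuity-in-time argument'' does not produce invariance; the paper instead constructs the candidate limit from average occupation measures and then proves convergence of the process itself by a smoothing-type argument, which for this theorem must be modified (since the diagonal weights near the origin are no longer pivotal, it replaces the first $n/4$ diagonal weights by i.i.d.\ $\mbox{Exp}(1)$ and couples with stationary TASEP of density $p$ run for time $n$ followed by slow-bond dynamics). Finally, your uniqueness step --- that any two limits must coincide by monotonicity of coupled discrepancies together with matching asymptotics at $\pm\infty$ --- is not ``as in the proof of Theorem \ref{t:mainst}'' (no such coupling-uniqueness argument appears there), and it is far from routine: if it worked as easily as stated it would bear on the open question, recorded right after Theorem \ref{t:otherrho}, of whether these are the only nontrivial extremal invariant measures. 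So while your overall orientation (sub-critical current $p(1-p)<\tfrac1{4+\varepsilon}$, comparison with stationary TASEP, coalescence from \cite{BSS17++}) matches the paper's, the proposal as written does not close the argument.
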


Theorem \ref{t:mainst}, Theorem \ref{t:otherrho} and Corollary \ref{c:partc} answer all parts of Liggett's question. However we do not identify all invariant measures for TASEP with a slow bond. A natural question asked by Liggett \cite{Lig16} is whether or not the invariant measures given in Theorem \ref{t:mainst} and Theorem \ref{t:otherrho} are the only nontrivial extremal invariant measures of the process. Another question of interest again pointed out by Liggett \cite{Lig16} is whether similar results hold for other translation invariant exclusion systems with positive drift, i.e., ASEP. Our techniques do not apply as there is no known simple polymer representation for ASEP, and even the question whether or not a slow bond of arbitrarily small strength affects the asymptotic current is open.  

%

\subsection{TASEP and Last Passage Percolation}
\label{s:lpp}
One can map TASEP on $\Z$ into a directed last passage percolation model on $\Z^2$ with i.i.d.\ Exponential weights, and much of the recent advances in understanding of TASEP has come from looking at the corresponding last passage percolation picture. For each vertex $v\in \Z^2$ associate i.i.d.\ weight $\xi_{v}$ distributed as $\mbox{Exp}(1)$. Define $u\preceq v$ if $u$ is co-ordinate wise smaller than $v$ in $\Z^2$. For $u\preceq v$ define the last passage time from $u$ to $v$, denoted $T_{u,v}$ by
$$T_{u,v}:=\max_{\pi} \sum_{v'\in \pi} \xi_{v'}$$
where the maximum is taken over all up/right oriented paths from $u$ to $v$. In particular, let $T_{n}$ denote the passage time from $(0,0)$ to $(n,n)$. One can couple the TASEP with the Exponential directed last passage percolation (DLPP) as follows. For $v=(x,y)\in \Z^2$ let $\xi_{v}$ be the the waiting time for the $(\min(x,y)+1)$-th jump at the site $(x-y)$ (once there is a particle at $x-y$ and the site $x-y+1$ is empty). It is easy to see (see e.g.\ \cite{sas}) that under this coupling $T_n$ is equal to the time it takes for $n+1$ particles to jump out of the origin when TASEP starts with step initial condition, i.e., the time taken by the particle at $-n$ to jump to site $1$. Often, when there is no scope for confusion, we shall denote by $T_{a,b}$ the last passage time from $(0,0)$ to $(a,b)$. In this notation, $T_{n+k,n}$ equals the time taken by the particle at $-n$ to jump to site $k+1$. 

Even when TASEP starts from some arbitrary initial condition, the above coupling can be used to describe jump times as last passage times, but the more general formula involves last passage time from a point to a set rather than last passage time between two points; see Section \ref{s:couple} for more details. 

Observe that in the coupling described above the passage times of the vertices on the line $x-y=i$ describes the weighting times for jumps at site $i$. Using this it is easy to translate the slow bond model to the last passage percolation framework. Indeed, we only need to modify the passage times on the diagonal line $x=y$ by i.i.d.\ $\mbox{Exp}(r)$ variables independently of the passage times of the other sites. We shall use $T^{(r)}$ to denote the last passage times in this model. It is clear from the above discussion that 
$T^{(r)}_{n}$ has the same distribution as $L_n^{(r)}$, where $L_n^{(r)}$ is as in Theorem \ref{t:lln}. For the rest of the paper we shall be working mostly with the last passage picture, implicitly we shall always assume the aforementioned coupling with TASEP to move between TASEP and DLPP even though we might not explicitly mention it every time. We shall see later how statistics from the last passage percolation model can be interpreted in terms of the occupation measures of sites in TASEP, and provide information about invariant measures.

\subsection{The Inputs from Integrable Probability; Tracy-Widom limit, and $n^{2/3}$ Fluctuations and Coalescence of geodesics}
\label{s:int}

The basic idea of \cite{BSS14} and \cite{BSS17++} were to study the environment around maximal paths between two points (henceforth called geodesics) in the Exponential DLPP model at different scales. This is facilitated by the very fine information about the fluctuation of the length of such paths, coming from the integrable probability literature. Following the prediction by Kardar, Parisi and Zhang \cite{KPZ86}, it is believed that passage times $T_{n}$ should have fluctuation of  order $n^{1/3}$ for very general passage time distribution, but this is only known for a handful of models for which some explicit exact calculation is possible using connections to algebraic combinatorics and random matrix theory, these are the so called exactly solvable or integrable models. Starting with the seminal work of Baik, Deift and Johansson \cite{BDJ99} where the $n^{1/3}$ fluctuation and Tracy-Widom scaling limit was established for Poissonian last passage percolation, the KPZ prediction has now been made rigorous for a handful of exactly solvable models; DLPP with exponential passage times among them. The Tracy-Widom scaling limit for exponential DLPP is due to Johansson \cite{Jo99}.

\begin{theorem}[\cite{Jo99}]
\label{t:Jo99}
Let $h>0$ be fixed. Let $v=(0,0)$ and $v_{n}=(n,\lfloor hn \rfloor)$. Then
\begin{equation}
\label{e:twlimitdiscrete}
\dfrac{T_{v,v_n}-(1+\sqrt{h})^2n}{h^{-1/6}(1+\sqrt{h})^{4/3}n^{1/3}} \stackrel{d}{\rightarrow} F_{TW}.
\end{equation}
where the convergence is in distribution and $F_{TW}$ denotes the GUE Tracy-Widom distribution. 
\end{theorem}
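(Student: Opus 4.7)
The plan is to exploit the exact solvability of exponential DLPP via the Robinson--Schensted--Knuth (RSK) correspondence, which reduces the passage time distribution to a determinantal point process whose soft-edge asymptotics can be analysed by classical orthogonal polynomial techniques. This is the route taken by Johansson and is the prototype for all subsequent Tracy--Widom limits in integrable last passage models.

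First, I would invoke RSK applied to the $n\times \lfloor hn\rfloor$ matrix of i.i.d.\ $\mbox{Exp}(1)$ weights. The resulting Young tableau shape, viewed as a point configuration on $\mathbb{R}_{\ge 0}$, has the same distribution as the spectrum of an $n\times \lfloor hn\rfloor$ complex Wishart matrix (the Laguerre Unitary Ensemble), and in particular $T_{v,v_n}$ has the same law as the largest eigenvalue. The joint law of the eigenvalues is determinantal with correlation kernel
\begin{equation*}
K_n(x,y) = \sum_{k=0}^{n-1}\phi_k(x)\phi_k(y),
\end{equation*}
where $\phi_k$ are the normalized Laguerre wave functions with parameter $\alpha_n := \lfloor hn\rfloor - n$. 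Consequently
\begin{equation*}
\mathbb{P}(T_{v,v_n}\le s) = \det(I-K_n)_{L^2(s,\infty)}.
\end{equation*}

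Second, I would analyse $K_n$ at the soft edge. The empirical spectral measure converges, after dividing by $n$, to the Marchenko--Pastur law supported on $[(1-\sqrt{h})^2,(1+\sqrt{h})^2]$, so the upper edge sits at $(1+\sqrt{h})^2 n$. Performing a Plancherel--Rotach style expansion of the Laguerre polynomials near this edge (or the equivalent Riemann--Hilbert steepest descent analysis \`a la Deift et al.), one obtains
\begin{equation*}
c_h n^{1/3}\, K_n\bigl((1+\sqrt{h})^2 n + c_h n^{1/3} x,\; (1+\sqrt{h})^2 n + c_h n^{1/3} y\bigr) \longrightarrow K_{\mathrm{Ai}}(x,y),
\end{equation*}
with $c_h = h^{-1/6}(1+\sqrt{h})^{4/3}$ and $K_{\mathrm{Ai}}$ the Airy kernel. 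The constants are forced by the local geometry of the equilibrium measure at its right endpoint: the square-root vanishing gives the $n^{1/3}$ scale, and the slope of the vanishing produces the $h$-dependent prefactor. Upgrading this pointwise convergence to trace-norm convergence on $L^2(s,\infty)$ requires uniform decay estimates on $K_n$ beyond the turning point, which follow from classical exponential bounds on Laguerre polynomials outside the bulk. Taking Fredholm determinants then yields
\begin{equation*}
\mathbb{P}\!\left(\frac{T_{v,v_n}-(1+\sqrt{h})^2 n}{c_h n^{1/3}}\le s\right) \longrightarrow \det(I-K_{\mathrm{Ai}})_{L^2(s,\infty)} = F_{TW}(s).
\end{equation*}

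The main obstacle is the rigorous soft-edge asymptotic analysis of the Laguerre kernel, namely producing sharp pointwise asymptotics in the transition window of width $n^{1/3}$ around $(1+\sqrt{h})^2 n$ together with the uniform tail estimates needed to validate the Fredholm determinant limit. Identifying the explicit constants $(1+\sqrt{h})^2$ and $h^{-1/6}(1+\sqrt{h})^{4/3}$ is essentially a Taylor expansion of the effective potential at the edge, but executing the asymptotic analysis and establishing trace-norm convergence uniformly on compact sets of $s$ is the technical heart of the argument and is where the heavy orthogonal polynomial machinery is unavoidable.
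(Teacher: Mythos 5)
The paper does not prove this statement at all; it is quoted directly from Johansson \cite{Jo99}, and your sketch is precisely the standard argument behind that citation: RSK identifies the exponential passage time with the largest eigenvalue of the Laguerre (complex Wishart) ensemble, and soft-edge asymptotics of the Laguerre kernel give the Airy kernel and hence $F_{TW}$, with the correct centering $(1+\sqrt{h})^2 n$ and scale $h^{-1/6}(1+\sqrt{h})^{4/3}n^{1/3}$ dictated by the Marchenko--Pastur edge. Your outline is correct as a proof plan, with the acknowledged technical heart (uniform edge asymptotics and trace-norm convergence of the Fredholm determinant) being exactly the content of Johansson's analysis.
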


GUE Tracy-Widom distribution is a very important distribution in random matrix theory that arises as the scaling limit of largest eigenvalue of GUE matrices; see e.g.\ \cite{BDJ99} for a precise definition of this distribution. For our purposes moderate deviation inequalities for the centred and scaled variable as in the above theorem will be important. Such inequalities can be deduced from the results in \cite{BFP12}, as explained in \cite{BSS14}. We quote the following result from there. 

\begin{theorem}[\cite{BSS14}, Theorem 13.2]
\label{t:moddevdiscrete}
Let $\psi>1$ be fixed. Let $v, v_n$ be as in Theorem \ref{t:Jo99}. Then  there exist constants $N_0=N_0(\psi)$, $t_0=t_0(\psi)$ and $c=c(\psi)$ such that we have for all $n>N_0, t>t_0$ and all $h \in (\frac{1}{\psi}, \psi)$

$$\P[|T_{v,v_{n}}- n(1+\sqrt{h})^{2}|\geq tn^{1/3}]\leq e^{-ct}.$$
\end{theorem}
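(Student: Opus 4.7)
The plan is to derive Theorem~\ref{t:moddevdiscrete} as a consequence of exact determinantal formulas for $T_{v,v_n}$ together with uniform integrable-probability asymptotics. The starting point is the identification of the joint law of passage times in exponential DLPP with a Laguerre Unitary Ensemble: via classical combinatorial identities (e.g.\ RSK), one obtains a Fredholm determinant representation
\[
\P[T_{(0,0),(n,m)}\le s]=\det(I-K_{n,m})_{L^2(s,\infty)}
\]
for an explicit Laguerre kernel $K_{n,m}$. This is precisely the identity that underlies Johansson's proof of Theorem~\ref{t:Jo99}; the present task is to upgrade the convergence in distribution in~\eqref{e:twlimitdiscrete} to a quantitative tail bound in the moderate deviation window $t\in[t_0, c\,n^{2/3}]$.

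With the determinantal representation in hand, the tail bound becomes a purely analytic statement about the Laguerre kernel. Here I would appeal to the steepest-descent / Riemann-Hilbert analysis carried out in \cite{BFP12}, which yields not merely pointwise convergence of the appropriately centred and rescaled Laguerre kernel to the Airy kernel, but quantitative error estimates strong enough to extract moderate deviation bounds on the top eigenvalue. Writing $\chi_n:=(T_{v,v_n}-(1+\sqrt{h})^2 n)/(h^{-1/6}(1+\sqrt{h})^{4/3} n^{1/3})$, this gives
\[
\P[\chi_n\ge t]\le e^{-c_1 t^{3/2}},\qquad \P[\chi_n\le -t]\le e^{-c_2 t^{3}},
\]
valid for all $t\in[t_0,c\,n^{2/3}]$. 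Since the coefficient $h^{-1/6}(1+\sqrt{h})^{4/3}$ is bounded above and below on the compact interval $(1/\psi,\psi)$, rephrasing these tail bounds in terms of $|T_{v,v_n}-(1+\sqrt{h})^2 n|\ge t n^{1/3}$ and noting that both the $t^{3/2}$ and $t^3$ upper tails dominate $t$ for $t\ge t_0$ yields the two-sided exponential bound of the form $e^{-ct}$ claimed in the theorem.

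The main obstacle is the uniformity in the aspect ratio $h\in(1/\psi,\psi)$. The location of the relevant saddle point, the contour deformations and the local Airy parametrix near the soft edge of the Laguerre spectrum all depend on $h$, and one must check that the error estimates in the steepest-descent analysis remain bounded uniformly as $h$ varies over the compact interval $(1/\psi,\psi)$. Since the saddle points depend continuously on $h$ and the normalising factor $h^{-1/6}(1+\sqrt{h})^{4/3}$ is continuous and strictly positive, compactness of $[1/\psi,\psi]$ together with continuous dependence of the contour data on $h$ converts the $h$-pointwise estimates of \cite{BFP12} into $h$-uniform ones, and the surviving $n$-dependent error terms decay fast enough that the constants $N_0,t_0,c$ depend only on $\psi$. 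This bookkeeping step, rather than any new probabilistic idea, is the delicate part of the argument, and is exactly what \cite{BSS14} carries out to arrive at the stated form.
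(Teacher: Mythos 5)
This theorem is quoted in the paper from \cite{BSS14} (Theorem 13.2) without an independent proof, the paper noting only that such bounds are deduced from the estimates of \cite{BFP12}; your sketch follows precisely that route (Fredholm/Laguerre determinantal formula, quantitative Airy-edge asymptotics from \cite{BFP12} giving $e^{-ct^{3/2}}$ and $e^{-ct^{3}}$ tails, and compactness in $h\in(1/\psi,\psi)$ for uniformity), so it is essentially the same approach the paper relies on. The one caveat is that your bounds are asserted only in the window $t\in[t_0,c\,n^{2/3}]$, whereas the theorem has no upper cutoff on $t$; for $t\gtrsim n^{2/3}$ one must append a standard large-deviation (or crude exponential-tail) estimate, which is routine but should be said.
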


Theorem \ref{t:moddevdiscrete} provides much information about the geometry and regularity of geodesics in the DLPP model; which was exploited crucially in \cite{BSS14} and \cite{BSS17++}, and these results will be used by us again. Our strategy is to use these to study the geometry of geodesics in the Exponential DLPP as well as on the model with reinforcement on the diagonal. Studying the geometry will enable us to prove facts about the occupation measure of certain sites in the corresponding particle systems, and thus conclude Theorem \ref{t:mainst}. A crucial step in the arguments in this paper would use the coalescence of geodesics starting from vertices that are close to each other. This result follows from Corollary $3.2$ of \cite{BSS17++} with $k=1$ and $R=n$.

\begin{theorem}[\cite{BSS17++}, Corollary 3.2]
\label{pathsmeet}
Let $L,L',m_0>0$ be fixed constants. Let $\Gamma$ be the geodesic from $(0,L)$ to $(n, m_0n+L'n^{2/3})$ and $\Gamma'$ be the geodesic from $(0,-L)$ to $(n,m_0n-L'n^{2/3})$ in the Exponential LPP model. Let $E$ be the event that $\Gamma$ and $\Gamma'$ meet. Then,
\[\P(E)\geq 1-Cn^{-c},\]
for some absolute positive constants $C,c$ (depending only on $L,L',m_0$ but not on $n$).
\end{theorem}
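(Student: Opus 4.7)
The plan is to combine transversal fluctuation estimates for point-to-point geodesics---derived from the one-point moderate deviation bound Theorem~\ref{t:moddevdiscrete}---with a planarity/monotonicity argument for LPP geodesics. Heuristically, both $\Gamma$ and $\Gamma'$ have asymptotic slope $m_0$ and transversal fluctuations of order $n^{2/3}$, while the separations of their endpoints are $O(1)$ on the left and $O(n^{2/3})$ on the right; so both paths live inside a common tube of width $O(n^{2/3})$ about a line of slope $m_0$, and the problem reduces to showing that this localization forces them to share a lattice vertex with polynomial failure probability.

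The first step is to deduce from Theorem~\ref{t:moddevdiscrete}, via a standard two-corridor comparison (if the geodesic from $u$ to $v$ were to pass through a transversal window at distance $\gg K n^{2/3}$ from the straight segment $[u,v]$, then $T_{u,v}$ would equal a sum of two long independent passage times with joint one-point deviation controlled at rate $e^{-\Omega(K^2)}$ by Theorem~\ref{t:moddevdiscrete}), that with probability at least $1 - C n^{-c}$ both $\Gamma$ and $\Gamma'$ remain within distance $M n^{2/3}$ of the straight segments joining their respective endpoints, for a large enough $M = M(L, L', m_0)$. Next I would introduce the \emph{X-configuration} of auxiliary geodesics $\Gamma_1$ from $(0,L)$ to $(n, m_0 n - L' n^{2/3})$ (upper-left to lower-right) and $\Gamma_2$ from $(0,-L)$ to $(n, m_0 n + L' n^{2/3})$ (lower-left to upper-right). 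Since $\Gamma_1$ and $\Gamma_2$ are up-right lattice paths connecting opposite corners of the same rectangle, $\Gamma_1$ starts above $\Gamma_2$ but ends below $\Gamma_2$, so a discrete intermediate-value argument forces them to share at least one vertex $w$ in the interior. The standard monotonicity of Exponential LPP geodesics with a common start or a common endpoint (a consequence of a.s.\ uniqueness of geodesics together with planarity) then yields the height orderings $\Gamma \ge \Gamma_1 \ge \Gamma'$ and $\Gamma \ge \Gamma_2 \ge \Gamma'$.

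To promote this to $\Gamma \cap \Gamma' \neq \emptyset$ with polynomial rate, I would invoke the standard coalescence result for geodesics in Exponential LPP from $O(1)$-close starting points to a common endpoint: since $\Gamma$ and $\Gamma_2$ share the endpoint $(n, m_0 n + L' n^{2/3})$ with starts $(0,L)$ and $(0,-L)$ at bounded distance, they agree except on a short prefix near their starting points with probability $1 - C n^{-c}$; the same applies to $\Gamma'$ and $\Gamma_1$ at the endpoint $(n, m_0 n - L' n^{2/3})$. If the meeting point $w$ of the X-configuration lies outside these short prefixes---which can be ensured up to polynomial error by combining the transversal fluctuation estimate of the first step with the fact that the expected $x$-coordinate of the $\Gamma_1, \Gamma_2$ crossing is $\approx n/2$---then $w \in \Gamma \cap \Gamma'$, completing the argument. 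The main obstacle is precisely this last quantitative coalescence step: extracting a polynomial rather than merely vanishing error rate requires a multi-scale argument iterating Theorem~\ref{t:moddevdiscrete} at dyadic sub-scales along the geodesic together with a careful union bound, and this is where the constants $c$ and $C$ depending on $L, L', m_0$ in the conclusion are produced.
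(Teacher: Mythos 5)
You should first note that the paper does not prove this statement at all: Theorem~\ref{pathsmeet} is imported verbatim from the companion paper (Corollary~3.2 of \cite{BSS17++}, applied with $k=1$ and $R=n$), so your proposal has to be judged as a proof of that companion-paper result. The preliminary steps are fine --- transversal fluctuation control at scale $Mn^{2/3}$ via Theorem~\ref{t:moddevdiscrete}, the crossing pair $\Gamma_1,\Gamma_2$ which must share a vertex by planarity, and the orderings $\Gamma\ge\Gamma_1\ge\Gamma'$, $\Gamma\ge\Gamma_2\ge\Gamma'$ from the polymer ordering of Lemma~\ref{l:porder} --- but the decisive step is circular. The ``standard coalescence result'' you invoke, that two geodesics with a common endpoint and starting points $O(1)$ apart agree except on a short prefix with probability $1-Cn^{-c}$, is precisely (a form of) the statement to be proved: quantitative, polynomial-rate coalescence is exactly the content of Theorem~3.1/Corollary~3.2 of \cite{BSS17++}, and at the time there was no off-the-shelf version of it (Newman/Busemann-function arguments give almost-sure coalescence of semi-infinite geodesics but no such uniform polynomial rate). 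The closing sentence, that this step follows from ``a multi-scale argument iterating Theorem~\ref{t:moddevdiscrete} at dyadic sub-scales together with a careful union bound,'' is not a proof and glosses over the actual difficulty: one-point moderate deviation bounds plus union bounds are not known to force geodesics to merge, and the companion paper's argument is a genuinely involved construction (ordering a large family of geodesics started from a grid, transversal-fluctuation and passage-time estimates at every intermediate scale, and an iterative percolation-type scheme), not a routine chaining bound.

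Even granting the coalescence input, the assembly has a gap. What you need is a shared vertex $w$ of $\Gamma_1$ and $\Gamma_2$ lying \emph{after} both coalescence points (of $\Gamma$ with $\Gamma_2$, and of $\Gamma'$ with $\Gamma_1$); note that after the \emph{last} crossing of $\Gamma_1$ and $\Gamma_2$ one has $\Gamma'\le\Gamma_1<\Gamma_2\le\Gamma$, so any meeting of $\Gamma$ and $\Gamma'$ can only occur before that last crossing, and a shared vertex of $\Gamma_1,\Gamma_2$ beyond the coalescence points is literally the same event as a shared vertex of $\Gamma,\Gamma'$ there. Your justification that ``the expected $x$-coordinate of the $\Gamma_1,\Gamma_2$ crossing is $\approx n/2$'' is unsubstantiated: the endpoint separation $2L'n^{2/3}$ sits exactly at the transversal fluctuation scale while the starting separation is $O(1)$, so the two paths are heavily intertwined and locating their crossings (in particular showing that some crossing survives past the coalescence prefixes, with polynomially small failure probability) is itself a nontrivial estimate that would have to be proved, not asserted. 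So as written the proposal both presupposes the key quantitative input and leaves the final reduction incomplete.
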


\subsection{Outline of the Proofs}
We provide a sketch of the main arguments in this subsection. The proof of Theorem \ref{t:mainst} has two major components. The first part is devoted to establishing the existence of the limiting distribution of TASEP with a slow bond at the origin started from the step initial conditions. In the second part, we show that the limiting distribution is asymptotically equivalent to a product Bernoulli measures with different densities far away from the origin to the left and to the right. Throughout the rest of the paper we shall work with a fixed $r<1$ and $\varepsilon=\varepsilon(r)$ given by Theorem \ref{t:lln}.

\subsubsection{A Basic Observation Connecting Last Passage Times and Occupation Measures}
The basic observation underlying all the arguments regarding invariant measures in this paper can be simply stated in the following manner. The amount of time the particle starting at $-n$ spends at $0$ is $T_{n,n}-T_{n-1,n}$ (recall our convention that $T_{a,b}$ is the last passage time from $(0,0)$ to $(a,b)$ when there is no scope for confusion). In the same vein, the total time the site $0$ is occupied between $T_{n}$ and $T_{n+k}$ is determined by the pairwise differences $T_{i,i}-T_{i-1,i}$ for $i\in \{n+1,n+2,\ldots , n+k\}$. More generally for states $I=[-b,b]\cap \Z$, where $b\in \N$ and $A\subseteq \{0,1\}^I$, using the coupling between TASEP and last passage percolation, it is not difficult to see that the occupation measure at sites $I$ between $T_n$ to $T_{n+k}$, i.e., $A\mapsto \int_{T_n}^{T_{n+k}}\ind(\eta_t(I)\in A)dt$ is a function of the pairwise differences of the last passage times $T_{\mathbf{0},v}$ for a set of vertices $v$ around the diagonal between $(n,n)$ and $(n+k,n+k)$. All our arguments about the limiting distribution in this paper are motivated by the above basic observation and the idea that the average occupation measure should be close to the limiting distribution as $n$ and $k$ becomes large.

\subsubsection{Convergence to a Limiting Measure}
The idea of showing that such a limit to the average occupation measure exists is as follows. Using Theorem \ref{t:lln} we can show that in the reinforced last passage percolation model, the geodesics (from $\mathbf{0}$ to $\mathbf{n}$, say where $\mathbf{n}:=(n,n)$ and $\mathbf{0}$ denotes the origin) are pinned to the diagonal and the typical fluctuation of the paths away from the diagonal is $O(1)$. This in turn implies that for some fixed $k$ and $n\gg k$, all the geodesics from $\mathbf{0}$ to points near the diagonal between $\mathbf{n}$ and $\mathbf{n+k}$ merge together at some point on the diagonal near $\mathbf{n}$ with overwhelming probability. This indicates that the pairwise difference of the passage times is determined by the individual passage times  near the region on the diagonal between $\mathbf{n}$ and $\mathbf{n+k}$. Using this localisation into disjoint boxes (with independent passage time configuration), the average occupation measure over a large time can be approximated by an average of i.i.d.\ random measures. A law of large numbers ensure that the average occupation measures converge to a measure. To show that the process, started from a step initial condition converges weakly to this distribution requires a comparison between average occupation measure during a random interval with the distribution at a fixed time and this is done via a smoothing argument using local limit theorems.

\subsubsection{Occupation Measures Far Away from Origin}
\label{s:heuristic}
The second part of the the argument, i.e., to show that the limiting measure is asymptotically equivalent to a product Bernoulli measure far away from the origin, is more involved. Let us restrict to the measure far away to the right of origin. Consider a fixed length interval $[k,k+L]$ for $k\gg 1$. From the above discussion it follows that the average occupation measure at some random time interval after a large time will be determined by the pairwise differences $T_{\mathbf{0}, v_i}-T_{\mathbf{0}, v_{j}}$ for the vertices $v_{i},v_{j}$ in some square of bounded size around the vertex $(n+k,n)$ for $n\gg k$. Let us first try to understand the geodesics $\Gamma_{0,v_{i}}$ from $\mathbf{0}$ to $v_{i}$, and in particular the geodesic from $(0,0)$ to $(n+k,n)$.

\begin{figure}[h] 
\centering
\begin{tabular}{cc}
\includegraphics[width=0.4\textwidth]{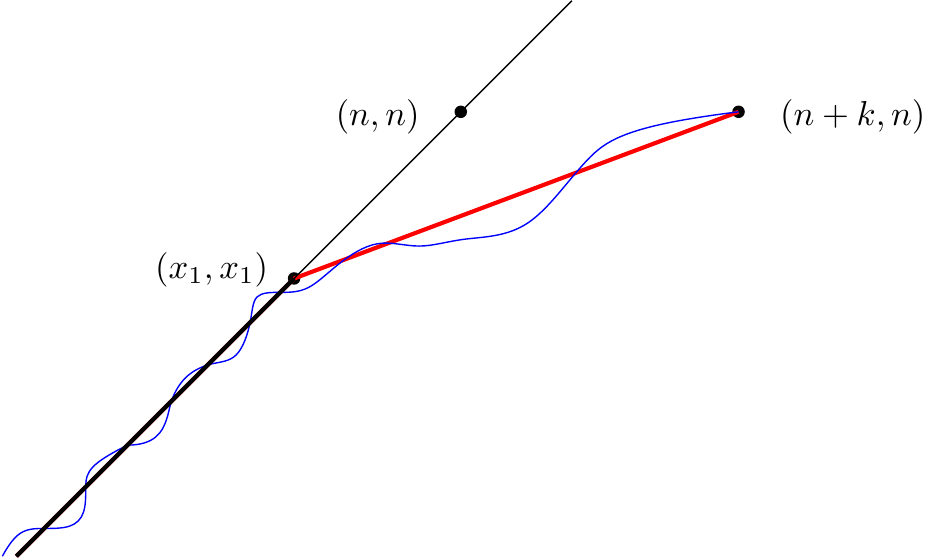} &\quad\quad\quad\quad\includegraphics[width=0.4\textwidth]{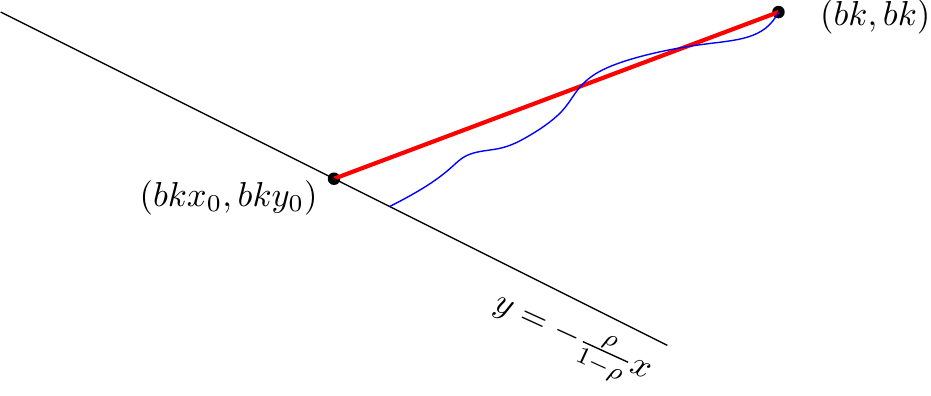} \\
(a) & \quad\quad\quad\quad(b)
\end{tabular}
\caption{ (a) First order behaviour of $\Gamma_{n+k,n}$, this remains pinned to the diagonal until point $(x_1,x_1)$ then follows a path that in the the first order is a straight line to $(n+k,n)$. (b) Point to line geodesic to the line $y=-\frac{\rho}{1-\rho} x$. This corresponds to TASEP with a product $\mbox{Ber}(\rho)$ initial condition. The geodesic in first order looks like a straight line. Our proof will compare the second part of the geodesic $\Gamma_{n+k,n}$ to the point to line geodesic in (b). We choose $\rho$ such that the slopes match.} 
\label{f:couple1}
\end{figure}

It is easy to see that the geodesic should be pinned to the diagonal until about $O(k)$ distance from $(n,n)$ and then should approximately look like a geodesic in the unconstrained model to $(n+k,n)$. The approximate location until which this pinning occurs can be computed by a first order analysis using Theorem \ref{t:moddevdiscrete} and Theorem \ref{t:lln} (Theorem \ref{t:moddevdiscrete} implies that $\E T_{x,y}\approx (\sqrt{x}+\sqrt{y})^2$ in the first order). These estimates yield that the last hitting point of the diagonal should be close to the point $(x_1,x_1)$ where $x_1$ maximises the function $(4+\varepsilon)x+(\sqrt{n+k-x}+\sqrt{n-x})^2$. An easy optimization gives 
\begin{equation}\label{e:defx1}
n-x_1=k\frac{(\sqrt{4+\varepsilon}-\sqrt{\varepsilon})^2}{4\sqrt{\varepsilon(4+\varepsilon)}}.
\end{equation}
Hence, the slope of the line joining $(n+k,n)$ to $(x_1,x_1)$ is 
\begin{equation}
\label{e:slope1}
\left(\frac{\sqrt{4+\varepsilon}-\sqrt{\varepsilon}}{\sqrt{4+\varepsilon}+\sqrt{\varepsilon}}\right)^2,
\end{equation}
and so the unpinned part of the geodesic $\Gamma_{0, (n+k,n)}$ is approximately a geodesic in an an environment of i.i.d.\ Exponentials along a direction with the slope given by the above expression. We shall show the following. \emph {For $k\gg 1$, with probability close to 1 the geodesics from all the vertices on a square of bounded size to  $0$, coalesce before reaching the diagonal near $(x_1,x_1)$}. This follows from Theorem \ref{pathsmeet}. This in turn will imply that for large $k$, on a set with probability close to 1,  that the pairwise differences between those passage time will be locally determined by the i.i.d.\ exponential random environment in some box of size $\ll k$ around the point $(n+k,n)$.  

The final observation above will allow us to couple TASEP with a slow bond together with a stationary TASEP with product $\mbox{Ber}(\rho)$ (for some appropriately chosen $\rho$) stationary distribution, so that with probability close to 1 (under the coupling), the average occupation measure on the interval $[k,k+L]$ (here $L$ is fixed) during some late and large interval of time for the slow bond TASEP, will be equal to the average occupation measure of the interval $[0,L]$ during an interval of same time. Hence the occupation measures will be close in total variation distance. Due to the stationarity of the latter process the latter occupation measure is close to product $\mbox{Ber}(\rho)$, and we shall be done by taking appropriate limits.

\subsubsection{A Coupling with a Stationary TASEP}

Towards constructing the coupling described above we use the correspondence between a stationary TASEP and Exponential last passage percolation. It is well known that jump times in stationary TASEP corresponds to last passage times in point-to-set last passage percolation in i.i.d.\ Exponential environment just as the jump times in TASEP with step initial condition corresponds to point-to-point passage times. See Section \ref{s:couplestat} for a precise definition; roughly the following is true. There exists a random curve $S$ (a function of the realisation of the stationary initial condition), such that the jump times of the stationary TASEP correspond to the last passage time from $S$ to $v$ for vertices $v\in \Z^2$ (naturally the last passage time from $S$ to $v$ means the maximum passage time of all paths that start somewhere in $S$ and end at $v$). It is standard, that for a product $\mbox{Ber}(\rho)$ initial condition the curve $S$ is well approximated by the line $\mathbb{L}$ with the equation 
$$y=-\frac{\rho}{1-\rho}x.$$

\begin{figure}[h] 
\centering
\begin{tabular}{cc}
\includegraphics[width=0.4\textwidth]{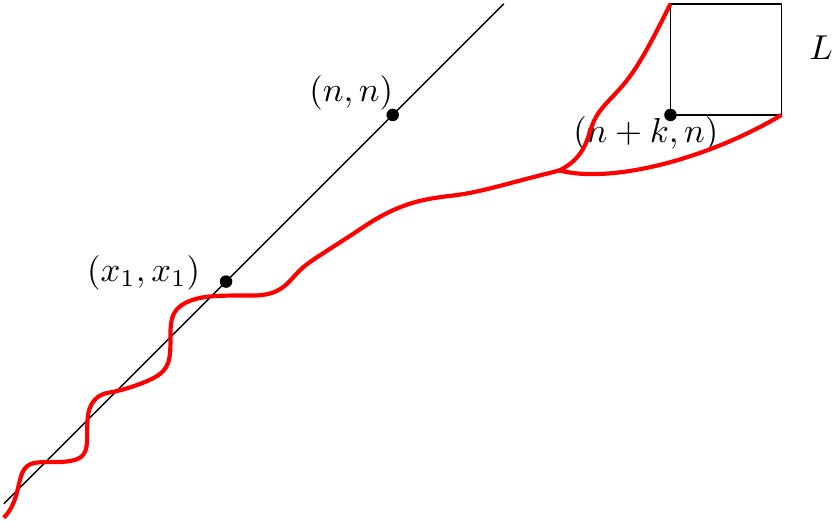} &\quad\quad\quad\quad\includegraphics[width=0.4\textwidth]{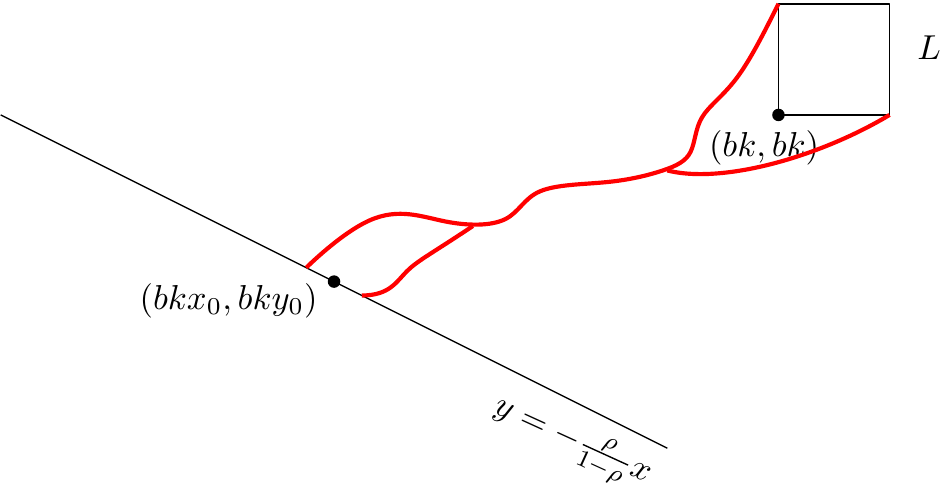} \\
(a) & \quad\quad\quad\quad(b)
\end{tabular}
\caption{We shall construct a coupling between the two systems: (a) TASEP with a slow bond, and (b) Stationary TASEP with product $\mbox{Ber}(\rho)$ initial condition. Roughly the coupling will assign the same individual vertex weights to both the systems upto a translation that takes the marked square in (a) to the marked square in (b). Using coalescence of geodesics, we shall show that on a large probability event for $n\gg k\gg 1$, the pairwise difference of passage times to the points in the marked square, will be determined locally in both the systems, and hence will be identical under the coupling.} 
\label{f:couple2}
\end{figure}

At this point we perform another first order calculation to determine the approximate slope for geodesics from $\mathbb{L}$ to $(n,n)$. Such a geodesic should hit the line $\mathbb{L}$ close to the point $(nx_0, ny_0)$ for $(x_0,y_0)$ which maximizes $(\sqrt{n-nx}+\sqrt{n-ny})^2$ for all $(x,y)\in \L$. An easy calculation gives that the approximate slope of the geodesic is  $\left(\frac{\rho}{1-\rho}\right)^2$. To construct a successful coupling, one needs to match this slope to the one in \eqref{e:slope1}. Solving the equation one gets 
\begin{equation}\label{rho}
\rho=\frac{\sqrt{4+\varepsilon}-\sqrt{\varepsilon}}{2\sqrt{4+\varepsilon}}.
\end{equation}
Observe that $\rho< \frac{1}{2}$ and $\rho(1-\rho)=\frac{1}{4+\varepsilon}$ as expected. Now there is a natural way to couple the two processes. Recall the point $(x_1,x_1)$. Choose $b>0$ and a translation of $\Z^2$ that takes $(bk,bk)$ to $(n+k,n)$ and $(bkx_0, bky_0)$ to $(x_1,x_1)$. Coupling the environment below the diagonal in the LPP corresponding to the slow bond model with the environment in the LPP of stationary TASEP (naturally under the above translation which takes $\mathbb{L}$ to a parallel line passing through $(x_1,x_1)$) and using the coalescence result Theorem \ref{pathsmeet} would give the required result under this coupling. (This is morally correct, even though not technically entirely precise. The formal argument is slightly more involved because it has to take care of the effect of the reinforced diagonals; see Section \ref{s:couple} for details).

\begin{remark}
\label{r:negative}
One can redo the same argument far away to the left of the origin. For an interval $[-k-L,-k]$ with $k\gg 1$ one finds that the appropriate density $\rho'$ for the stationary TASEP to be coupled with the slow bond TASEP is given by
\begin{equation}\label{rhodash}
\rho'=\frac{\sqrt{4+\varepsilon}+\sqrt{\varepsilon}}{2\sqrt{4+\varepsilon}}.
\end{equation}
and $\rho'$ is indeed equal to $1-\rho$ as it should be.
\end{remark}

\subsection{Notations}
For easy reference purpose, let us collect here a number of notations, some of which have already been introduced, that we shall use throughout the remainder of this paper. Define the partial order $\preceq $ on $\Z^2$ by $u = (x,y) \preceq u' = (x',y')$ if $x \leq x'$, and $y \leq y'$. For $\mathbf{a},\mathbf{b}\in \Z^2$ with $\mathbf{a}\preceq  \mathbf{b}$, let $\Gamma^{(r)}_{\mathbf{a},\mathbf{b}}$ denote the geodesic from $\mathbf{a}$ to $\mathbf{b}$ in the reinforced model when the passage times on the diagonal have been changed to i.i.d.\ Exp($r$) variables. Also when $\mathbf{a}=0$, $\Gamma^{(r)}_{\mathbf{0},\mathbf{b}}$ is simply denoted as $\Gamma^{(r)}_{\mathbf{b}}$, and for $\mathbf{b}=(b,b)$, $\Gamma^{(r)}_{\mathbf{b}}$ is denoted simply as $\Gamma^{(r)}_b$. We shall drop the superscript $r$ when there is no scope for confusion. For the usual exponential DLPP, i.e. when $r=1$, we shall abuse the notation and denote the corresponding geodesics by $\Gamma^0_{\mathbf{a},\mathbf{b}},\Gamma^0_{\mathbf{b}},\Gamma^0_b$. We shall also denote by $T_{\mathbf{a},\mathbf{b}}$ (resp.\ $T_{a,b}^0$) the weight of the geodesic $\Gamma_{\mathbf{a},\mathbf{b}}$ (resp.\ $\Gamma^0_{\mathbf{a},\mathbf{b}}$). \footnote{In certain settings we shall work with the following modified passage times without explicitly mentioning so. For an increasing path $\gamma$ from $v_1$ to $v_2$ let us denote the passage time of $\gamma$ by
$$\ell (\gamma)= \sum_{v\in \gamma \setminus \{v_2\}} \xi_{v};$$
Observe that this is a little different from the usual definition of passage time as we exclude the final vertex while adding weights. This is done for convenience as our definition allows $\ell(\gamma)=\ell(\gamma_1)+\ell(\gamma_2)$ where $\gamma$ is the concatenation of $\gamma_1$ and $\gamma_2$. As the difference between the two definitions is minor while considering last passage times between far away points, all our results will be valid for both our and the usual definition of LPP.}

For $u \preceq  u'$ in $\Z^2$, let $\mbox{Box}(u,u')$ denote the rectangle with bottom left corner $u$ and top right corner $u'$. 
For an increasing path $\gamma$ and $\ell\in \Z$, $\gamma(\ell)\in \Z$ will denote the maximum number such that $(\ell,\gamma(\ell))\in \gamma$ and $\gamma^{-1}(\ell)\in \Z$ be the maximum number such that $(\gamma^{-1}(\ell),\ell)\in \gamma$. 

As we shall be working on $\Z^2$, often we use the notation $\llbracket \cdot, \cdot \rrbracket$ for discrete intervals, i.e., $\llbracket a, b \rrbracket$ shall denote $[a,b]\cap \Z$. In the various theorems and lemmas, the values of the constants $C,C',c,c'$ appearing in the bounds change from one line to the next, and will be chosen small or large locally.


\subsection{Organization of the Paper}
The remainder of this paper is organized as follows. Section \ref{s:GeodSlow} develops the geometric properties of the geodesics in the slow bond model, in particular the diffusive fluctuations of the geodesics and localisation and coalescence of geodesics near the diagonal are shown in this section for the reinforced model. Section \ref{s:inv} is devoted to constructing a candidate for the invariant measure in the slow bond TASEP by passing to the limit of average occupation measures. That this measure is the limiting measure of the slow bond TASEP started from step initial condition is shown in Section \ref{s:conv}. Finally, Section \ref{s:couple} deals with the coupling between the slow bond TASEP and the stationary TASEP that ultimately leads to the proof of Theorem \ref{t:mainst}. We finish off the paper in subsection \ref{s:otherrho} by providing a sketch of the argument for Theorem \ref{t:otherrho}. The proofs of a few technical lemmas used in Sections \ref{s:conv} and \ref{s:couple} have been relegated to Appendix A (Section \ref{s:techlem}). Additionally, a Central Limit Theorem for the passage times in slow bond TASEP is provided in Appendix B (Section \ref{s:clt}) as this has not been not directly used in the paper.

\subsection*{Acknowledgements}
The authors thank Thomas Liggett for telling them about the problem and are grateful to Thomas Liggett and Maury Bramson for discussing the background and useful comments on an earlier version of this paper. RB  would also like to thank Shirshendu Ganguly and Vladas Sidoravicius for useful discussions.

\section{Geodesics in Slow Bond Model}
\label{s:GeodSlow}
Consider the Exponential last passage percolation model corresponding to TASEP with a slow bond at the origin that rings at rate $r<1$. We shall work with a fixed $r<1$ throughout the section and $\varepsilon$ will be as in Theorem \ref{t:lln}. We shall refer to this as the slow bond model when there is no scope for confusion. It follows easily by comparing \eqref{e:lln} and Theorem \ref{t:lln} that in this model the geodesic is \emph{pinned} to the diagonal, i.e., the expected number of times the geodesic $\Gamma_{n}$ between $\mathbf{0}=(0,0)$ to $\mathbf{n}=(n,n)$ (as there is no scope of confusion we shall suppress the dependence of $\Gamma$ on $r$) hits the reinforced diagonal line is linear in $n$. In this section we establish stronger geometric properties of those geodesics. Indeed we shall show that the typical distance between two consecutive points on $\Gamma_{n}$ that are on the diagonal is $O(1)$ and also the transversal distance of $\Gamma_{n}$ from the diagonal at a typical point is also $O(1)$. We begin with the following easy lemma. 

\begin{lemma}
\label{i:davoid}
There exists absolute positive constants $m_0,c$ (depending only on $r$) such that for any $m\in \N$, $m\geq m_0$, the probability that $\Gamma_m$ does not touch the diagonal between $\mathbf{0}$ and $\mathbf{m}$ is at most
$e^{-cm^{2/3}}$.
\end{lemma}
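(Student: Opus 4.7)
My plan is to exploit the fact that if $\Gamma_m$ never meets the diagonal except at its two endpoints, then the weight $T_m^{(r)}$ is essentially the weight of a path in the \emph{unreinforced} Exponential LPP model, which by Theorem \ref{t:moddevdiscrete} is tightly concentrated around $4m$; this contradicts Theorem \ref{t:lln}, which forces $T_m^{(r)}$ to be approximately $(4+\varepsilon)m$. To make this precise I would work with the inverse CDF coupling, setting $\xi_v^{(r)} = \xi_v/r$ at diagonal sites $v$ and $\xi_v^{(r)} = \xi_v$ off the diagonal, where $\{\xi_v\}_{v \in \Z^2}$ is an i.i.d.\ Exp$(1)$ field. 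Then the unreinforced LPP (with weights $\xi_v$) and the slow bond LPP (with weights $\xi_v^{(r)}$) are realized on a common probability space, with $\xi_v^{(r)} \ge \xi_v$ everywhere and equality off the diagonal.

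Let $A$ denote the event $\Gamma_m \cap \{(i,i) : 1 \le i \le m-1\} = \emptyset$. On $A$ the path $\Gamma_m$ uses only two diagonal vertices, namely $\mathbf 0$ and $\mathbf m$, so
\[
T_m^{(r)} \;=\; \sum_{v \in \Gamma_m} \xi_v \;+\; (r^{-1}-1)\bigl(\xi_{\mathbf 0} + \xi_{\mathbf m}\bigr) \;\le\; T_m^{0} + (r^{-1}-1)\bigl(\xi_{\mathbf 0} + \xi_{\mathbf m}\bigr),
\]
since $\sum_{v \in \Gamma_m} \xi_v$ is the weight of a particular increasing path in the unreinforced environment and hence is bounded by the maximum $T_m^{0}$.

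I would then combine three probabilistic inputs. First, Theorem \ref{t:moddevdiscrete} applied with $h=1$ and $t = \varepsilon m^{2/3}/8$ gives $\P(T_m^{0} \ge (4+\varepsilon/8)m) \le e^{-c \varepsilon m^{2/3}}$ for all sufficiently large $m$. Second, Theorem \ref{t:lln} gives $\E T_m^{(r)} \ge (4+\varepsilon - o(1)) m$, and combined with a concentration estimate for slow bond LPP passage times, i.e.\ the analogue of Theorem \ref{t:moddevdiscrete} for the reinforced model established in \cite{BSS14}, this yields $\P(T_m^{(r)} \le (4+\varepsilon/2) m) \le e^{-c m^{2/3}}$. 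Third, since $\xi_{\mathbf 0} + \xi_{\mathbf m}$ is a sum of two independent Exp$(1)$ variables, a trivial tail bound gives $\P((r^{-1}-1)(\xi_{\mathbf 0} + \xi_{\mathbf m}) \ge \varepsilon m/4) \le e^{-cm}$. On the intersection of $A$ with the complements of these three events, the display above forces
\[
(4 + \tfrac{\varepsilon}{2}) m \;\le\; T_m^{(r)} \;\le\; (4 + \tfrac{\varepsilon}{8}) m + \tfrac{\varepsilon m}{4} \;=\; (4 + \tfrac{3\varepsilon}{8}) m,
\]
which is a contradiction. A union bound then yields $\P(A) \le e^{-c m^{2/3}}$ for all $m \ge m_0$.

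The main technical point to verify is the lower-tail estimate $\P(T_m^{(r)} \le (4+\varepsilon/2) m) \le e^{-c m^{2/3}}$: Theorem \ref{t:moddevdiscrete} is stated only for the unreinforced model, so I would need to invoke the corresponding moderate-deviation estimate for the slow bond passage time from \cite{BSS14}. Alternatively, a weaker but still sufficient lower-tail bound can be obtained by combining the expectation lower bound from Theorem \ref{t:lln} with a standard Lipschitz-type concentration inequality for functionals of independent exponentials, which is more than enough since only the $e^{-c m^{2/3}}$ rate is required here.
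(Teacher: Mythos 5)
Your overall strategy coincides with the paper's: on the avoidance event the weight of $\Gamma_m$ is, up to the two endpoint weights, the weight of an up/right path in the unreinforced environment and hence at most $T^0_m$, and one then squeezes this against an upper-tail bound for $T^0_m$ from Theorem \ref{t:moddevdiscrete} and a lower-tail bound for $T^{(r)}_m$ at the linear scale $\varepsilon m$. Your monotone coupling $\xi^{(r)}_v=\xi_v/r$ on the diagonal, the explicit handling of $\xi_{\mathbf 0}+\xi_{\mathbf m}$, and the contradiction display are all fine (indeed slightly more careful about the endpoints than the paper, which resamples the diagonal by independent $\mathrm{Exp}(r)$ weights), and the bound on $\E T^{(r)}_m$ via Theorem \ref{t:lln} is legitimate once one passes through superadditivity (Fekete/Kingman), exactly as the paper does implicitly.

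The gap is in the one ingredient the paper actually has to prove from scratch: the lower-tail estimate $\P\bigl(T^{(r)}_m\le(4+\varepsilon/2)m\bigr)\le e^{-cm^{2/3}}$. Your primary route invokes ``the analogue of Theorem \ref{t:moddevdiscrete} for the reinforced model established in \cite{BSS14}''; no such moderate-deviation estimate for the slow-bond passage time is quoted in this paper, and it is not an available off-the-shelf input (this paper itself only proves a much weaker concentration for $T^{(r)}_n$, Lemma \ref{l:Tndev}, and that appears later and with a stretched-exponential rate $e^{-cn^{\delta/2}}$ that would not give $e^{-cm^{2/3}}$). Your fallback, a ``standard Lipschitz-type concentration inequality,'' is also not automatic: bounded-difference inequalities do not apply to unbounded exponential weights, and a Kesten-type martingale bound concentrates $T^{(r)}_m$ at scale $\sqrt m$ with tails $e^{-cx}$ for deviations $x\sqrt m$, which at deviation $\varepsilon m$ yields only $e^{-c\sqrt m}$, weaker than the rate you claim and than the rate the lemma asserts. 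Two correct fixes: (i) use a concentration inequality adapted to products of exponentials (Talagrand/Bobkov--Ledoux type), noting that $T^{(r)}_m$ is a maximum of linear functionals with $0/1$ coefficients, hence coordinatewise $1$-Lipschitz with $\ell^2$-gradient at most $\sqrt{2m+1}$; this gives $e^{-c\varepsilon^2 m}$ at linear deviations and suffices; or (ii) argue as the paper does: choose $M$ with $\E T^{(r)}_M/M>4+3\varepsilon/4$, use superadditivity to dominate $T^{(r)}_{m}$ from below by a sum of i.i.d.\ copies of $T^{(r)}_M$ over consecutive diagonal blocks, and apply a Chernoff bound (using $T^{(r)}_M\preceq_{ST}\mathrm{Gamma}(M^2,r)$), which gives the even stronger lower-tail bound $e^{-cm}$, so that the final $e^{-cm^{2/3}}$ comes solely from the unreinforced upper tail.
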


The proof follows easily by comparing the lengths of the geodesics in the reinforced and unreinforced environments.

\begin{proof} Consider the coupling between the slow bond model and the (unreinforced) DLPP where the passage times at all vertices not on the diagonal are same, and those on the diagonal are replaced by i.i.d.\ Exp($r$) variables independent of all other passage times. Then it is easy to see that if $\Gamma_m$ avoids the diagonal between $(0,0)$ and $(m,m)$, then it is the maximal path in the unreinforced environment between $(0,0)$ and $(m,m)$ that never touches the diagonal in between, and hence its length is at most the length of the geodesic in the unreinforced environment. Hence,
\[\P(\Gamma_{m} \mbox{ avoids diagonal})\leq \P(T_m\leq T_m^0)\leq \P\left(T^0_{m}>(4+\frac{\varepsilon}{2})m\right)+\P\left(T_{m}<(4+\frac{\varepsilon}{2})m\right).\]
Now by moderate deviation estimate in Theorem \ref{t:moddevdiscrete}, for $m\geq m_0$,
\[\P\left(T^0_{m}>(4+\frac{\varepsilon}{2})m\right) \leq e^{-c_2m^{2/3}},\]
where $c_2$ is a constant depending only on $\varepsilon$.
In order to bound the probability that the geodesic in the reinforced environment is not too short, first choose $M$ large enough so that $\E(\frac{T_M}{M})>4+\frac{3\varepsilon}{4}$. Then because of super-additivity of the path lengths, $\frac{T_{nM}}{nM}\geq \frac{X_1+\ldots +X_n}{n}=:\bar{X}$ where $X_i:=\frac{T_{(i-1)\mathbf{M},i\mathbf{M}}}{M}$ are i.i.d.\ random variables, each having the same distribution as that of $X_1=\frac{T_M}{M}$. Let $[m/M]=n$, and $m$ is large enough so that $\frac{4+\varepsilon/2}{n}<\varepsilon/8$, then
\[\P\left(T_m<(4+\frac{\varepsilon}{2})m\right)\leq \P\left(\frac{T_{nM}}{nM}<(4+\frac{\varepsilon}{2})\frac{n+1}{n}\right)<\P\left(\left|\bar{X}-\E X_1\right|>\frac{\varepsilon}{8}\right)\leq e^{-c'n}\leq e^{-cm}.\]
where $c,c'$ are constants depending only $\varepsilon$ and $r$. The last inequality follows as it is easy too see that for a fixed $M$, ${T_M}\preceq _{ST} \mbox{Gamma}(M^2,r)$  which has exponential tails where $\preceq _{ST}$ denotes stochastic domination.
\end{proof}

We remark that the exponent here is not optimal. One can prove an upper bound of $e^{-cm}$ by using large deviation estimates from \cite{Jo99} instead of Theorem \ref{t:moddevdiscrete}, but this is sufficient for our purposes.


The following proposition controls the transversal fluctuation of the geodesics $\Gamma_{n}$. Recall that for $\ell\in \Z$, $\Gamma(\ell)\in \Z$ is the maximum number such that $(\ell,\Gamma(\ell))\in \Gamma$ and $\Gamma^{-1}(\ell)\in \Z$ be the maximum number such that $(\Gamma^{-1}(\ell),\ell)\in \Gamma$ .

\begin{proposition}
\label{deviationfromdiagonal} 
For $h\in  \llbracket 0,n \rrbracket$, we have for all $n \geq m>m_0$, for some absolute positive constants $m_0,c$,
$$\P(|\Gamma_n(h)-h| \geq m) \leq e^{-cm^{1/2}}, \mbox{ and}$$
\[\P(|\Gamma_{n}^{-1}(h)-h|>m)\leq e^{-cm^{1/2}}.\]
\end{proposition}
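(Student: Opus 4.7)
Proof plan. The plan is to reduce the claim to Lemma~\ref{i:davoid} by decomposing $\Gamma_n$ into its excursions away from the diagonal. A deviation $|\Gamma_n(h) - h| \geq m$ at column $h$ forces the single excursion of $\Gamma_n$ that straddles column $h$ to have endpoints on the diagonal at distance at least $m$, and Lemma~\ref{i:davoid} tells us such long excursions are exponentially rare.

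By the coordinate-swap symmetry $(x,y)\leftrightarrow(y,x)$ of the slow bond model, it suffices to bound $\P(\Gamma_n(h) - h \geq m)$. On this event, define
\[
a := \max\{\ell \in \llbracket 0, h \rrbracket : (\ell,\ell) \in \Gamma_n\}, \qquad b := \min\{\ell \in \llbracket a+1, n \rrbracket : (\ell,\ell) \in \Gamma_n\},
\]
adopting the convention that $(0,0), (n,n) \in \Gamma_n$ so that $a, b$ are always well defined. Since $\Gamma_n$ is up-right monotone and $(h, \Gamma_n(h)) \in \Gamma_n$ with $\Gamma_n(h) \geq h + m > h$, up-right monotonicity forces $b \geq h + m$, while $a \leq h$, so $b - a \geq m$. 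By Bellman optimality, the restriction of $\Gamma_n$ between $(a,a)$ and $(b,b)$ coincides with the slow bond geodesic $\Gamma^{(r)}_{(a,a),(b,b)}$, and by construction this sub-geodesic avoids $(i,i)$ for every $a < i < b$.

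The events $\{a_0 = a,\, b_0 = b\}$ for different pairs $(a,b)$ are disjoint; combining this with translation invariance of the slow bond environment (the reinforced diagonal is invariant under shifts by $(k,k)$) and Lemma~\ref{i:davoid} applied to the translated geodesic of length $b - a$, we get
\[
\P(\Gamma_n(h) - h \geq m) \;\leq\; \sum_{\substack{0 \leq a \leq h,\ h \leq b \leq n \\ b - a \geq m}} e^{-c(b-a)^{2/3}} \;\leq\; \sum_{\ell \geq m} \ell\, e^{-c\ell^{2/3}} \;\leq\; C e^{-c' m^{2/3}} \;\leq\; e^{-c'' m^{1/2}}
\]
for $m$ larger than an absolute constant $m_0$. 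The symmetric bound on $\{h - \Gamma_n(h) \geq m\}$ (using excursions below the diagonal) yields the first inequality of the proposition. The second inequality, for $\Gamma_n^{-1}(h)$, follows by applying the identical argument after swapping coordinates, since the swap carries $\Gamma_n(\cdot)$ to $\Gamma_n^{-1}(\cdot)$ and preserves the law of the slow bond environment.

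The main obstacle is the bookkeeping in the excursion decomposition: one must check that the lower bound $b - a \geq m$ holds in all configurations, including the boundary cases $a = 0$, $b = n$, and $a = h$ (where the excursion begins or ends at an endpoint of $\Gamma_n$ rather than a genuine return to the diagonal). Each of these is handled by the up-right monotonicity argument above, but the case split must be written out explicitly to invoke Lemma~\ref{i:davoid} correctly. Everything else in the proof is a routine union bound, and the exponent we obtain is $m^{2/3}$, which is stronger than the claimed $m^{1/2}$.
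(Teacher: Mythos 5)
Your proof is correct and takes essentially the same route as the paper's: decompose the deviation into the diagonal excursion of $\Gamma_n$ straddling column $h$, bound each excursion by Lemma~\ref{i:davoid} together with translation invariance along the diagonal, and union-bound over the excursion endpoints (the paper's sum over the last diagonal hit before $(h,h)$ and the excursion length is the same bookkeeping), with the other deviation and the $\Gamma_n^{-1}$ bounds handled by the symmetric/reflected arguments just as in the paper. Your resulting exponent $m^{2/3}$ is even slightly stronger than the stated $m^{1/2}$, consistent with the paper's remark that it has not optimized exponents.
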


\begin{proof}
For the purpose of this proof we drop the subscript $n$ from $\Gamma_{n}$. First note that if $|\Gamma(h)-h|>m$, then $h\geq m$ or $n-h\geq m$. If $B$ is the event that the geodesic $\Gamma$ hits the diagonal at $(i,i)$ and returns to the diagonal again at $(j,j)$ with $|j-i|\geq s$, then applying previous lemma \ref{i:davoid},
\[\P(B)\leq \sum_{j=s}^\infty e^{-cj^{2/3}}\leq e^{-c's^{7/12}}.\]
Hence if $\Gamma(h)>h+m$, by summing up over all positions where $\Gamma$ touches the diagonal for the last time before $(h,h)$, one has,
\[\P[\Gamma(h)-h>m]\leq \sum_{i=0}^h e^{-c'(i+m)^{7/12}}\leq \sum_{i=m}^\infty e^{-c'i^{7/12}}\leq e^{-cm^{1/2}}.\]
Similar arguments work for the events $\{\Gamma(h)<h-m\}$ and $\{|\Gamma^{-1}(h)-h|>m\}$. 
\end{proof}

Notice that it is not hard to establish using similar arguments that for pairs of points not far away from the diagonal, the geodesic between them also has $O(1)$ transversal fluctuation from the diagonal at a typical point. We state this below without a proof.
\begin{corollary} \label{c:devdiag}
Let $h\in \llbracket 0,n\rrbracket$ and $t=o(h)$ and $t'=o(n-h)$ and $\Gamma=\Gamma_{(0,t),(n,n+t')}$. Then there exist absolute positive constants $n_0,h_0,m_0,c$ such that for all $n\geq n_0,h\geq h_0, m\geq m_0$,
\[\P[|\Gamma(h)-h| \geq m] \leq e^{-cm^{1/2}}, \mbox{ and}\]
\[\P(|\Gamma^{-1}(h)-h|>m)\leq e^{-cm^{1/2}}.\].
\end{corollary}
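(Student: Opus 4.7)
The plan is to adapt the proof of Proposition \ref{deviationfromdiagonal} to the setting of endpoints slightly off the diagonal. The argument has two pieces: (a) a new estimate showing that $\Gamma := \Gamma_{(0,t),(n,n+t')}$ touches the diagonal close to each endpoint with overwhelming probability, and (b) the interior argument of Proposition \ref{deviationfromdiagonal} using Lemma \ref{i:davoid} between consecutive diagonal touches, which then applies essentially verbatim.

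For step (a), let $I$ be the column of the first diagonal hit of $\Gamma$, and aim for $\P(I \geq s) \leq e^{-cs^{2/3}}$ whenever $s$ exceeds a suitable multiple of $t$. On $\{I \geq s\}$, the restriction of $\Gamma$ from $(0,t)$ to $(I,I)$ is the slow bond geodesic between those two points and avoids the diagonal in its interior; by the modified passage-time convention, its length therefore equals the unreinforced last passage time $T^{0}_{(0,t),(I,I)}$. Theorem \ref{t:moddevdiscrete} combined with translation invariance of the unreinforced model gives $T^{0}_{(0,t),(I,I)} \leq 4I - 2t + C I^{1/3}$ except on an event of probability at most $e^{-cI^{2/3}}$. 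On the other hand, concatenating the horizontal path from $(0,t)$ to $(t,t)$ with the slow bond geodesic from $(t,t)$ to $(I,I)$ is a valid path, so $T_{(0,t),(I,I)}$ is at least the sum of their lengths; by diagonal translation invariance of the slow bond model $T_{(t,t),(I,I)}$ is distributed as $T_{I-t}$, and the superadditivity/Gamma-tail estimate used in the proof of Lemma \ref{i:davoid} gives $T_{(t,t),(I,I)} \geq (4+\varepsilon/2)(I-t)$ with probability at least $1-e^{-c(I-t)}$. Since $s$ is a large multiple of $t$, the two bounds are incompatible and $\P(I \geq s) \leq e^{-cs^{2/3}}$ after a union bound over $I \geq s$. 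The symmetric argument run on the time-reversed geodesic controls the last diagonal hit of $\Gamma$.

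With (a) in place, step (b) follows the argument of Proposition \ref{deviationfromdiagonal} directly. On $\{\Gamma(h) \geq h+m\}$, either the first diagonal hit of $\Gamma$ occurs at column $>h$, which by (a) has probability at most $e^{-ch^{2/3}} \leq e^{-cm^{1/2}}$ since $h \geq h_0$ and $m \leq n$; or the last diagonal hit before $(h,h)$ occurs at some $(i,i)$ with $i \leq h$ and the subsequent hit lies at column $j \geq h+m$. In the latter case, the sub-geodesic of $\Gamma$ from $(i,i)$ to $(j,j)$ avoids the diagonal, so Lemma \ref{i:davoid} bounds the probability of this excursion by $e^{-c'(h+m-i)^{7/12}}$; summing over $i \in \llbracket 0, h \rrbracket$ yields the claimed $e^{-cm^{1/2}}$ decay. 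The bound on $\Gamma^{-1}(h)$ follows symmetrically, using the analog of (a) for the last diagonal hit. The main technical obstacle is step (a): one needs the shape-function correction of order $t$ for $T^{0}_{(0,t),(I,I)}$ to stay below the macroscopic $\varepsilon I / 2$ margin uniformly over $I$ in a range growing with $n$, and this is precisely what the hypotheses $t = o(h)$ and $t' = o(n-h)$ deliver.
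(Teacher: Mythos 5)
Your overall strategy is the one the paper intends (the paper states Corollary \ref{c:devdiag} without proof, saying only that it follows by "similar arguments" to Proposition \ref{deviationfromdiagonal}), and your step (a) — pinning the first and last diagonal hits to within a constant multiple of $t$, resp.\ $t'$, of the endpoints by comparing the diagonal-avoiding unreinforced passage time with the reinforced passage time along the diagonal — is sound. One quantitative slip there: Theorem \ref{t:moddevdiscrete} with deviation $CI^{1/3}$ only yields a probability bound $e^{-cC}$, not $e^{-cI^{2/3}}$; to get the stated decay you should take a deviation of order $\varepsilon I$ (i.e.\ $t\sim \varepsilon I^{2/3}$ in the theorem), which still fits inside the linear margin $\varepsilon I/4$, so the conclusion of (a) is unaffected. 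You should also say a word about the case $t<0$ (replace the horizontal segment by a vertical one through $(0,0)$) and about the event that $\Gamma$ never meets the diagonal at all, which is handled by the same comparison as in Lemma \ref{i:davoid}.

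The genuine problem is in step (b). First, the inequality $e^{-ch^{2/3}}\leq e^{-cm^{1/2}}$ you invoke for the case ``first diagonal hit at column $>h$'' is simply false in the allowed range: the corollary must hold for $h$ as small as a fixed constant $h_0$ while $m$ can be of order $n-h$, so $m\gg h^{4/3}$ is possible and the event ``first hit after column $h$'' is not rare in terms of $m$. The fix is to intersect with the event you are actually bounding: on $\{\Gamma(h)\geq h+m\}$ the path is at height $\geq h+m$ at column $h$, so if its first diagonal hit comes after column $h$ it must in fact come at column $\geq h+m$ (or $\Gamma$ never meets the diagonal), and then (a) gives $e^{-c(h+m)^{2/3}}$, which suffices. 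Second, your dichotomy omits the sub-case in which $\Gamma$ leaves the diagonal at some $(i,i)$ with $i\leq h$ and never returns; Lemma \ref{i:davoid} does not apply to that final off-diagonal excursion. This is where the symmetric part of (a) (control of the \emph{last} diagonal hit) must be used for the $\Gamma(h)$ bound as well, together with the observation that $\{\Gamma(h)\geq h+m\}$ forces $h+m\leq n+t'$, hence $n-h\geq m-t'\gtrsim m$ since $t'=o(n-h)$, so the last hit lying at column $\leq h$ has probability at most $e^{-c(n-h)^{2/3}}\leq e^{-c'm^{1/2}}$. With these two repairs (and the same remarks for $\Gamma^{-1}$), your argument goes through and matches the intended proof.
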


Our next result will establish something stronger. We shall show that typically geodesic between every pair of points, one of which is close to $\mathbf{0}$ and the other close to $\mathbf{m}$, meet the diagonal \emph{simultaneously}. 

\begin{theorem}
\label{meetondiagonal} 
Fix $0<\alpha <1$. Let $L_1$ be the line segment joining $(0,-m^\alpha)$ to $(0,m^{\alpha})$. Similarly $L_2$ be the line segment joining $(m,m-m^{\alpha})$ to $(m,m+m^{\alpha})$. Let $\ce$ denote the event that there exists $u\in \llbracket 0,m \rrbracket$ such that $(u,u)\in \Gamma_{a,b}$ for all $a\in L_1\cap \Z^2, b\in L_2\cap \Z^2$. Then there exist some absolute positive constants $m_0,c$ such that for all $m\geq m_0$, $\P(\ce)\geq 1-e^{-cm^{\ell}}$ where $\ell=\min\{\frac{1-\alpha}{2},\frac{\alpha}{2}\}$.
\end{theorem}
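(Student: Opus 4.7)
The plan is to combine pinning of geodesics to the diagonal (Lemma~\ref{i:davoid}) with a coalescence argument adapted to the slow bond model. My three-step strategy is: (i) localize the first and last diagonal hits of every $\Gamma_{a,b}$, for $a \in L_1 \cap \Z^2$ and $b \in L_2 \cap \Z^2$, to within $O(m^\alpha)$ of $\mathbf{0}$ and $\mathbf{m}$ respectively; (ii) reduce the problem to finding a common diagonal point for the induced $O(m^{2\alpha})$-family of ``middle'' diagonal-to-diagonal sub-geodesics; (iii) conclude by a coalescence argument in the reinforced model.

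For Step (i), I would apply Corollary~\ref{c:devdiag} with transversal scale $m^\alpha$ to the geodesic $\Gamma_{a,b}$, whose endpoints sit at vertical offset at most $m^\alpha$ from the diagonal endpoints. For each fixed $(a,b)$, the corollary yields that the first diagonal hit $(p_{a,b}, p_{a,b})$ satisfies $p_{a,b} \leq C m^\alpha$ except on an event of probability at most $e^{-c m^{\alpha/2}}$ (the $1/2$-exponent in the tail of Corollary~\ref{c:devdiag} applied at length-scale $m^\alpha$), and symmetrically the last diagonal hit satisfies $q_{a,b} \geq m - C m^\alpha$. A union bound over the $O(m^{2\alpha})$ pairs is absorbed into the exponential and gives Step~(i) with probability $\geq 1 - e^{-c' m^{\alpha/2}}$. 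On this event, the middle sub-geodesic of $\Gamma_{a,b}$ (from first to last diagonal hit) is the a.s.\ unique slow-bond geodesic between two points in $Cm^\alpha$-windows on the diagonal around $\mathbf{0}$ and $\mathbf{m}$, and by Proposition~\ref{deviationfromdiagonal} and Lemma~\ref{i:davoid} each such middle geodesic is tightly pinned, with small bulk transversal fluctuation and frequent diagonal visits.

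For Step~(iii), an analog of the coalescence result Theorem~\ref{pathsmeet} adapted to the reinforced model then gives that the at most $O(m^{2\alpha})$ distinct possible middle sub-geodesics share a common diagonal point in the bulk range $[2Cm^\alpha,\, m - 2Cm^\alpha]$ with probability at least $1 - e^{-c m^{(1-\alpha)/2}}$; here the $(1-\alpha)/2$ exponent arises from applying an analog of Proposition~\ref{deviationfromdiagonal} to the length-$\Theta(m)$ bulk segment at transverse scale $m^{1-\alpha}$, which is the effective scale at which two extremal middle sub-geodesics must ``merge'' before their entry/exit offsets of order $m^\alpha$ separate them again. Combining the two tail bounds gives $\P(\mathcal{E}) \geq 1 - e^{-c m^{\alpha/2}} - e^{-c m^{(1-\alpha)/2}} \geq 1 - e^{-c' m^\ell}$ with $\ell = \min\{\alpha/2,\, (1-\alpha)/2\}$, as required.

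The main obstacle is making Step~(iii) rigorous, since Theorem~\ref{pathsmeet} is stated only for the unreinforced Exponential LPP model and must be transferred to the reinforced setting uniformly over the $O(m^{2\alpha})$ endpoint pairs. Two natural routes are available: re-run the geometric arguments of~\cite{BSS17++} directly in the slow bond environment using the stronger pinning and fluctuation estimates of this section (the reinforcement should, if anything, only accelerate coalescence), or construct a coupling between slow-bond geodesics and their unreinforced counterparts that preserves the relevant merging structure. A subtle technical point is that the ``a.s.\ uniqueness of geodesics'' argument used to collapse the family of middle sub-geodesics to a bounded number of extremal ones needs care when endpoints are allowed to lie on the reinforced diagonal itself, but the continuity of the Exp($r$) marginals makes this routine.
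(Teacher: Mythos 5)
Your Step (iii) is where the proof actually lives, and as written it has a genuine gap. You defer the key coalescence estimate to ``an analog of Theorem \ref{pathsmeet} adapted to the reinforced model'' with failure probability $e^{-cm^{(1-\alpha)/2}}$, but no such statement is available, and it cannot be obtained by transferring Theorem \ref{pathsmeet}: that result (and the unreinforced machinery behind it) gives only a \emph{polynomial} error bound $Cn^{-c}$, which is far too weak for the stretched-exponential bound demanded by Theorem \ref{meetondiagonal}. The stretched-exponential rate is not a fluctuation estimate at ``transverse scale $m^{1-\alpha}$'' as you suggest; in the reinforced model it comes from a renewal/block mechanism that your sketch never supplies. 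Concretely, the paper reduces (by polymer ordering, Lemma \ref{l:porder}) to the two extremal geodesics $\tilde\Gamma_1=\Gamma_{(0,-m^\alpha),(m,m-m^\alpha)}$ and $\tilde\Gamma_2=\Gamma_{(0,m^\alpha),(m,m+m^\alpha)}$, cuts the diagonal into $m^{(1-\alpha)/2}$ disjoint blocks of length $m^{(1+\alpha)/2}$, controls both geodesics at the block endpoints to within $m^\alpha$ of the diagonal via Proposition \ref{deviationfromdiagonal} (this is where the $\alpha/2$ exponent enters), and then uses a \emph{positive-probability local pinning event} in each block (Lemma \ref{l:diagpos}: an atypically large Exponential weight at the block's diagonal midpoint together with typical nearby weights, combined via FKG) that forces both extremal geodesics, and hence by ordering every $\Gamma_{a,b}$, through that midpoint. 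Since the block events are i.i.d., simultaneous failure has probability $(1-\delta)^{m^{(1-\alpha)/2}}$, which is the true source of the $(1-\alpha)/2$ exponent. Without this (or an equivalent) mechanism, your claimed bound in Step (iii) is unsupported, and ``re-running the arguments of \cite{BSS17++} in the slow bond environment'' would at best reproduce a polynomial rate.

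Your Steps (i)--(ii) are also heavier than needed and slightly miscalibrated: there is no need to localize first/last diagonal hits of all $O(m^{2\alpha})$ geodesics separately, since polymer ordering collapses the whole family to the two extremal paths at the outset; and Corollary \ref{c:devdiag}/Proposition \ref{deviationfromdiagonal} applied at transversal scale $m^\alpha$ give tails $e^{-cm^{\alpha/2}}$ only after you also control the paths at the interior block endpoints, which is exactly the event $A$ in the paper's argument. So the reduction you outline is salvageable, but the theorem's content --- stretched-exponential simultaneous pinning --- still has to be proved by the block-and-local-pinning argument rather than cited from a coalescence theorem for the unreinforced model.
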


We emphasize again that in this Theorem \ref{meetondiagonal} as well as in the preceding lemmas, we have been very liberal about the exponents, and have not always attempted to find the best possible exponents in the bounds, as long as they suffice for our purpose. 

We shall need a few lemmas to prove Theorem \ref{meetondiagonal}. The following lemma is basic and was stated in \cite{BSS14}, we restate it here without proof.

\begin{lemma}[\cite{BSS14}, Lemma 11.2, Polymer Ordering:]
\label{l:porder}
Consider points $a=(a_1,a_2),a'=(a_1,a_3),b=(b_1,b_2),b'=(b_1,b_3)$ such that $a_1 < b_1$ and $a_2\leq a_3\leq b_2\leq b_3$. Then we have $\Gamma_{a,b}(x)\leq \Gamma_{a',b'}(x)$ for all $x\in [a_1,b_1]$.
\end{lemma}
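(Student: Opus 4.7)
The plan is to argue by contradiction via the classical \emph{swap-and-uniqueness} principle from last passage percolation. Write $\pi := \Gamma_{a,b}$ and $\pi' := \Gamma_{a',b'}$, and suppose toward a contradiction that there exists $x_0 \in [a_1, b_1]$ with $\pi(x_0) > \pi'(x_0)$. The hypotheses $a_2 \le a_3$ and $b_2 \le b_3$ place $\pi$ weakly below $\pi'$ at both endpoint columns $a_1$ and $b_1$, so the proposed violation forces the relative vertical order of the two up-right lattice paths to flip at least twice: once in $[a_1, x_0]$ and once more in $[x_0, b_1]$.

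The first step will be to turn each order-flip into a shared lattice vertex. Let $k^* \le x_0$ be the smallest column at which $\pi$ has overtaken $\pi'$, so $\pi(k^*-1) \le \pi'(k^*-1)$ while $\pi(k^*) > \pi'(k^*)$. At column $k^*$ the path $\pi$ sweeps through $y$-values $\pi(k^*-1)$ up to $\pi(k^*)$ while $\pi'$ sweeps through $\pi'(k^*-1)$ up to $\pi'(k^*)$; the four-fold inequality
\[
\pi(k^*-1) \;\le\; \pi'(k^*-1) \;\le\; \pi'(k^*) \;<\; \pi(k^*)
\]
then produces a shared column-$k^*$ vertex $u$ lying in the overlap of these two swept intervals. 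The analogous argument applied on the right of $x_0$ yields a shared vertex $w$ at some column $k^{**} \ge x_0$.

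The second step is to swap the middle segments and invoke uniqueness. Define
\[
\hat{\pi} := \pi_{a \to u} \oplus \pi'_{u \to w} \oplus \pi_{w \to b}, \qquad \hat{\pi}' := \pi'_{a' \to u} \oplus \pi_{u \to w} \oplus \pi'_{w \to b'},
\]
which are valid up-right paths from $a$ to $b$ and from $a'$ to $b'$ respectively, the pieces agreeing at $u$ and $w$ by construction. A straightforward accounting of weights on the two sides yields
\[
T(\hat{\pi}) + T(\hat{\pi}') \;=\; T(\pi) + T(\pi') \;=\; T_{a,b} + T_{a',b'},
\]
while optimality forces $T(\hat{\pi}) \le T_{a,b}$ and $T(\hat{\pi}') \le T_{a',b'}$. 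Equality must therefore hold throughout, so $\hat{\pi}$ itself is a geodesic from $a$ to $b$. Because the passage-time weights (Exp($1$) off the diagonal and Exp($r$) on it) have a jointly continuous distribution, the geodesic between any fixed pair of vertices is almost surely unique, and hence $\hat{\pi} = \pi$. This identifies the middle segments $\pi'_{u \to w} = \pi_{u \to w}$, but by hypothesis they sit at distinct heights $\pi(x_0) > \pi'(x_0)$ at the column $x_0 \in [u_1, w_1]$, the desired contradiction.

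The only step that requires mild care is the discrete shared-vertex extraction, which leans on the up-right lattice structure of the geodesics; once this is clean, the swap/uniqueness step is routine. Degenerate boundary cases where $a_2 = a_3$ or $b_2 = b_3$ simply collapse $u$ onto $a = a'$ or $w$ onto $b = b'$ and leave the argument intact, and the almost-sure uniqueness of geodesics is obtained by discarding the measure-zero event on which two distinct up-right paths share the same total weight.
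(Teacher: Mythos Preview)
The paper does not supply its own proof of this lemma: it is quoted from \cite{BSS14} and explicitly ``restate[d] here without proof.'' Your swap-and-uniqueness argument is the standard proof of polymer ordering in planar last passage percolation and is correct. The one place that deserves slightly more care than you give it is the extraction of the shared vertex $u$ when the first overtaking already occurs at the leftmost column $k^* = a_1$ (so that $k^*-1$ is undefined); here one simply observes that $\pi$ visits $(a_1,a_2)$ and $\pi'$ visits $(a_1,a_3)$ with $a_2 \le a_3$, and since $\pi(a_1) > \pi'(a_1) \ge a_3$, the path $\pi$ sweeps through $(a_1,a_3)$, furnishing the shared vertex directly. With that adjustment (which your closing remark on degenerate boundary cases essentially anticipates), the argument is complete.
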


The next lemma shows that two geodesics between pairs of points not far from the diagonal have a positive probability to pass through the midpoint of the diagonal. Define $\alpha'=\frac{\alpha+1}{2}$. Clearly $\alpha<\alpha'<1$.
\begin{lemma}
\label{l:diagpos} Let $a_1=m^{\alpha'}$, $\Gamma_1=\Gamma_{(0,-m^\alpha),(a_1,a_1-m^{\alpha})}$, $\Gamma_2=\Gamma_{(0,m^\alpha),(a_1,a_1+m^{\alpha})}$ and $v:=(\frac{a_1}{2},\frac{a_1}{2})$. Let $F_1$ be the event that $v\in \Gamma_1\cap \Gamma_2$. Then there exists some absolute positive constant $\delta$ such that $\P(F_1)\geq \delta$.
\end{lemma}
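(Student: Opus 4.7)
My plan is to prove $\P(F_1) \geq \delta$ by combining localization of both geodesics to a bounded box around $v$ with a favorable local weight configuration inside the box that forces both sub-geodesics through $v$.

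Since $\alpha' > \alpha$, we have $m^\alpha = o(a_1/2)$, so Corollary \ref{c:devdiag} applies to both $\Gamma_1$ (with $t = t' = -m^\alpha$) and $\Gamma_2$ (with $t = t' = m^\alpha$). Evaluating at $h = a_1/2 \pm C$ for a fixed constant $C$ chosen large enough, we obtain that with probability at least $1 - \epsilon$ both geodesics pass through a bounded box $B$ around $v$, entering through the lower-left face and exiting through the upper-right face in such a way that $v$ is reachable along any up-right path inside $B$.

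Next consider the local event $A = \{\xi_v > \sum_{w \in B \setminus \{v\}} \xi_w\}$. Since $B$ contains only $O(1)$ vertices and $\xi_v \sim \mathrm{Exp}(r)$ is independent of the other weights in $B$, a moment-generating-function computation gives $\P(A) \geq \delta_0 > 0$ for some absolute constant $\delta_0$. On $A$, any up-right sub-path in $B$ that includes $v$ has total weight at least $\xi_v$, while any sub-path avoiding $v$ has total weight at most $\sum_{w \in B \setminus \{v\}} \xi_w < \xi_v$; hence any local geodesic in $B$ with entry/exit points compatible with reaching $v$ must pass through $v$. Combined with the localization event, this implies $v \in \Gamma_1 \cap \Gamma_2$.

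The main obstacle is that the localization event and $A$ both depend on the weights inside $B$, so they are not independent. To handle this, I would condition on $\omega_{\text{out}}$, the weights outside $B$, and argue that the localization event is essentially preserved under resampling of the interior weights $\omega_{\text{in}}$; the entry/exit points of the global geodesics on $\partial B$ are determined primarily by the large-scale landscape outside $B$ (as captured by the moderate deviation bound in Theorem \ref{t:moddevdiscrete}) and only mildly by perturbations inside $B$ of constant size. Formalizing this decoupling, for instance via a coupling argument comparing the original geodesics with those computed after resampling $\omega_{\text{in}}$, would yield $\P(F_1) \geq \delta$ for some absolute constant $\delta > 0$.
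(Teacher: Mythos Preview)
Your overall strategy matches the paper's: localize both geodesics to an $O(1)$ box around $v$ via Corollary \ref{c:devdiag}, then force them through $v$ by making $\xi_v$ large relative to the other weights in the box. The difference, and the genuine gap in your write-up, is the handling of the dependence between the localization event and the local weight event.

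You propose to condition on the exterior configuration and argue that the entry/exit points of the global geodesics on $\partial B$ are ``essentially preserved under resampling of the interior weights''. This is not obvious and you do not prove it: the geodesic is a global optimizer, and changing $O(1)$ many weights inside $B$ can in principle shift the entry/exit points. Making this into a rigorous coupling would require quantitative stability of geodesic locations under local perturbations, which is considerably more than what is needed here.

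The paper sidesteps this entirely with a one-line FKG argument. Write $D_1$ for the event that the sum of the weights in the box other than $\xi_v$ is small, and $D_2$ for the localization event that both $\Gamma_1,\Gamma_2$ pass within distance $C$ of $v$. The key observation is that, conditional on all weights except $\xi_v$, the event $D_2$ is \emph{increasing} in $\xi_v$ (a larger weight at $v$ can only pull the geodesics closer to $v$), and of course $\{\xi_v > 2C^2\}$ is increasing in $\xi_v$, while $D_1$ does not depend on $\xi_v$ at all. Hence
\[
\P(\xi_v > 2C^2 \mid D_1 \cap D_2) \;\geq\; \P(\xi_v > 2C^2 \mid D_1) \;=\; \P(\xi_v > 2C^2) \;=\; e^{-2rC^2},
\]
so $\P(\{\xi_v > 2C^2\}\cap D_1 \cap D_2) \geq \tfrac12 e^{-2rC^2}$, and on this event the rerouting argument forces both geodesics through $v$. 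This monotonicity observation is exactly the missing idea in your proposal; once you have it, no resampling or stability argument is needed.
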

The idea is as follows. There is a positive probability that the exponential random variable $\xi_v$ at $v$ is large, and all other random variables that lie in a large but constant sized box around $v$ are small. As Corollary \ref{c:devdiag} says that $\Gamma_1$ and $\Gamma_2$ are very likely to be in close proximity to $v$, they have a positive probability to pass through $v$. Formally, we do the following.

\begin{proof}[Proof of Lemma \ref{l:diagpos}] Define 
\[\cB_1=\mbox{Box}((\frac{a_1}{2},\frac{a_1}{2}-C),(\frac{a_1}{2}+C,\frac{a_1}{2}));\]
and 
\[\cB_2=\mbox{Box}((\frac{a_1}{2}-C,\frac{a_1}{2}),(\frac{a_1}{2},\frac{a_1}{2}+C))\] 
as squares of side length $C$ with a common vertex $v$. Here $C$ is an absolute constant to be chosen appropriately later. See Figure \ref{f:dcoalesce} (a). Define the following events
\[D_1=\left\{\sum_{x\in \cB_1,x\neq v} \xi_x<2C^2,\sum_{x\in \cB_2,x\neq v} \xi_x<2C^2\right\};\]
\[D_2=\left\{|\Gamma_j(v)-v|\leq C,|\Gamma^{-1}_j(v)-v|\leq C \mbox{ for } j=1,2\right\}.\]
Note $D_1$ is the intersection of two independent high probability events as sum of $C^2-1$ many i.i.d.\ exponential random variables is less than $2C^2$ with high probability for large enough $C$. Also from Corollary \ref{c:devdiag}, it follows that $D_2$ is the intersection of two events with high probability, hence can be made to occur with arbitrarily high probability by choosing $C$ large (note that $m^\alpha=o(a_1)$). Hence choose $C$ large enough so that $\P(D_1)\geq \frac{3}{4}$ and $\P(D_2)\geq \frac{3}{4}$, so that $\P(D_1\cap D_2)\geq \frac{1}{2}$.

Notice that both the events $\{\xi_v>2C^2\}$ and $D_2$ are increasing in the value at $v$ given the configuration on $\R^2\setminus\{v\}$. Hence by the FKG inequality and the fact that $\{\xi_v>2C^2\}$ and $D_1$ are independent, it follows that,
\[\P(\xi_v>2C^2|D_1\cap D_2)\geq \P(\xi_v>2C^2|D_1)=\P(\xi_v>2C^2)=e^{-2rC^2}.\]
Hence
\[\P\left(\left\{\xi_v>2C^2\right\}\cap D_1\cap D_2\right)\geq \frac{e^{-2rC^2}}{2}=:\delta.\]

We claim that on $\{\xi_v>2C^2\}\cap D_1\cap D_2$, both $\Gamma_1$ and $\Gamma_2$ pass through $v$. To see this, define $z_1$ to be the point where $\Gamma_1$ enters one of the boxes $\cB_1$ or $\cB_2$ and $w_1$ denote the point where it leaves the box, similarly define $z_2$ and $w_2$ as the box entry and exit of $\Gamma_2$. Since $D_2$ holds, we can join $z_1$ and $w_1$ to $v$ by line segments and get an alternate increasing path that equals $\Gamma_1$ everywhere else, and inside the box it goes from $z_1$ to $v$ to $w_1$ in straight lines. Call this new path $\Gamma'_1$ (see Figure \ref{f:dcoalesce} (a)). Because of the events $\{\xi_v>2C^2\}$ and $D_1$, the weight of $\Gamma'_1$ is more than that of $\Gamma_1$, unless $\Gamma'_1=\Gamma_1$. Thus on $\{\xi_v>2C^2\}\cap D_1\cap D_2$, $\Gamma_1$ passes through $v$. A similar argument applies to $\Gamma_2$. Hence 
\[\P(F_1)=\P(v\in \Gamma_1\cap \Gamma_2)\geq \P(\{\xi_v>2C^2\}\cap D_1\cap D_2)\geq \delta>0.\] 
\end{proof}

\begin{figure}[h] 
\centering
\begin{tabular}{cc}
\includegraphics[width=0.4\textwidth]{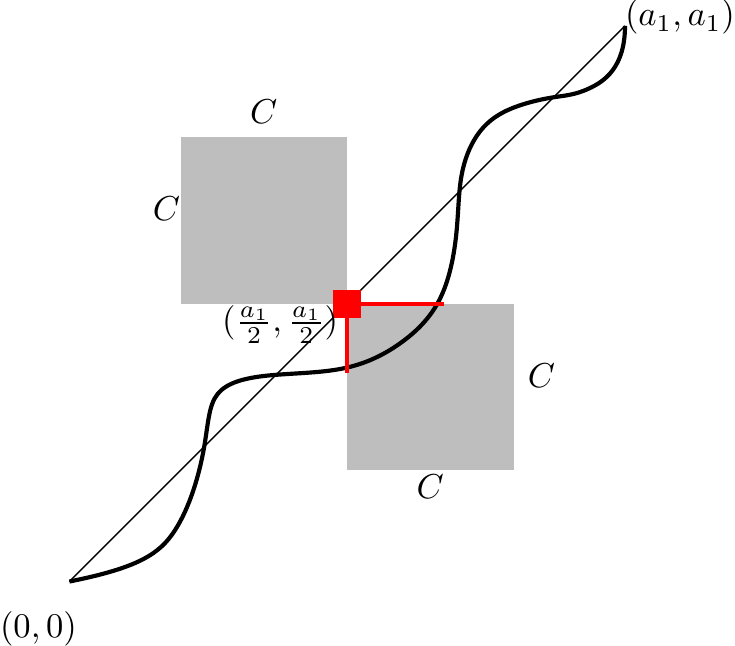} &\quad\quad\quad\quad\includegraphics[width=0.4\textwidth]{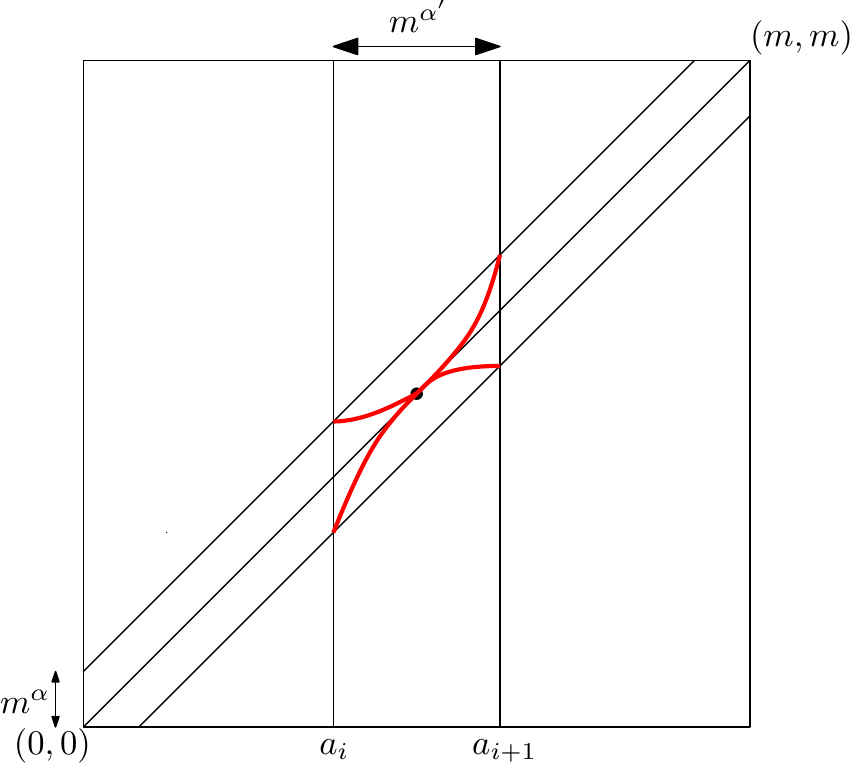} \\
(a) & \quad\quad\quad\quad(b)
\end{tabular}
\caption{(a) Proof of Lemma \ref{l:diagpos}: on the positive probability event that the path passes close to $(\frac{a_1}{2}, \frac{a_1}{2})$, the weight of that vertex is abnormally large, and that of the nearby vertices are typical the path passes through $(\frac{a_1}{2}, \frac{a_1}{2})$. (b) Proof of Theorem \ref{meetondiagonal}: there is a positive probability that all the paths between $a_i$ and $a_{i+1}$ coalesce at the midpoint on the diagonal.} 
\label{f:dcoalesce}
\end{figure}

Now we prove Theorem \ref{meetondiagonal}. Recall that $\alpha'=\frac{\alpha+1}{2}$. The idea is to break up the diagonal into intervals of lengths $m^{\alpha'}$. Because of Proposition \ref{deviationfromdiagonal}, we know that all the geodesics stay close to the diagonal at each of the endpoints of these intervals. The above Lemma \ref{l:diagpos} together with polymer ordering ensures that all these paths meet at the midpoints of each interval with positive probability; see Figure \ref{f:dcoalesce} (b). Because of independence in each interval, the theorem follows. This kind of argument is very crucial and has been repeated throughout the paper.
\begin{proof}[Proof of Theorem \ref{meetondiagonal}] We formalize the above idea. Let $\tilde{\Gamma}_1=\Gamma_{(0,m^{-\alpha}),(m,m-m^{-\alpha})}$ and $\tilde{\Gamma}_2=\Gamma_{(0,m^{\alpha}),(m,m+m^{\alpha})}$. Notice that, if there exists $u\in \llbracket 0,m \rrbracket$ such that $u\in \tilde{\Gamma}_1\cap \tilde{\Gamma}_2$, then because of polymer ordering as stated in Lemma \ref{l:porder}, $u\in \Gamma_{a,b}$ for all $a\in L_1\cap \Z, b\in L_2\cap \Z$. Define $n=m^{1-\alpha'}=m^{\frac{1-\alpha}{2}}$ and
\[a_i=im^{\alpha'} \mbox{ for } i=0,1,2,\ldots,n.\]
Define the event $A$ as
\[A=\left\{|\tilde{\Gamma}_j(a_i)-a_i|\leq m^{\alpha} \mbox{ for all } j=1,2; i=0,1,2,\ldots, n\right\}.\]
Then from Proposition \ref{deviationfromdiagonal} it follows by taking union bound,
\[\P(A)\geq 1-2m^{\frac{1-\alpha}{2}}e^{-cm^{\frac{\alpha}{2}}}\geq 1-e^{-c'm^{\frac{\alpha}{2}}}.\]
Also for $i=1,2,\ldots,n$ define the events $F_i$ as
\[F_i=\left\{\left(\frac{a_{i-1}+a_i}{2},\frac{a_{i-1}+a_i}{2}\right)\in \Gamma_{(a_{i-1},a_{i-1}+m^{\alpha}),(a_{i},a_{i}+m^{\alpha})}\cap \Gamma_{(a_{i-1},a_{i-1}-m^{\alpha}),(a_{i},a_{i}-m^{\alpha})}\right\}.\]

Again due to polymer ordering, it is easy to see that,
\[\ce \supseteq A\cap \left\{\bigcup_{i=1}^{n}F_i\right\} \]
As the $F_i$'s are i.i.d.\ and $\P(F_1)\geq \delta$ by Lemma \ref{l:diagpos}, hence,
\[\P(\ce^c)\leq \P(\cap_iF_i^c)+\P(A^c)\leq \prod_i^n\P(F_i^c)+e^{-c'm^{\frac{\alpha}{2}}}\leq (1-\delta)^{n}+e^{-c'm^{\frac{\alpha}{2}}}\leq e^{-cm^{\ell}},\]
where $\ell=\min\{\frac{1-\alpha}{2},\frac{\alpha}{2}\}$.
\end{proof}

The following corollary follows easily from Theorem \ref{meetondiagonal}. It says that a collection of geodesics whose starting points are close to each other and so are their endpoints, has a high probability to meet the diagonal simultaneously. 
 
\begin{corollary} 
\label{c:meetdiagpar}
Fix $0<\alpha<1$ and $K>0$. Let $U_1$ be the parallelogram whose four vertices are $(0,m^\alpha)$, $(0,-m^{\alpha})$, $(m^K,m^K+m^\alpha)$, $(m^K,m^K-m^\alpha)$. Similarly, define $U_2$ as the parallelogram with vertices $(m^K+m,m^K+m+m^\alpha)$, $(m^K+m,m^K+m-m^\alpha)$, $(2m^K+m,2m^K+m+m^\alpha)$ and $(2m^K+m,2m^K+m-m^\alpha)$. Let $\cA$ denote the event that there exists $u\in \llbracket m^K,m^K+m\rrbracket$ such that $(u,u)\in \Gamma_{a,b}$ for all $a\in U_1\cap \Z^2, b\in U_2\cap \Z^2$. Then there exists constants $m_0,c$ depending only on $K$, such that for all $m\geq m_0$, $\P(\cA)\geq 1-e^{-cm^\l}$ where $\l=\min\{\frac{1-\alpha}{2},\frac{\alpha}{2}\}$.
\end{corollary}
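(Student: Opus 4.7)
The plan is to reduce to Theorem \ref{meetondiagonal} applied to a slightly shortened central subinterval. Fix $\alpha' := (1+\alpha)/2 \in (\alpha,1)$, set $x_L := \lfloor m^K + m^{\alpha'} \rfloor$ and $x_R := \lfloor m^K + m - m^{\alpha'}\rfloor$, and define the vertical segments $L_L := \{x_L\} \times \llbracket x_L - m^\alpha, x_L + m^\alpha \rrbracket$ and $L_R := \{x_R\} \times \llbracket x_R - m^\alpha, x_R + m^\alpha \rrbracket$. Translating Theorem \ref{meetondiagonal} by $(x_L, x_L)$ and applying it on the interval $[x_L, x_R]$ of length $x_R - x_L = m - 2m^{\alpha'} + O(1)$ produces an event $\ce_0$ with $\P(\ce_0) \geq 1 - e^{-cm^\l}$ on which there exists $u \in \llbracket x_L, x_R \rrbracket \subseteq \llbracket m^K, m^K + m \rrbracket$ such that $(u,u) \in \Gamma_{a', b'}$ for every $a' \in L_L \cap \Z^2$ and $b' \in L_R \cap \Z^2$. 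The bridging step is to introduce the event $\cF$: every geodesic $\Gamma_{a, b}$ with $a \in U_1 \cap \Z^2$, $b \in U_2 \cap \Z^2$ contains at least one vertex of $L_L$ and one of $L_R$. On $\ce_0 \cap \cF$, for any such $a, b$, choose $a' \in \Gamma_{a, b} \cap L_L$ and $b' \in \Gamma_{a, b} \cap L_R$ (which automatically satisfy $a' \preceq b'$ for $m$ large, since vertices of $L_R$ coordinate-wise dominate those of $L_L$). The subpath of $\Gamma_{a,b}$ from $a'$ to $b'$ must then coincide with the geodesic $\Gamma_{a', b'}$ by optimality, and on $\ce_0$ this geodesic passes through $(u,u)$; hence so does $\Gamma_{a,b}$. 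Therefore $\cA \supseteq \ce_0 \cap \cF$, and the task reduces to showing $\P(\cF^c) \leq e^{-c' m^{\alpha/2}}$.

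To estimate $\P(\cF^c)$, the plan is to invoke polymer ordering (Lemma \ref{l:porder}): for each fixed pair of first coordinates $(a_1, b_1) \in \llbracket 0, m^K \rrbracket \times \llbracket m^K + m, 2m^K + m \rrbracket$, the value $\Gamma_{(a_1, a_2), (b_1, b_2)}(x_L)$ is sandwiched, as $(a_2, b_2)$ varies in the allowed ranges, between the two extreme geodesics $\Gamma^{\pm}_{a_1, b_1} := \Gamma_{(a_1, a_1 \pm m^\alpha), (b_1, b_1 \pm m^\alpha)}$. The crucial consequence of shifting $L_L$ inward from $m^K$ to $x_L = m^K + m^{\alpha'}$ is that, for every such pair, the horizontal distances $x_L - a_1 \geq m^{\alpha'}$ and $b_1 - x_L \geq m - m^{\alpha'}$ both dominate $m^\alpha$, so after translating the starting point to the origin the hypotheses $t = o(h)$ and $t' = o(n - h)$ of Corollary \ref{c:devdiag} are satisfied uniformly in $(a_1, b_1)$. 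Corollary \ref{c:devdiag} then yields $|\Gamma^{\pm}_{a_1, b_1}(x_L) - x_L| \leq m^\alpha$ with probability at least $1 - e^{-cm^{\alpha/2}}$, placing the crossing inside $L_L$. A union bound over the $O(m^{2K})$ pairs $(a_1, b_1)$ absorbs the polynomial factor into the exponential, and the symmetric argument at $x_R$ yields the $L_R$ crossing. Combining this with the $\ce_0$ bound and using $\alpha/2 \geq \l$ gives $\P(\cA) \geq 1 - e^{-c''m^\l}$.

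The main technical obstacle is precisely the \emph{edge regime} where $a_1$ lies within $O(m^\alpha)$ of $m^K$ (or symmetrically $b_1$ within $O(m^\alpha)$ of $m^K + m$): in that regime the starting offset $m^\alpha$ is no longer negligible compared to the distance to the evaluation point, Corollary \ref{c:devdiag} cannot be applied to the crossing at $x = m^K$ directly, and the number of ``up'' moves the geodesic may make before crossing $x = m^K$ admits no useful deterministic bound. The inward shift of the test lines by $m^{\alpha'}$ is the device that circumvents this: the buffer $m^{\alpha'} = o(m)$ dominates $m^\alpha$ enough for the diagonal-pinning estimate to apply uniformly to all extreme geodesics, while the sacrificed intervals of length $m^{\alpha'}$ on each end of the middle strip do not affect the exponent $\l = \min\{(1-\alpha)/2, \alpha/2\}$ delivered by Theorem \ref{meetondiagonal} on the shortened middle interval (the ratio of log-width to log-length is the same as in the original statement up to an $o(1)$ correction, absorbed into the constant $c$).
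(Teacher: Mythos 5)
Your argument is correct and follows essentially the same route as the paper: the paper likewise funnels every geodesic from $U_1$ to $U_2$ through narrow vertical segments on two interior cut lines (using Corollary \ref{c:devdiag} together with polymer ordering and a union bound over the $O(m^{2K})$ extreme geodesics) and then applies Theorem \ref{meetondiagonal} to the family of geodesics between those segments. The only difference is cosmetic: the paper places the cuts at $m^K+\frac{m}{3}$ and $m^K+\frac{2m}{3}$, which makes the hypotheses of Corollary \ref{c:devdiag} and the rescaling in Theorem \ref{meetondiagonal} trivially comfortable, whereas your cuts at distance $m^{\alpha'}$ from the ends require the (harmless) exponent bookkeeping you note at the end.
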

\begin{proof}
Let $L_1$ be the line segment joining $(m^K+\frac{m}{3},m^K+\frac{m}{3}+m^\alpha)$ and $(m^K+\frac{m}{3},m^K+\frac{m}{3}-m^\alpha)$. Similarly let $L_2$ be the line segment joining $(m^K+\frac{2m}{3},m^K+\frac{2m}{3}+m^\alpha)$ and $(m^K+\frac{2m}{3},m^K+\frac{2m}{3}-m^\alpha)$. Let $\cB$ denote the event for all $a\in U_1\cap \Z^2, b\in U_2\cap \Z^2$, $\Gamma_{a,b}(m^K+\frac{m}{3})\in L_1$ and $\Gamma_{a,b}(m^K+\frac{2m}{3})\in L_2$. Using Corollary \ref{c:devdiag} and union bound, it is easy to see that, 
\[\P(\cB)\geq 1-4m^{2K}e^{-cm^{\alpha/2}}\geq 1-e^{-c'm^{\alpha/2}}.\] 
Hence applying Theorem \ref{meetondiagonal} to all geodesics from $L_1$ to $L_2$, one has the result.
\end{proof}

Similarly, the following corollary is immediate. We omit the proof. 

\begin{corollary}
\label{c:twopathsmeet}
Fix $0<\alpha<1$. Let $0<a<b<L$ such that $|b-a|=m$ and $a\geq n,L-b\geq n$ and $n\geq m$. Let $E_1$ be the line segment joining $(0,-n^\alpha)$ and $(0,n^\alpha)$. Let $E_2$ be the line segment joining $(L,L-n^\alpha)$ and $(L,L+n^\alpha)$. Let $B_1$ be the line segment joining $(a,a-m^\alpha)$ and $(a,a+m^\alpha)$, and $B_2$ be the segment joining $(b,b-m^\alpha)$ and $(b,b+m^\alpha)$. Let $\ce$ be the event that there exists $u\in \llbracket a,b \rrbracket$ such that $(u,u)\in \Gamma_{i,j}$ for all $i\in E_1, j\in E_2$ and all $i\in B_1,j\in B_2$. Then there exists absolute positive constant $c$ such that $\P(\ce)\geq 1-e^{-cm^{\l}}$ where $\l=\min \{\frac{1-\alpha}{2},\frac{\alpha}{2}\}$.
\end{corollary}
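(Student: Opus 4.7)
The plan is to adapt the argument of Theorem \ref{meetondiagonal} to simultaneously control two families of geodesics — one spanning $[0,L]$ and one spanning $[a,b]$. The first step is a reduction: by polymer ordering (Lemma \ref{l:porder}), applied separately within each family, it suffices to force four \emph{extremal} geodesics
\[
\tilde\Gamma_1 := \Gamma_{(0,-n^\alpha),(L,L-n^\alpha)},\ \tilde\Gamma_2 := \Gamma_{(0,n^\alpha),(L,L+n^\alpha)},\ \tilde\Gamma_3 := \Gamma_{(a,a-m^\alpha),(b,b-m^\alpha)},\ \tilde\Gamma_4 := \Gamma_{(a,a+m^\alpha),(b,b+m^\alpha)}
\]
to share a common diagonal point $(u,u)$ with $u \in \llbracket a,b\rrbracket$. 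Indeed, polymer ordering applied within each family on its own $x$-range sandwiches every $\Gamma_{i,j}$ in the event $\ce$ between the two corresponding extremals, so they must pass through any coalescence point shared by the envelopes.

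Next, I follow the partition scheme of Theorem \ref{meetondiagonal}: set $\alpha' := (\alpha+1)/2$, $n' := \lfloor m^{(1-\alpha)/2}\rfloor$, and $a_i := a + i m^{\alpha'}$ for $i=0,\ldots,n'$. Let $A$ be the event that $|\tilde\Gamma_j(a_i)-a_i| \leq m^\alpha$ for every $j\in\{1,2,3,4\}$ and every $i$. Corollary \ref{c:devdiag} applies in each case: for $\tilde\Gamma_{1,2}$, since $n^\alpha = o(a_i)$ and $n^\alpha = o(L-a_i)$ (using $a\geq n$, $L-b\geq n$); and for $\tilde\Gamma_{3,4}$ at interior indices $i$, since $m^\alpha = o(m^{\alpha'}) \leq o(a_i-a), o(b-a_i)$. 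Each such deviation event has probability at most $e^{-cm^{\alpha/2}}$, so a union bound over the $O(m^{(1-\alpha)/2})$ indices and four geodesics gives $\P(A) \geq 1 - e^{-c'm^{\alpha/2}}$.

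For each $i=1,\ldots,n'$ let $v_i := ((a_{i-1}+a_i)/2,(a_{i-1}+a_i)/2)$ and define
\[
F_i := \bigl\{v_i \in \Gamma_{(a_{i-1},a_{i-1}-m^\alpha),(a_i,a_i-m^\alpha)}\bigr\} \cap \bigl\{v_i \in \Gamma_{(a_{i-1},a_{i-1}+m^\alpha),(a_i,a_i+m^\alpha)}\bigr\}.
\]
Lemma \ref{l:diagpos} (applied to the sub-interval $[a_{i-1},a_i]$, whose length is exactly $m^{\alpha'}$) gives $\P(F_i)\geq\delta$ for an absolute constant $\delta>0$; and the $F_i$ are mutually independent because they are measurable with respect to the weights in disjoint vertical strips $\{a_{i-1}\leq x\leq a_i\}$. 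On $A\cap F_i$, the restriction of each $\tilde\Gamma_j$ to $[a_{i-1},a_i]$ is a geodesic between two points within $m^\alpha$ of the diagonal, hence by polymer ordering is sandwiched between the two envelope geodesics appearing in $F_i$; since both envelopes pass through $v_i$, so does $\tilde\Gamma_j$. Therefore $\ce \supseteq A \cap \bigcup_{i=1}^{n'} F_i$, yielding
\[
\P(\ce^c) \leq \P(A^c) + (1-\delta)^{n'} \leq e^{-c'm^{\alpha/2}} + e^{-c''m^{(1-\alpha)/2}} \leq e^{-cm^\ell}.
\]

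The main subtlety I anticipate is exactly the mismatch in $x$-ranges between the two families: polymer ordering compares only geodesics with matched endpoint $x$-coordinates, so a direct ``extremal" reduction does not chain the $E$- and $B$-families together. The event $A$ serves as the bridge — it simultaneously pins all four extremals close to the diagonal at the partition points, after which polymer ordering strip-by-strip transfers the constant-probability coalescence of the two envelope geodesics in $F_i$ to all four extremals at once.
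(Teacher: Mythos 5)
Your proof is correct and follows essentially the route the paper intends (it omits the proof as "immediate"): reduce to the extremal geodesics of each family by polymer ordering — which is indeed the key step needed to avoid any union bound over the $E_1\times E_2$ pairs, since $n$ may be arbitrarily large compared to $m$ — pin them near the diagonal at the partition points via Corollary \ref{c:devdiag}, and force coalescence in one of the $\sim m^{(1-\alpha)/2}$ independent strips of width $m^{\alpha'}$ via Lemma \ref{l:diagpos}, exactly as in the proofs of Theorem \ref{meetondiagonal} and Corollary \ref{c:meetdiagpar}. The minor boundary issues you handle implicitly (adjacent strips sharing a column, the partition points nearest the endpoints of $[a,b]$) are at the same level of detail as the paper's own treatment.
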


\subsection{Subdiffusive Fluctuations of the Last Passage Time}
Unlike the TASEP where the passage times $L_n$ is of the order $n^{1/3}$, in presence of a slow bond the passage times $T_n$ (note we suppress the dependence on $r$) show diffusive fluctuation. This is a consequence of the path getting pinned to the diagonal at a constant rate; and using Theorem \ref{meetondiagonal} one can argue that $T_n$ can be approximated by partial sums of a stationary process. Using this, and the mixing properties guaranteed by Theorem \ref{meetondiagonal}, it is possible to prove a central limit theorem for $T_{n}$. Although such a result is interesting, it is not crucial for our purposes in this paper. We shall often want to compare best paths in the reinforced environment (i.e., the slow bond model with the diagonals boosted) with paths that do not use the diagonal. Typically the paths that use the diagonal will be larger, and to quantify this we would need concentration bounds for $|T_{n}-(4+\varepsilon)n|$. This will be done in two steps (a) control on the difference between $\E T_n$ and $(4+\varepsilon)n$ and (b) concentration of $T_n$ around its mean. A proof of the central limit theorem is provided in Section \ref{s:clt}.


We first start with the following lemma.
\begin{lemma}
\label{l:expdiff}
There exists an absolute constant $K>0$ such that 
$$ \E T_{n} \geq (4+\varepsilon)n -K.$$
\end{lemma}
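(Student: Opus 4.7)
The upper bound $\E T_n \le (4+\varepsilon) n$ is immediate from super-additivity: since $T_{n+m} \ge T_{(0,0),(n,n)} + T_{(n,n),(n+m,n+m)}$ with the two summands independent and distributed as $T_n$, $T_m$, the sequence $\{\E T_n\}$ is super-additive, and Fekete's lemma combined with Theorem~\ref{t:lln} gives $\E T_n/n \le \lim_k \E T_k/k = 4+\varepsilon$. The substantive part of the lemma is therefore the matching lower bound with a bounded additive correction.

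The plan is to reduce that lower bound to establishing the complementary ``approximate sub-additivity''
\[
\E T_{n+m} \;\le\; \E T_n + \E T_m + C \qquad (\star)
\]
for an absolute constant $C$ uniform in $n,m \ge 1$. Granting $(\star)$, set $\mu := 4+\varepsilon$, $\tilde a_n := \mu n - \E T_n \ge 0$, and $a_n := \tilde a_n - C$. Rearranging $(\star)$ gives $\tilde a_{n+m} \ge \tilde a_n + \tilde a_m - C$, i.e., $a_n$ is super-additive; Theorem~\ref{t:lln} yields $\tilde a_n/n \to 0$ and hence $a_n/n \to 0$. Fekete's lemma then forces $\sup_k a_k/k = 0$, whence $a_k \le 0$ and therefore $\tilde a_k \le C$ for every $k \ge 1$, completing the proof with $K = C$.

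It remains to establish $(\star)$. Let $\Gamma_{n+m}$ be the geodesic from $(0,0)$ to $(n+m,n+m)$ and let $u^\ast$ denote the touching point of $\Gamma_{n+m}$ with the diagonal closest to $n$. By Proposition~\ref{deviationfromdiagonal} applied at column $n$, together with Lemma~\ref{i:davoid} applied in a bounded window around $(n,n)$ (and Theorem~\ref{meetondiagonal} to upgrade the estimate to a \emph{common} pinning point for all nearby geodesics), one has $|u^\ast - n| \le M$ on an event of probability at least $1 - e^{-cM^{1/2}}$ for a large enough absolute constant $M$. On this event the exact split $T_{n+m} = T_{(0,0),(u^\ast,u^\ast)} + T_{(u^\ast,u^\ast),(n+m,n+m)}$ holds; comparing these summands to $T_n$ and to an independent copy $T'_m$ of $T_m$, and using the moderate deviation bound of Theorem~\ref{t:moddevdiscrete} on the unreinforced segments of length $\le M$ adjacent to $(n,n)$, controls the discrepancy up to an additive $O(M) = O(1)$ in expectation. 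On the low-probability complementary event the excess $T_{n+m} - T_n - T'_m$ is bounded in $L^1$ via Cauchy--Schwarz combined with the sub-Gaussian tail control of passage times coming from Theorem~\ref{t:moddevdiscrete}.

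The main obstacle is $(\star)$ itself: only the ``trivial'' inequality $T_{(0,0),(u^\ast,u^\ast)} \ge T_n + T_{(n,n),(u^\ast,u^\ast)}$ is automatic from super-additivity when $u^\ast > n$, and the matching upper bound on the ``gain'' $T_{n+m} - T_n - T'_m$ genuinely requires input beyond super-additivity. Specifically, one needs the coalescence of diagonal geodesics (Theorem~\ref{meetondiagonal}) to force this gain to be supported on a small, well-controlled event; quantitatively matching the tail bound there with an $L^1$ control uniform in $n,m$ is the delicate step.
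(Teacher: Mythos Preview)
Your overall strategy --- reduce to an approximate sub-additivity $\E T_{n+m}\le \E T_n+\E T_m+C$ and then run Fekete on $a_n=\mu n-\E T_n-C$ --- is precisely the mechanism behind the paper's argument, which proves the equivalent statement $|\E T_{nM}-M\,\E T_n|\le KM$ and sends $M\to\infty$ with $n$ fixed. Your Fekete reduction is correct and is simply a repackaging of that limiting step.

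The gap is in your justification of $(\star)$. Splitting $T_{n+m}$ at the \emph{random} diagonal hit $u^\ast$ is the wrong framing: even on $\{|u^\ast-n|\le M\}$, comparing $T_{(u^\ast,u^\ast),(n+m,n+m)}$ to $T_m$ (or $T_{u^\ast}$ to $T_n$) is not immediate, since $u^\ast$ is correlated with the environment and the inequality you need points the wrong way for super-additivity to help. You correctly flag this as ``the delicate step'' but do not resolve it; also, your appeal to Theorem~\ref{t:moddevdiscrete} here is misplaced, as that result concerns the unreinforced model and gives no direct control on reinforced passage times along the diagonal.

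The paper avoids the random split entirely. It compares the \emph{restriction} $T_{nM}(in,(i+1)n)$ of the long geodesic between the deterministic lines $x=in$ and $x=(i+1)n$ directly to the short geodesic $T_{i\mathbf{n},(i+1)\mathbf{n}}\stackrel{d}{=}T_n$. On the coalescence event (Corollary~\ref{c:twopathsmeet}) that both geodesics meet on the diagonal within distance $m^{1/3}$ of each endpoint, the two paths agree throughout the interior, so their weight difference is supported on two boundary boxes of bounded size; this difference is stochastically dominated by a fixed Gamma variable, yielding the uniform $L^1$ bound $\E|T_{nM}(in,(i+1)n)-T_{i\mathbf{n},(i+1)\mathbf{n}}|\le K$ at once. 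Summing over $i$ and dividing by $M$ finishes. The device ``restrict the long geodesic to a deterministic window and use coalescence to match it to the short geodesic'' is exactly the piece your proof of $(\star)$ is missing.
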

Note that due to superadditivity, $\E T_n\leq (4+\varepsilon)n$ always holds. The main idea in the proof of this lemma is that if $0<a<b<L$, then since the geodesic $\Gamma_L$ is close to the diagonal at $(a,a)$ and $(b,b)$, the part of $\Gamma_L$ falling between the lines $x=a$ and $x=b$ is close in length to that of the geodesic $\Gamma_{(a,a),(b,b)}$. This gives a way to compare geodesics between intervals of different lengths.
\begin{proof}
For any $n \in \N$ and $0<a<b<n$, define $T_n(a,b)$ as the weight of the part of the geodesic $\Gamma_n$ that lies between the vertical lines $x=a$ and $x=b$. That is, $T_n(a,b)=\sum_{(x_i,y_i)\in \Gamma_n: a\leq x_i\leq b} \xi_{(x_i,y_i)}$.
 Then we claim that there exists absolute positive constants $M_0,K$ such that for any $M\geq M_0$, any $n \in \N$ and any $i\in \llbracket 0,M-1\rrbracket$, 
\[|\E(T_{nM}(in,(i+1)n))-\E(T_{i\mathbf{n},(i+1)\mathbf{n}})|\leq K.\]
 To see this fix any $m\in \N$, and define the event $E$ that $\Gamma_{nM}$ and $\Gamma_{i\mathbf{n},(i+1)\mathbf{n}}$ meet together on the diagonal between $\llbracket in,in+m^{1/3} \rrbracket$ and again between $\llbracket(i+1)n-m^{1/3},(i+1)n\rrbracket$, and let $F$ be the event that $\Gamma_{nM}$ passes through the line segment joining $(in,in-m^{1/6})$ and $(in,in+m^{1/6})$, and again through the line segment joining $((i+1)n,(i+1)n-m^{1/6})$ and $((i+1)n,(i+1)n+m^{1/6})$. Let $Y\sim \mbox{Gamma}(2m^{2/3},r)$ is the sum of $2m^{2/3}$ many i.i.d.\ $\mbox{Exp}(r)$ random variables. Then using Proposition \ref{deviationfromdiagonal} and Theorem \ref{meetondiagonal},
\begin{eqnarray*}
&&\P(|T_{nM}(in,(i+1)n)-T_{i\mathbf{n},(i+1)\mathbf{n}}|\geq m)\\
&\leq& \P\left(\left\{|T_{nM}(in,(i+1)n)-T_{i\mathbf{n},(i+1)\mathbf{n}}|\geq m\right\}\cap E\cap F\right)+\P(E^c\cap F)+\P(F^c)\\
&\leq& 2\P(Y\geq m)+C'e^{-cm^{1/12}}+2C'e^{-cm^{1/12}}\\
&\leq & 2C'e^{-cm}+C'e^{-cm^{1/12}}+C'e^{-cm^{1/12}}\leq C''e^{-c'm^{1/12}}.
\end{eqnarray*}
Hence, summing over all $m\in \N$, we have, for all $n,M,i$, there exists some absolute positive constant $K$ such that,
\begin{equation}
\label{e:diffi}
|\E(T_{nM}(in,(i+1)n))-\E(T_{i\mathbf{n},(i+1)\mathbf{n}})|\leq \E|T_{nM}(in,(i+1)n)-T_{i\mathbf{n},(i+1)\mathbf{n}}|\leq K.
\end{equation}
As $T_{i\mathbf{n},(i+1)\mathbf{n}}\overset{d}{=}T_n$, hence, adding up \eqref{e:diffi} over all $i\in \llbracket 0,M-1\rrbracket$, 
\[|\E(T_{nM})-M\E(T_n)|\leq KM.\]
That is,
\[\frac{\E(T_{nM})}{nM}\leq \frac{M\E(T_n)}{nM}+\frac{KM}{nM}=\frac{\E T_n}{n}+\frac{K}{n}.\]
Hence, keeping $n$ fixed, and taking $M\rightarrow \infty$, 
\[4+\varepsilon\leq \frac{\E T_n}{n}+\frac{K}{n}.\]
Hence, for all $n$, $\E T_n\geq (4+\varepsilon)n-K$.
\end{proof}

For the second ingredient, while it is possible to prove a concentration at scale $n^{1/2}$, a tail bound at scale $n^{1/2+o(1)}$ is more standard and much easier to prove. We state, without proof the following result which can be proved using standard Martingale techniques with some truncation (cf.\ the proof of  Theorem $3.11$ in \cite{Kesten}). This will be sufficient for our purpose.  
%
%

\begin{lemma}\label{l:Tndev}
Fix $\delta>0$. Then there exist absolute positive constants $C',c$ such that
\[\P(|T_n-\E T_n|\geq n^{1/2+\delta})\leq C'e^{-cn^{\delta/2}}.\]
\end{lemma}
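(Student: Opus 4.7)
The plan is a standard Doob martingale concentration with truncation, essentially as in Kesten's proof of Theorem $3.11$ in \cite{Kesten}. The key Lipschitz property is: changing a single weight $\xi_v$ from value $a\in[0,M]$ to value $b\in[0,M]$ alters $T_n$ by at most $|a-b|\le M$, so if the weights were bounded by $M$ then McDiarmid's inequality would immediately give concentration at scale $M\sqrt{N}$, where $N$ is the number of weights that matter. Since exponential weights are not bounded, I would first truncate at level $M := n^{\delta/4}$, and localize to the strip
\[S := \{(x,y)\in \llbracket 0,n\rrbracket^2 : |y-x|\le m\}, \qquad m := n^{\delta/4},\]
of area $|S|=O(n^{1+\delta/4})$ around the diagonal. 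Writing $\bar\xi_v := \xi_v\wedge M$ and $\bar T_n^S$ for the truncated, strip-restricted last passage time from $\mathbf{0}$ to $\mathbf{n}$, the martingale argument would then be applied to $\bar T_n^S$.

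The reduction from $T_n$ to $\bar T_n^S$ uses Proposition \ref{deviationfromdiagonal}: a union bound over $h\in \llbracket 0,n\rrbracket$ yields $\P(\Gamma_n \not\subset S)\le n\, e^{-c m^{1/2}}$, and a union bound over $S$ gives $\P(\max_{v\in S}\xi_v > M)\le |S|\, e^{-rM}$; both are super-polynomially small for the above choice of $M,m$. On the intersection $\mathcal{G}$ of their complements one has $T_n = \bar T_n^S$, and Cauchy--Schwarz, combined with the crude moment bound $\E T_n^2\le Cn^{4}$ (from $T_n\le \sum_{v\in \llbracket 0,n\rrbracket^2}\xi_v$), shows that $|\E T_n - \E\bar T_n^S|$ is also super-polynomially small. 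Enumerating the vertices of $S$ as $v_1,\ldots,v_N$ and forming the Doob martingale of $\bar T_n^S$ with respect to the filtration they generate, the martingale differences are bounded pointwise by $M$, so McDiarmid's inequality gives
\[\P\bigl(|\bar T_n^S - \E \bar T_n^S|\ge t\bigr) \;\le\; 2\exp\!\Bigl(-\tfrac{2 t^2}{NM^2}\Bigr) \;\le\; 2\exp\!\Bigl(-\tfrac{c\,t^2}{n^{1+3\delta/4}}\Bigr).\]
Plugging in $t = \tfrac12 n^{1/2+\delta}$ gives stretched-exponential decay at rate $n^{5\delta/4}\gg n^{\delta/2}$, and combining with the super-polynomial bounds on $\mathcal{G}^c$ and on $|\E T_n - \E \bar T_n^S|$ produces the desired estimate for $T_n$.

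The only delicate point, and the main obstacle if any, is balancing the truncation parameters: one needs $M,m$ large enough that $\P(\mathcal{G}^c)$ is super-polynomially small (essentially $M,m\gg (\log n)^2$ suffices by Proposition \ref{deviationfromdiagonal}), and simultaneously small enough that $NM^2 \ll n^{1+2\delta}$ so that McDiarmid delivers decay beating $e^{-c n^{\delta/2}}$. The symmetric choice $M=m=n^{\delta/4}$ leaves polynomial slack in both directions, which is precisely why the statement is phrased with exponent $\delta/2$ rather than $\delta$; closing that gap would require a sharper variance-style martingale analysis along the lines of Kesten's original argument, but for our purposes the simple McDiarmid bound suffices.
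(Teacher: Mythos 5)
The paper itself gives no proof of this lemma (it is stated without proof, with a pointer to the martingale-plus-truncation argument of Theorem 3.11 in \cite{Kesten}), so the only issue is whether your sketch actually delivers the stated bound, and as written it does not. The problem is the final bookkeeping: the lemma demands the specific rate $C'e^{-cn^{\delta/2}}$, but your good event costs $\P(\mathcal{G}^c)\le Cn\,e^{-cm^{1/2}}+|S|e^{-rM}$, and with your choice $m=M=n^{\delta/4}$ the strip-localization term is of order $e^{-cn^{\delta/8}}$. That is ``super-polynomially small'' but much larger than $e^{-cn^{\delta/2}}$, so the estimate you end up with is $C'e^{-cn^{\delta/8}}$, not the claimed one; the closing assertion that the symmetric choice leaves ``polynomial slack in both directions'' and that ``the simple McDiarmid bound suffices'' is incorrect as far as the exponent $\delta/2$ is concerned.

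Moreover the shortfall cannot be fixed by re-tuning the parameters within your scheme. Writing $m=n^{a}$ and $M=n^{b}$, the three error exponents are $a/2$ (strip localization via Proposition \ref{deviationfromdiagonal}), $b$ (truncation), and $2\delta-a-2b$ (McDiarmid with $t\asymp n^{1/2+\delta}$ and $N\asymp n^{1+a}$), and $\min\{a/2,\,b,\,2\delta-a-2b\}\le 2\delta/5$ for every admissible choice, with equality at $a=4\delta/5$, $b=2\delta/5$; so plain bounded differences tops out at $e^{-cn^{2\delta/5}}$, strictly short of $e^{-cn^{\delta/2}}$. Reaching $\delta/2$ requires exactly the sharper step you defer: instead of charging every vertex of the strip the worst-case increment $M$, bound the sum of conditional variances of the Doob increments through $\sum_v\P(v\in\Gamma_n)=\E|\Gamma_n|=O(n)$ and apply a Freedman/Bernstein-type martingale inequality with a polynomially large truncation level; this is Kesten's argument, it yields a bound of the form $\exp(-c\min\{t^2/n,\,t/M\})$, and with $t=n^{1/2+\delta}$ it comfortably gives the exponent $\delta/2$. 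Your weaker rate would in fact likely suffice for the paper's downstream uses (the authors are deliberately loose with exponents, e.g.\ in Proposition \ref{reinforcedx1}), but as a proof of the lemma as stated the argument is incomplete.
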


Lemma \ref{l:expdiff} and Lemma \ref{l:Tndev} imply the following proposition that will be useful later. As discussed earlier in the introduction, in order to look at the limiting distribution away from the origin, we will have to consider geodesics from $(0,0)$ to $(n+k,n)$ for $k$ small compared to $n$. And the geodesic from $(0,0)$ to $(n+k,n)$ is expected to hit the diagonal for the last time near the point $(x_1,x_1)$, such that $x_1$ maximizes the quantity $r(x):=(4+\varepsilon)x+(\sqrt{n+k-x}+\sqrt{n-x})^2.$ 
This is made precise in the following proposition. As before, we have not been very strict about the correct order of the exponents here.

\begin{proposition}
\label{reinforcedx1}
Let $n\geq k^{8/7}$, and $\Gamma_{n+k,n}$ be the geodesic from $(0,0)$ to $(n+k,n)$ in the reinforced environment. Let $(X,X)$ be the last point on the diagonal that lies on $\Gamma_{n+k,n}$. Let $x_1\in \llbracket 0,n\rrbracket$ be the point that maximizes the quantity $r(x)$ above. Then there exist constants $C',c$ such that  
\[\P(|X-x_1|\geq k^{3/5})\leq C'e^{-ck^{1/20}}.\] 

\end{proposition}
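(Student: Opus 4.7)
The strategy is to sandwich $T_{n+k,n}$ between two natural estimates, and use concavity of $r$ to force $X$ close to its maximizer $x_1$. Since $\Gamma^{(r)}_{n+k,n}$ avoids the diagonal strictly after $(X,X)$, we have $T_{n+k,n} \leq T_X + T^0_{(X,X),(n+k,n)} + O(1)$, while concatenating the reinforced geodesic $\Gamma^{(r)}_{x_1}$ with the unreinforced geodesic from $(x_1, x_1)$ to $(n+k,n)$ gives $T_{n+k,n} \geq T_{x_1} + T^0_{(x_1, x_1),(n+k,n)} - O(1)$. Hence on the event $\{X = x\}$,
$$(T_x - T_{x_1}) + (T^0_{(x,x),(n+k,n)} - T^0_{(x_1,x_1),(n+k,n)}) \geq -O(1),$$
whose mean, by Lemma \ref{l:expdiff} and Theorem \ref{t:moddevdiscrete}, equals $r(x) - r(x_1) + O(k^{1/3})$.

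A direct computation gives $r''(x_1) = -k^2/[2((n-x_1)(n+k-x_1))^{3/2}] = \Theta(-1/k)$ using $n - x_1 = \Theta(k)$ from \eqref{e:defx1}, so by a second-order Taylor expansion, $r(x_1) - r(x) \geq c(x-x_1)^2/k$ for $|x - x_1| \leq c' k$ and $\geq c'' k$ for larger $|x - x_1|$. In particular, for $|X - x_1| \geq k^{3/5}$, the mean of the above displayed expression is at most $-ck^{1/5}$, a gap that must be swallowed by the fluctuations with probability at least $1 - C'e^{-ck^{1/20}}$.

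Controlling these fluctuations at scale $o(k^{1/5})$ is the main technical step, and is done by using coalescence of geodesics to \emph{localize} each of the two differences. For the reinforced part, a coalescence argument along the diagonal (in the spirit of Theorem \ref{meetondiagonal}) applied at a scale around $(x_1, x_1)$ ensures that, with overwhelming probability, the reinforced geodesics from $\mathbf{0}$ to $(x, x)$ and to $(x_1, x_1)$ share a common prefix ending at some diagonal point $(u, u)$ close to $\min(x, x_1)$; this expresses $T_x - T_{x_1}$ as a difference of two reinforced passage times over a region of size $O(|x - x_1|)$, whose deviations can be controlled by Lemma \ref{l:Tndev}. For the unreinforced part, Theorem \ref{pathsmeet} yields the analogous coalescence of the geodesics from $(x, x)$ and $(x_1, x_1)$ to $(n+k, n)$, localizing $T^0_{(x,x),(n+k,n)} - T^0_{(x_1,x_1),(n+k,n)}$ to a region of size $O(|x - x_1|)$ with fluctuations bounded at the sub-diffusive scale $|x - x_1|^{1/3 + o(1)}$ via Theorem \ref{t:moddevdiscrete} at deviation level $t = k^{1/20}$. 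A union bound over $x$ in the polynomial-size range $\llbracket 0, n \rrbracket \setminus (x_1 - k^{3/5}, x_1 + k^{3/5})$ then yields the claim, the polynomial factor $n$ being dominated by $e^{-ck^{1/20}}$.

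The main obstacle is the reinforced fluctuation control: the naive diffusive scaling $|x - x_1|^{1/2 + o(1)}$ for the reinforced difference only narrowly misses the $k^{1/5}$ mean gap at the critical exponent $|x - x_1| = k^{3/5}$. Overcoming this likely requires a bootstrap — first obtaining a rough a priori bound $|X - x_1| \leq k^{\alpha}$ for some $\alpha \in (3/5, 1)$ via the cruder version of the above argument (which immediately yields $\alpha$ around $2/3$ or $3/4$ by balancing quadratic gap against diffusive fluctuations), then sharpening the coalescence to a smaller window and exploiting the $k^{1/3}$-scale fluctuations of the unreinforced increment to upgrade the exponent to $3/5$.
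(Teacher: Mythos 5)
Your set-up is sound and your calculus is sharper than what is written in the paper: sandwiching $T_{n+k,n}$ through the last diagonal point and comparing with the path through $(x_1,x_1)$ is exactly the right comparison, and your Taylor expansion correctly shows that the mean gap is quadratic, of order $(x-x_1)^2/k$, hence only of order $k^{1/5}$ at displacement $k^{3/5}$. But the proposal does not prove the proposition, and the obstacle you flag at the end is not something a bootstrap can absorb. On the pinning events the reinforced increment $T_x-T_{x_1}$ is essentially the passage time of the pinned stretch between $(x_1,x_1)$ and $(x,x)$, whose fluctuations are genuinely diffusive (Proposition \ref{p:lb} gives a variance lower bound linear in the length), so at $d=|x-x_1|\approx k^{3/5}$ the noise is of order $k^{3/10}$, strictly larger than the $k^{1/5}$ mean gap no matter how finely you localize; shrinking the window in a bootstrap reduces the set of competitors but not the size of this noise at the critical displacement, so the second step of your plan cannot close the argument. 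The same difficulty already appears at the level of means: your own error term $O(k^{1/3})$ (the unmatched corrections to $\E T^0$) exceeds $k^{1/5}$, and nothing in the proposal shows these corrections cancel between $x$ and $x_1$ to accuracy $o(k^{1/5})$. Balancing the quadratic gap $d^2/k$ against diffusive noise $d^{1/2}$ puts the critical scale at $d\asymp k^{2/3}$, so your route can at best deliver the statement with a window of size $k^{2/3+o(1)}$, not $k^{3/5}$.

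Two further concrete problems. First, the union bound ``over the polynomial-size range $\llbracket 0,n\rrbracket$'' is unjustified: the hypothesis is only $n\geq k^{8/7}$, so $n$ may be arbitrarily large compared to $k$ and a factor of $n$ is not dominated by $e^{-ck^{1/20}}$. The paper's first step — forcing $\Gamma_{n+k,n}$ to coalesce with the geodesic started at $(n-k^{8/7},n-k^{8/7})$ via Corollary \ref{c:twopathsmeet} — exists precisely to reduce the competition to the window $\llbracket n-k^{8/7}/2,n\rrbracket$, and your proposal has no substitute for it. Second, every invocation of Theorem \ref{pathsmeet} costs a polynomially small (in $k$) failure probability, so an argument that uses off-diagonal coalescence for each competitor cannot yield the claimed $C'e^{-ck^{1/20}}$ bound. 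For comparison, the paper's proof is cruder than yours at the decisive point: after the reduction it makes a single first-order comparison of $T_{v,(x_2,x_2)}+T^H_{(x_2,x_2),(n+k,n)}$ against the same quantity through $(x_1,x_1)$, asserts a mean gap of order $k^{3/5}$ for all $|x_2-x_1|\geq k^{3/5}$, and then only needs to control each passage time to accuracy $k^{3/5}$ via Lemma \ref{l:Tndev} and Proposition 12.2 of \cite{BSS14}, with no coalescence of off-diagonal geodesics at all. Your quadratic expansion shows that this asserted mean gap is in fact only $\Theta(k^{1/5})$ at the critical displacement, so you have identified a genuine tension in that step; but as written your proposal neither resolves it nor recovers the stated probability bound.
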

 
\begin{proof}
Let $v_1=n-k^{8/7}$ and $\Gamma_1$ be the geodesic from $v=(v_1,v_1)$ to $(n+k,n)$. Let $\ce$ be the event that there exists $u\in \llbracket n-\frac{k^{8/7}}{2},n\rrbracket$ such that $(u,u)\in \Gamma_1\cap\Gamma_{n+k,n}$. Then from Corollary \ref{c:twopathsmeet}, $\P(\ce)\geq 1-e^{-c'k^{1/14}}$. As $\Gamma_1$ and $\Gamma_{n+k,n}$ have the last endpoint common, hence once they meet they coincide till $(n+k,n)$. Thus on $\ce$, the last point on the diagonal for both $\Gamma_1$ and $\Gamma_{n+k,n}$ are same. Hence enough to find the point where $\Gamma_1$ last meets the diagonal.

To this end, first note that from \eqref{e:defx1} in the introduction it follows that $n-x_1=ck$ for some constant $c$. Let $\Gamma_2$ be the union of the two geodesics $\Gamma_{2,1}$ from $v$ to $(x_1,x_1)$ and the geodesic $\Gamma_{2,2}$
from $(x_1,x_1)$ to $(n+k,n)$ that avoids the diagonal. Let $A$ be the event that $\Gamma_1$ touches the diagonal for the last time at some point $(x_2,x_2)$ with $x_2\in \llbracket n-\frac{k^{8/7}}{2},n\rrbracket$ and $|x_2-x_1|\geq k^{3/5}$. Then
\[\P(|X-x_1|\geq k^{3/5})\leq \P(A)+\P(\ce^c)\leq \P(A)+e^{-c'k^{1/14}}.\]
 Let $H=\{(x,y)\in \R^2: y\geq x\}$ denote the region in $\R^2$ that lies on or above the diagonal line $x=y$, and for $z_1,z_2\in \R^2$ with $z_2\geq z_1$ and $z_1,z_2\in \bar{(H^c)}$, let $T_{z_1,z_2}^H$ denote the weight of the geodesic from vertex $z_1$ to $z_2$ that does not pass through the region $H$ (except possibly at the endpoints). Then clearly
\[A\subseteq \bigcup_{x_2: |x_2-x_1|\geq k^{3/5}} \left\{T_{v,(x_2,x_2)}+T^H_{(x_2,x_2),(n+k,n)}\geq T_{v,(x_1,x_1)}+T_{(x_1,x_1),(n+k,n)}^H\right\}.\] 
Calculating expectations using Lemma \ref{l:expdiff}, we get for any such $x_2$, there exists some constant $\alpha$ such that
\[\E(T_{v,(x_2,x_2)})+\E(T^0_{(x_2,x_2),(n+k,n)})\leq \E(T_{v,(x_1,x_1)})+\E(T^0_{(x_1,x_1),(n+k,n)})-\alpha k^{3/5}.\]

Since $|v_1-x_1|\sim k^{8/7}$, by Lemma \ref{l:Tndev} it follows that $\P(|T_{v,(x_1,x_1)}-\E(T_{v,(x_1,x_1)})|\geq \frac{\alpha}{4}k^{3/5})\leq C's^{-ck^{1/20}}$. Also applying Proposition $12.2$ from \cite{BSS14}, one immediately gets that 
$$\P(|T_{(x_1,x_1),(n+k,n)}^H-\E(T^0_{(x_1,x_1),(n+k,n)})|\geq \frac{\alpha}{4}k^{3/5})\leq e^{-ck^{4/45}}.$$

Since $x_2-v_1\geq \frac{k^{8/7}}{2}$, similar calculations for the fluctuation of $T_{v,(x_2,x_2)}+T^H_{(x_2,x_2),(n+k,n)}$ around $\E(T_{v,(x_2,x_2)})+\E(T^0_{(x_2,x_2),(n+k,n)})$ and union bound over all $x_2\in \llbracket n-\frac{k^{8/7}}{2},n\rrbracket$ give the result.
\end{proof}

\section{Constructing a Candidate for Invariant Measure}
\label{s:inv}
In this section we construct a candidate for invariant measure for TASEP with a slow bond. The construction of the measure is fairly intuitive. Fix a finite interval $[-b,b]$ around the origin. Recall that $T_{n}$ is the last passage time between $(0,0)$ and $(n,n)$ (i.e., $T_{n}$ is the time that $(n+1)$st particle crosses the origin). Now for $n\gg k \gg 1$ consider the average occupation measure of sites in $[-b,b]$ between $T_{n}$ and $T_{n+k}$. Because the geodesics are localized around the diagonal it turns out using the correspondence between TASEP and DLPP that the occupation measures are approximately determined by the weight configuration on a small box around the diagonal between  $(n,n)$ and $(n+k,n+k)$. Moreover using the independence of the configurations of such disjoint boxes and Theorem \ref{meetondiagonal} one can construct such a sequence of occupation measures that are Cauchy. One gets an the candidate measure passing to a limit.

Recall that $\eta_{t}=\eta_{t}^{(r)}$ is the configuration of TASEP with a slow bond started from the step initial condition, i.e., $\eta_t(i)=0$ or $1$ according to whether the site $i$ is vacant or occupied at time $t$. Also for an interval $I\subseteq \Z$ let $\eta_{t}(I)$ denote the configuration restricted to $I$. Our main theorem in this section is the following. 

\begin{theorem}
\label{t:exist}
There exists a measure $Q$ on $\{0,1\}^{\Z}$ with the following property: Fix $b\in \N$ and $\delta>0$. Set $I=[-b,b]$ and let $Q_{I}$ denote the restriction of $Q$ to $I$. Then there exist constants $k_0,c>0$ depending only on $b$, such that for all $k\geq k_0$, 
\[\sup_{n}\P\left(\sup_{A\subseteq \{0,1\}^{I}} \left|\frac{1}{(4+\varepsilon)k}\int_{T_n}^{T_{n+k}}\ind(\eta_t(I)\in A )dt-Q_{I}(A)\right|>\delta\right)<e^{-c\delta k^{1/13}}.\]
\end{theorem}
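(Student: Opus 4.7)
My plan is to exploit the TASEP--DLPP correspondence together with the pinning and coalescence of geodesics in the slow bond model (established in Section~\ref{s:GeodSlow}) to decompose the average occupation measure over $[T_n,T_{n+k}]$ into approximately independent local pieces, and then conclude via a law of large numbers. As recalled in Subsection~1.4,
$$\mu_{n,k}(A) := \frac{1}{(4+\varepsilon)k}\int_{T_n}^{T_{n+k}}\ind(\eta_t(I)\in A)\,dt$$
is a deterministic function (depending only on $b$ and $k$) of the pairwise differences $T_{\mathbf{0},v}-T_{\mathbf{0},v'}$ for $v,v'$ in a band of width $O(b)$ around the diagonal segment $[(n,n),(n+k,n+k)]$. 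Fix $\alpha\in(0,1)$. Applying Theorem~\ref{meetondiagonal} at a vertical cut of height $O(k^\alpha)$ near $x=n$, together with Proposition~\ref{deviationfromdiagonal} to handle the pinning from $\mathbf{0}$ to that cut and from the coalescence point onwards, shows that on an event $\ce_{n,k}$ of probability at least $1-e^{-ck^{\l}}$ (with $\l=\min(\alpha/2,(1-\alpha)/2)$) all the geodesics $\Gamma^{(r)}_{\mathbf{0},v}$ pass through a common diagonal point $(u^*,u^*)$ with $n-k^\alpha\leq u^*\leq n$, and their subsequent segments from $(u^*,u^*)$ to $v$ remain inside a box $B_{n,k}$ of dimensions $O(k)\times O(k^\alpha)$ around the diagonal segment. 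On $\ce_{n,k}$ each pairwise difference $T_{\mathbf{0},v}-T_{\mathbf{0},v'}$ coincides with $T_{(u^*,u^*),v}-T_{(u^*,u^*),v'}$ and is therefore determined by the weights in $B_{n,k}$ alone; let $\hat{\mu}_{n,k}$ denote the measure computed from these localised differences. Then $\hat{\mu}_{n,k}=\mu_{n,k}$ on $\ce_{n,k}$, and the distribution of $\hat{\mu}_{n,k}$ depends on $k$ only, by translation invariance along the diagonal of the reinforced i.i.d.\ environment.

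Set $q_k(A):=\E[\hat{\mu}_{0,k}(A)]$. From the additivity $\mu_{0,2k}=\tfrac12(\mu_{0,k}+\mu_{k,k})$, the fact that $\hat{\mu}=\mu$ off a bad event of total probability at most $3e^{-ck^{\l}}$, and the boundedness $\hat{\mu}\in[0,1]$, one obtains $|q_{2k}(A)-q_k(A)|\leq Ce^{-ck^{\l}}$. Iterating gives a Cauchy subsequence $\{q_{2^jk_0}(A)\}_j$, and comparing a general $k$ to $2^jk_0$ via a similar block decomposition shows that $q_k(A)$ converges to a common limit $Q_I(A)$ with $|q_k(A) - Q_I(A)| \leq C' e^{-ck^{\l/2}}$. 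Consistency of $\{Q_I\}_I$ under restriction of the reference interval $I$ is inherited from the occupation measures, and Kolmogorov's extension theorem then yields a probability measure $Q$ on $\{0,1\}^{\Z}$.

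For the quantitative bound, I split $[T_n,T_{n+k}]$ into $N=\lfloor k/(m+\Delta)\rfloor$ blocks $[T_{n_j},T_{n_j+m}]$ with $n_j=n+(j-1)(m+\Delta)$, taking $m=k^\beta$ for a suitable $\beta\in(0,1)$ and $\Delta\asymp m^\alpha$ so that the localisation boxes $B_{n_j,m}$ are pairwise disjoint. Writing $\mu_{n,k}(A)$ as a block average plus a deterministic gap contribution of size $O(m^{\alpha-1})$, and replacing each $\mu_{n_j,m}$ by $\hat{\mu}_{n_j,m}$ at cost $Ne^{-cm^{\l}}$ in probability (union bound over blocks), the problem reduces to controlling an average of $N$ i.i.d.\ $[0,1]$-valued random variables $\hat{\mu}_{n_j,m}(A)$ with common mean $q_m(A)=Q_I(A)+O(e^{-cm^{\l/2}})$. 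Hoeffding's inequality gives $\P(|N^{-1}\sum_j\hat{\mu}_{n_j,m}(A)-q_m(A)|>\delta/3)\leq 2e^{-c\delta^2 N}$, and a union bound over the $2^{|I|}$ subsets $A\subseteq\{0,1\}^I$ (bounded independently of $k$) together with the optimisation of $\beta$ produces a bound of the form $e^{-c'\delta k^{\gamma}}$ for some $\gamma>0$; the specific exponent $1/13$ in the statement of the theorem is a particular outcome of balancing the three sources of error $N e^{-cm^\l}$, $m^{\alpha-1}$ and $e^{-c\delta^2 N}$. The main obstacle is to set up the localisation carefully: one must verify, uniformly over $O(k^{O(1)})$ relevant endpoints $v$, that all geodesics $\Gamma^{(r)}_{\mathbf{0},v}$ coalesce at a common diagonal point very close to $(n,n)$ and then stay in a narrow box, and one must arrange the block decomposition so that the disjointness of the localisation boxes is preserved while the gap contribution and the number of failure events both remain negligible.
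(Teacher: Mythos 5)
Your overall strategy is the same as the paper's: localise the pairwise differences of passage times via pinning and coalescence on the diagonal (Proposition \ref{deviationfromdiagonal}, Theorem \ref{meetondiagonal}), decompose the time interval into blocks separated by gaps so that the block occupation densities become i.i.d., apply concentration, and extract $Q_I$ by a Cauchy-type argument plus Kolmogorov extension; your variant of defining $Q_I(A)$ as a limit of expectations of localised block variables (rather than the paper's a.s.\ limit along $k=2^{2^m}$, degenerate by the zero--one law) is a legitimate alternative. However, two steps fail as written. First, the block variables are \emph{not} $[0,1]$-valued: the normalisation in the statement is by the deterministic quantity $(4+\varepsilon)k$, not by the elapsed time $T_{n+k}-T_n$, so each block average is only bounded by $\frac{T_{n_j+m}-T_{n_j}}{(4+\varepsilon)m}$, which is unbounded. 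Hoeffding's inequality therefore does not apply; one needs the subexponential domination $\frac{T_m}{m}\preceq_{ST}$ (a controlled subexponential variable) and a Bernstein/Chernoff bound for subexponential variables, exactly as in the proof of Proposition \ref{Cauchyproperty}. The same unboundedness also invalidates your estimates $|q_{2k}(A)-q_k(A)|\leq Ce^{-ck^{\l}}$ and the replacement of $\mu$ by $\hat\mu$ ``at cost $Ne^{-cm^\l}$'': discarding a bad event of small probability only controls expectations after an $\mathcal L^2$-boundedness/Cauchy--Schwarz argument (cf.\ Lemma \ref{uniformintegrable} and its use in Section \ref{s:couple}), and the gap contribution is a random passage-time increment, not a deterministic $O(m^{\alpha-1})$ term; it too must be bounded via a coalescence event and the tail of $T_{2k^{1/3}}$ as in the paper.

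Second, your localisation is not quite sufficient to give independence across blocks. Knowing that all geodesics $\Gamma_{\mathbf 0,v}$, $v$ in the block, pass through a common \emph{random} diagonal point $(u^*,u^*)$ does not make the pairwise differences, nor your $\hat\mu_{n_j,m}$, measurable with respect to the weights in the box $B_{n_j,m}$: the location $u^*$ and the event itself depend on the environment outside the box. The fix is the one the paper uses: include in the coalescence event the geodesics started from a \emph{deterministic} anchor $(q_{j-1},q_{j-1})$ placed in the gap just before the block (Corollary \ref{c:meetdiagpar}); on that event the differences from $\mathbf 0$ agree with the differences from $(q_{j-1},q_{j-1})$, and the latter are genuine functions of the weights in disjoint regions, hence i.i.d.\ across blocks (and their common law depends only on the block length and the fixed anchor offset, which also repairs your definition of $q_k$ via the block started at $n=0$, where no anchor precedes the block). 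With these two repairs -- deterministic anchoring for independence, and subexponential concentration plus $\mathcal L^2$ control in place of Hoeffding and boundedness -- your argument becomes essentially the paper's proof of Proposition \ref{Cauchyproperty} and Theorem \ref{t:exist}.
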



The main step of the proof of Theorem \ref{t:exist} is to establish that the sequence of average occupation measures as in the statement of the Theorem is almost surely Cauchy. To this end we have the following proposition. 

\begin{proposition}
\label{Cauchyproperty}
Fix $b\in \N$ and $\delta>0$, set $I=[-b,b]$ and fix $A\subseteq \{0,1\}^{I}$. Then there exist absolute constants $k_0,c>0$ such that for all $k\geq k_0$,
\[\sup_{n\in \N, \alpha\in [\frac{3}{2},3]}\P\left(\left|\frac{1}{k^\alpha}\int_{T_n}^{T_{n+k^\alpha}}\ind(\eta_t(I)\in A )dt-\frac{1}{k}\int_{T_n}^{T_{n+k}}\ind(\eta_t(I)\in A)dt\right|>\delta\right)<e^{-c\delta k^{1/12}}.\]
\end{proposition}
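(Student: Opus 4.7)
The strategy is to introduce an intermediate scale $m = k^{1/3}$ and decompose both time intervals in the statement into chunks of the form $[T_{n+jm}, T_{n+(j+1)m}]$. Writing $N_\gamma := k^\gamma / m$ and $W_j := \int_{T_{n+jm}}^{T_{n+(j+1)m}} \ind(\eta_t(I) \in A)\, dt$, the two averages in question equal $(N_\gamma m)^{-1} \sum_{j=0}^{N_\gamma - 1} W_j$ for $\gamma \in \{1, \alpha\}$ (rounding absorbed). I will exhibit i.i.d.\ random variables $\tilde W_j$ coinciding with $W_j$ on a high-probability event, and apply a law of large numbers to push both averages to a common deterministic value $\mu := \E[\tilde W_1]/m$; the desired bound on their difference then follows by the triangle inequality.

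For each chunk $j$, the correspondence recorded in the outline of Section 1 expresses $W_j$ as a deterministic function of the pairwise differences $T_{\mathbf{0}, v_1} - T_{\mathbf{0}, v_2}$ for vertices $v_1, v_2$ within $O(b)$ of the diagonal segment $\{(s,s) : s \in \llbracket n+jm, n+(j+1)m\rrbracket\}$. To localize this dependence, apply Corollary \ref{c:meetdiagpar} at scale $m$ with its interior parameter chosen as $1/2$ (shifting its reference point so that $U_1$ is a short diagonal window just before the chunk and $U_2$ one just after, and using Proposition \ref{deviationfromdiagonal} to absorb the $O(b)$ vertical width). On the resulting event $\ce_j$, which has probability at least $1 - e^{-cm^{1/4}} = 1 - e^{-ck^{1/12}}$, every geodesic from $\mathbf{0}$ to a relevant vertex $v$ passes through a common vertex $(u_j, u_j)$ on the diagonal with $u_j$ just before the chunk, so $T_{\mathbf{0}, v} = T_{\mathbf{0}, (u_j, u_j)} + T_{(u_j, u_j), v}$; all required pairwise differences therefore reduce to differences of LPP times sourced at $(u_j, u_j)$, which depend only on the environment in a local box $B_j$ of size $O(m)$ around the chunk. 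Taking $\tilde W_j$ to be this local functional, the $\tilde W_j$ are i.i.d.\ (disjoint $B_j$, shift-invariant environment off the diagonal and i.i.d.\ reinforcement on it) and bounded by a constant multiple of $m$ with overwhelming probability (by Theorem \ref{t:lln} and Lemma \ref{l:Tndev}; a mild truncation absorbs the small probability tail). Hoeffding then yields, for each $\gamma$,
\[ \P\!\left( \left| (N_\gamma m)^{-1} \sum_{j=0}^{N_\gamma - 1} \tilde W_j - \mu \right| > \delta/4 \right) \leq 2 \exp(-c\delta^2 N_\gamma). \]

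A union bound over the $N_\alpha \leq k^{8/3}$ coalescence events $\ce_j$ and the two Hoeffding events gives total failure probability at most $e^{-c\delta k^{1/12}}$ for $k$ large: the $k^{8/3} e^{-ck^{1/12}}$ contribution from coalescence absorbs the polynomial prefactor into the exponent, while the Hoeffding contributions $e^{-c\delta^2 k^{2/3}}$ and $e^{-c\delta^2 k^{\alpha - 1/3}}$ are much smaller for fixed $\delta$. On the complementary event both averages lie within $\delta/2$ of the common value $\mu$, whence their difference is at most $\delta$. Uniformity over $n$ follows from the fact that the local statistics inside each $B_j$ have a distribution independent of $n$; uniformity over $\alpha \in [3/2, 3]$ is immediate since $\alpha$ only enters through $N_\alpha$, a polynomial factor. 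The main obstacle is making the localization step fully rigorous at the particle-system level: one must verify that the occupation of $I$ throughout the chunk is really determined by the environment in the local box $B_j$ alone (a constant time shift cancels in all pairwise differences). Particles entering or leaving $I$ near the chunk endpoints, whose jump times depend on passage times to vertices slightly outside the diagonal segment, require widening $B_j$ by an $O(b)$ buffer and carefully invoking Proposition \ref{deviationfromdiagonal} together with the coalescence to keep the boxes $B_j$ disjoint across $j$ without losing any contribution to $W_j$.
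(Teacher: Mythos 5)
Your overall route (a common small scale $m=k^{1/3}$ for both averages, localization of each chunk's occupation functional via coalescence, then concentration of block sums around a common mean) is viable and genuinely close to the paper's, but as written it has a real gap in the independence step. For $W_j$ to coincide with a local functional $\tilde W_j$ with failure probability $e^{-ck^{1/12}}=e^{-cm^{1/4}}$, the coalescence of the geodesics from the origin with those from a deterministic local source must be allowed to occur in a diagonal window whose length grows like $m$: Theorem \ref{meetondiagonal}/Corollary \ref{c:meetdiagpar} give failure probability $e^{-cg^{1/4}}$ for a window of length $g$, so you need $g\sim m$, not $O(b)$. Consequently the box $B_j$ defining $\tilde W_j$ must reach back through essentially the whole of chunk $j-1$, and consecutive boxes overlap by a full chunk; your closing remark that an ``$O(b)$ buffer'' suffices underestimates this by a polynomial factor. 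The $\tilde W_j$ are therefore not i.i.d.\ and Hoeffding does not apply as stated (they are only finite-range dependent). This is exactly the issue the paper's construction is designed around: it uses blocks of size $k$ (and, for the $k$-average, a second decomposition into blocks of size $k^{1/2}$) separated by gaps of length $k^{1/3}$, sources the block functionals $Y_i$ at the deterministic diagonal points $(q_{i-1},q_{i-1})$ at the start of each gap so that the defining regions are disjoint and the $Y_i$ are genuinely i.i.d., and then bounds the (small) contribution of the gaps separately. Your scheme can be repaired either by inserting such gaps (and bounding the lost gap contributions, as the paper does) or by keeping contiguous chunks and splitting the finite-range-dependent sequence into a bounded number of independent subsequences before applying Hoeffding; but some such device is needed, and it is the missing ingredient in your write-up.

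A secondary inaccuracy: your stated application of Corollary \ref{c:meetdiagpar} with ``$U_1$ a short window just before the chunk and $U_2$ one just after'' is not the right geometry. The corollary places the common diagonal point \emph{between} $U_1$ and $U_2$, so with that choice the coalescence point could fall inside the chunk, and the pairwise differences $T_{\mathbf{0},v_1}-T_{\mathbf{0},v_2}$ for vertices early in the chunk would not be localized. The correct application (as in the paper's event $E_i$) takes the chunk's parallelogram itself as the target set $U_2$, with the starting points (the far-away origin, reduced via Proposition \ref{deviationfromdiagonal} and polymer ordering to points in a thin window near the diagonal, together with the deterministic local source) in $U_1$, and requires the common point to lie in the window strictly before the parallelogram. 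Your stated conclusion for $\ce_j$ is the right one, so this is a misstatement rather than a conceptual error, but in a complete proof the application must be set up this way.
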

In the proof, we shall need the following parallelogram. For $s>b$, $s,b\in \N$, let $U_{s,r,b}$ denote a parallelogram with endpoints $(s-b-1,s),(s,s-b-1),(s+r+b,s+r),(s+r,s+r+b)$.

 Recall that for sites $j\in [-b,b]$, the length of $T_{s+j,s}$ gives the time taken by a particle at $-s$ to first visit site $j+1$. Hence, for any fixed $A\subseteq \{0,1\}^I$, $\int _{T_s}^{T_{s+r}}\ind(\eta_t(I)\in A)dt$, the total occupation time at these sites corresponding to the states defined by $A$ between times $T_s$ and $T_{s+r}$ is a function of the \emph{pairwise differences} in the lengths of the geodesics starting from $(0,0)$ and ending in $U_{s,r,b}$. We state this without proof in the next lemma.
 
\begin{lemma}
\label{l:occden}
For $s,r,b\in \N$, $s>b$, and $I=[-b,b]$ and $A\subseteq \{0,1\}^I$, let $\Theta_{s,r,b}=\{T_{i,j}:(i,j)\in U_{s,r,b}\cap \Z^2\}$ denote the set of lengths of all geodesics starting from $(0,0)$ and ending in $U_{s,r,b}$. Then there exists a function $f=f_{r,b,A}:\R_+^{|\Theta_{s,r,b}|} \mapsto \R_+$ such that for any $c\in \R_+, \mathbf{x}\in \R_+^{|\Theta_{s,r,b}|}$, $f(\mathbf{x}+c)=f(\mathbf{x})$ and
\[\frac{1}{r}\int _{T_s}^{T_{s+r}}\ind(\eta_t(I)\in A)dt=f(\Theta).\]
Also the function $f$ does not depend on the location $s$.
\end{lemma}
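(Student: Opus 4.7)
My approach will exploit the TASEP--DLPP coupling for the step initial condition to express the occupation dynamics at sites in $I=[-b,b]$ directly in terms of last passage times. Under this coupling, the particle initially at $-m$ first arrives at site $j$ at time $\tau_m^j = T_{m+j-1,m}$ (with the convention $\tau_m^{-m} = 0$), and departs from $j$ at time $\tau_m^{j+1} = T_{m+j,m}$. Consequently $\eta_t(j) = 1$ iff $\tau_m^j \le t < \tau_m^{j+1}$ for some $m$.

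I would first show that the configuration-valued path $(\eta_t(I))_{t\in[T_s,T_{s+r}]}$, and therefore the integral $\tfrac{1}{r}\int_{T_s}^{T_{s+r}}\ind(\eta_t(I)\in A)\,dt$, is a deterministic function of the arrival/departure times $\tau_m^j$ that either fall inside $[T_s,T_{s+r}]$ or straddle one of its endpoints. Both the initial configuration $\eta_{T_s}(I)$ and all subsequent transitions are determined entirely by the orderings of these $\tau_m^j$ relative to $T_s$ and $T_{s+r}$. Every such $\tau_m^j$ equals $T_{a,b'}$ for some lattice point $(a,b')$ with $a-b' \in [-b-1,b]$, and the ordering constraints (particles pass through $I$ in their labelling order) confine the relevant $m$ to a window of radius $O(b)$ around $[s,s+r]$; thus every such $(a,b')$ lies in $U_{s,r,b}$.

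Writing the integral as a function of the collection $\Theta_{s,r,b}$ in this manner produces the desired $f$. The translation invariance $f(\mathbf{x}+c)=f(\mathbf{x})$ is then immediate, since the comparisons of $T$-values against $T_s = T_{s,s}$ and the lengths of time intervals between successive arrival/departure events depend only on pairwise differences, and so are unaffected by a common additive shift. The $s$-independence of the functional form is likewise clear: the combinatorial recipe producing $f$ from the ordered arrival/departure times and the induced sequence of configuration transitions makes no reference to $s$ at all, and therefore yields the same function for every choice of $s$.

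The main point requiring care will be the bookkeeping at the boundary of the space--time window, namely verifying that every particle whose presence in $I$ intersects $[T_s,T_{s+r}]$---including those arriving just before $T_s$ or leaving just after $T_{s+r}$---has both its arrival and departure coordinates $(m+j-1,m)$ and $(m+j,m)$ lying inside $U_{s,r,b}$; once this deterministic containment is checked, the explicit construction of $f$ from the ordered list of arrival/departure times and the verification of its two properties are routine.
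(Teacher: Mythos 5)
The paper states Lemma~\ref{l:occden} without proof, so there is no official argument to compare with; your outline is certainly the intended one: particle $-m$ occupies site $j$ exactly on $[T_{m+j-1,m},T_{m+j,m})$, the integral over $[T_s,T_{s+r}]$ therefore depends only on differences of passage times (the window endpoints $T_{s,s}$ and $T_{s+r,s+r}$ are themselves coordinates of $\Theta$), and the recipe is $s$-independent because $U_{s,r,b}$ is a diagonal translate of $U_{s',r,b}$. The translation-invariance and $s$-independence parts of your argument are fine.

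The step you defer --- ``every such $(a,b')$ lies in $U_{s,r,b}$'' --- is, however, the only real content of the lemma, and as asserted it fails for the quadrilateral spanned by the four listed corner points. Doing the bookkeeping (particle $-s$ sits at a site $\geq 1$ for $t\geq T_s$, particle $-(s+r)$ at a site $\leq 0$ for $t<T_{s+r}$), the particles that can occupy site $j$ during the window are $-m$ with $m\in\llbracket s+1-j,s+r-1\rrbracket$ for $j\in\llbracket 1,b\rrbracket$ and $m\in\llbracket s+1,s+r-j\rrbracket$ for $j\in\llbracket -b,0\rrbracket$, and one needs the passage times to both $(m+j-1,m)$ and $(m+j,m)$ for these pairs. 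The arrival times at site $-b$, i.e.\ the points $(m-b-1,m)$ with $m\in\llbracket s+1,s+r+b\rrbracket$ (for instance $(s,s+b+1)$, the arrival of particle $-(s+b+1)$ at $-b$, which can genuinely fall strictly inside the window), all lie on the line $x-y=-(b+1)$, which meets the convex hull of $(s-b-1,s),(s,s-b-1),(s+r+b,s+r),(s+r,s+r+b)$ only at the single corner $(s-b-1,s)$; similarly $(s+r+b-1,s+r-1)$ lies beyond the right edge once $b\geq 3$. So the containment you take for granted does not hold for the literal $U_{s,r,b}$, and your proof as written does not close. The fix is easy and should be made explicit: the needed points all have $x-y\in\llbracket -b-1,b\rrbracket$ and coordinates in $\llbracket s-b,s+r+b\rrbracket$, hence lie in an $O(1)$ enlargement of $U_{s,r,b}$ (equivalently in $\cB(s,r,b)$), and this is all that the subsequent application in Proposition~\ref{Cauchyproperty} uses; the imprecision originates in the paper's loose description of the ``parallelogram,'' but since you flag this containment as the main point requiring care, you should actually carry out this enumeration rather than call it routine.
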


We apply this lemma to prove Proposition \ref{Cauchyproperty}. Define $\cB(s,r,b)$ as the Box$((s-b-1,s-b-1),(s+r+b,s+r+b))$ of size $r+2b+1$. Clearly $U_{s,r,b}\subseteq \cB(s,r,b)$. The idea here is to break the $k^\alpha$-sized box at $(n,n)$ into $k^{\alpha-1}$-many $k$-sized boxes, leaving sufficient amount of gap between each box, and use a renewal argument and a law of large numbers to get the required result. More formally, we do the following.

\begin{proof}[Proof of Proposition \ref{Cauchyproperty}]
Fix $n\in \N$ and $\alpha\in [\frac{3}{2},3]$ and let $B:= \cB(n,k^\alpha,b)$. Let $r_k$ be the largest integer such that $r_k(k+k^{1/3})+k^{1/3}\leq k^{\alpha}$. Clearly $r_k\sim k^{\alpha-1}$. Define
\[a_i=n+(i+1)k^{1/3}+ik, \mbox{ \ \ for } i=0,1,2,\ldots,r_k.\]
Define the boxes
 \[B_i=\mathcal{B}(a_i,k,b)\mbox{ for } i= 0,1,2,\ldots,r_k,\]
 and parallelograms
 \[U_i=U_{a_i,k,b}\subseteq B_i \mbox{ for } i= 0,1,2,\ldots,r_k.\]
  Let $k^{1/3}>4b$. Then each of these $k$-sized boxes $B_i$ are separated by a distance of at least $k^{1/3}/2$. Let $p_i=a_i-b-1$ and $q_i=a_i+k+b$, so that $(p_i,p_i)$ and $(q_i,q_i)$ are the endpoints of the box $B_i$. Define $q_{-1}=n$. See Figure \ref{f:cauchy}.

\begin{figure}[h] 
\centering
\includegraphics[width=0.5\textwidth]{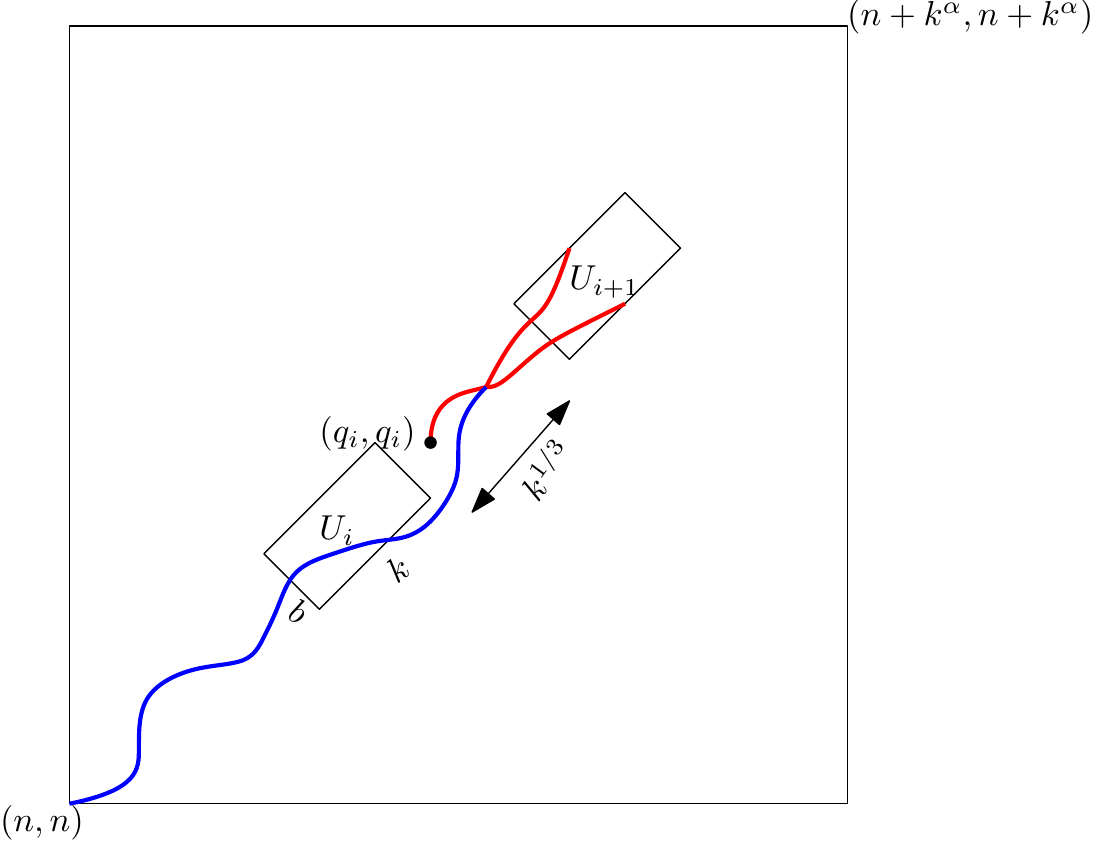} 
\caption{Using the fact that geodesics are localised near the diagonal it follows that on a high probability event the pairwise difference of passage times from the origin to vertices in $U_{i+1}$ is same as the pairwise differences of passage times from the vertex $(q_{i},q_{i})$ to vertices in $U_{i+1}$; the latter collection is an i.i.d.\ sequence.} 
\label{f:cauchy}
\end{figure}


Now fix a particular $i\in \{0,1,2,\ldots,r_k\}$. Let $E_i$ denote the event that all the geodesics from $(0,0)$ to all points in $U_i$ and all the geodesics from $(q_{i-1},q_{i-1})$ to $U_i$ meet the diagonal simultaneously between $\llbracket q_{i-1},p_i \rrbracket$. Then by Corollary \ref{c:meetdiagpar}, 
\[\P(E_i)\geq 1-e^{-ck^{1/12}}.\]
Let $E:=\cap_{i=0}^{r_k} E_i$. Then 
\[\P(E)\geq 1-k^\alpha e^{-ck^{1/12}}\geq 1-e^{-c'k^{1/12}},\]
for all large  enough $k$. Define,
\[\Theta^0_i=\{T_{(0,0),(u,v)}: (u,v)\in U_i\cap \Z^2\},\mbox{ for } i= 0,1,2,\ldots,r_k,\]
\[\Theta^q_i=\{T_{(q_{i-1},q_{i-1}),(u,v)}: (u,v)\in U_i\cap \Z^2\},\mbox{ for } i= 0,1,2,\ldots,r_k.\]
Then on the
event $E$, for all $i\in \llbracket 0, r_k\rrbracket$, $T_{(0,0),(u,v)}-T_{(q_{i-1},q_{i-1}),(u,v)}=T_{(0,0),(u',v')}-T_{(q_{i-1},q_{i-1}),(u',v')}$ for all $(u,v), (u',v')\in U_i$. Using Lemma \ref{l:occden}, there exists $f=f_{k,b,A}$ such that $\frac{1}{k}\int_{T_{a_i}}^{T_{a_i+k}}\ind(\eta_t(I)\in A)dt=f(\Theta_i^0)$. Using the property of translation invariance of $f$, on $E$, we have,
\[\frac{1}{k}\int_{T_{a_i}}^{T_{a_i+k}}\ind(\eta_t(I)\in A)dt=f(\Theta_i^0)=f(\Theta_i^q).\]
Define $Y_i:=f(\Theta_i^q)$ for $i=0,1,2,\ldots,r_k$. Clearly, $Y_i$s are independent and identically distributed. The rest of the argument is standard and uses Chernoff bounds.

First note that
\[\P\left(\left|\frac{1}{k^\alpha}\int_{T_n}^{T_{n+k^\alpha}}\ind(\eta_t(I)\in A)dt-\frac{1}{r_k}\sum_{i=0}^{r_k}\frac{1}{k}\int_{T_{a_i}}^{T_{{a_i}+k}}\ind(\eta_t(I)\in  A)dt\right|\geq \frac{\delta}{8}\right)\leq e^{-ck^{1/12}}. \]
 Indeed, in order to show that $\frac{1}{k^\alpha}\sum_{i=0}^{r_k-1}\int_{T_{a_i+k}}^{T_{{a_{i+1}}}}\ind(\eta_t(I)\in  A)dt$ is small, enough to show that $\frac{1}{k^\alpha}\sum_{i=0}^{r_k-1}\left(T_{{a_{i+1}}}-T_{a_i+k}\right)$ is small. Let $F$ denote the event that for each $i$, the geodesics $\Gamma_{a_{i+1}}$ and $\Gamma_{a_i+k}$ meet together on the diagonal in the interval $\llbracket a_i+k-k^{1/3},a_{i+1} \rrbracket$. Then from Corollary \ref{c:twopathsmeet}, $\P(F)\geq 1-e^{-ck^{1/12}}$. On $F$, 
 \[T_{{a_{i+1}}}-T_{a_i+k}\leq T_{(a_i+k-k^{1/3},a_i+k-k^{1/3}),(a_{i+1},a_{i+1})}.\] 
 As $a_{i+1}-(a_i+k-k^{1/3})=2k^{1/3}$, $T_{(a_i+k-k^{1/3},a_i+k-k^{1/3}),(a_{i+1},a_{i+1})}\overset{d}{=}T_{2k^{1/3}}$. Hence, by union bound,
 \[\P\left(\frac{1}{k^\alpha}\sum_{i=0}^{r_k-1}\left(T_{{a_{i+1}}}-T_{a_i+k}\right)\geq \delta\right)\leq k^{\alpha-1}\P\left(\frac{1}{k}T_{2k^{1/3}}\geq \delta\right).\]
It is easy to see that the right hand side is exponentially small. Similarly one can bound the other terms.
 
  Hence it is enough to get an upper bound to 
\[\P\left(\left|\frac{1}{r_k}\sum_{i=0}^{r_k}\frac{1}{k}\int_{T_{a_i}}^{T_{{a_i}+k}}\ind(\eta_t(I)\in A)dt-\frac{1}{k}\int_{T_n}^{T_{n+k}}\ind(\eta_t(I)\in A)dt\right|>\delta/4\right).\]

Now note that, 
\begin{eqnarray*}
&&\P\left(\left|\frac{1}{r_k}\sum_{i=0}^{r_k}\frac{1}{k}\int_{T_{a_i}}^{T_{{a_i}+k}}\ind(\eta_t(I)\in A)dt-\E\left[\frac{1}{k}\int_{T_n}^{T_{n+k}}\ind(\eta_t(I)\in A)dt\right]\right|>\delta/8\right)\\
&\leq& \P\left(\left\{\left|\frac{1}{r_k}\sum_{i=0}^{r_k}Y_i-\E(Y_0)\right|>\delta/8\right\} \cap E\right)+e^{-ck^{1/12}}\\
&\leq& \P\left(\left|\frac{1}{r_k}\sum_{i=0}^{r_k}Y_i-\E(Y_0)\right|>\delta/8\right)+e^{-ck^{1/12}}, 
\end{eqnarray*}
where $Y_i=f(\Theta_i^q)$, $i=0,1,\ldots, r_k$ are i.i.d.\ having the same distribution as that of the occupation density $\frac{1}{k}\int_{T_{k^{1/3} -b}}^{T_{k^{1/3}-b+k}}\ind(\eta_t(I)\in A)dt$, as discussed earlier. Also, 
\[|Y_i|\overset{d}{=}\left|\frac{1}{k}\int_{T_{k^\beta -b}}^{T_{k^\beta-b+k}}\ind(\eta_t(I)\in A)dt\right|\leq \left|\frac{1}{k}\left(T_{k^\beta-b+k}-T_{k^\beta -b}\right)\right|\leq \frac{1}{k}T_{k+k^\beta}\leq \frac{2T_{k+k^\beta}}{k+k^\beta},\]
and as $\frac{T_{k+k^\beta}}{k+k^\beta}$ is a subexponential random variable (we can take the same parameters for this subexponential random variable for all values of $k$, as when $m$ increases one gets better tail bounds for $\frac{T_m}{m}$), one gets
\[\P\left(\left|\frac{1}{r_k}\sum_{i=0}^{r_k}Y_i-\E(Y_0)\right|>\delta/8\right)\leq e^{-c\delta r_k}\leq e^{-c\delta\sqrt{k}/2}\]
for all $\alpha \in [\frac{3}{2},3]$.
Hence, the only thing left to bound is
\[\P\left(\left|\frac{1}{k}\int_{T_n}^{T_{n+k}}\ind(\eta_t(I)\in A)dt -\E\left[\frac{1}{k}\int_{T_n}^{T_{n+k}}\ind(\eta_t(I)\in A)dt\right]\right|>\delta/8\right).\]

To this end, we follow the exact same procedure as we just did. We consider the Box$((n-b-1,n-b-1),(n+k+b,n+k+b))$ and break it into a number of boxes of size $k^{1/2}$ each, leaving a gap of size $k^{1/3}$ between any two boxes. Define the event $E'$ parallel to the event $E$, such that on $E'$, $\frac{1}{k}\int_{T_n}^{T_{n+k}}\ind(\eta_t(I)\in A)dt$ can be written as a sum of independent and identically distributed random variables. Using exactly similar arguments, we have
\[\P\left(\left|\frac{1}{k}\int_{T_n}^{T_{n+k}}\ind(\eta_t(I)\in A)dt -\E\left[\frac{1}{k}\int_{T_n}^{T_{n+k}}\ind(\eta_t(I)\in A)dt\right]\right|>\delta/8\right)\leq e^{-c\delta k^{1/12}}.\]
This completes the proof.
\end{proof}

Now we are in a position to prove Theorem \ref{t:exist}.
\begin{proof}[Proof of Theorem \ref{t:exist}]
Fix $A\subseteq \{0,1\}^I$ and $n \in \N$. Using $\alpha=2$ in Proposition \ref{Cauchyproperty}, we have,
\[\P\left(\left|\frac{1}{k^2}\int_{T_n}^{T_{n+k^2}}\ind(\eta_t(I)\in A )dt-\frac{1}{k}\int_{T_n}^{T_{n+k}}\ind(\eta_t(I)\in A)dt\right|>\delta\right)<e^{-c\delta k^{1/12}}.\]
Choosing $k=2^{2^m}$ and $\delta=1/2^m$, for all $m$ sufficiently large, one has,
\begin{equation}\label{probbound}
\P\left(\left|\frac{1}{2^{2^{m+1}}}\int_{T_n}^{T_{n+2^{2^{m+1}}}}\ind(\eta_t(I)\in A )dt-\frac{1}{2^{2^m}}\int_{T_n}^{T_{n+2^{2^m}}}\ind(\eta_t(I)\in A)dt\right|>\frac{1}{2^m}\right)<e^{-\frac{c}{2^m} 2^{2^m/12}}.
\end{equation}
As the probabilities are summable in $m$, the sequence $\{\frac{1}{2^{2^m}}\int_{T_n}^{T_{n+2^{2^m}}}\ind(\eta_t(I)\in A)dt\}_m$ is Cauchy almost surely, and hence converges almost surely, the limiting random variable is degenerate by Kolmogorov zero-one law. As $\frac{T_k}{k}\rightarrow (4+\varepsilon)$ a.s.\ as $k\rightarrow \infty$, it is easy to see that 
\[\frac{1}{(4+\varepsilon)2^{2^m}}\int_{T_n}^{T_{n+2^{2^m}}}\ind(\eta_t(I)\in A)dt\overset{\mbox{a.s.}}{\rightarrow} Q_{I}(A),\]
for some probability measure $Q_{I}$ on $\{0,1\}^I$. (It is easy to see that the limit does not depend on $n$). It is not hard to see that $Q_I~$s form a consistent system of probability measures, and hence define a unique probability measure $Q$ on $\{0,1\}^\Z$ such that $Q_I$ is the projection of $Q$ on $I$.

Also, for any fixed $m$ large enough, by summing up the probabilities in \eqref{probbound}, for all $r>m$,
\begin{eqnarray}\label{limitforsubseq}
&&\P\left(\left|\frac{1}{2^{2^m}}\int_{T_n}^{T_{n+2^{2^m}}}\ind(\eta_t(I)\in A)dt-(4+\varepsilon)Q_{I}(A)\right|>\delta\right)\\
&\leq&\P\left(\sup_{r>m}\left|\frac{1}{2^{2^m}}\int_{T_n}^{T_{n+2^{2^m}}}\ind(\eta_t(I)\in A)dt-\frac{1}{2^{2^{r}}}\int_{T_n}^{T_{n+2^{2^{r}}}}\ind(\eta_t(I)\in A )dt\right|>\delta\right)\nonumber \\
&\leq&\sum_{r=m}^\infty \P\left(\left|\frac{1}{2^{2^r}}\int_{T_n}^{T_{n+2^{2^r}}}\ind(\eta_t(I)\in A)dt-\frac{1}{2^{2^{r+1}}}\int_{T_n}^{T_{n+2^{2^{r+1}}}}\ind(\eta_t(I)\in A )dt\right|>\frac{\delta}{2^r}\right) \nonumber\\
&\leq&\sum_{r=m}^\infty e^{-\frac{c\delta}{2^r} 2^{2^r/12}}
\leq 2e^{-c\delta 2^{2^m/13}}\nonumber.
\end{eqnarray}
Now, for any $2^{2^m}\leq k\leq 2^{2^{m+1}}$, there exists some $\alpha\in [\frac{3}{2},3]$, such that $k^\alpha=2^{2^{m+1}}$ or $k^\alpha=2^{2^{m+2}}$. (If $k^\alpha=2^{2^{m+1}}$ for some $\alpha\in[\frac{3}{2},2]$ then we are done, else $k^\alpha=2^{2^{m+1}}$ for some $\alpha\in(1,\frac{3}{2})$, and then $k^{2\alpha}=2^{2^{m+2}}$ where $2\alpha\in(2,3)$). Thus, combining the bounds in Proposition \ref{Cauchyproperty} and that in \eqref{limitforsubseq}, we get,
\begin{equation}
\label{e:limA}
\P\left(\left|\frac{1}{(4+\varepsilon)k}\int_{T_n}^{T_{n+k}}\ind(\eta_t(I)\in A )dt-Q_{I}(A)\right|>\delta\right)<e^{-c\delta k^{1/13}}.
\end{equation}
As this holds for all $A\subseteq \{0,1\}^I$, and $b\in \N$ is fixed, using \eqref{e:limA} for all $2^{2b+1}$ subsets $A$, and union bound,
\[\P\left(\sup_{A\subseteq \{0,1\}^{I}}\left|\frac{1}{(4+\varepsilon)k}\int_{T_n}^{T_{n+k}}\ind(\eta_t(I)\in A )dt-Q_{I}(A)\right|>\delta\right)<e^{-c\delta k^{1/13}}.\]
As this holds for all $n\in \N$, the result follows. 
\end{proof}

\section{Convergence to Invariant Measure}\label{s:conv}
In this section we shall establish that started from the step initial condition TASEP with a slow bond at the origin converges weakly to the measure $Q$. It suffices to prove the following theorem for convergence of finite dimensional distributions.

\begin{theorem}
\label{t:limit}
For any fixed $b\in \N, I=[-b,b]$ and $A\subseteq \{0,1\}^{I}$, 
\[\P(\eta_t(I)\in A)\rightarrow Q_{I}(A)\]
as $t\to \infty$.
\end{theorem}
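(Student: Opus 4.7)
The plan is to combine Theorem \ref{t:exist}, which controls the time-averaged occupation over the random window $[T_n, T_{n+k}]$, with a local limit theorem for the passage times $T_n$ to deduce pointwise convergence of $g(t) := \P(\eta_t(I)\in A)$ to $Q_I(A)$. The two main ingredients are (i) transferring the averaged convergence from a random to a deterministic interval, and (ii) a smoothing step that promotes the averaged convergence to pointwise convergence.

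For (i), I would use Lemma \ref{l:Tndev} together with Lemma \ref{l:expdiff} to get $|T_n - (4+\varepsilon)n| \leq n^{1/2+\delta}$ with probability at least $1 - e^{-cn^{\delta/2}}$. Choosing $k = k(n)$ with $n^{1/2+\delta} \ll k \ll n$ (e.g.\ $k = \lfloor n^{3/4} \rfloor$), the random window $[T_n, T_{n+k}]$ agrees with the deterministic interval $J_n := [(4+\varepsilon)n,(4+\varepsilon)(n+k)]$ up to a negligible set of times with overwhelming probability. Combining this with Theorem \ref{t:exist} and dominated convergence then yields
$$\frac{1}{|J_n|}\int_{J_n} g(s)\,ds \longrightarrow Q_I(A).$$
I also note that $g$ is Lipschitz with a constant depending only on $|I|$, because only finitely many transitions at bounded total rate can affect $\eta_t(I)$ in any short interval.

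For (ii), the crucial part, the idea is to exploit a local CLT for $T_n$. Using the near-renewal structure exposed by the diagonal-pinning and coalescence estimates of Section \ref{s:GeodSlow} (formalized in Appendix \ref{s:clt}), one shows that $T_n$ has a smooth density near $(4+\varepsilon)n$, approximately Gaussian at scale $\sqrt{n}$. Combined with the strong Markov property at the stopping times $T_n$, this local CLT lets me express $g(t)$ at a fixed large time $t$ as a weighted average of $g$-values over a window of times densely and smoothly filling a neighbourhood of $t$, with weights arising from the density of $T_n$'s with $n$ near $t/(4+\varepsilon)$. Matching the scales, this weighted average is well-approximated by the deterministic-window average from Step (i), so that $g(t) \to Q_I(A)$.

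The main obstacle is the smoothing step (ii): proving a sufficiently quantitative local CLT for $T_n$ and carefully matching the scales on which geodesic localization, the local limit theorem, and Markov-property smoothing operate. The structural inputs required (coalescence, pinning with $O(1)$ transversal fluctuation, moderate deviations for $T_n$) are already available from Section \ref{s:GeodSlow}, so what remains is a careful combinatorial/analytic assembly rather than new geometric input.
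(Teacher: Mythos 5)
Your high-level outline (averaged convergence from Theorem \ref{t:exist} plus a local-limit-theorem smoothing step) is indeed the route the paper takes, but your step (ii), which you yourself flag as the crux, has a genuine gap. You propose to write $g(t)=\P(\eta_t(I)\in A)$ as a weighted average of $g$-values over nearby times, with weights given by the density of $T_n$ for $n$ near $t/(4+\varepsilon)$, justified by the strong Markov property at $T_n$. No such identity is available: the strong Markov property only gives $g(t)=\E\bigl[\Psi_{t-T_n}(\eta_{T_n})\bigr]$ with $\Psi_s(\zeta)=\P_\zeta(\eta_s(I)\in A)$, and both $\eta_{T_n}$ and the post-$T_n$ occupation process of $I$ (in the LPP picture, a function of the differences $T_{\mathbf{0},v}-T_{\mathbf{0},(n,n)}$ for $v$ near the diagonal beyond $(n,n)$) are strongly correlated with $T_n$ itself, since they are built from overlapping parts of the same environment; indeed the jump times at sites in $I$ are essentially the $T_{n+j,n}$'s. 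So the density of $T_n$ cannot be pulled out as an independent mixing kernel, and no local CLT for the full $T_n$, however quantitative, repairs this. The missing idea is precisely the decoupling that the paper's proof manufactures: take $n=\lfloor\sqrt t\rfloor$ (so the smoothing randomness sits far before time $t$), condition on $\mathcal{F}_n$ (all weights except $\sim n^{1/6}$ well-separated diagonal vertices in a box of side $\sim n^{1/3}$ near the origin), and show (Lemma \ref{l:Gsum}) that the conditional fluctuation $G_n=T_n-\E(T_n\mid\mathcal{F}_n)$ agrees, up to probability $e^{-cn^{1/24}}$, with a sum of $n^{1/6}$ i.i.d.\ non-lattice variables measurable with respect to the small box $\mathcal{G}_n$, while by coalescence of geodesics the occupation process of $I$ after $T_n$ is measurable with respect to the weights outside that box. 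The classical local CLT is then applied only to $G_n$, and the resulting genuine mixture over time-shifts at scale $n^{1/12}$ is compared to occupation averages via Lemma \ref{l:exprand}.

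There is also a scale mismatch between your two steps. To trade the random window $[T_n,T_{n+k}]$ for a deterministic one you need $k\gg n^{1/2+\delta}$ (the concentration scale from Lemma \ref{l:Tndev}), so the deterministic-window averages of step (i) live at scale at least $n^{1/2+\delta}$; but with $n\sim t/(4+\varepsilon)$ the putative smoothing kernel coming from the density of $T_n$ has width only of order $\sqrt t$, which is smaller. An average against a kernel of width $\sqrt t$ cannot be decomposed into window averages of length $t^{1/2+\delta}$, so even granting the mixture representation, step (ii) is not "well-approximated by the deterministic-window average from Step (i)". The paper sidesteps this entirely: it never converts to deterministic windows, but applies Theorem \ref{t:exist} (through Lemma \ref{l:exprand}, which is stated for random windows $[T_n+U,\,T_n+U+s]$) at the smoothing scale $s\asymp n^{1/12}$, so the only requirement is that this scale tends to infinity. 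Your step (i) and the Lipschitz continuity of $g$ (the paper's Lemma \ref{l:unifcont}) are fine but by themselves cannot yield pointwise convergence, since $g$ could oscillate on $O(1)$ scales compatible with both.
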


The idea of the proof goes as follows. First we observe that the configuration of $\eta_{T_n+s}(I)$ at time $T_n+s$ does not depend on the exact value of the passage time $T_n$, but the amount of overshoot of the different passage times from $T_n$, and is thus roughly independent of the passage times near the origin. Also conditioning on all the exponential random variables except at a number of sufficiently spaced vertices on the diagonal near the origin, one can argue that the effects of these vertex weights on $T_n$ are roughly independent. Hence, a local limit theorem suggests that the conditional distribution of $T_n$ is close to Gaussian. Owing to the flatness of the Gaussian density, one can approximate $t-T_n$ by the uniform distribution, and thus reduce $\eta_{t}(I)=\eta_{T_n+(t-T_n)}(I)$ to the average occupation measure over suitable intervals. From here one can resort to Theorem \ref{t:exist} to get the convergence to $Q_I$.


For the proof of Theorem \ref{t:limit} we shall need a few lemmas. The following lemma is basic and follows easily from Theorem \ref{t:exist}.

\begin{lemma}\label{l:exprand}
Fix $b\in \N, I=[-b,b]$ and $A\subseteq \{0,1\}^{I}$ and $n\in \N$ and any $\delta, \alpha>0$. Then, for any random variable $U$ such that $\P(U\in [\frac{n^2}{3},2n^2])\geq 1-\delta$, and $s\geq \alpha n^{1/12}$,
\[\E\left|\frac{1}{s}\int_{T_n+U}^{T_n+U+s}\ind(\eta_t(I)\in A )dt-Q_I(A)\right|\leq 2\delta +e^{-c\delta s^{1/30}}.\]
\end{lemma}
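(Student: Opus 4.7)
The plan is to approximate the continuous-time average
\[ I_1 := \frac{1}{s}\int_{T_n+U}^{T_n+U+s} \ind(\eta_t(I)\in A)\, dt \]
by a quantity to which Theorem~\ref{t:exist} applies directly. The event $G^c := \{U \notin [n^2/3, 2n^2]\}$ has probability at most $\delta$, and since the integrand is bounded by $1$ it contributes at most $\delta$ to $\E|I_1 - Q_I(A)|$; I therefore restrict to $G$. On $G$, set $k := \lfloor s/(4+\varepsilon)\rfloor$, so that $(4+\varepsilon)k = s + O(1)$, and let $m = m(\omega)$ be the random integer satisfying $T_m \leq T_n + U < T_{m+1}$.

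I interpose
\[ J_1 := \frac{1}{s}\int_{T_m}^{T_m + s}\ind(\eta_t(I)\in A)\, dt, \qquad J_2 := \frac{1}{(4+\varepsilon)k}\int_{T_m}^{T_{m+k}}\ind(\eta_t(I)\in A)\, dt, \]
and bound $\E|I_1 - Q_I(A)|$ on $G$ by $\E|I_1 - J_1| + \E|J_1 - J_2| + \E|J_2 - Q_I(A)|$. For the first difference the two integration intervals differ by a shift of length $T_n + U - T_m \in [0, T_{m+1} - T_m]$, giving $|I_1 - J_1| \leq 2(T_{m+1} - T_m)/s$; super-additivity together with Lemma~\ref{l:expdiff} yields $\E(T_{m+1} - T_m) \leq C$ uniformly in $m$, so $\E|I_1 - J_1| \leq 2C/s \leq \delta$ once $s \geq 2C/\delta$. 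For the second, a short computation gives $|J_1 - J_2| \leq |T_{m+k} - T_m - s|/s + O(1/s)$, with $T_{m+k} - T_m - s = O(1) + \bigl((T_{m+k} - T_m) - (4+\varepsilon)k\bigr)$; applying Lemma~\ref{l:Tndev} to the slow-bond passage time from $(m,m)$ to $(m+k,m+k)$ (which is distributed as $T_k$ by translation invariance up to a negligible boundary correction from Theorem~\ref{meetondiagonal}) bounds this fluctuation by $k^{1/2+\delta'}$ outside an event of probability at most $C' e^{-ck^{\delta'/2}}$, so $\E|J_1 - J_2| \leq \delta + e^{-c'' s^{1/30}}$.

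The main step is controlling $\E|J_2 - Q_I(A)|$, which is where Theorem~\ref{t:exist} enters. The key observation is that on $G$, combined with the concentration of $T_m$ around $(4+\varepsilon)m$ from Lemma~\ref{l:Tndev}, the random index $m$ lies in a \emph{deterministic} window $[M_{\min}, M_{\max}]$ of length $O(n^2)$ outside a super-polynomially small event. Applying Theorem~\ref{t:exist} with this $k$ and taking a union bound over all candidate $m' \in [M_{\min}, M_{\max}]$ gives
\[ \P\!\Bigl(\sup_{m' \in [M_{\min},M_{\max}]} |J_2(m') - Q_I(A)| > \delta \Bigr) \leq C\, n^2\, e^{-c\delta k^{1/13}}. \]
Since $n \leq (s/\alpha)^{12}$ (from $s \geq \alpha n^{1/12}$) and $k \asymp s$, the polynomial factor $n^2 \leq (s/\alpha)^{24}$ is absorbed by the exponential $e^{-c\delta s^{1/13}}$, producing $\E|J_2 - Q_I(A)| \leq \delta + e^{-c'\delta s^{1/30}}$ for $s$ sufficiently large. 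Summing the three bounds with the $G^c$ contribution and re-apportioning $\delta$ (say using $\delta/3$ in each application) yields the claimed $\E|I_1 - Q_I(A)| \leq 2\delta + e^{-c\delta s^{1/30}}$; for small $s$ the bound is trivial from the a priori estimate $\E|I_1 - Q_I(A)| \leq 1$ after decreasing $c$. The only mildly delicate point is the matching of exponents: the polynomial union-bound factor $n^2$ costs a portion of the $s^{1/13}$ slack from Theorem~\ref{t:exist}, leaving us with the weaker exponent $s^{1/30}$ in the final bound.
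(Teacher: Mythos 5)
Your proposal is correct and follows essentially the same route as the paper's proof: both reduce the statement to Theorem \ref{t:exist} applied uniformly over the polynomially-many (in $n$) candidate straddling indices $m$ with $T_m\leq T_n+U<T_{m+1}$, use $s\geq \alpha n^{1/12}$ to absorb the union-bound factor (which is exactly why the exponent degrades from $1/13$ to $1/30$), and dispose of $\{U\notin[\tfrac{n^2}{3},2n^2]\}$ and the $\delta$-threshold via boundedness of the target quantity. The only bookkeeping difference is that the paper compares $\frac{1}{s}\int_{T_n+U}^{T_n+U+s}$ with the occupation fraction normalized by the random elapsed time $T_{m+\ell}-T_m$ (so every quantity stays bounded by $1$ and the probability-to-expectation step is immediate), whereas your intermediate $J_2$ is unbounded, so your bounds on $\E|J_1-J_2|$ and $\E|J_2-Q_I(A)|$ need the standard $L^2$/uniform-integrability patch (as in Lemma \ref{uniformintegrable}); likewise the concentration of $T_{m+k}-T_m$ around $(4+\varepsilon)k$ is obtained more directly from Theorem \ref{t:exist} with $A=\{0,1\}^I$, as the paper does, than via Lemma \ref{l:Tndev} plus a coalescence transfer.
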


\begin{proof}
Observe that from Theorem \ref{t:exist} with $A=\{0,1\}^I$, it immediately follows,
\[\sup_{n}\P\left( \left|\frac{T_{n+k}-T_n}{(4+\varepsilon)k}-1\right|>\delta\right)<e^{-c\delta k^{1/13}}.\]
This, together with the statement of Theorem \ref{t:exist} gives
\[\P\left(\left|\frac{1}{T_{n+k}-T_n}\int_{T_n}^{T_{n+k}}\ind(\eta_t(I)\in A )dt-Q_{I}(A)\right|>\delta\right)<e^{-c\delta k^{1/13}}.\]
Taking (polynomial in $n$ number of) union bounds over $m,\l$ such that $T_m\leq T_n+U<T_{m+1}$ and $T_{m+\l}\leq T_n+U+s<T_{m+\l+1}$, along with the bounds for $T_m/m$, this implies
\[\P\left(\left\{U\in [\frac{n^2}{3},2n^2]\right\}\cap\left\{\left|\frac{1}{s}\int_{T_n+U}^{T_n+U+s}\ind(\eta_t(I)\in A )dt-Q_I(A)\right|>\delta\right\}\right)<e^{-c\delta s^{1/30}}.\]

Hence,
\begin{eqnarray*}
\E\left|\frac{1}{s}\int_{T_n+U}^{T_n+U+s}\ind(\eta_t(I)\in A )dt-Q_I(A)\right|&\leq& \delta + \P\left(\left|\frac{1}{s}\int_{T_n+U}^{T_n+U+s}\ind(\eta_t(I)\in A )dt-Q_I(A)\right|>\delta\right)\\
&\leq& 2\delta +e^{-c\delta s^{1/30}}.
\end{eqnarray*}
\end{proof}

For the remainder of this section, we shall need the following notations. Fix $n\in \N$ and define 
\[a_i=in^{1/6}, \mbox{ for } i=0,1,2,\ldots,n^{1/6}+1.\]
Let 
$\mathcal{F}_n=\sigma\{\xi_{i,j}:(i,j)\notin \{(a_m,a_m):m=1,2,\ldots
,n^{1/6}\} \}$ denote the $\sigma$-field generated by all the  vertex weights except at the locations $(a_i,a_i)$. Also let \[\mathcal{G}_n=\sigma\{\xi_{i,j}:i\in [0,2n^{1/3}],j\in [0,2n^{1/3}]\}.\]
 For $n\in \N$, define
\[G'_{n}=T_{n,n}-\E(T_{n,n}|\mathcal{F}_n).\]

With these notations, we are in a position to state the next lemma which is required to prove Theorem \ref{t:limit}. 

\begin{lemma}
\label{l:Gsum}
In the above set up, there exists a $\mathcal{G}_n$-measurable random variable $G_n$  such that $\P(G'_{n}=G_n)\geq 1-e^{-cn^{1/24}}$. Moreover $G_n$ is a sum of $n^{1/6}$ many i.i.d.\ random variables with a non lattice distribution and having mean $0$ and variance $\tau^2\in [a,b]$ for some absolute positive constants $a,b$.
\end{lemma}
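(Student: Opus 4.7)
The plan is to use the strong localization and coalescence properties of geodesics established in the preceding section to decompose $G'_n = T_{n,n} - \E(T_{n,n}\mid \cF_n)$ as a sum of $n^{1/6}$ independent ``local'' contributions on a high probability event, one for each of the marked diagonal weights $\xi_{(a_i,a_i)}$. Because each contribution arises from integrating out a single exponential weight, its variance will be bounded by constants, and translation invariance of the environment will force the contributions to be i.i.d.

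\textbf{Step 1 (Coalescence event).} Apply Theorem \ref{meetondiagonal} with window size $m = n^{1/6}$ and fluctuation exponent $\alpha = 1/2$ (so that $\ell = \min(\alpha/2,(1-\alpha)/2) = 1/4$) to each of the $n^{1/6}$ intervals $[a_{i-1},a_i]$ along the diagonal. Together with a union bound, this produces an event $\cE$ with $\P(\cE)\geq 1 - n^{1/6}e^{-cn^{1/24}} \geq 1 - e^{-c'n^{1/24}}$ on which, for every $i$, all geodesics from a short vertical line segment at horizontal coordinate $a_{i-1}$ to a short segment at $a_i$ pass through a common diagonal point $(w_i,w_i)\in [a_{i-1},a_i]$. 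Combined with Proposition \ref{deviationfromdiagonal} (which ensures $\Gamma_n$ stays within a slab of constant width around the diagonal at these coordinates), this forces $\Gamma_n$ to pass through every $(w_i,w_i)$ on $\cE$. By taking the line segments to sit slightly inside the interval (e.g.\ at coordinates $a_{i-1}+1$ and $a_i-1$), the coalescence point $w_i$ can be arranged to depend only on the weights in a strip $D_i$ around the diagonal between $a_{i-1}$ and $a_i$ that excludes $(a_i,a_i)$, making $w_i$ an $\cF_n$-measurable random variable.

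\textbf{Step 2 (Decomposition and definition of $G_n$).} On $\cE$, the geodesic $\Gamma_n$ passes through all the $(w_i,w_i)$, so additivity of passage times along the geodesic yields
\begin{equation*}
T_{n,n} \;=\; T_{(0,0),(w_1,w_1)} \;+\; \sum_{i=1}^{n^{1/6}} L_i \;+\; T_{(w_{n^{1/6}+1},w_{n^{1/6}+1}),(n,n)},
\end{equation*}
where $L_i := T_{(w_i,w_i),(w_{i+1},w_{i+1})}$ involves $\xi_{(a_i,a_i)}$ (the unique weight in the set $\{\xi_{(a_j,a_j)}\}_j$ lying in the relevant box). Set
\begin{equation*}
X_i \;:=\; L_i - \E\bigl(L_i \,\big|\, \cF_n\bigr), \qquad G_n := \sum_{i=1}^{n^{1/6}} X_i.
\end{equation*}
Since each $L_i$ and each $w_j$ is determined by weights in $[0,2n^{1/3}]^2$, the random variable $G_n$ is $\cG_n$-measurable. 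On $\cE$, the two ``boundary'' terms above are $\cF_n$-measurable (they involve no $\xi_{(a_j,a_j)}$), so subtracting $\E(T_{n,n}\mid \cF_n)$ from both sides of the decomposition gives $G'_n = \sum_i X_i = G_n$.

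\textbf{Step 3 (Properties of the $X_i$'s).} By translation invariance of the i.i.d.\ exponential environment and the disjointness of the strips $D_i$ (whose widths are $n^{1/6}$, matching the spacing of the $a_i$'s), the random variables $X_i$ are i.i.d. They have mean $0$ by construction. For the variance, note that $L_i$ is $1$-Lipschitz in the single variable $\xi_{(a_i,a_i)}$ (raising this weight by $\Delta$ raises the maximum in the LPP by at most $\Delta$), and the conditioning integrates out only this one variable, so
\begin{equation*}
\mathrm{Var}(X_i) \;=\; \E\bigl[\mathrm{Var}(L_i \mid \cF_n)\bigr] \;\leq\; \mathrm{Var}(\xi_{(a_i,a_i)}) \;=\; \tfrac{1}{r^2},
\end{equation*}
giving the upper bound. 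For the lower bound, on a positive-probability event for the remaining weights in $D_i$ the local geodesic achieving $L_i$ contains $(a_i,a_i)$ and moreover the ``alternative'' (path avoiding $(a_i,a_i)$) is strictly shorter, so $L_i$ depends non-trivially on $\xi_{(a_i,a_i)}$ with conditional variance bounded below by an absolute constant; integrating yields $\mathrm{Var}(X_i)\geq a > 0$. The non-lattice property follows because on the above positive-probability event the conditional distribution of $X_i$ given the other weights has an absolutely continuous component (inherited from the continuous distribution of $\xi_{(a_i,a_i)}$).

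\textbf{Main obstacle.} The principal delicacy is arranging the coalescence points $w_i$ so that (a) the geodesic $\Gamma_n$ passes through them on the event $\cE$ and (b) each $w_i$ is measurable with respect to weights \emph{excluding} $\xi_{(a_i,a_i)}$ and $\xi_{(a_{i-1},a_{i-1})}$, so that both the decomposition and the conditioning in the definition of $X_i$ interact correctly with $\cF_n$. This is achieved by choosing the slab endpoints strictly inside each $[a_{i-1},a_i]$ and by a careful application of Theorem \ref{meetondiagonal} on sub-intervals that avoid the marked diagonal vertices.
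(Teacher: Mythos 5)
Your overall strategy (localize the fluctuation of $T_{n,n}$ to $n^{1/6}$ blocks around the hidden diagonal weights via coalescence, then treat $G_n'$ as a sum of block variables) is the right one, but the specific decomposition breaks the key conclusion of the lemma: the summands you define are not independent. Each $L_i=T_{(w_i,w_i),(w_{i+1},w_{i+1})}$ is a passage time between \emph{random} endpoints, and the point $w_{i+1}$ (together with the weights in the strip $D_{i+1}$ that determine it, and the environment immediately around it over which the paths run) enters both $L_i$ and $L_{i+1}$. So consecutive $X_i$'s share randomness; at best you obtain a stationary $1$-dependent sequence, not an i.i.d.\ one, and "disjointness of the strips $D_i$" does not apply to $L_i$, which straddles two of them. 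Since the i.i.d.\ structure is exactly what the lemma asserts and what is fed into the local CLT in the proof of Theorem \ref{t:limit}, this is a genuine gap, not a cosmetic one. The paper avoids it by never splitting $T_{n,n}$ at random coalescence points: it fixes \emph{deterministic} vertical strips $C_i$ (each containing exactly one hidden vertex $(a_i,a_i)$), separated by buffer strips $B_i$, shows that inside each $B_i$ (and beyond $2n^{1/3}$) the geodesic $\Gamma_{n,n}$ coincides with an $\mathcal{F}_n$-measurable geodesic with high probability, so those portions contribute nothing to $G_n'$, and that inside $C_i$ it coincides with the geodesic $\Gamma_i$ between the deterministic points $(p_i,p_i)$ and $(p_{i+1},p_{i+1})$; then $G_n=\sum_i\bigl(T_i(C_i)-\E(T_i(C_i)\mid\mathcal{F}_n)\bigr)$, and each summand is a function of the weights in disjoint deterministic strips, hence genuinely i.i.d. Patching your construction essentially forces this restructuring: with deterministic split points the exact additive decomposition of $T_{n,n}$ fails, so one must work with restrictions of the geodesic to regions rather than with passage times between split points.

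Two secondary points. First, even on $\mathcal{E}$ the identity $G_n'=\sum_i X_i$ does not follow by "subtracting $\E(T_{n,n}\mid\mathcal{F}_n)$ from both sides", because your decomposition holds only on $\mathcal{E}$, which is not $\mathcal{F}_n$-measurable; the conditional expectations do not localize to that event and an approximation step is needed (the paper's own write-up is similarly informal here, so this is a lesser concern). Second, in the variance lower bound, the event that the local geodesic passes through $(a_i,a_i)$ is itself a function of $\xi_{(a_i,a_i)}$, so you cannot simply condition on a positive-probability event of the \emph{remaining} weights; the paper's Lemma \ref{l:Gvar} handles this with a monotonicity argument (defining the event in the configuration with $\xi_{(a_1,a_1)}=0$ and using that it is increasing in that weight, together with an FKG-type step), and some version of that device is needed in your Step 3 as well.
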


The argument is standard and almost mimics that in the proof of Proposition \ref{Cauchyproperty}.

\begin{proof}
For any region $B\in \R^2$, let $\Gamma(B):=\Gamma\cap B$ denote the part of $\Gamma$ inside the region $B$, and its weight as $T(B)$. Consider the points \[p_i:=\frac{a_{i-1}+a_{i}}{2}, \mbox{ for } i=1,2,\ldots,n^{1/6}+1.\]
 Further define 
\[m_i=\frac{p_i+a_i}{2}, \mbox{ for } i=1,2,\ldots, n^{1/6}+1, \mbox{ and } n_i=\frac{a_i+p_{i+1}}{2}, \mbox{ for } i=1,2,\ldots, n^{1/6}.\]

\begin{figure}[h] 
\centering
\includegraphics[width=0.2\textwidth]{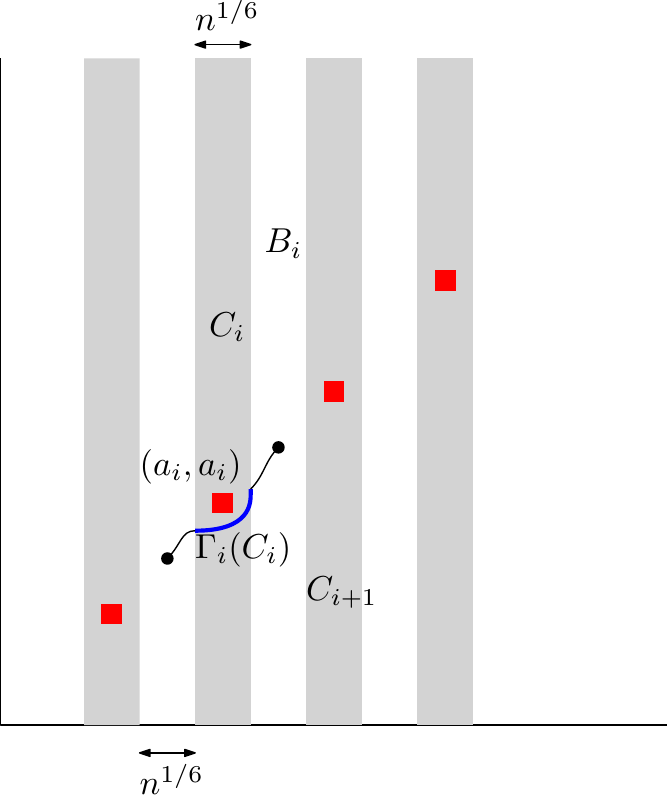} 
\caption{The vertices marked in red are the ones not revealed. $T_{i}(C_i)$ are i.i.d.\ random variables whose sum we use to capture the fluctuation in $T_{n}$ after all the remaining vertices are revealed.} 
\label{f:lclt}
\end{figure}
 
 The geodesic $\Gamma_{(a_i+1,a_i+1),(a_{i+1}-1,a_{i+1}-1)}$ from $(a_i+1,a_i+1)$ to $(a_{i+1}-1,a_{i+1}-1)$ and the geodesic $\Gamma_{n,n}$ meet together on the diagonal between $a_i$ and $n_i$ and again between $m_{i+1}$ and $a_{i+1}$, and hence coincides between $n_i$ and $m_{i+1}$, with probability atleast $1-e^{-cn^{1/24}}$ by Corollary \ref{c:twopathsmeet}. Let \[B_i=\{(x,y):n_i\leq x\leq m_{i+1}\} \mbox{ for } i=1,2,\ldots,n^{1/6},\]
 \[D=\{(x,y):x\geq 2n^{1/3}\},\]
 \[C_i=\{(x,y):m_i\leq x\leq n_i\} \mbox{ for } i=1,2,\ldots,n^{1/6}.\]
 Note that, of these, only the sets $C_i$~s contain the unrevealed locations $(a_i,a_i)$. Since for all $i$, the geodesics $\Gamma_{(a_i+1,a_i+1),(a_{i+1}-1,a_{i+1}-1)}$ are $\mathcal{F}_n$ measurable, hence 
 \[\P\Big(T_{n,n}(B_i)-\E(T_{n,n}(B_i))|\mathcal{F}_n)=0  \mbox{ for all } i\Big) \geq 1-n^{1/6}e^{-cn^{1/24}}.\] 
  A similar argument shows that $T_{n,n}(D)-\E(T_{n,n}(D)|\mathcal{F}_n)=0$ with probability atleast $1-e^{-cn^{1/24}}$. 
  
  Let $\Gamma_i:=\Gamma_{(p_i,p_i),(p_{i+1},p_{i+1})}$ and $T_i(C_i)$ denote the weight of $\Gamma_i\cap C_i$. Then, repeating similar calculations, $\Gamma_i$ and $\Gamma_{n,n}$ coincide inside $C_i$ for all $i=1,2,\ldots,n^{1/6}$ with probability atleast $1-n^{1/6}e^{-cn^{1/24}}$. Define
  \begin{equation}
  \label{defofGn}
  G_n=\sum_i^{n^{1/6}}(T_i(C_i)-\E(T_i(C_i)|\mathcal{F}_n)).
  \end{equation}
 Clearly $G_n$ is $\mathcal{G}_n$-measurable and it follows that
 \begin{equation*}
 \P(G'_{n}=G_n)\geq 1-e^{-cn^{1/24}}.
 \end{equation*}
 Observe that $T_i(C_i)-\E(T_i(C_i)|\mathcal{F}_n)$, $i=1,2,\ldots,n^{1/6}$ are i.i.d.\ mean zero random variables with a non lattice distribution. That they have bounded variance follows from Lemma \ref{l:Gvar}. This completes the proof of Lemma \ref{l:Gsum}.
\end{proof}

Now we begin the proof of Theorem \ref{t:limit}.
\begin{proof}[Proof of Theorem \ref{t:limit}] 
Fix $t\geq 0$ sufficiently large. Fix $n=\floor{\sqrt{t}}$. Fix any $\delta>0$. Recall that $G_n$ was defined in \eqref{defofGn}. Let $M$ be a large enough constant such that $-Mn^{1/12}\leq G_n\leq Mn^{1/12}$ with probability at least $1-\delta$.

Define
\[h(g):=\eta_{(T_n+t-\E(T_n|\cF_n)-g)}(I).\]
Then by Lemma \ref{l:Gsum},
\[\P(h(G_n)=\eta_t(I))\geq 1-e^{-cn^{1/24}}.\]

Now observe that if $\E(T_n|\cF_n) \leq \frac{n^2}{2}$, and $-Mn^{1/12}\leq g \leq Mn^{1/12}$, then $\frac{n^2}{3}\leq t-\E(T_n|\cF_n)-g\leq 2n^2$. Also for any $0\leq c\leq T_{2n^2}-T_n$, $\eta_{T_n+c}(I)$ is a function of the differences $T_{x,y}-T_n$ where $(x,y)\in D$ where $D$ is the $2b$ sized strip along the diagonal in $\mbox{Box}((n-b,n-b),(2n^2+b,2n^2+b))$. Let $E$ be the event that there exists $u\in \llbracket n^{1/2},n-b\rrbracket$ such that $(u,u)\in \bigcap_{(x,y) \in D} \Gamma_{\mathbf{0},(x,y)}\bigcap_{(x,y)\in D} \Gamma_{(n^{1/2},n^{1/2}),(x,y)}$. Then union bound and Corollary \ref{c:meetdiagpar} imply that $\P(E)\geq 1-e^{-cn^{1/8}}$. On $E$, $\eta_{T_n+c}(I)$ is a function of the differences $T_{(n^{1/2},n^{1/2}),(x,y)}-T_{(n^{1/2},n^{1/2}),(n,n)}$ which is $(\mathcal{G}_n)^c$ measurable. Then on the event that $\E(T_n|\mathcal{F}_n)\leq \frac{n^2}{2}$, there exists a function $h'$ which is $(\mathcal{G}_n)^c$ measurable, such that for each $g\in [-Mn^{1/12},Mn^{1/12}]$,
\[\P(h(g)=h'(g+\E(T_n|\mathcal{F}_n)))\geq 1-e^{-cn^{1/8}}.\]

Then,
\begin{eqnarray*}
&&\P(\eta_t(I)\in A)\\
&=& \P(h(G_n)\in A)+R_n\\
&=& \E(\P(h(G_n)\in A)|\mathcal{F}_n)+R_n\\
&=& \E\left(\int\P(h(g)\in A|\mathcal{F}_n,G_n=g)f_{G_n|\mathcal{F}_n}(g)dg\right)+R_n \\
&=&\E\left(\int\P(h'(g+\E(T_n|\mathcal{F}_n))\in A|\mathcal{F}_n,G_n=g)f_{G_n|\mathcal{F}_n}(g)dg\right)+R'_n \\
&=&\E\left(\int \P(h'(g+E(T_n|\mathcal{F}_n))\in A|\mathcal{F}_n)f_{G_n|\mathcal{F}_n}(g)dg\right)+R'_n\\
&=&\int \E(\P(h'(g+E(T_n|\mathcal{F}_n))\in A|\mathcal{F}_n))f_{G_n}(g)dg +R'_n\\
&=&\int \P(h(g)\in A)f_{G_n}(g)dg +R''_n,
\end{eqnarray*}
where $|R_n|, |R'_n|,|R''_n|\leq 3e^{-n^{1/24}}$, by interchanging the integral and expectation, and the fact that given $\mathcal{F}_n$, $h'$ and $G_n$ are conditionally independent.

Fix $A\subseteq \{0,1\}^I$. Since by Lemma \ref{l:unifcont}, $\psi(g)=\P(h(g)\in A)$ is uniformly continuous, choose $h>0$ such that $\sup_{|g-g'|\leq h}|\psi(g)-\psi(g')|\leq \delta$. For this $h>0$, applying local central limit theorem to $G_n$, due to Lemma \ref{l:Gsum}, we have,
\[\P(a\leq G_n\leq a+h)=h\phi_n(a)+o(1/n^{1/12}),\]
where the error term $o(1/n^{1/12})$ is uniform in $a$, and $\phi_n$ denotes the density of $N(0,\tau^2 n^{1/6})$ distribution, where $\tau$ is bounded. Then,
\[\left|\int \P(h(g)\in A)f_{G_n}(g)dg-\int \P(h(g)\in A)\phi_n(g)dg\right|\leq 4\delta +o(1).\]

Now,
\[\int \P(h(g)\in A)\phi_n(g)dg=\int\P(\eta_{T_n+U-g}(I)\in A)\phi_n(g)dg=\E\left(\int\ind(\eta_{T_n+U-g}(I)\in A)\phi_n(g)dg \right), \]
where $U=t-\E(T_n|\cF_n)\in [\frac{n^2}{2},n^2]$  with probability at least $1-\delta$. Now, if $\phi$ denotes the density of $Z\sim N(0,\tau^2)$, then get $R$ large such that $\P(|Z|\geq R)\leq \delta$. Also let $\beta$ be the modulus of continuity of the Gaussian density corresponding to this $\delta$. Divide $[-R,R]$ into points $a_1=-R,a_2,\ldots,a_r=M$ such that $|a_i-a_{i+1}|=\beta$ ( so that $\sup_{x\in [a_i,a_{i+1}]}|\phi(x)-\phi(a_i)|\leq \delta$). Then if $b_i:=n^{1/12}a_i$, then $b_{i+1}-b_i=n^{1/12}\beta$ and,
\[\sup_{x\in [b_i,b_{i+1}]}|\phi_n(x)-\phi_n(b_i)|\leq \frac{\delta}{n^{1/12}}.\]
 
Hence, using Lemma \ref{l:exprand}, one gets,
\begin{eqnarray*}
&&\left|\E\left(\int\ind(\eta_{T_n+U-g}(I)\in A)\phi_n(g)dg \right)-Q_I(A)\right|\\
&\leq&\left|\E\left(\sum_{i=1}^{r-1}\int_{b_i}^{b_{i+1}}
\ind(\eta_{T_n+U-g}(I)\in A)\phi_n(g)dg \right)-Q_I(A)\right|+\delta\\
&\leq&\left|\E\left(\sum_{i=1}^{r-1}\int_{b_i}^{b_{i+1}}
\ind(\eta_{T_n+U-g}(I)\in A)\phi_n(b_i)dg \right)-Q_I(A)\right|+2R\delta+\delta\\
&=&\left|\sum_{i=1}^{r-1}\phi_n(b_i)(b_{i+1}-b_i)\E\left(\frac{1}{b_{i+1}-b_i}\int_{T_n+U-b_{i+1}}^{T_n+U-b_{i}}
\ind(\eta_s(I)\in A)ds \right)-Q_I(A)\right|+2R\delta+\delta\\
&\leq& \sum_{i=1}^{r-1}\phi_n(b_i)(b_{i+1}-b_i)\left(2\delta+e^{-c\delta(b_{i+1}-b_i)^{1/30}}\right)+Q_I(A)\left|\sum_{i=1}^{r-1}\phi_n(b_i)(b_{i+1}-b_i)-1\right|+2R\delta+\delta\\
&=&\sum_{i=1}^{r-1}\phi_n(b_i)(b_{i+1}-b_i)\left(2\delta+e^{-c\delta(n^{1/12}\beta)^{1/30}}\right)+4R\delta+2\delta\\
&\leq& \left(2\delta+e^{-c\delta(n^{1/12}\beta)^{1/30}}\right)+ 2R\delta \left(2\delta+e^{-c\delta(n^{1/12}\beta)^{1/30}}\right)+4R\delta+2\delta.
\end{eqnarray*}
Now let $n\rightarrow \infty$, and then $\delta\rightarrow 0$, and note that $R\delta\rightarrow 0$ as $R\sim \sqrt{\log(1/\delta)}<<\frac{1}{\sqrt{\delta}}$ as $\delta\rightarrow 0$.
\end{proof}

\section{Coupling TASEP with a Slow Bond with a Stationary TASEP}
\label{s:couple}
We complete the proof of Theorem \ref{t:mainst} in this section. Recall $\rho$ from Remark \ref{r:rho}. Now fix $L\in N$ and set $I=[0,L]$. For $k\in \N$, consider  the interval $k+I$. We shall define a coupling between the stationary TASEP with density $\rho$ (i.e., with product $\mbox{Ber}(\rho)$ particle configuration) and the TASEP with a slow bond started from the step initial condition. We shall show that under this coupling for all $k$ sufficiently large, the asymptotic occupation measure for $k+I$ in the slow bond model is with probability close to one equal to the occupation measure of $I$ in the stationary TASEP with density $\rho$. This implies the total variation distance between the two measures is small. By stationarity one of them is close to product $\mbox{Ber}(\rho)$, and hence the other must be so too. This will establish that the limiting stationary measure $\nu_{*}$ of the slow bond process is asymptotically equivalent to $\nu_{\rho}$ at $\infty$. By an identical argument one can establish asymptotic equivalence to $\nu_{1-\rho}$ at $-\infty$. The crux of the argument will be to show that the coupling works with large probability and to show that we first need to consider the last passage percolation formulation of TASEP with an arbitrary initial condition, in particular a stationary one. 

\subsection{Last Passage Percolation and Stationary TASEP}
\label{s:couplestat}
The correspondence we described between TASEP started from step initial condition extends to arbitrary initial condition as follows. Let $\eta\in \{0,1\}^{\Z}$ be an arbitrary particle configuration. Let $S_{\eta}$ be a bi-infinite connected subset of $\Z^2$ (an injective image of $\Z$) defined recursively. Define $F:\Z\to \Z^2$ as follows: set $F(0)=(0,0)$. For $i>0$, set $F(i)=F(i-1)+ (0,-1)$ if $\eta(i)=1$ and set $F(i)=F(i-1)+(1,0)$ otherwise. For $i<0$, set $F(i)=F(i+1)+(0,1)$ if $\eta(i+1)=1$ and $F(i)=F(i+1)+(-1,0)$ otherwise. Let $S_{\eta}=F(\Z)$. Clearly $S_{\eta}$ is a connected subset of $\Z^2$ that divides $\Z^2$ into two connected components (see Figure \ref{f:stationary}) one of which (let us call it $R_{\eta}$) contains the whole positive quadrant. Also set $R=R_{\eta}\cup S_{\eta}$. Fix $\ell\in \Z$; let $a\in \Z$ be the smallest number such that $(a+\ell, a)\in R$. Consider the following coupling between TASEP with initial condition $\eta$ and Last Passage Percolation with i.i.d.\ exponential weights by setting $\xi_{a+j+\ell, a+j}$ (the edge weight at vertex $(a+j+\ell, a+j)$) to be equal to the waiting time for the $(j+1)$-th jump at site $\ell$. The following standard result gives the correspondence between last passage times and jump times under this coupling. We omit the proof, see e.g.\ \cite{BFP10}.

\begin{figure}[h] 
\centering
\includegraphics[width=0.3\textwidth]{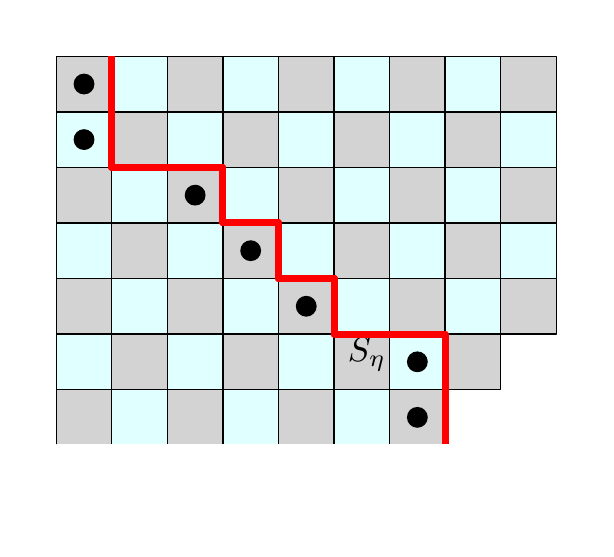} 
\caption{Correspondence between Last Passage Percolation and TASEP with general initial condition. The red line in the figure depicts a part of the boundary $S_{\eta}$ between $R_{\eta}$ and $\Z^2\setminus R_{\eta}$ for a part of the configuration $\eta$ given by 110101010011. Jump times in TASEP started with initial condition $\eta$ corresponds to last passage times from $S_{\eta}$ to various vertices of $\Z^2$.} 
\label{f:stationary}
\end{figure}

\begin{proposition}
\label{p:couplegeneral}
Let $\eta\in \{0,1\}^{\Z}$ be fixed and consider the coupling described above. Fix $\ell\in \Z$ and let $v_{n}=(n+\ell, n)$. Then the time taken for the $n$-th particle to left of the origin to jump through site $\ell$ is equal to
$$\sup_{v\in S_{\eta}} T_{v,v_{n}}$$
where $T_{v,v_{n}}$ denotes the usual last passage time between $v$ and $v_{n}$. 
\end{proposition}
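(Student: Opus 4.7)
The plan is to verify the identity by induction on the partial order on $R$, exhibiting both the TASEP jump times and the LPP passage times as solutions of a common recursion with matching boundary data.

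First, I would unpack the geometry of $S_\eta$. Each interior lattice vertex $v=(x,y)\in R\setminus S_\eta$ is in canonical bijection with a specific jump event of TASEP: the $n(v)$-th jump through the bond $(\ell,\ell+1)$, where $\ell=x-y$ and $n(v)=\min(x,y)-c_\ell$ with $c_\ell$ determined by the unique point $(a+\ell,a)\in S_\eta$ with minimal $a$. Under the coupling defined just before the proposition, the edge weight $\xi_v$ is precisely the independent $\mathrm{Exp}(1)$ clock $X_{\ell,n(v)}$ that fires once this jump becomes enabled.

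Next, I would set up the two parallel recursions. On the TASEP side, writing $\tau(\ell,n)$ for the time of the $n$-th jump through $(\ell,\ell+1)$, whenever both prerequisites correspond to actual jumps we have
\[
\tau(\ell,n) \;=\; \max\!\bigl(\tau(\ell-1,n),\,\tau(\ell+1,n-1)\bigr) + X_{\ell,n},
\]
the two terms reflecting that the jump requires a particle to have arrived at $\ell$ (via the $n$-th jump across $(\ell-1,\ell)$) and the previous tenant of $\ell+1$ to have departed (the $(n-1)$-th jump across $(\ell+1,\ell+2)$). On the LPP side, setting $M(v):=\sup_{v'\in S_\eta}T_{v',v}$, the standard one-step decomposition of last passage times from a source set gives
\[
M(v) \;=\; \max\!\bigl(M(v-e_1),\,M(v-e_2)\bigr) + \xi_v \qquad (v\in R\setminus S_\eta),
\]
with the convention $M(v-e_i)=-\infty$ whenever $v-e_i\notin R$.

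The heart of the argument is matching the boundary data: I must check that whenever a TASEP prerequisite is absent because the initial configuration $\eta$ already supplies the required particle or vacancy, the corresponding lattice neighbour $v-e_i$ lies outside $R$, so the $-\infty$ convention correctly prunes that term from the LPP max. This is a two-case verification against the recursive definition of $F$: a downward step of $F$ (a particle in $\eta$) removes the need for a particle to arrive at the corresponding site, and the associated edge of $S_\eta$ is exactly the one separating the interior from $v-e_2$; a rightward step (a hole) removes the need for the site to be vacated, blocking access to $v-e_1$. Once this boundary check is in hand, the two recursions coincide term-by-term; an induction on the partial order on $R$ then yields $\tau(\ell,n)=M(v)$ for every $v$ corresponding to the $n$-th jump across bond $(\ell,\ell+1)$, and specialising to $v=v_n=(n+\ell,n)$ gives the proposition. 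The main obstacle is purely bookkeeping — setting up the bijection of Step 1 and verifying the two boundary cases carefully — since conceptually the curve $S_\eta$ simply encodes which prerequisites of the LPP recursion are pre-satisfied by $\eta$.
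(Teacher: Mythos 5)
The paper does not actually prove Proposition \ref{p:couplegeneral} (it is quoted as standard, with a pointer to \cite{BFP10}), so your proposal must stand on its own; the architecture you choose --- exhibit the jump times and $M(v):=\sup_{v'\in S_\eta}T_{v',v}$ as solutions of one recursion and induct along the partial order on $R$ --- is indeed the standard route. But there is a genuine error in the middle step. With the indexing you fix in Step 1, $\tau(\ell,n)$ is the time of the $n$-th crossing of the bond $(\ell,\ell+1)$, and for that indexing the correct prerequisites are the $(n-\eta(\ell))$-th crossing of $(\ell-1,\ell)$ and the $(n-1+\eta(\ell+1))$-th crossing of $(\ell+1,\ell+2)$: if $\eta(\ell)=1$, the particle making the $n$-th jump out of $\ell$ is only the $(n-1)$-st particle to arrive there from $\ell-1$, and if $\eta(\ell+1)=1$, the tenant who must vacate $\ell+1$ is the $n$-th (not the $(n-1)$-st) particle to leave that site. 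Concretely, if $\eta(\ell)=1$ and $n=2$, your unshifted recursion invokes $\tau(\ell-1,2)$, whereas the true prerequisite is $\tau(\ell-1,1)$ --- a different, perfectly actual jump, so your caveat ``whenever both prerequisites correspond to actual jumps'' does not rescue the formula, and these shifts occur at every level $n$, not only near $S_\eta$. Hence the claim that the two recursions ``coincide term-by-term'' after checking only which terms get pruned on the boundary is false as stated: the initial configuration enters not just by deleting terms but by shifting indices in every column.

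The repair is precisely the bookkeeping you deferred: the offsets satisfy $c_{\ell-1}=c_\ell+\eta(\ell)$ and $c_{\ell+1}=c_\ell-\eta(\ell+1)$, so the lattice neighbours $v-e_1$ and $v-e_2$ automatically carry the shifted jump numbers $n-\eta(\ell)$ and $n-1+\eta(\ell+1)$; written with these shifts, the TASEP recursion does match $M(v)=\max\bigl(M(v-e_1),M(v-e_2)\bigr)+\xi_v$ vertex by vertex, and the pruning criterion (prerequisite absent iff the shifted index is $0$ iff $v-e_i\notin R$) is then correct. Alternatively, index by particle label rather than crossing count: the unshifted recursion you wrote is valid for particle-labelled jump times, with dictionary (particle $n$, site $\ell$) $\leftrightarrow$ vertex $(n-1+\ell,\,n-1)$ and pruning conditions ``particle $n$ starts at $\ell$'' and ``particle $n-1$ starts strictly to the right of $\ell+1$'', which one then verifies coincide with $v-e_i\notin R$. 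Two further slips to fix in either version: the first jump at each site corresponds to the vertex $F(\ell)\in S_\eta$ itself, so the bijection and the base case of the induction must include boundary vertices (the boundary data is not simply $M=\xi$ on $S_\eta$, since consecutive horizontal steps of $S_\eta$ produce comparable boundary points); and your pairing of boundary steps with pruned neighbours is reversed --- a particle at $\ell$ (a down step) cuts off $v-e_1$, the arrival prerequisite living on the line $x-y=\ell-1$, while a hole at $\ell+1$ (a right step) cuts off $v-e_2$.
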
 

Observe that in case $\eta=\ind_{(-\infty,0]}$, i.e., for step initial condition the set $S_{\eta}$ is just the boundary of the positive quadrant of $\Z^2$ and hence point to line (or general set $S_{\eta}$) passage time reduce to point to origin passage time in that case and hence this result is consistent with the previous correspondence between TASEP and LPP that we quoted. 


Let us now specialise to the stationary $\mbox{Ber}(\rho)$ initial condition, i.e., in $\eta$ each site is occupied with probability $\rho$ independent of the others. Clearly in such a case the gap between two consecutive particles is a geometric random variable with mean $\frac{1}{\rho}-1$. Hence in this case $S_{\eta}$ is a random staircase curve passing through the origin that has horizontal steps of length that are distributed as i.i.d.\ $\mbox{Geom}(\rho)$ and two consecutive horizontal steps (can also be of length $0$) are separated by vertical steps of unit length. By a large deviation estimate on geometric random variables this random line $S_{\eta}$ can be approximated by the deterministic line $\mathbb{L}$ given by
$$ y=-\frac{\rho}{1-\rho}x;$$ 
so for our purposes we can approximate $S_{\eta}$ with $\L$ and consider the corresponding last passage times to $\mathbb{L}$. Before making a precise statement, let us first consider the last passage percolation to the deterministic line $\L$. Fix $\ell\in \Z$ and consider the geodesic from $\mathbb{L}$ to $(n+\ell, n)$ for $n$ large. If $n\gg \ell$, this geodesic is quite close to the geodesic from $\mathbb{L}$ to $(n,n)$. Comparing the first order of the length of the geodesic from $v$ to $(n,n)$ for different $v\in \mathbb{L}$  it is not too hard to see that the first order distance, i.e., $(\sqrt{n-x}+\sqrt{n-y})^2$ for all $(x,y)\in \L$ is maximized at $(nx_0,ny_0)\in \mathbb{L}$ where $(x_0,y_0)$ is given by

\begin{equation}
\label{defofx0}
(x_0,y_0)=\left(-\frac{1-2\rho}{\rho}, \frac{1-2\rho}{1-\rho}\right).
\end{equation}

\begin{figure}[htbp!] 
\centering
\includegraphics[width=0.4\textwidth]{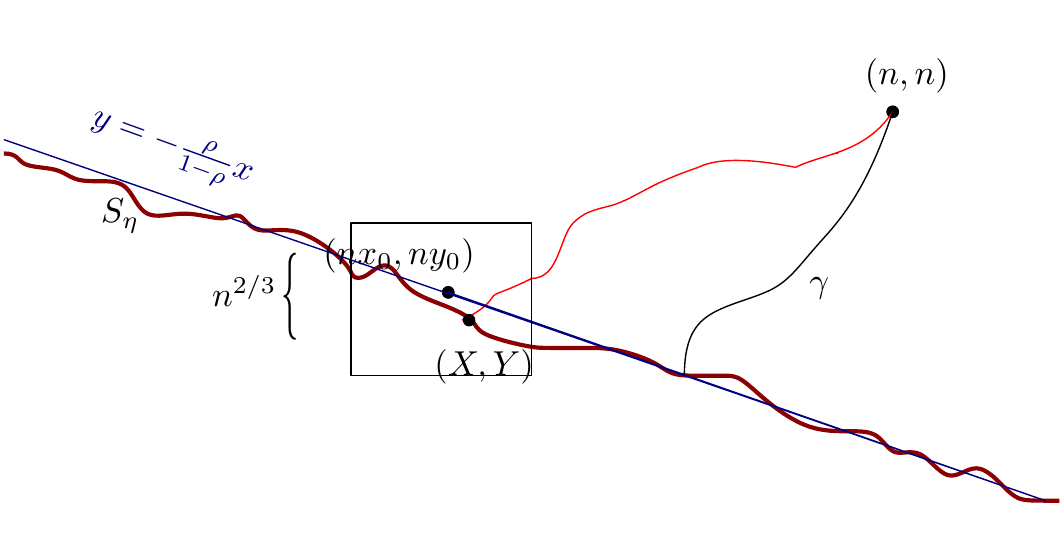} 
\caption{Proposition \ref{p:approximation}: we compare the best path to $S_{\eta}$ from $(n,n)$ to a path $\gamma$ that intersects $S_{\eta}$ far from $(nx_0,ny_0)$. Because of large deviation estimates, at that point $S_{\eta}$ is not too far from the line $y=-\frac{\rho}{1-\rho}x$. By computing expectation we show that that expected length of $\gamma$ is less than that of the best geodesic by an amount larger than the natural fluctuation scale.} 
\label{f:closeS}
\end{figure}

Note that $(nx_0,ny_0)\neq (0,0)$ whenever $\rho\neq \frac{1}{2}$. It follows from this that the point where the geodesic from $(n+\l,n)$ to $\mathbb{L}$ hits $\mathbb{L}$ should be at distance $O(n^{2/3})$ from $(nx_0, ny_0)$. In fact the same remains true for the geodesic from  $(n+\l,n)$ to the random curve $S_{\eta}$. More precisely we have the following proposition. 

\begin{proposition}
\label{p:approximation} Fix $\rho \in (0,1), \rho\neq \frac{1}{2}$. Let $(x_0,y_0)$ be the point on the line $\mathbb{L}$ given by \eqref{defofx0} and let $\ell\in \Z$ be fixed. Let $\Gamma^S_{n+\l,n}$ be the geodesic from $(n+\l,n)$ to the random line $S_{\eta}$. Let $(X_\l,Y_\l)\in S_\eta$ be such that $\Gamma^S_{n+\l,n}=\Gamma^0_{(X_\l,Y_\l),(n+\l,n)}$, where $\Gamma^0_{u,v}$ is the point to point geodesic from $u$ to $v$ in the usual Exponential DLPP. Then given any $\delta>0$ however small, there exists $M=M(\delta,\l)$ such that,
 \[\P(\|(X,Y)-(nx_0,ny_0)\|\geq Mn^{2/3})\leq \delta.\]
\end{proposition}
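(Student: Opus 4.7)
Parametrize $S_\eta$ by $F:\Z\to\Z^2$ as in the excerpt, so that $F$ is a walk with i.i.d.\ increments drawn according to the $(1-\rho,\rho)$ rule, and the geodesic endpoint is $F(I)$ for some random integer $I$. Set $\bar F(i) := ((1-\rho)i,-\rho i)$, $\eta(i) := F_1(i) - (1-\rho)i$ (a centered random walk with $\Var(\eta(i)) = |i|\rho(1-\rho)$), let $i^\ast = i^\ast(n,\ell)$ be the maximizer of $\psi(i) := (\sqrt{n+\ell-(1-\rho)i}+\sqrt{n+\rho i})^2$ (so $\bar F(i^\ast) = (nx_0,ny_0)+O(\ell)$), and write $f(v_1,v_2) := (\sqrt{n+\ell-v_1}+\sqrt{n-v_2})^2$ for the first-order mean of $T_{v,(n+\ell,n)}$. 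Since the LPP weights are independent of $\eta\in\{0,1\}^{\Z}$, we may condition on $F$ throughout, and the task reduces to bounding $\P(|I-i^\ast|>Mn^{2/3})\leq \delta$ for some $M=M(\delta,\ell)$ sufficiently large.

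\emph{Step 1: local mean deficit.} A direct computation at $(nx_0,ny_0)$ yields $\nabla f\cdot(1-\rho,-\rho)=0$ (criticality of the $\mathbb{L}$-maximizer), $\nabla f\cdot(1,1)=-1/(\rho(1-\rho))$, and second derivative in the characteristic direction $(1-\rho,-\rho)$ equal to $-c_\rho/n$ for an explicit $c_\rho>0$. Decomposing $F(i)-F(i^\ast) = (i-i^\ast)(1-\rho,-\rho) + (\eta(i)-\eta(i^\ast))(1,1)$ and Taylor-expanding gives
\[
f(F(i)) - f(F(i^\ast)) \;\leq\; -\alpha\,\frac{(i-i^\ast)^2}{n} + \beta\,\bigl|\eta(i)-\eta(i^\ast)\bigr|,
\]
uniformly for $|i-i^\ast|\leq Cn$. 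Bernstein's inequality for the random walk $\eta$ gives $|\eta(i)-\eta(i^\ast)|\leq s\sqrt{|i-i^\ast|}$ with probability $1-e^{-cs^2}$; taking $s\asymp M\cdot 2^k$ at distance $|i-i^\ast|\asymp 2^k Mn^{2/3}$ leaves a net mean deficit $\gtrsim 4^k M^2 n^{1/3}$ with probability at least $1-e^{-cM^2 4^k}$.

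\emph{Step 2: dyadic union bound.} Partition $\{i:|i-i^\ast|\geq Mn^{2/3}\}\cap\{|i|\leq Cn\}$ (the outer truncation is automatic since $F(I)\preceq (n+\ell,n)$) into annuli $\mathcal A_k := \{i : 2^k Mn^{2/3}\leq |i-i^\ast|<2^{k+1}Mn^{2/3}\}$ for $k=0,1,\ldots,O(\log n)$. Inside each $\mathcal A_k$ pick $O(2^k M)$ representative indices spaced by $n^{2/3}$. Conditionally on $F$, Theorem~\ref{t:moddevdiscrete} applied with $t\asymp 4^k M^2$ controls the fluctuations of $T_{F(i),(n+\ell,n)}$ and $T_{F(i^\ast),(n+\ell,n)}$ around their means at precisely the scale of the deficit from Step~1, yielding
\[
\P\bigl(T_{F(i),(n+\ell,n)}\geq T_{F(i^\ast),(n+\ell,n)}\bigr) \;\leq\; e^{-c'\,4^k M^2}.
\]
Polymer ordering (Lemma~\ref{l:porder}) transfers this from representatives to all of $\mathcal A_k$ at constant cost, and summing yields $\P(|I-i^\ast|>Mn^{2/3})\leq \sum_{k\geq 0} C\,2^k M\,e^{-c'4^k M^2}\leq C'Me^{-c'M^2/2}\leq \delta$ for $M=M(\delta,\ell,\rho)$ sufficiently large.

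\emph{Main obstacle.} The three scales in Step~1 --- the quadratic mean deficit $M^2 n^{1/3}$ coming from KPZ $2/3$-scaling, the $\sqrt{|i-i^\ast|}$ correction from transversal fluctuations of $S_\eta$, and the $n^{1/3}$ Tracy--Widom passage-time fluctuations --- all coincide exactly at $|i-i^\ast|\asymp n^{2/3}$, which is precisely why the $2/3$ exponent appears in the conclusion. Only the rapid multi-scale decay $e^{-c\,4^k M^2}$ absorbs the polynomially-many representative indices across the dyadic annuli without inflating $M$ with $n$; a single-scale union bound would force $M\gtrsim\sqrt{\log n}$, falling short of the $M=M(\delta,\ell)$ required by the statement.
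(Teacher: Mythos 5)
Your overall strategy is sound and is essentially the "balance expectation against fluctuation" computation that the paper itself declines to write out: note that the paper does not prove this proposition internally, but instead routes through the deterministic line $\mathbb{L}$ (quoting Lemma 4.3 of \cite{FO17} for the statement that points of $\mathbb{L}$ at distance $\geq Mn^{2/3}$ from $(nx_0,ny_0)$ lose to the maximizer) and then uses a Chernoff bound to say $|S_\eta - \mathbb{L}| = O(n^{1/2})$, absorbing the random staircase into the deterministic statement. You instead work directly on the random staircase, conditioning on $F$ and folding its $\sqrt{|i-i^*|}$ fluctuations (Bernstein) into the mean deficit; your gradient computation at $(nx_0,ny_0)$ (vanishing derivative along $(1-\rho,-\rho)$, derivative $-1/(\rho(1-\rho))$ along $(1,1)$, curvature $-c_\rho/n$) is correct, and this organization is a legitimate, somewhat more self-contained alternative to the paper's reduction.

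There are, however, two places where your write-up as stated would not go through. First, the claim that ``polymer ordering (Lemma \ref{l:porder}) transfers this from representatives to all of $\mathcal{A}_k$ at constant cost'' is not justified: Lemma \ref{l:porder} orders geodesics geometrically but does not compare passage times, and the event $\{I\in\mathcal{A}_k\}$ forces you to control $\max_{i\in\mathcal{A}_k} T_{F(i),(n+\ell,n)}$, i.e.\ a supremum over $\sim 2^kMn^{2/3}$ starting points per annulus. A per-point union bound reintroduces the $n^{2/3}$ factor and hence the $M\gtrsim\sqrt{\log n}$ loss you correctly identify as fatal; the standard repair is a sup-version of the moderate deviation bound for passage times from an interval of starting points (e.g.\ Propositions 10.1 and 10.5 of \cite{BSS14}, which this paper itself invokes in the proof of Proposition \ref{p:exclose}) or an explicit chaining step over each $n^{2/3}$-block; note that crude coordinate-wise domination by a block corner fails here because shifting the start point by $O(n^{2/3})$ in the $(1,1)$ direction changes the mean by $O(n^{2/3})\gg 4^kM^2n^{1/3}$. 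Second, Theorem \ref{t:moddevdiscrete} is stated only for slopes $h\in(1/\psi,\psi)$ with constants depending on $\psi$, so it cannot be applied verbatim in the far annuli where the rectangle from $F(i)$ to $(n+\ell,n)$ degenerates (first or second coordinate of the displacement much smaller than the other); those regimes must be handled separately, which is easy since the mean deficit there is macroscopic (order $n$) and one can dominate $T_{F(i),(n+\ell,n)}$ by a passage time to an enlarged target with bounded aspect ratio, but it needs to be said. (A minor further point: the truncation $|i|\leq Cn$ is automatic only outside an event of exponentially small probability for the walk $F$, not deterministically.) With these two ingredients supplied, your argument gives the proposition.
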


The proof of this proposition follows from a computation balancing expectation and fluctuation and using Theorem \ref{t:moddevdiscrete} to bound the tails. We shall omit the proof. The argument is by now standard and has been used a number of times in bounding transversal fluctuation for geodesics in various polymer models in KPZ universality class; see e.g.\ \cite[Theorem 11.1]{BSS14}. In the setting of point-to-line last passage percolation this was considered in the very recent preprint \cite{FO17}. Indeed, Lemma $4.3$ of \cite{FO17} shows that the geodesic $(n+\l,n)$ to any point on $\L$ that is more than a distance of $Mn^{2/3}$ from $(nx_0,ny_0)$ is smaller than the geodesic to $(nx_0,ny_0)$ with probability close to one. Also, by suitably applying Chernoff bound, one can show that $|S_\eta(v)-L(v)|=O(n^{1/2})$ for $|v|=O(n)$ with high probability, implying the proposition.

\subsection{Convergence to Product Bernoulli Measure}
We shall complete the proof of Theorem \ref{t:mainst} now. Recall the invariant measure $Q$ constructed in Section \ref{s:inv} and also recall the definition of $\rho<\frac{1}{2}$ from Remark \ref{r:rho}. For an interval $[a,b]$ let $Q_{[a,b]}$ be the projection of $Q$ onto the coordinates in $[a,b]$. It suffices to prove that for each fixed $L\in \N$
\begin{equation}
\label{limitBernoullir}
Q_{[k,k+L]}(\beta_0,\ldots,\beta_L)\overset{k\rightarrow\infty}{\longrightarrow} \rho^{\sum_i \beta_i}(1-\rho)^{L+1-\sum_i\beta_i}, \mbox{\ \ and }
\end{equation}
\begin{equation}
\label{limitBernoullil}
Q_{[-k-L,-k]}(\beta_0,\ldots,\beta_L)\overset{k\rightarrow\infty}{\longrightarrow} \rho^{L+1-\sum_i \beta_i}(1-\rho)^{\sum_i\beta_i}
\end{equation}
for every $\underline{\beta}=(\beta_0,\ldots, \beta_{L})\in \{0,1\}^{L+1}$. This will complete the proof of Theorem \ref{t:mainst} with $Q=\nu_*$. We shall only prove \eqref{limitBernoullir} and the second equation will follow from an identical proof.

We first set up the following notations. Here $k'=ck$ is a constant multiple of $k$ and $n\gg k$. The dependence among the various parameters is summarised below.
\begin{itemize}
\item[1.] $L$ denotes a fixed constant.
\item[2.] $\delta>0$ will denote some predefined quantity however small.
\item[3.] $R$ denotes a sufficiently large constant, to be chosen appropriately later, depending only on $L,\delta$.
\item[4.] $k$ is chosen large enough depending on $L,\delta, R$.
\item[5.] $s$ is chosen large enough depending on $k,L,\delta,R$.
\item[6.] $n\geq s$.
\end{itemize}
 Fix $\rho<\frac{1}{2}$ from Remark \ref{r:rho}. We shall call the LPP model corresponding to the stationary TASEP with product $\mbox{Ber}(\rho)$ configuration as considered in the previous subsection the stationary model, the geodesics from $(m+\l,m)$ to the random line $S_\eta$ as $\Gamma^S_{m+\l,m}$ and its weight as $T_{m+\l,m}^S$. 

Recall that $x_1=n-a(\varepsilon)k$ was defined for the reinforced model in Proposition \ref{reinforcedx1} where $a(\varepsilon)$ was calculated in \eqref{e:defx1} in the Introduction. Also $(x_0,y_0)$ was defined in the stationary model in \eqref{defofx0}. Fix $L,R \in \N$. Define $k'$ such that 
\[k'-k'x_0=k-(R+L).\]
Let $v$ be the vertex where the line joining $(x_1,x_1)$ to $(n+k,n)$ intersect the vertical line $x=n+R+L$, i.e., $v=(v_1,v_2)=(n+R+L,n-\frac{a(\varepsilon)(k-R-L)}{a(\varepsilon)+1})$, and for the reinforced model define
\[B_R:=\mbox{Box}(v,(n+k,n)).\]
Also, for the stationary model, define 
\[B_S:=\mbox{Box}((k'x_0,k'y_0),(k',k')).\]

Since $\rho$ is chosen such that the slopes of the line joining $(x_1,x_1)$ to $(n+k,n)$ match that of the line joining $(k'x_0,k'y_0)$ to $(k',k')$,  hence, the dimensions of the two boxes, $B_S$ in the stationary model, and $B_R$ in the reinforced model are same. 

We consider $2L+R$-sized boxes at the vertices $(k',k')$ and $(n+k,n)$ of the two boxes $B_S$ and $B_R$. To be precise, consider the box Box$((k'-L,k'-L),(k'+R+L,k'+R+L)$ in the stationary model and let $D_S$ be the $L$ sized strip along and above the diagonal of this box, i.e., $D_S$ is the quadrilateral with endpoints $(k'-L,k'),(k',k'),(k'+R,k'+R),(k'+R,k'+R+L)$. Similarly consider the box Box$((n+k-L,n-L),(n+k+R+L,n+R+L))$ in the reinforced model, and let $D_R$ be the $L$ sized strip along and above the diagonal of this box. 

Also slightly enlarge the two boxes $B_S$ and $B_R$, so that $B_S^2:=$ Box$((k'x_0,k'y_0),(k'+R+L,k'+R+L)$, and $B_R^2:=$ Box($v,(n+k+R+L,n+R+L)$). Observe that both the boxes $B_S^2$ and $B_R^2$ have i.i.d.\ $\mbox{Exp}(1)$ random variables at each interior vertex. See Figure \ref{f:couplemain}.

\begin{figure}[h] 
\centering
\begin{tabular}{c}
\includegraphics[width=0.4\textwidth]{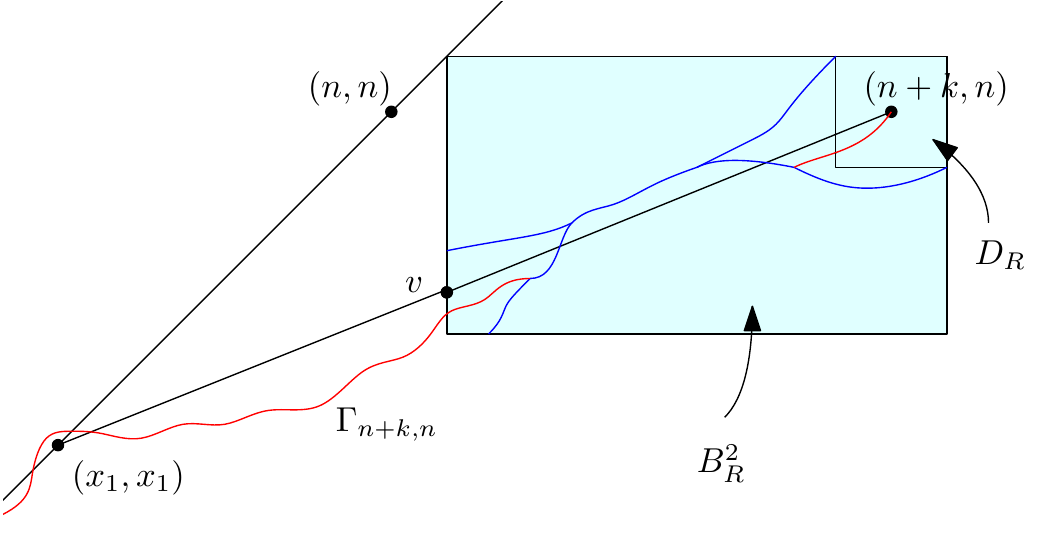} \\
(a)\\
\includegraphics[width=0.4\textwidth]{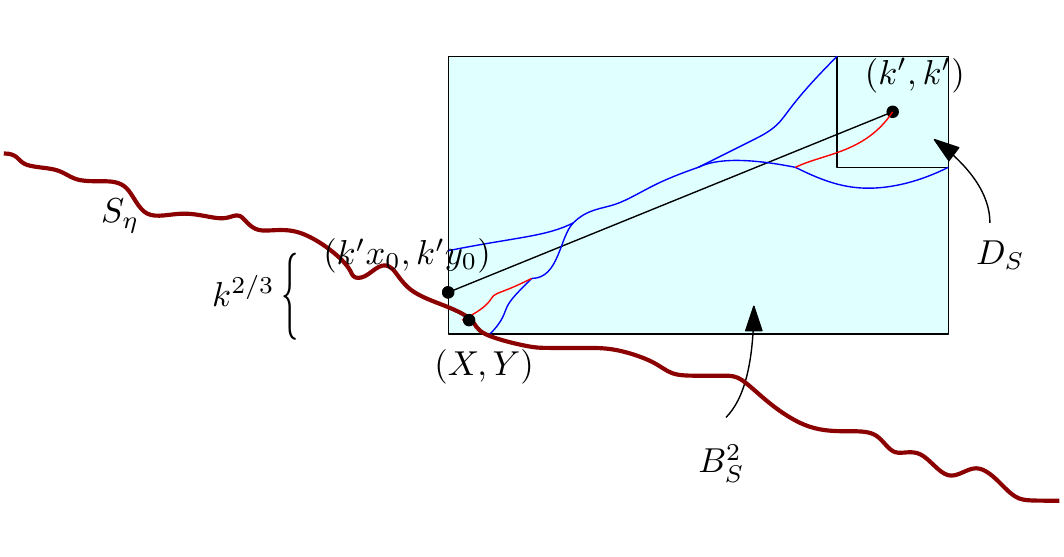}\\(b)
\end{tabular}
\caption{Coupling between TASEP with a slow bond in (a) and Stationary TASEP with density $\rho$ in $(b)$. The point $v$ in (a) corresponds to the point $(k'x_0, k'y_0)$ in (b) and the point $(n+k,n)$ in (a) corresponds to the point $(k',k')$ in (b). We couple the two systems so the the configuration of passage times in the box $B_{R}^2$ in (a) is identical (upto translation) to that in $B_{S}^2$ in (b). We use the fact that the geodesics $\Gamma_{0,u}$ for all $u$ in $D_{R}$ pass through some point close to $v$, and the geodesics $\Gamma_{u}^{S}$ for all $u\in D_{S}$ pass through some point close to $(k'x_0,k'y_0)$ and the coalescence result Theorem \ref{pathsmeet} to argue that the pairwise difference of those geodesics are identical in the two systems with probability close to one if $k$ is large.} 
\label{f:couplemain}
\end{figure}

We have the following basic lemma that relates the expected occupation measures in the reinforced and stationary models. 

\begin{lemma}\label{l:occmeas}
Fix $L,R\in \N$ and consider the above set up. Let $\mathcal{H}_S$ be the event there exists some vertex $u$ such that $u\in \bigcap_{(x,y)\in D_S} \Gamma^S_{x,y}\bigcap_{(x,y)\in D_S} \Gamma^0_{(k'x_0,k'y_0),(x,y)} $. Similarly let $\mathcal{H}_R$ be the event that there exists some vertex $u'$ such that $u'\in \bigcap_{(x,y)\in D_R}\Gamma_{(0,0),(x,y)}\bigcap_{(x,y)\in D_R}\Gamma_{v,(x,y)}$. Then on the event $\mathcal{H}_S\cap \mathcal{H}_R$, under the coupling of all exponential random variables at the corresponding vertices in the two boxes $B_S^2$ and $B_R^2$, for any $A\subseteq \{0,1\}^{L+1}$,
\begin{equation}\label{coupling}
\frac{1}{R}\int_{T_{\mathbf{0},(n+k,n)}}^{T_{\mathbf{0},(n+k+R,n+R)}}\ind(\eta_t[k,k+L]\in A)dt=\frac{1}{R}\int_{T^S_{k',k'}}^{T^S_{k'+R,k'+R}}\ind(\eta_t^S[1,L+1]\in A)dt,
\end{equation}
where $\eta_t$ is the configuration of the TASEP with a slow bond started from the step initial condition and $\eta_t^S$ is the configuration of the stationary TASEP with density $\rho$, and $\eta_t[k,k+L]$ and $\eta_t^S[1,L+1]$ are the configurations restricted to $[k,k+L]$ and $[1,L+1]$.
\end{lemma}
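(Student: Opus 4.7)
The plan is to exploit the TASEP-LPP dictionary on both sides of \eqref{coupling} to reduce the equality of occupation measures to an equality of pairwise differences of last passage times, and then to use the coalescence events $\mathcal{H}_R$ and $\mathcal{H}_S$ together with the coupling of weights to establish that equality. First, an obvious extension of Lemma \ref{l:occden} (the proof of which uses only the TASEP/LPP dictionary and is insensitive to where the interval sits) shows that the left-hand side of \eqref{coupling} equals $f\bigl(\{T_{\mathbf{0},(x,y)}\}_{(x,y)\in D_R}\bigr)$ for some function $f = f_{R,L,A}$ that is translation invariant in its arguments. Applying Proposition \ref{p:couplegeneral} in place of the step-initial-condition dictionary, the right-hand side of \eqref{coupling} can be written as $\tilde f\bigl(\{T^S_{(x,y)}\}_{(x,y)\in D_S}\bigr)$ for the analogous function $\tilde f$; since $D_R$ and $D_S$ have identical shape and the translation $\phi$ that maps $v$ to $(k'x_0,k'y_0)$ also maps $D_R$ bijectively to $D_S$ (and in particular the diagonal vertices of $D_R$ onto the diagonal vertices of $D_S$), the translation invariance of the dictionary gives $\tilde f(\cdot) = f(\phi^{-1}(\cdot))$. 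Hence it suffices to show that the translation-equivalence classes of $\{T_{\mathbf{0},(x,y)}\}_{(x,y)\in D_R}$ and $\{T^S_{\phi(x,y)}\}_{(x,y)\in D_R}$ agree.

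Next, the event $\mathcal{H}_R$ says that there is a common vertex $u'$ through which both $\Gamma_{\mathbf{0},(x,y)}$ and $\Gamma_{v,(x,y)}$ pass for every $(x,y)\in D_R$. Splitting these geodesics at $u'$ gives
\[
T_{\mathbf{0},(x,y)} - T_{\mathbf{0},(x',y')} = T_{u',(x,y)} - T_{u',(x',y')} = T_{v,(x,y)} - T_{v,(x',y')}
\]
for all $(x,y),(x',y')\in D_R$, so the pairwise differences of passage times from $\mathbf{0}$ coincide with those from $v$. Running exactly the same argument on $\mathcal{H}_S$ (with $u$ in place of $u'$ and the point-to-set passage times $T^S$ in place of $T_{\mathbf{0}, \cdot}$), the pairwise differences of the $T^S_{(x,y)}$'s for $(x,y)\in D_S$ coincide with the pairwise differences of $T^0_{(k'x_0,k'y_0),(x,y)}$.

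Finally, the coupling identifies the exponential weights at corresponding vertices of $B_R^2$ and $B_S^2$ via $\phi$. The crucial geometric fact is that the box $B_R^2$ lies (weakly) below the reinforced diagonal, with only the corner $(n+R+L,n+R+L)$ possibly on it; consequently every up/right path from $v$ to any $(x,y)\in D_R$ stays in $B_R^2$ and does not use any reinforced weight. Thus $T_{v,(x,y)}$ is really a point-to-point last passage time in the i.i.d.\ Exp$(1)$ environment inside $B_R^2$, which by the coupling equals $T^0_{(k'x_0,k'y_0),\phi(x,y)}$. Combining the previous paragraph and applying the translation-invariant function $f$ to both sides yields \eqref{coupling}.

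The step I expect to require the most care is the verification that $B_R^2$ avoids the reinforced diagonal and that both coalescence vertices $u'$ and $u$ (which are forced by $v\preceq u'\preceq (x,y)$ and the analogous statement for $u$) indeed lie inside the respective boxes, so that the identifications of $T_{v,(x,y)}$ with $T^0_{(k'x_0,k'y_0),\phi(x,y)}$ under the coupling involve only weights that have been matched. Once this geometric bookkeeping is in place, the rest is a routine application of the path-splitting identity and the translation invariance of $f$; no new probabilistic estimate is needed beyond what defines $\mathcal{H}_R$ and $\mathcal{H}_S$.
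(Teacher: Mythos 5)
Your argument is correct and follows essentially the same route as the paper: Lemma \ref{l:occden} (together with the dictionary of Proposition \ref{p:couplegeneral}) turns both occupation densities into the same translation-invariant function of local passage times, the coalescence events $\mathcal{H}_R,\mathcal{H}_S$ reduce the relevant pairwise differences to passage times from $v$ and from $(k'x_0,k'y_0)$, and the coupling of the weights in $B_R^2$ and $B_S^2$ identifies these. The only place you are slightly more explicit than the paper is the geometric check that paths from $v$ meet no reinforced weight except possibly the single corner $(n+R+L,n+R+L)$ of $B_R^2$, a caveat the paper handles only implicitly by remarking that the interiors of $B_R^2$ and $B_S^2$ carry i.i.d.\ $\mbox{Exp}(1)$ weights.
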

\begin{proof} Let $A_S=\{\Gamma_{x,y}^S:(x,y)\in D_S\}, A_S^0=\{\Gamma^0_{(k'x_0,k'y_0),(x,y)}:(x,y)\in D_S\}, A_R=\{\Gamma_{(0,0),(x,y)}:(x,y)\in D_R\}, A_R^0=\{\Gamma_{v,(x,y):(x,y)\in D_R}\}$. By Lemma \ref{l:occden} it is easy to see that the occupation density 
$\frac{1}{R}\int_{T^S_{k',k'}}^{T^S_{k'+R,k'+R}}\ind(\eta_t^S[1,L+1]\in A)dt=f(A_S)$ for some function $f$ such that $f(\textbf{x}+c)=f(\textbf{x})$. Hence, on $\mathcal{H}_S$, since the differences in the lengths of the maximal paths in the set $A_S$ are the same as the differences in the lengths of the corresponding maximal paths starting from $(k'x_0,k'y_0)$,
\[\frac{1}{R}\int_{T^S_{k',k'}}^{T^S_{k'+R,k'+R}}\ind(\eta_t^S[1,L+1]\in A)dt=f(A_S^0).\]
Similarly, on $\mathcal{H}_R$,
\[\frac{1}{R}\int_{T_{\mathbf{0},(n+k,n)}}^{T_{\mathbf{0},(n+k+R,n+R)}}\ind(\eta_t[k,k+L]\in A)dt=f(A_R^0).\]
Since under the coupling, $f(A_S^0)=f(A_R^0)$, the result follows. 
\end{proof}

The next proposition says that the expected occupation measures in the reinforced and stationary models are close.
\begin{proposition}\label{p:exclose}
Fix  $L,R\in \N$ and $A\subseteq \{0,1\}^{L+1}$ and $\delta>0$. Let $Z_R=\frac{1}{R}\int_{T_{\mathbf{0},(n+k,n)}}^{T_{\mathbf{0},(n+k+R,n+R)}}\ind(\eta_t[k,k+L]\in A)dt$ and $Z_S=\frac{1}{R}\int_{T^S_{k',k'}}^{T^S_{k'+R,k'+R}}\ind(\eta_t^S[1,L+1]\in A)dt$ be the occupation densities in the reinforced and stationary models. Then there exist positive constants $C,c$ depending only on $\delta,L,R$ (and not on $k$) such that
\[|\E Z_R-\E Z_S|\leq \sqrt{\frac{C}{ k^c}+\delta}. \]
\end{proposition}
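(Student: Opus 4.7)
The plan is to define a coupling between the two systems under which the i.i.d.\ $\mathrm{Exp}(1)$ weights in the box $B_R^2$ of the reinforced model are identified with those in $B_S^2$ of the stationary model via the natural translation sending $v \mapsto (k'x_0, k'y_0)$; all other randomness (the reinforced diagonal weights in the first model and the initial configuration $\eta$ determining $S_\eta$ in the second) is taken independent. This is consistent since $B_R^2$ lies strictly below the diagonal and thus contains no reinforced weights. Under this coupling, Lemma \ref{l:occmeas} gives $Z_R = Z_S$ pointwise on $\mathcal{H}_R \cap \mathcal{H}_S$. Since $Z_R$ is bounded by $(T_{\mathbf{0},(n+k+R,n+R)} - T_{\mathbf{0},(n+k,n)})/R$ (analogously for $Z_S$), Lemma \ref{l:Tndev} together with standard LPP moment bounds yields $\E Z_R^2, \E Z_S^2 \leq C_0$, a constant depending only on $R$. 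Cauchy--Schwarz then gives
\[
|\E Z_R - \E Z_S| = \bigl|\E(Z_R - Z_S)\mathbf{1}_{(\mathcal{H}_R \cap \mathcal{H}_S)^c}\bigr| \leq \sqrt{\E(Z_R-Z_S)^2}\cdot \sqrt{\P((\mathcal{H}_R \cap \mathcal{H}_S)^c)},
\]
which explains the square root in the stated bound. It suffices to show $\P(\mathcal{H}_R^c) \leq C k^{-c}$ and $\P(\mathcal{H}_S^c) \leq \delta + C k^{-c}$.

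For $\mathcal{H}_R$: by Proposition \ref{reinforcedx1}, with probability at least $1 - C'e^{-ck^{1/20}}$, every geodesic $\Gamma_{(0,0),(x,y)}$ for $(x,y) \in D_R$ touches the diagonal for the last time within $k^{3/5}$ of $(x_1,x_1)$ (the endpoint varies over an $O(L)$-sized set, so a union bound is absorbed into the constants). By construction $v$ lies on the macroscopic line from $(x_1,x_1)$ to $(n+k,n)$ at longitudinal distance of order $k$ from both, with slope $a(\varepsilon)/(a(\varepsilon)+1)$. The portion of $\Gamma_{(0,0),(x,y)}$ past its last diagonal point is a point-to-point geodesic in an i.i.d.\ $\mathrm{Exp}(1)$ environment, and with high probability enters $B_R^2$ through its southwest boundary. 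An application of Theorem \ref{pathsmeet} in the direction determined by this slope, for starting points near $v$ and endpoints inside $D_R$ (which differ by $O(1)$), yields a common coalescence vertex $u'$ inside $B_R$, with failure probability $\leq C k^{-c}$. Because past $v$ any such $\Gamma_{(0,0),(x,y)}$ behaves exactly as $\Gamma_{v,(x,y)}$, the same $u'$ works for both families, establishing $\mathcal{H}_R$.

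For $\mathcal{H}_S$: by Proposition \ref{p:approximation} applied to each endpoint in $D_S$ (union bound over the $O(L)$ choices), with probability at least $1-\delta$ the hitting point of $\Gamma^S_{x,y}$ with $S_\eta$ lies within $M(k')^{2/3}$ of $(k'x_0,k'y_0)$ for a suitable $M = M(\delta,L)$. On this event the portion of each $\Gamma^S_{x,y}$ inside $B_S^2$ coincides with a point-to-point geodesic in an i.i.d.\ $\mathrm{Exp}(1)$ environment from a vertex near $(k'x_0,k'y_0)$ to $(x,y)$. Theorem \ref{pathsmeet} in this setup produces a common coalescence vertex $u$ that lies simultaneously on $\{\Gamma^S_{x,y}\}$ and on the reference point-to-point geodesics $\{\Gamma^0_{(k'x_0,k'y_0),(x,y)}\}$, with failure probability $\leq C k^{-c}$. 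Combining the two bounds and inserting them into the Cauchy--Schwarz inequality above completes the proof.

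\textbf{Main obstacle.} The delicate point is ensuring that in the reinforced model the common coalescence vertex $u'$ lies on \emph{both} families of geodesics, those from $(0,0)$ and those from $v$. This requires combining the $O(k^{3/5})$ pinning bound at $(x_1,x_1)$ with the slope-matching choice of $v$, so that after unpinning each $\Gamma_{(0,0),(x,y)}$ is, inside $B_R^2$, effectively a geodesic emanating from the vicinity of $v$. The enlargement from $B_R$ to $B_R^2$ is precisely what provides the $O(k^{2/3})$ room needed for this local coalescence to complete strictly before the geodesics reach $D_R$.
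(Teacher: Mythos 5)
Your overall strategy is the same as the paper's: couple the i.i.d.\ weights in $B_R^2$ with those in $B_S^2$, invoke Lemma \ref{l:occmeas} to get $Z_R=Z_S$ on $\mathcal{H}_R\cap\mathcal{H}_S$, use uniform $\mathcal{L}^2$ bounds plus Cauchy--Schwarz to reduce to estimating $\P((\mathcal{H}_R\cap\mathcal{H}_S)^c)$, and bound the two events via Proposition \ref{p:approximation}, Proposition \ref{reinforcedx1}, Theorem \ref{pathsmeet} and polymer ordering. However, your treatment of $\mathcal{H}_R$ has a genuine gap in two respects. First, your claim $\P(\mathcal{H}_R^c)\leq Ck^{-c}$ is not obtainable from the tools you cite: after unpinning near $(x_1,x_1)$, the geodesics $\Gamma_{(0,0),(x,y)}$ cross the vertical line through $v$ only within transversal distance $Mk^{2/3}$ of $v$, and for \emph{fixed} $M$ the failure probability of this localization is a constant of order $e^{-cM}$, not decaying in $k$ (and Theorem \ref{pathsmeet} as stated does not allow $M$ to grow with $k$). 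One must therefore choose $M=M(\delta)$ and accept a $\delta$-sized contribution to $\P(\mathcal{H}_R^c)$ as well, exactly as the paper does in \eqref{e:passur}; your final bound survives only because the proposition grants a $\delta$ inside the square root, but the intermediate claim as stated is wrong.

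Second, you describe the portion of $\Gamma_{(0,0),(x,y)}$ past its last diagonal point as "a point-to-point geodesic in an i.i.d.\ $\mathrm{Exp}(1)$ environment." It is not: it is the maximizer among paths \emph{constrained to avoid the reinforced diagonal}, so the standard unconstrained transversal fluctuation estimate (Theorem \ref{t:carsestimate}) does not apply to it directly. The paper handles this by sandwiching it, via polymer ordering, between the unconstrained geodesic $\Gamma_1'$ from $(x_1+k^{3/5},x_1+k^{3/5})$ (which can only lie above) and a diagonal-avoiding geodesic $\Gamma_2$ from $(x_1-k^{3/5},x_1-k^{3/5})$, and controls the latter with the constrained-path fluctuation estimate (Proposition 12.2 of \cite{BSS14}) together with a first-order expectation comparison. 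Relatedly, your phrase "past $v$ any such $\Gamma_{(0,0),(x,y)}$ behaves exactly as $\Gamma_{v,(x,y)}$" is not literally true, since these geodesics only pass within $O(Mk^{2/3})$ of $v$; what forces a common vertex on \emph{both} families (from $(0,0)$ and from $v$) is precisely the coalescence event $F_R$ for the two extreme geodesics launched from the endpoints of the segment $U_R$, combined with polymer ordering. With these two repairs your argument becomes the paper's proof.
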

\begin{proof} First observe that, following similar arguments as in Lemma \ref{uniformintegrable}, the random variables $Z_S$ and $Z_R$, which are bounded by $\frac{T_{k'+R,k'+R}^S-T_{k',k'}^S}{R}$ and $\frac{T_{\mathbf{0},(n+k+R,n+R)}-T_{\mathbf{0},(n+k,n)}}{R}$, are $\mathcal{L}^2$ bounded. Let $C^*$ be the sum of squares of these two $\mathcal{L}^2$ bounds, so that $C^*$ is an absolute positive constant, not depending on $k,R,\delta$.

Recall the events $\mathcal{H}_S, \mathcal{H}_R$ defined in Lemma \ref{l:occmeas}. Let $U_S$ be the line segment joining $(k'x_0+M(k')^{2/3},k'y_0-2M(k')^{2/3})$ and $(k'x_0+M(k')^{2/3},k'y_0+2M(k')^{2/3})$ for some fixed $M=M(\delta)$. Let $E_S$ be the event that the geodesics in $A_S:=\{\Gamma^S_{x,y}:(x,y)\in D_S\}$ and the geodesics in $A_S^0=\{\Gamma^0_{(k'x_0,k'y_0),(x,y)}:(x,y)\in D_S\}$ pass through the line segment $U_S$. Then Proposition \ref{p:approximation} and Theorem \ref{t:carsestimate} together with polymer ordering (Lemma \ref{l:porder}) imply that, for the given $\delta>0$, one can choose $M=M(\delta)$ large enough such that
\[\P(E_S)\geq 1-\frac{\delta}{2C^*}.\]
Let $F_S$ denote the event that the geodesic from $(k'x_0+M(k')^{2/3},k'y_0-2M(k')^{2/3})$ to $(k'+L+R,k'-L)$ and the geodesic from $(k'x_0+M(k')^{2/3},k'y_0+2M(k')^{2/3})$ to $(k'-L,k'+L+R)$ meet together. Then it follows from Theorem \ref{pathsmeet} that
\[\P(F_S)\geq 1-\frac{C'}{k^{c'}},\]
for some $C',c'$ depending on $M$ and hence on $\delta$, but not on $k$.
Due to polymer ordering, 
\[\P(\mathcal{H}_S)\geq \P(E_S\cap F_S)\geq 1-\frac{\delta}{2C^*}-\frac{C'}{k^{c'}},\]

Now we consider the reinforced model. Recall that $v=(v_1,v_2)=(n+R+L,n-\frac{a(\varepsilon)(k-R-L)}{a(\varepsilon)+1})$. Let $\Gamma_1$ be the geodesic from $(x_1+k^{3/5},x_1+k^{3/5})$ to $(n+k-L,n+L+R)$ that avoids the diagonal line segment joining $(0,0)$ to $(n+R+L,n+R+L)$. Also let $\Gamma_2$ be the geodesic from $(x_1-k^{3/5},x_1-k^{3/5})$ to $(n+k+L+R,n-L)$ that avoids the diagonal line segment joining $(0,0)$ to $(n+R+L,n+R+L)$. Then we claim that one can choose $M=M(\delta)$ large enough such that
\begin{equation}\label{e:passur}
\P\left(\{|\Gamma_1(n+R+L)-v_2|\leq Mk^{2/3}\}\cap\{|\Gamma_2(n+R+L)-v_2|\leq Mk^{2/3}\} \right)\geq 1-\frac{\delta}{2C^*}.
\end{equation}
To see this, define $\Gamma_1'$ to be the geodesics from $(x_1+k^{3/5},x_1+k^{3/5})$ to $(n+k-L,n+L+R)$  with the diagonal line segment joining $(0,0)$ to $(n+R+L,n+R+L)$ not reinforced (i.e. corresponding to the usual exponential DLPP). Observe that $\Gamma_1$ can never be above $\Gamma_1'$ and by Theorem \ref{t:carsestimate}, one can choose $M(\delta)$ such that $\P(\Gamma'_1(n+R+L)\leq v_2+Mk^{2/3})\geq 1-\frac{\delta}{4C^*}$. Also, if $\Gamma_2(n+R+L)\leq v_2-Mk^{2/3}$, then for any such $u_2\leq v_2-Mk^{2/3}$, there exists some constant $\alpha$ such that for $u=(n+R+L,u_2)$,
\begin{equation*}
\E(T^0_{(x_1-k^{3/5},x_1-k^{3/5}),u})+\E(T^0_{u,(n+k+L+R,n-L)})\leq \E(T^0_{(x_1-k^{3/5},x_1-k^{3/5}),(n+k+L+R,n-L)})-\alpha M k^{1/3}.
\end{equation*}
Using Proposition $12.2$ of \cite{BSS14} for fluctuations of constrained paths, and moderate deviation estimates of supremum and infimum of geodesic lengths in Proposition $10.1$ and $10.5$ of \cite{BSS14}, and using standard arguments, one gets $\P(\Gamma_2(n)\geq v_2-Mk^{2/3})\geq 1-\frac{\delta}{4C^*}$ by choosing $M$ large.

\begin{figure}[h] 
\centering
\includegraphics[width=0.5\textwidth]{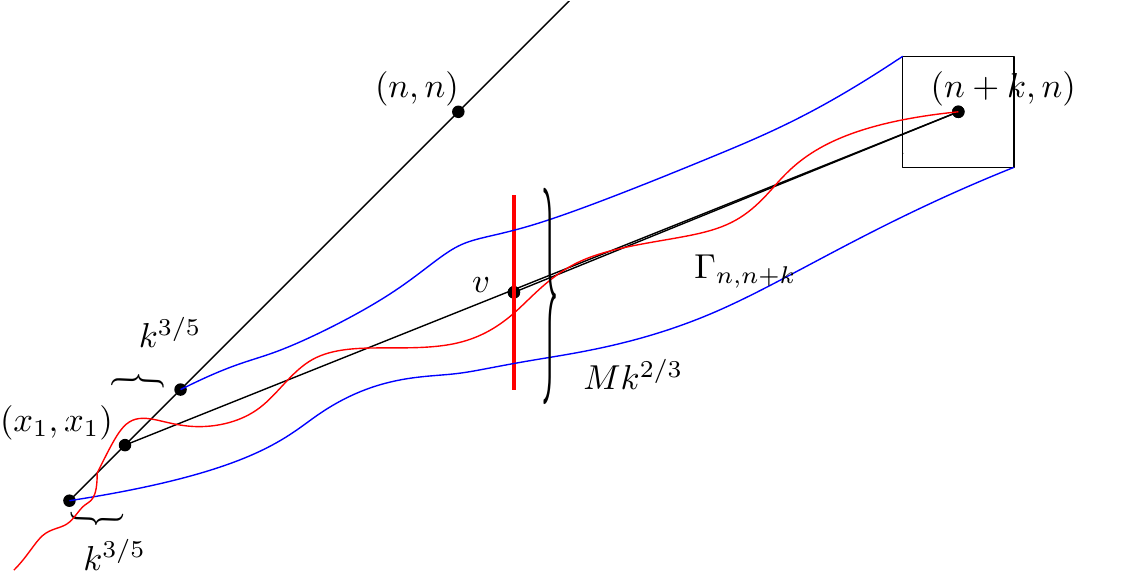} 
\caption{On the high probability event that $\Gamma_{n+k,n}$ leaves the diagonal between $x_1-k^{3/5}$ and $x_1+k^{3/5}$, it is sandwiched between the two geodesic marked in blue in the figure. By local path regularity estimate both of the blue paths are likely to intersect the vertical line through $v_2$ at a distance no more than $Mk^{2/3}$ for some large constant $M$, and hence the same is true for $\Gamma_{n+k,n}$ with probability close to one.} 
\label{f:closeR}
\end{figure}


Let $U_R$ be the line segment joining $(n+R+L,v_2-Mk^{2/3})$ and $(n+R+L,v_2+Mk^{2/3})$. Let $E_R$ be the event that the geodesics in $A_R=\{\Gamma_{(0,0),(x,y)}:(x,y)\in D_R\}$ pass through the line segment $U_R$. Then Proposition \ref{reinforcedx1} and equation \eqref{e:passur} and polymer ordering imply 
\[\P(E_R)\geq 1-\frac{\delta}{2C^*}-C'e^{-ck^{1/20}}.\]
Let $F_R$ denote the event that the geodesic from $(n+R+L,v_2-Mk^{2/3})$ to $(n+k+L+R,n-L)$ and the geodesic from $(n+R+L,v_2+Mk^{2/3})$ to $(n+k-L,n+L+R)$ meet together. Then it follows from Theorem \ref{pathsmeet} that there exists constants $C',c'$ depending on $M$ and hence on $\delta$ such that
\[\P(F_R)\geq 1-\frac{C'}{k^{c'}}.\]
Hence, again by polymer ordering,
\[\P(\mathcal{H}_R)\geq \P(E_R\cap F_R)\geq 1-\frac{C'}{k^{c'}}-\frac{\delta}{2C^*}. \]
 
 Let $Z=Z_R-Z_S$ under the coupling in Lemma \ref{l:occmeas}. Hence, due to Lemma \ref{l:occmeas} and using Cauchy-Schwarz inequality, 
\[|\E Z_R-\E Z_S|\leq \E|Z|=\E |Z|\ind_{(\mathcal{H}_S\cap \mathcal{H}_R)^c}\leq (C^*)^{1/2}\sqrt{\P((\mathcal{H}_S\cap \mathcal{H}_R)^c)}\leq \sqrt{\frac{C}{ k^c}+\delta}.\]
\end{proof}

To finish off the proof we shall show that the occupation measures are indeed close to the measure $Q$ and the product Bernoulli measures respectively. To this end, we have the following lemma.

\begin{lemma}
\label{LHSlimit}
Fix $L,R\in \N$ and $A\subseteq \{0,\}^{L+1}$. Recall that $T_m:=T_{\mathbf{0},(m,m)}$ in the reinforced model and $Z_R:=\frac{1}{R}\int_{T_{\mathbf{0},(n+k,n)}}^{T_{\mathbf{0},(n+k+R,n+R)}}\ind(\eta_t[k,k+L]\in A)dt$. Then for $n\geq s\gg k^2$, $k\geq R$,
\[\left|\E\left(Z_R\right)-\E\left(\frac{1}{sR}\int_{T_{n}}^{T_{n+sR}}\ind(\eta_t[k,k+L]\in A)dt\right) \right|\leq 
\frac{C'k}{sR}+C'e^{-cs^{1/4}}\leq \frac{C'}{kR}+C'e^{-ck^{1/2}},\]
where $C',c$ are constants not depending on $k,n,s,R$.
\end{lemma}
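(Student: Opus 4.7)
The plan is to split $[T_n, T_{n+sR}]$ into boundary contributions and $s$ approximately identically distributed bulk pieces, exploiting coalescence and translation invariance of the reinforced LPP weights.

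\emph{Boundary and bulk split.} Since $T_n \leq T_{\mathbf{0},(n+k,n)}$ and $T_{n+sR} \leq T_{\mathbf{0},(n+k+sR,n+sR)}$ by monotonicity of passage times in the target, writing $X_j := \int_{T_{\mathbf{0},(n+jR+k,n+jR)}}^{T_{\mathbf{0},(n+(j+1)R+k,n+(j+1)R)}} \ind(\eta_t[k,k+L] \in A)\,dt$ gives
\[\int_{T_n}^{T_{n+sR}} \ind(\eta_t[k,k+L] \in A)\,dt = \sum_{j=0}^{s-1} X_j + E_1 - E_2,\]
where the boundary terms satisfy $|E_1| \leq T_{\mathbf{0},(n+k,n)} - T_n$ and $|E_2| \leq T_{\mathbf{0},(n+k+sR,n+sR)} - T_{n+sR}$. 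By monotonicity each is dominated by $T_{n+k}-T_n$ (respectively its translate), and superadditivity combined with Lemma \ref{l:expdiff} yields $\E|E_i| = O(k)$; note also $X_0 = R\cdot Z_R$. After division by $sR$ the boundary contribution is $O(k/(sR))$.

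\emph{Translation argument for the bulk.} The central claim is $\E X_j = \E X_0 + O(R\,e^{-cR^{\ell}})$ uniformly in $j$ for some $\ell>0$. By an analogue of Lemma \ref{l:occden} for the observation window $[k,k+L]$, $X_j$ is a translation-invariant function of the pairwise differences $\{T_{\mathbf{0},v}-T_{\mathbf{0},v'}\}$ for $v,v'$ in a parallelogram $U_j$ located near $(n+jR+k,n+jR)$. By Proposition \ref{reinforcedx1}, the last diagonal point of $\Gamma_{\mathbf{0},(n+jR+k,n+jR)}$ lies close to $x_1^{(j)} := n+jR - a(\varepsilon)k$; combining this with Corollary \ref{c:meetdiagpar}, on a high-probability event $\Omega_j$ all geodesics $\Gamma_{\mathbf{0},v}$ for $v \in U_j$ share a common last diagonal point $(m_j,m_j)$ near $(x_1^{(j)},x_1^{(j)})$. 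On $\Omega_j$ the pairwise differences reduce to $T_{(m_j,m_j),v}-T_{(m_j,m_j),v'}$; call the resulting random variable $X_j^*$. Since $x_1^{(j)} - x_1^{(0)} = jR$ and $U_j = U_0 + (jR,jR)$, the diagonal translation $(x,y) \mapsto (x+jR,y+jR)$ preserves both the i.i.d.\ $\mathrm{Exp}(1)$ off-diagonal weights and the i.i.d.\ $\mathrm{Exp}(r)$ on-diagonal weights, and carries the coalescence structure for $X_0^*$ onto that for $X_j^*$; hence $X_j^* \stackrel{d}{=} X_0^*$ and $\E X_j^* = \E X_0^*$. Cauchy--Schwarz together with the $\mathcal{L}^2$-bound $\E X_j^2 = O(R^2)$ (from standard tail bounds for passage-time differences, e.g.\ Lemma \ref{l:Tndev}) then gives $|\E X_j - \E X_j^*| \leq CR\sqrt{\P(\Omega_j^c)}$, and likewise for $X_0$.

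\emph{Combining.} Putting the pieces together,
\[\Bigl|\E Z_R - \E \frac{1}{sR}\!\int_{T_n}^{T_{n+sR}}\!\ind\,dt\Bigr| \leq \frac{1}{s}\sum_{j=0}^{s-1}\frac{|\E X_j - \E X_0|}{R} + O\!\left(\frac{k}{sR}\right).\]
The main obstacle is the coalescence step: the parallelograms $U_j$ must be set up at a scale simultaneously compatible with the diffusive transversal fluctuations of the pinned geodesic (Proposition \ref{deviationfromdiagonal}) and the transversal fluctuations of the off-diagonal portion (Proposition \ref{reinforcedx1}), and Corollary \ref{c:meetdiagpar} applied at the right scale to produce the rate $e^{-cs^{1/4}}$. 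Under the assumption $s \geq k^2 \geq R^2$ the weaker form $e^{-ck^{1/2}}$ is then immediate.
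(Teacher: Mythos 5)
Your boundary treatment (the $O(k/(sR))$ term via $T_{\mathbf{0},(n+k,n)}\leq T_{n+k}$, superadditivity and Lemma \ref{l:expdiff}) and your overall skeleton — decompose into $s$ blocks $X_j$ and show $\E X_j\approx \E X_0$ by coalescence plus translation invariance along the diagonal — are the same as the paper's. But the central step has a genuine gap. You re-root at the \emph{random} common last diagonal point $(m_j,m_j)$ of the geodesics $\Gamma_{\mathbf{0},v}$, $v\in U_j$, and then claim $X_j^*\stackrel{d}{=}X_0^*$ because the shift $(x,y)\mapsto(x+jR,y+jR)$ preserves the reinforced environment. The shift preserves the environment, but it does not preserve the root: it carries geodesics from $\mathbf{0}$ to $U_j$ onto geodesics from $(-jR,-jR)$ to $U_0$, so what you actually get is that $X_j^*$ has the law of the analogous quantity defined with root $(-jR,-jR)$, not with root $\mathbf{0}$. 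Since $m_j$ is a functional of the geodesics from the fixed origin, the joint law of $(m_j-jR,\ \text{environment near }U_0)$ need not equal that of $(m_0,\ \text{environment near }U_0)$; showing the root-dependence is negligible is itself a coalescence statement, and it is exactly the step the paper supplies. The paper re-roots at a \emph{deterministic} diagonal point at distance $\sim s$ behind the window (namely $(n+(i-1)R-L-s,\,n+(i-1)R-L-s)$), uses Corollary \ref{c:twopathsmeet} to show the geodesics from $\mathbf{0}$ and from that deterministic point share a diagonal point with probability $\geq 1-e^{-cs^{1/4}}$, and only then applies exact translation invariance to identify each block with a fixed reference variable $Z$ (defined with $n$ replaced by $s+L$), finishing with $\mathcal{L}^2$-boundedness and Cauchy--Schwarz.

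A second, related problem is quantitative: your stated central claim $\E X_j=\E X_0+O(R\,e^{-cR^{\ell}})$ is vacuous because $R$ is a fixed constant, so after dividing by $R$ and averaging over $j$ the error does not decay in $k$ or $s$ at all. Moreover the high-probability events you actually construct (Proposition \ref{reinforcedx1} and Corollary \ref{c:meetdiagpar} near the departure point) live at scale $k$, so even if repaired they could only give a $k$-dependent rate such as $e^{-ck^{1/20}}$, not the $e^{-cs^{1/4}}$ asserted in the lemma; you acknowledge this as ``the main obstacle'' but do not resolve it, whereas in the paper the $s^{1/4}$ rate comes precisely from coalescing over the length-$s$ pinned stretch of diagonal behind the window. (Minor: the $\mathcal{L}^2$ bound on the blocks should come from an analogue of Lemma \ref{uniformintegrable}, not Lemma \ref{l:Tndev}.)
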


\begin{proof} Let $T^{n,k,a}:=T_{\mathbf{0}, (n+k+a,n+a)}$. Recall the definitions of $D_R$ and $A_R$ from Lemma \ref{coupling}. Using standard arguments, it is easy to see that, for $n\geq s\gg k^2$, the geodesics in $A_R$ and the geodesics starting from $(n-L-s,n-L-s)$ to the corresponding points in $D_R$ meet the diagonal simultaneously with probability atleast $1-e^{-cs^{1/4}}$ by Corollary \ref{c:twopathsmeet}. This also holds true for $n$ replaced by $n+R,n+2R,\ldots, n+(s-1)R$.

Define the random variables 
\begin{eqnarray*}
Y_i&:=&\frac{1}{R}\int_{T^{n,k,(i-1)R}}^{T^{n,k,iR}}\ind(\eta_t[k,k+L]\in A)dt\mbox{,\ \ \ \ }i=1,2,\ldots,s,\\
Z&:=&\frac{1}{R}\int_{T^{s+L,k,0}}^{T^{s+L,k,R}}\ind(\eta_t[k,k+L]\in A)dt.
\end{eqnarray*}
Then, using standard arguments, it follows that there exist random variables $Z_i, i=1,2,\ldots,s$, such that $Z_i\overset{d}{=}Z$ for each $i$, and $\P(Y_i=Z_i \mbox{ for all } i)\geq 1-se^{-cs^{1/4}}$. Hence, using the $\mathcal{L}^2$ boundedness of the random variables and Cauchy-Schwarz inequality, as in the previous proposition, we have, 
\[|\E(Y_i)-\E(Y_1)|\leq |\E(Y_i)-\E(Z)|+|\E(Z)-\E(Y_1)|\leq C'e^{-\frac{c}{2}s^{1/4}}, \mbox{ for each } i=1,2,\ldots, s,\]
where $C'$ is some constant not depending on $n,k,R,s$. Hence,
\[\left|\E\left(\frac{1}{s}\sum_{i=1}^{s} Y_i\right)-\E(Y_1)\right|\leq C'e^{-\frac{c}{2}s^{1/4}}.\]
Now note that, 
\[\frac{1}{s}\sum_{i=1}^{s} Y_i=\frac{1}{sR}\int_{T^{n,k,0}}^{T^{n,k,sR}}\ind(\eta_t[k,k+L]\in A)dt,\]
and $\E\left(\frac{T^{n,k,0}-T_n}{sR}\right)$ and $\E\left(\frac{T^{n,k,sR}-T_{n+sR}}{sR}\right)$ are less than $\frac{C'k}{sR}$ by again using Lemma \ref{uniformintegrable}.
\end{proof}

Finally putting all of these together we get the result. 

\begin{theorem}
\label{t:finalapprox}
Let $\nu_\rho$ denote the product Bernoulli($\rho$) measure. Then, for the given $\delta>0$ and any set $A\subseteq \{0,1\}^{L+1}$,
\begin{equation}
\limsup_{k\rightarrow \infty}|Q_{[k,k+L]}(A)-\nu_\rho(A)|\leq (2\sqrt{\delta}+4\delta)(4+\varepsilon)^{-1}.
\end{equation}
\end{theorem}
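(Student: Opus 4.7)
My plan is to bound $|(4+\varepsilon)(Q_{[k,k+L]}(A)-\nu_\rho(A))|$ by a triangle inequality routed through the intermediate quantities $\E Z_R$ and $\E Z_S$, and then divide by $(4+\varepsilon)$. The first link and the second link are essentially already in place; the third link is the main remaining work.

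First, I apply Theorem \ref{t:exist} with the interval $[k,k+L]$ and window length $sR$. The bound in Theorem \ref{t:exist} is in probability, but since $\frac{1}{(4+\varepsilon)sR}\int_{T_n}^{T_{n+sR}}\ind(\eta_t[k,k+L]\in A)\,dt$ is dominated by $\frac{T_{n+sR}-T_n}{(4+\varepsilon)sR}$ which is $L^2$-bounded uniformly in $n,s,R$ (by the subexponential tails of $T_m/m$), the convergence holds in expectation as well. Thus $(4+\varepsilon)Q_{[k,k+L]}(A) = \lim_{s\to\infty}\E\bigl[\tfrac{1}{sR}\int_{T_n}^{T_{n+sR}}\ind(\eta_t[k,k+L]\in A)\,dt\bigr]$. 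Letting $s\to\infty$ in Lemma \ref{LHSlimit} then gives $|(4+\varepsilon)Q_{[k,k+L]}(A)-\E Z_R|\le \tfrac{C'}{kR}+C'e^{-ck^{1/2}}$, which is $o_k(1)$.

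Second, Proposition \ref{p:exclose} is exactly the middle link: $|\E Z_R - \E Z_S|\le \sqrt{C/k^c+\delta}$, whose $k\to\infty$ limit is $\sqrt{\delta}$ and contributes one of the two $\sqrt{\delta}$'s in the final bound.

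Third, I need $|\E Z_S - (4+\varepsilon)\nu_\rho(A)|\le \sqrt{\delta}+4\delta+o_k(1)$. Here the stationarity of $\nu_\rho$ is crucial: $\P(\eta_t^S[1,L+1]\in A)=\nu_\rho(A)$ for every deterministic $t$, so if the random window $[T^S_{k',k'},T^S_{k'+R,k'+R}]$ were replaced by a deterministic one of length exactly $R(4+\varepsilon)$, Fubini would yield $(4+\varepsilon)\nu_\rho(A)$ on the nose. The two errors are (i) the deviation of $V_{k',R}:=T^S_{k'+R,k'+R}-T^S_{k',k'}$ from its asymptotic mean $R(4+\varepsilon)$, controlled by the stationary-current identity $\rho(1-\rho)=1/(4+\varepsilon)$ together with moderate-deviation bounds analogous to Theorem \ref{t:moddevdiscrete}; and (ii) the covariance between $\ind\bigl(t\in[T^S_{k',k'},T^S_{k'+R,k'+R}]\bigr)$ and $\ind(\eta_t^S[1,L+1]\in A)$. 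For (ii) I would rerun a version of the coalescence--and--localisation argument of Proposition \ref{p:exclose}, but now entirely inside the stationary model (no reinforced diagonal is present there, which only simplifies the analysis): polymer ordering (Lemma \ref{l:porder}) together with Theorem \ref{pathsmeet} shows that the geodesics to $(k',k')$, $(k'+R,k'+R)$ and all vertices in the diagonal strip controlling $\eta_t^S[1,L+1]$ for $t$ near $T^S_{k',k'}$ coalesce through a common vertex far from the origin, so that the pairwise differences controlling the two indicators are determined by disjoint regions of the LPP environment up to an event whose probability is bounded by the same $\delta$-dependent constants that appeared in Proposition \ref{p:exclose}. A Cauchy--Schwarz argument identical to the one closing the proof of Proposition \ref{p:exclose} then gives the bound $\sqrt{\delta}+4\delta+o_k(1)$.

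Combining the three estimates and taking $\limsup_{k\to\infty}$ yields $\limsup_k|(4+\varepsilon)(Q_{[k,k+L]}(A)-\nu_\rho(A))|\le 2\sqrt{\delta}+4\delta$, and dividing by $(4+\varepsilon)$ finishes the proof. The main obstacle is the third step: although morally one is just re-running Proposition \ref{p:exclose} in the translation-invariant stationary model, care is needed because $\eta_t^S[1,L+1]$ and the stopping times $T^S_{k',k'}$, $T^S_{k'+R,k'+R}$ are \emph{a priori} both functionals of the same environment, and the quantitative decorrelation must match the constants produced by Proposition \ref{p:exclose} so that the final bound has the sharp form $2\sqrt{\delta}+4\delta$ advertised in the statement.
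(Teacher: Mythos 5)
Your first two links coincide with the paper's own proof: the paper chains $(4+\varepsilon)Q_{[k,k+L]}(A)\approx \E\bigl[\tfrac{1}{sR}\int_{T_n}^{T_{n+sR}}\ind(\eta_t[k,k+L]\in A)\,dt\bigr]\approx \E Z_R\approx \E Z_S$ using exactly Theorem \ref{t:exist} plus uniform integrability, Lemma \ref{LHSlimit} and Proposition \ref{p:exclose} (the paper keeps $s$ finite and pays an extra $2\delta$ where you pass to the $s\to\infty$ limit; either bookkeeping is acceptable, since only the total budget $2\sqrt{\delta}+4\delta$ matters).

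The gap is in your third link, the comparison of $\E Z_S$ with $(4+\varepsilon)\nu_\rho(A)$. The mechanism you propose --- coalescence of geodesics so that ``the pairwise differences controlling the two indicators are determined by disjoint regions,'' followed by the Cauchy--Schwarz step of Proposition \ref{p:exclose} --- cannot work, because there are no two disjointly determined functionals to decouple: for $t$ in the relevant range, the window indicator $\ind\bigl(t\in[T^S_{k',k'},T^S_{k'+R,k'+R}]\bigr)$ and the configuration $\eta^S_t[1,L+1]$ are both read off from where $t$ sits relative to the \emph{same} family of passage times $\{T^S_{x,y}:(x,y)\in D_S\}$. Coalescence (as used in Proposition \ref{p:exclose}) only says these passage-time differences are locally determined; that statement is equally true for $R=1$, where $\E Z_S$ has no reason to be close to $(4+\varepsilon)\nu_\rho(A)$, since the occupation density over a single inter-jump interval at the origin is biased by the conditioning at its endpoints. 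So any correct argument must use largeness of $R$ in an essentially ergodic way: the time average over $R$ consecutive inter-jump intervals converges to $(4+\varepsilon)\nu_\rho(A)$, uniformly in $k'$. This is precisely the content of the paper's Lemma \ref{l:stapprox}, proved by a size-biasing (Palm-type) argument: the gap straddling a fixed time has the size-biased law of the inter-jump gap $\Delta$, Cauchy--Schwarz with $\widetilde{\E}\Delta^2<\infty$ (Lemma \ref{uniformintegrable}) reduces the claim to the Palm measure where it no longer depends on $k'$, and there it follows from $T_R/R\to 4+\varepsilon$ and weak convergence to $\nu_\rho$ started from the post-jump distribution. Your item (i) (that $V_{k',R}$ is close to $R(4+\varepsilon)$ uniformly in $k'$) suffers from the same issue: with $R$ fixed, $\E V_{k',R}$ is not $R(4+\varepsilon)$, and point-to-point moderate deviations (Theorem \ref{t:moddevdiscrete}) do not directly control these point-to-curve increments; it too is naturally handled by the size-biasing/ergodicity route. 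In short, your step three should be replaced by (or re-derive) Lemma \ref{l:stapprox}; the covariance-via-coalescence argument as described does not close, and the constants $\sqrt{\delta}+4\delta$ claimed for it are not substantiated.
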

As this holds for all $\delta>0$ and all $A\subseteq \{0,1\}^{L+1}$, this completes the proof of Theorem \ref{t:mainst}.
\begin{proof} 
Fix $A\subseteq \{0,1\}^{L+1}$.
Observe that, following similar arguments as in Lemma \ref{uniformintegrable}, the random variables $Z_S$ (which are bounded by $\frac{T_{k'+R,k'+R}^S-T_{k',k'}^S}{R}$), are $\mathcal{L}^2$ bounded, hence uniformly integrable. This, together with stationarity, would imply that, for the given $\delta>0$, we can choose $R=R(\delta)$ large enough not depending on $k'$, such that
\[\left|\E\left(\frac{1}{R}\int_{T^S_{k',k'}}^{T^S_{k'+R,k'+R}}\ind(\eta_t^S[1,L+1]\in A)dt\right)-(4+\varepsilon)\nu_\rho(A)\right|\leq \delta.\]
This is proved in Lemma \ref{l:stapprox} below. Fix such an $R$. Then, by Proposition \ref{p:exclose} and Lemma \ref{LHSlimit}, it follows that for $n\geq s\gg k^2$,
\begin{equation}\label{e:avgmu}
\left|\E\left(\frac{1}{sR}\int_{T_n}^{T_{n+sR}}\ind(\eta_t[k,k+L]\in A)dt\right)-(4+\varepsilon)\nu_\rho(A) \right|\leq \delta+\sqrt{\left(\frac{C}{ k^c}+\delta\right)}+\frac{C'}{kR}+C'e^{-ck^{1/2}},
\end{equation}
where $C,C',c$ depend only on $R,\delta, L$. Choose $k$ large enough so that the right side of \eqref{e:avgmu} is less than $2\sqrt{\delta}+2\delta$. Fix such a $k$.

Applying Theorem \ref{t:exist} and uniform integrability of the random variables, there exist constants $\tilde{C},\tilde{c}$ depending on $k$, such that for every $n\in \N$
\[\left|\E\left(\frac{1}{sR}\int_{T_n}^{T_{n+sR}}\ind(\eta_t[k,k+L]\in A)dt\right)-(4+\varepsilon)Q_{[k,k+L]}(A)\right|\leq \delta +\tilde{C}e^{-\tilde{c}\delta s^{1/13}}.\]
Choose $s$ large so that the right hand side of the above equation is at most $2\delta$.

Combining all this, we get, for any fixed $A\subseteq \{0,1\}^{L+1}$, and for all large $k$ (depending on $\delta$),
\[\left|Q_{[k,k+L]}(A)-\nu_\rho(A)\right|\leq (2\sqrt{\delta}+4\delta)(4+\varepsilon)^{-1}.\]
\end{proof}

\begin{lemma}
\label{l:stapprox}
In the setting of the proof of Theorem \ref{t:finalapprox}, for the given $\delta>0$, there exists $R=R(\delta)$ such that \[\sup_{k'}\left| \E\left(\frac{1}{R}\int_{T^S_{k',k'}}^{T^S_{k'+R,k'+R}}\ind(\eta_t^S[1,L+1]\in A)dt\right)-(4+\varepsilon)\nu_\rho(A)\right|\leq \delta.\]
\end{lemma}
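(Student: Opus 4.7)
The plan is to use the stationarity of $\nu_\rho$ under the TASEP dynamics to eliminate the dependence on $k'$, and then combine ergodicity with uniform integrability. First I would apply the strong Markov property of the TASEP at the stopping time $T^S_{k',k'}$ (which, in the particle-system picture, is a specific jump time). Since $\nu_\rho$ is invariant, the time-shifted configuration $\eta^{*}:=\eta^S_{T^S_{k',k'}}$ is again $\nu_\rho$-distributed, and the post-shift process $\{\eta^S_{T^S_{k',k'}+s}\}_{s\ge 0}$ has the law of a fresh stationary TASEP. The difference $T^S_{k'+R,k'+R}-T^S_{k',k'}$ is, by the LPP correspondence applied to this post-shift process, equal in distribution to $T^S_{R,R}$. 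Hence
\[\frac{1}{R}\int_{T^S_{k',k'}}^{T^S_{k'+R,k'+R}}\ind(\eta_t^S[1,L+1]\in A)\,dt\ \stackrel{d}{=}\ W_R := \frac{1}{R}\int_0^{T^S_{R,R}}\ind(\eta_t^S[1,L+1]\in A)\,dt,\]
so the expectation in the lemma equals $\E W_R$, independently of $k'$; the problem is reduced to showing $\E W_R \to (4+\varepsilon)\nu_\rho(A)$ as $R\to\infty$.

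Next, I would factor $W_R = (T^S_{R,R}/R)\cdot Y_R$, where $Y_R := (T^S_{R,R})^{-1}\int_0^{T^S_{R,R}}\ind(\eta_t^S[1,L+1]\in A)\,dt$. Since the stationary current at density $\rho$ equals $\rho(1-\rho)=1/(4+\varepsilon)$, the law of large numbers for the jump times (a consequence of the LPP representation together with Birkhoff) gives $T^S_{R,R}/R \to 4+\varepsilon$ almost surely. The measure $\nu_\rho$ is an extremal invariant measure for the TASEP semigroup by \cite{Lig76}, and therefore ergodic; Birkhoff's continuous-time ergodic theorem then yields $t^{-1}\int_0^t \ind(\eta_s^S[1,L+1]\in A)\,ds\to \nu_\rho(A)$ a.s., which together with $T^S_{R,R}\to\infty$ a.s.\ gives $Y_R\to\nu_\rho(A)$ a.s. Multiplying the two limits, $W_R \to (4+\varepsilon)\nu_\rho(A)$ almost surely.

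Finally, to promote the a.s.\ convergence to convergence in expectation, I would bound $W_R\le T^S_{R,R}/R$ and use that $T^S_{R,R}/R$ is $L^2$-bounded uniformly in $R$: this is exactly the uniform $L^2$ bound invoked at the start of the proof of Proposition \ref{p:exclose} (via arguments similar to those in Lemma \ref{uniformintegrable}), transferred from $(T^S_{k'+R,k'+R}-T^S_{k',k'})/R$ to $T^S_{R,R}/R$ by the distributional equality of the first paragraph. Thus $\{W_R\}$ is uniformly integrable, the a.s.\ limit passes to expectations, and choosing $R=R(\delta)$ large enough makes $|\E W_R - (4+\varepsilon)\nu_\rho(A)|\le \delta$ uniformly in $k'$. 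The hardest step is really the distributional equality in the first paragraph: because the LPP representation is built relative to a specific initial configuration, the cleanest route is to argue it first in the particle-system language (strong Markov at a jump time, followed by stationarity of $\nu_\rho$), and only afterwards reintroduce the LPP picture for the post-shift process; once that identification is in place, the ergodic-theoretic and uniform integrability steps are standard.
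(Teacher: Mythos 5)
Your first step is where the argument breaks: invariance of $\nu_\rho$ guarantees that $\eta^S_t\sim\nu_\rho$ at every \emph{deterministic} time $t$, not at the random jump time $T^S_{k',k'}$. In fact $\eta^S_{T^S_{k',k'}}$ is certainly not $\nu_\rho$-distributed: at that instant a particle has just jumped from site $0$ to site $1$, so $\eta^S_{T^S_{k',k'}}(1)=1$ almost surely while $\nu_\rho(\eta(1)=1)=\rho<1$ --- and the occupation integrand in the lemma looks precisely at the window $[1,L+1]$ where this bias sits, so it cannot be waved away. The law of the configuration just after a jump at the origin is a Palm-type (size-biased) object, which the paper calls $\tilde{\nu}$, and the inter-jump interval straddling a typical time has the size-biased law of the gap $\Delta$. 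Consequently the strong Markov property does not give you that the post-shift process is ``a fresh stationary TASEP'', and the claimed identity $T^S_{k'+R,k'+R}-T^S_{k',k'}\stackrel{d}{=}T^S_{R,R}$, hence the identification of the expectation with $\E W_R$ independently of $k'$, is unjustified. This is exactly the issue the paper's proof is organized around: it fixes $\kappa>0$, bounds $\P(\mathcal{A}_{k',R})$ (the event that the time average deviates from $(4+\varepsilon)\nu_\rho(A)$ by $\kappa$) by $(\widetilde{\E}\Delta^2)^{1/2}\widetilde{\P}(\mathcal{A}_{k',R})^{1/2}/\widetilde{\E}\Delta$ via Cauchy--Schwarz, using $\widetilde{\E}\Delta=4+\varepsilon$ and $\widetilde{\E}\Delta^2<\infty$ (Lemma \ref{uniformintegrable}), then uses that under the equilibrium law the probability is independent of $k'$ and reduces the problem to the process started from the hitting distribution $\tilde{\nu}$ of the post-jump configurations, which converges weakly to $\nu_\rho$, together with $T_R/R\to 4+\varepsilon$ almost surely.

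Your remaining ingredients (ergodicity of $\nu_\rho$ and Birkhoff for the time average, the law of large numbers for $T^S_{R,R}/R$, and the uniform $L^2$ bound giving uniform integrability) are sound and indeed reappear in spirit in the paper's argument, but you apply them to the wrong object. To repair the proof you would need either an exact Burke-type statement controlling the law of the configuration at a crossing epoch (it is $\tilde{\nu}$, not $\nu_\rho$), or the size-biasing/Palm correction that the paper inserts; without one of these, neither the uniformity in $k'$ nor the identification of the limiting constant $(4+\varepsilon)\nu_\rho(A)$ follows.
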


\begin{proof}
The proof is by a standard size biasing argument. Let $\kappa>0$ be fixed sufficiently small depending on $\delta$. Let $\mathcal{A}_{k',R}$ denote the event that
$$\left|\left(\frac{1}{R}\int_{T^S_{k',k'}}^{T^S_{k'+R,k'+R}}\ind(\eta_t^S[1,L+1]\in A)dt\right)-(4+\varepsilon)\nu_\rho(A)\right|\geq \kappa.$$
Clearly it suffices to show that $\sup_{k'}\P(\mathcal{A}_{k',R})\to 0$ as $R\to \infty$. Now, for the process in equilibrium let us denote the law by $\widetilde{\P}$ and the expectation by $\widetilde{\E}$ and let $\Delta$ denote the time difference between two consecutive jumps at the origin. Clearly the distribution of the time difference between the jumps straddling time $0$ is size biased distribution of $\Delta$, and Cauchy-Schwarz inequality then implies 
$$\P(\mathcal{A}_{k',R})\leq  \frac{(\widetilde{\E} \Delta^2)^{1/2}\widetilde{\P}(A_{k',R})^{1/2}}{\widetilde{\E} \Delta}.$$
We know that $\widetilde{\E}\Delta= (4+\varepsilon)$ (cf. Remark \ref{r:rho}) and we have already shown (by Lemma \ref{uniformintegrable}) that $\widetilde{\E} \Delta^2 <\infty$, hence it suffices to prove that $\sup_{k'}\widetilde{\P}(\mathcal{A}_{k',R})\to 0$ as $R\to \infty$. Now observe that $\widetilde{\P}$ measure of $\mathcal{A}_{k',R}$ is independent of $k'$, and hence it suffices to show that
$$ \P\left(\left|\frac{1}{R}\int_{0}^{T_R}\ind(\eta_t^{\tilde{\nu}}[1,L+1]\in A)dt-(4+\varepsilon)\nu_\rho(A)\right|\geq \kappa\right) \to 0$$ as $R\to \infty$ where $\eta^{\tilde{\nu}}$ denotes the process started from the hitting distribution $\tilde{\nu}$ of $\mathcal{B}$ in the stationary chain where $\mathcal{B}$ denotes the set of configurations immediately after a jump at the origin. The result now follows by observing that starting from $\tilde{\nu}$, TASEP converges to $\nu_{\rho}$ weakly and the fact that $\frac{T_{R}}{R}\rightarrow (4+\varepsilon)$ almost surely.
\end{proof}

\subsection{A Sketch of Proof of Theorem \ref{t:otherrho}}\label{s:otherrho}
We end with a sketch of the proof of Theorem \ref{t:otherrho}. As will be clear shortly, the proof is quite similar to the proof of Theorem \ref{t:mainst}, so we shall omit the details. Fix $p<\rho$. We shall show that starting from $\nu_{p}$ initial condition, TASEP with a slow bond converges to a stationary distribution $\nu^*_{p}$, moreover, $\nu^*_{p}$ is asymptotically equivalent to $\nu_{p}$ at $\pm \infty$. A similar argument applies for $p>1-\rho$.

Using the correspondence between TASEP starting from a stationary distribution and last passage percolation described in Subsection \ref{s:couplestat}, it follows that jump times in TASEP started with product $\mbox{Ber}(p)$ initial condition corresponds roughly to last passage times to the line $\mathbb{L}$ with the equation 
$$y=-\frac{p}{1-p}x.$$ Using coalescence of geodesics from points near $(n,n)$ to the line $\mathbb{L}$ it follows as before that average occupation measures over large intervals $T_{n}$ to $T_{n+k}$ converge to a measure $\nu^*_{p}$ on the space of all configurations. However, as the geodesics now will typically not remain pinned to the diagonal, instead of the strong coalescence results of Theorem \ref{meetondiagonal} used earlier, here one has to use Theorem \ref{pathsmeet} for the coalescence of geodesics. To show that the process itself converges to the measure $\nu^*_{p}$ (and hence $\nu^*_{p}$ is stationary), one needs a smoothing argument as in Section \ref{s:conv}. However as the vertices on the diagonal closer to the origin are no longer pivotal, a different argument would be needed. Consider TASEP with a slow bond. By coalescence, the geodesics from points near $(2n,2n)$ to $\mathbb{L}$ are very unlikely to be affected by the first $\frac{n}{4}$ many passage times on the diagonal, in particular one can replace these by i.i.d.\ $\mbox{Exp}(1)$ variables and get a coupling between TASEP with a slow bond; and Stationary TASEP with density $p$ run for time $n$ followed by TASEP with a slow bond, such that the average occupation measure of the former in an interval around time $T_{2n}$ is with high probability identical to that of a slow bond at time  $T_{2n}-\delta n$ for all $\delta\in (0,1)$ as running the stationary TASEP for time $n$ does not change the marginal distribution. Since the occupation measures are close to one another in total variation distance (and each of them are close to $\nu^*_{p}$) the process must converge to the limiting distribution $\nu^*_{p}$. 

It remains to show that $\nu^*_{p}$ is asymptotically equivalent to $\nu_{p}$ at $\pm \infty$. We shall only sketch that $\nu^*_{p}$ is asymptotically equivalent to $\nu_{p}$ at $\infty$, the other part is easier. As in the proof of Theorem \ref{t:mainst}, the basic objects of study are the geodesics to the points $(n+k,n)$ for $n\gg k \gg 1$. The important observation is the following. If $p<\rho$, then the geodesics from $(n+k,n)$ to $\mathbb{L}$ spends only $O(1)$ time on the diagonal, in a deterministic interval of length $O(k^{2/3})$. This can be checked by doing a first order calculation as in Subsection \ref{s:heuristic}, and a variant of Theorem \ref{t:carsestimate}. So the geodesic from $(n+k,n)$ to $\mathbb{L}$ is asymptotically a straight line that has the same slope (asymptotically for $n\gg k\gg 1$) as the geodesic from $(k,k)$ to $\mathbb{L}$ in the unreinforced DLPP. Using this and coalescence one can again couple the occupation measure of stationary TASEP of density $p$ near the origin, to be close in total variation distance to the occupation measure of TASEP with a slow bond at some large time and at sites near the point $k$ for some large $k$. The proof of Theorem \ref{t:otherrho} is then completed as in the proof of Theorem \ref{t:mainst}. We omit the details.

\bibliography{slowbond}
\bibliographystyle{plain}

\section{Appendix A: Proofs of a few technical results} \label{s:techlem}
\subsection{Lemmas used in Section \ref{s:conv}}
\begin{lemma}
\label{l:Gvar}
In the set up of Lemma \ref{l:Gsum}, there exist two absolute positive constants $a,b$, such that $\tau^2\in [a,b]$.
\end{lemma}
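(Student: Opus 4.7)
My plan is to obtain an explicit formula for $\mathrm{Var}(T_i(C_i)\mid\mathcal{F}_n)$ via a local analysis, then bound it below using a positive-probability argument and above using the Efron--Stein inequality applied to $T_n$.

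\emph{Local structure.} Because the unrevealed diagonal vertices $\{(a_m,a_m)\}$ are spaced by $n^{1/6}$, the only unrevealed vertex in $\mathrm{Box}((p_i,p_i),(p_{i+1},p_{i+1}))$ (where $\Gamma_i$ lives) is $(a_i,a_i)$. Thus, given $\mathcal{F}_n$, $T_i(C_i)$ is a function of $\xi:=\xi_{a_i,a_i}$ alone. Let $A$ and $B+\xi$ be the maximum weights of paths from $(p_i,p_i)$ to $(p_{i+1},p_{i+1})$ that respectively avoid and pass through $(a_i,a_i)$, with $A,B$ being $\mathcal{F}_n$-measurable, and set $c:=A-B$. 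Writing $A_C,B_C$ for the corresponding $C_i$-restricted weights ($B_C$ excluding $\xi$), one has
\begin{equation*}
T_i(C_i)\;=\;A_C\,\ind_{\xi<c}+(B_C+\xi)\,\ind_{\xi\geq c}\qquad(\text{first term absent when } c\leq 0).
\end{equation*}
An elementary integration against $\xi\sim\mathrm{Exp}(r)$ yields the key identity
\begin{equation*}
\mathrm{Var}(T_i(C_i)\mid\mathcal{F}_n)\;=\;p(1-p)(\Delta-1/r)^2+p/r^2,\qquad p:=e^{-rc^+},\;\Delta:=A_C-B_C-c.
\end{equation*}

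\emph{Lower bound.} Dropping the non-negative first term gives $\mathrm{Var}(T_i(C_i)\mid\mathcal{F}_n)\geq p/r^2$, hence $\tau^2\geq r^{-2}\E[p]=r^{-2}\P(\xi\geq c)=r^{-2}\P(\Gamma_i\ni(a_i,a_i))$. I will show the last probability is bounded below uniformly in $n$ by a positive-probability / FKG argument mimicking the proof of Lemma \ref{l:diagpos}: on the joint event that $\xi_{a_i,a_i}$ is abnormally large, that the weights of vertices in a bounded neighborhood of $(a_i,a_i)$ are typical, and that the transversal fluctuations of the relevant geodesic fragments near $(a_i,a_i)$ are controlled (Proposition \ref{deviationfromdiagonal}), $\Gamma_i$ is forced through $(a_i,a_i)$, and FKG ensures this event has probability at least some $\delta_0>0$. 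This gives $\tau^2\geq a:=\delta_0/r^2$.

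\emph{Upper bound.} Given $\mathcal{F}_n$, $T_n$ is a function of the $n^{1/6}$ independent $\mathrm{Exp}(r)$ variables $\{\xi_{a_m,a_m}\}$ that is $1$-Lipschitz in each coordinate (raising any single weight by $\delta$ raises $T_n$ by at most $\delta$). Efron--Stein then yields $\mathrm{Var}(T_n\mid\mathcal{F}_n)\leq n^{1/6}/r^2$, and hence $\E[(T_n-\E(T_n\mid\mathcal{F}_n))^2]\leq n^{1/6}/r^2$. Since $G_n=T_n-\E(T_n\mid\mathcal{F}_n)$ off an event of probability at most $e^{-cn^{1/24}}$ (Lemma \ref{l:Gsum}), and both random variables have polynomial-in-$n$ moments (by Lemma \ref{l:Tndev}), a Cauchy--Schwarz argument shows $\mathrm{Var}(G_n)\leq n^{1/6}/r^2+o(n^{1/6})$. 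As $G_n$ is a sum of $n^{1/6}$ i.i.d. mean-zero terms of common variance $\tau^2$, this forces $\tau^2\leq 1/r^2+o(1)$, so $\tau^2\leq b$ for some absolute $b$ (for large $n$; for small $n$, $\tau^2$ is finite and the bound can be absorbed).

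The main technical hurdle is the $L^2$-approximation of $G_n$ by $T_n-\E(T_n\mid\mathcal{F}_n)$ in the upper bound, which combines the super-exponential coalescence probability with polynomial moment bounds on $T_n$; the lower bound follows quickly from the explicit variance formula together with a standard FKG argument.
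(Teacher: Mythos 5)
Your plan is correct, and its two halves relate differently to the paper's own proof. For the lower bound you are in essence doing what the paper does, namely forcing $\Gamma_i$ through $(a_i,a_i)$ by the mechanism of Lemma \ref{l:diagpos} (abnormally large weight at the point, typical weights in a bounded neighbourhood, transversal control from Proposition \ref{deviationfromdiagonal}, FKG); but your exact two-piece representation of $T_i(C_i)$ as a function of $\xi_{a_i,a_i}$ given $\mathcal{F}_n$, and the resulting identity $\mathrm{Var}(T_i(C_i)\mid\mathcal{F}_n)=p(1-p)(\Delta-1/r)^2+p/r^2$, is a genuine streamlining: it reduces the lower bound to the \emph{unconditional} estimate $\P(\Gamma_i\ni(a_i,a_i))\geq\delta_0$, so you do not need the paper's additional device (the event $D_0$ defined via $\xi_{a_1,a_1}=0$ together with monotonicity), whose only purpose is to produce an $\mathcal{F}_n$-measurable event for a direct second-moment computation. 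For the upper bound you genuinely diverge: the paper simply notes $T_1(C_1)\leq\max\{X_1+X_2,X\}+\xi_{(a_1,a_1)}$, hence $|T_1(C_1)-\E(T_1(C_1)\mid\mathcal{F}_n)|\leq\xi_{(a_1,a_1)}+\E\xi_{(a_1,a_1)}$, and is done immediately, while you run conditional Efron--Stein on the global $T_n$ and transfer to $G_n$ through the coupling of Lemma \ref{l:Gsum}. Your route does work (and it avoids having to control the term $\E[p(1-p)\Delta^2]$, which your identity alone does not bound), but it is heavier and carries three obligations you should discharge explicitly: (i) you may only use the coupling estimate $\P(G'_n=G_n)\geq 1-e^{-cn^{1/24}}$, which is established in the proof of Lemma \ref{l:Gsum} before the variance of the summands is invoked, so state this ordering to dispel the appearance of circularity; (ii) Lemma \ref{l:Tndev} concerns $T_n-\E T_n$, not $G_n$, so for the Cauchy--Schwarz transfer bound $\E G_n^4$ directly, e.g.\ via $0\leq T_i(C_i)\leq T_{(p_i,p_i),(p_{i+1},p_{i+1})}$, whose moments are polynomial in $n$ and therefore negligible against $e^{-cn^{1/24}}$; (iii) the resulting bound $\tau^2\leq 1/r^2+o(1)$ holds only for large $n$, and the finitely many remaining $n$ require the (easy) finiteness of $\tau^2(n)$, which you note. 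In short: your lower bound is the paper's argument made cleaner by the explicit conditional-variance formula, while your upper bound is a correct but roundabout alternative to the paper's two-line stochastic domination, which you could simply adopt instead.
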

The non trivial part in this lemma is to prove the lower bound for $\tau^2$. For this, we follow the proof of Lemma \ref{l:diagpos} and even construct the same events to ensure that $\Gamma_1$ touches $(a_1,a_1)$ on some event whose probability is bounded away from $0$ (not depending on $n$). There is one additional technicality here which is taken care of by the monotonicity of the events.


\begin{proof} Let $X_1$ be the weight of the geodesic from $(m_1,m_1)$ to $(a_1,a_1)$ (excluding the weight of $\xi_{(a_1,a_1)}$), and $X_2$ be the weight of the geodesic from $(a_1,a_1)$ to $(n_1,n_1)$ (excluding the weight of $\xi_{(a_1,a_1)}$), and let $X$ be the weight of the geodesic from $(m_1,m_1)$ to $(n_1,n_1)$ avoiding the point $(a_1,a_1)$. Then clearly,
\[T_1(C_1)= \max\{X_1+X_2+\xi_{(a_1,a_1)}, X\}\leq \max\{X_1+X_2,X\}+\xi_{(a_1,a_1)}.\]
Hence,
\[\max\{X_1+X_2, X\}\leq \E(T_1(C_1)|\mathcal{F}_n)\leq \max\{X_1+X_2, X\}+\E(\xi_{(a_1,a_1)}).\]
Hence,
\[|T_1(C_1)-\E(T_1(C_1)|\mathcal{F}_n)|\leq \xi_{(a_1,a_1)}+\E(\xi_{(a_1,a_1)}).\]
Thus, the upper bound of $\tau^2$ is immediate. Hence we only need to prove the lower bound of $\tau^2$.

Let $C$ be an absolute positive constant to be chosen appropriately later and consider the two boxes of size $C$ whose top left or bottom right vertex is $(a_1,a_1)$. Let $D_1$ denotes the event that the sum of all $C^2-1$ many exponential random variables excluding $\xi_{(a_1,a_1)}$ inside each of these boxes is less than $2C^2$. Let $D_2$ denote the event that $\Gamma_1$ is within a vertical and horizontal distance of $C$ from $(a_1,a_1)$. It is not hard to see that the same argument as in Lemma \ref{i:davoid} and Proposition \ref{deviationfromdiagonal} works even when one point on the diagonal is conditioned to have $0$ weight, and it follows that $\P(D_2|\xi_{(a_1,a_1)}=0)\geq 1-e^{-cC^{1/2}}$, where $c$ is some absolute positive constant. Let $\Pi_B$ denote the configuration restricted to the set $B$. Define the event $D_0\subseteq \Pi_{\Z^2\setminus(a_1,a_1)}$ as,
\[D_0=\{\omega\in \Pi_{(\Z^2\setminus(a_1,a_1))}:\omega \cap \{\xi_{a_1,a_1}=0\} \in  D_2 \}.\]
Note that $D_0$ is independent of $\xi_{a_1,a_1}$. Also as $D_2$ is an increasing event in $\xi_{a_1,a_1}$, hence $D_0\cap \{\xi_{a_1,a_1}\geq x\}\subseteq D_2$ for all $x\geq 0$. Also, since $D_0\cap \{\xi_{a_1,a_1}=0\}=D_2\cap\{\xi_{a_1,a_1}=0\}$, hence,
\[\P(D_0)= \P(D_2|\xi_{a_1,a_1}=0)\geq 1-e^{-cC^{1/2}}.\]

Since $D_1$ is also a high probability event, one can choose $C$ a large constant (not depending on $n$) such that $\P(D_0\cap D_1)\geq \frac{1}{2}$. Observe that $D_1\cap D_0\cap \{\xi_{(a_1,a_1)}>2C^2\}\subseteq D_1\cap D_2\cap \{\xi_{(a_1,a_1)}>2C^2\}$. Also it follows from the proof of Lemma \ref{l:diagpos}, that on $D_1\cap D_2\cap \{\xi_{(a_1,a_1)}>2C^2\}$, $\Gamma_1$ touches the point $(a_1,a_1)$. As $D_1\cap D_0 \in \mathcal{F}_n$,
\[(T_1(C_1)-\E(T_1(C_1)|\mathcal{F}_n))^2\geq (T_1(C_1)-\E(T_1(C_1)|\mathcal{F}_n))^2\ind_{D_1\cap D_0}=(T_1(C_1)\ind_{D_1\cap D_0}-\E(T_1(C_1)\ind_{D_1\cap D_0}|\mathcal{F}_n))^2.\]
Also, 
\begin{eqnarray*}
&&T_1(C_1)\ind_{D_1\cap D_0}-\E(T_1(C_1)\ind_{D_1\cap D_0}|\mathcal{F}_n)\\
&=& T_1(C_1)\ind_{D_1\cap D_0\cap \{\xi_{(a_1,a_1)}>2C^2\}}-\E(T_1(C_1)\ind_{D_1\cap D_0\cap \{\xi_{(a_1,a_1)}>2C^2\}}|\mathcal{F}_n)\\
&&+(T_1(C_1)\ind_{D_1\cap D_0\cap \{\xi_{(a_1,a_1)}\leq 2C^2\}}-\E(T_1(C_1)\ind_{D_1\cap D_0\cap \{\xi_{(a_1,a_1)}\leq 2C^2\}}|\mathcal{F}_n)).
\end{eqnarray*}

Clearly the second summand is at most $2C^2$ in absolute value (by a similar calculation as done for the upper bound of $\tau$). Hence enough to show that the first summand is bigger than $4C^2$ with a positive probability that does not depend on $n$.

On $D_1\cap D_0\cap \{\xi_{(a_1,a_1)}>2C^2\}$, $\Gamma_1$ passes through $(a_1,a_1)$, hence,
\[ T_1(C_1)\ind_{D_1\cap D_0\cap \{\xi_{(a_1,a_1)}>2C^2\}}=(X_1+\xi_{(a_1,a_1)}+X_2)\ind_{D_1\cap D_0\cap \{\xi_{(a_1,a_1)}>2C^2\}},\]
and,
\[\E(T_1(C_1)\ind_{D_1\cap D_0\cap \{\xi_{(a_1,a_1)}>2C^2\}}|\mathcal{F}_n)=((X_1+X_2)\E(\ind_{\xi_{(a_1,a_1)}>2C^2})+\E(\xi_{(a_1,a_1)}\ind_{\xi_{(a_1,a_1)}>2C^2}))\ind_{D_1\cap D_0}\]

Let $D_3:=\{\xi_{(a_1,a_1)}\ind_{\{\xi_{(a_1,a_1)}>2C^2\}}\geq \E(\xi_{(a_1,a_1)}\ind_{\{\xi_{(a_1,a_1)}>2C^2\}})+4C^2\}$.
Note that $\{\ind_{\{\xi_{(a_1,a_1)}>2C^2\}}\geq \E(\ind_{\{\xi_{(a_1,a_1)}>2C^2\}})\}=\{\xi_{(a_1,a_1)}>2C^2\}\supset D_3$. Let $\P(D_3)=p$. 

As $\P(D_0\cap D_1)\geq \frac{1}{2}$ and $D_0\cap D_1$ is independent of $D_3$, hence $\P(D_0\cap D_1 \cap D_3)\geq \frac{p}{2}$. Hence with probability atleast $\frac{p}{2}$, $T_1(C_1)\ind_{D_1\cap D_0}-\E(T_1(C_1)\ind_{D_1\cap D_0}|\mathcal{F}_n)\geq 2C^2$. As $C,p$ are constants not depending on $n$, this proves the claim.
\end{proof}


\begin{lemma}\label{l:unifcont}
Fix $b\in \N, I=[-b,b]$ and $A\subseteq \{0,1\}^{I}$. For $s\geq 0$, define $\rho(s):=\P(\eta_s(I)\in A)$. Then $\rho(s)$ is uniformly continuous in $s$.
\end{lemma}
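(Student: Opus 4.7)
The plan is to establish that $\rho$ is Lipschitz in $s$, which is strictly stronger than uniform continuity. The observation is that the event $\{\eta_s(I)\in A\}\triangle\{\eta_t(I)\in A\}$ is contained in the event that the configuration restricted to $I$ changes between times $s$ and $t$, and for the TASEP dynamics (with or without a slow bond) this can only happen if one of the Poisson clocks attached to the $2b+2$ bonds $(x,x+1)$ with $x\in\llbracket -b-1,b\rrbracket$ rings during $[s,t]$.

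First I would use the Harris graphical construction: to each bond $(x,x+1)$ associate an independent rate-$1$ (or rate-$r$ if $x=0$) Poisson process of potential jump times, and couple $\eta_\cdot$ to these clocks in the standard way. For $s<t$ we then have
\[
|\rho(t)-\rho(s)| \;\leq\; \P\bigl(\eta_s(I)\neq \eta_t(I)\bigr) \;\leq\; \P\bigl(\text{some clock at a bond $(x,x+1)$, $x\in\llbracket -b-1,b\rrbracket$, rings in $[s,t]$}\bigr).
\]
Since each such clock rings in $[s,t]$ with probability at most $1-e^{-(t-s)}\le t-s$, and there are $2b+2$ such bonds, a union bound gives
\[
|\rho(t)-\rho(s)| \;\leq\; (2b+2)\,(t-s).
\]
Hence $\rho$ is $(2b+2)$-Lipschitz on $[0,\infty)$, which in particular implies uniform continuity.

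There is no real obstacle: the only point to be careful about is that $\eta_s(I)$ can change only via a jump across one of the bonds with at least one endpoint in $I$, i.e.\ the bonds indexed by $x\in\llbracket -b-1,b\rrbracket$; jumps at other bonds leave $\eta_\cdot(I)$ unchanged. Once this is noted, the Poisson clock bound is immediate. Note in particular that the slow bond at the origin only \emph{decreases} the clock rate there (from $1$ to $r<1$), so the bound $(2b+2)(t-s)$ still holds.
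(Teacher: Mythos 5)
Your proof is correct, and it takes a different route from the paper's. You couple the configurations at the two times on a single Harris graphical construction and observe that $\eta_\cdot(I)$ can only change at a ring of one of the $2b+2$ bond clocks with an endpoint in $I$, giving $|\rho(t)-\rho(s)|\leq \P(\eta_s(I)\neq\eta_t(I))\leq (2b+2)(t-s)$; this is a valid argument and in fact works for an arbitrary initial condition, at the price of a Lipschitz constant growing with $b$. The paper instead exploits the step initial condition: writing $\rho(s+\delta)$ by conditioning on whether the waiting time $\xi_{(0,0)}$ for the first jump at the origin exceeds $\delta$, it notes that on $\{\xi_{(0,0)}>\delta\}$ nothing has moved by time $\delta$ and, by memorylessness, the process viewed from time $\delta$ has the law of the original process, so $\P(\eta_{s+\delta}(I)\in A\mid \xi_{(0,0)}>\delta)=\rho(s)$ and hence $|\rho(s+\delta)-\rho(s)|\leq 2(1-e^{-r\delta})$, a bound uniform in $b$. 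Both yield (more than) the uniform continuity needed; your version is more robust to the initial condition, the paper's is shorter and dimension-free in $I$, and the only point worth making explicit in your write-up is that the Harris construction indeed realizes the slow-bond generator $\mathscr{L}^{(r)}$, which is standard.
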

\begin{proof}
To see this, note that, for any $\delta>0$,
\[\rho(s+\delta)=\P(\eta_{s+\delta}(I)\in A|\xi_{(0,0)}>\delta)\P(\xi_{(0,0)}>\delta)+\P(\eta_{s+\delta}(I)\in A|\xi_{(0,0)}\leq\delta)\P(\xi_{(0,0)}\leq\delta).\]
If $\xi_{(0,0)}>\delta$, then the exponential clock at site $0$ of the TASEP has not yet ticked, and as the TASEP starts from step initial conditions, so $\P(\eta_{s+\delta}(I)\in A|\xi_{(0,0)}>\delta)=\P(\eta_s(I)\in A)$. Also as $\P(\xi_{(0,0)}>\delta)=e^{-r\delta}$ Hence,
\[|\rho(s+\delta)-\rho(s)|\leq 2(1-e^{-\delta}),\]
which shows the uniform continuity of $\rho(s)$.
\end{proof}

\subsection{Regularity estimate and uniform integrability used in Section \ref{s:couple}}

The scaling exponent $2/3$ in the transversal fluctuation of the geodesic was identified in \cite{J00} and \cite{BSS14}. However, for our purpose, a more refined and local fluctuation estimate shall be useful, which we quote from \cite{BSS17++}.

\begin{theorem}[\cite{BSS17++}, Corollary $2.4(a)$]\label{t:carsestimate}
Let $\psi>1$ and $m\in [\frac{1}{\psi},\psi]$ be fixed. Let $\Gamma$ be the geodesic from $(0,0)$ to $(n,mn)$ and $\mathcal{S}$ be the line segment joining $(0,0)$ to $(n,mn)$. For $\l\in \Z$, let $\mathcal{S}(\l)$ be such that $(\l,\mathcal{S}(\l))\in \mathcal{S}$, $\Gamma(\ell)\in \Z$ be the maximum number such that $(\ell,\Gamma(\ell))\in \Gamma$ and $\Gamma^{-1}(\ell)\in \Z$ be the maximum number such that $(\Gamma^{-1}(\ell),\ell)\in \Gamma$. 
Then there exist positive constants $n_0, \l_0, s_0, c$ depending only on $\psi$, such that for all $n\geq n_0, s\geq s_0, \l\geq \l_0$,
\[P[|\Gamma(\l)-\mathcal{S}(\l)|\geq s\l^{2/3}] \leq e^{-cs}.\]
\end{theorem}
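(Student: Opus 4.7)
The plan is to establish the local transversal fluctuation bound through a standard energy-versus-fluctuation comparison, using the moderate deviation estimate in Theorem \ref{t:moddevdiscrete} as the main analytic input. Write $v_h := (\l,\lfloor m\l+h\rfloor)$, so that $v_0$ is essentially the point on the segment $\mathcal{S}$ at horizontal coordinate $\l$, and set $v_n := (n,\lfloor mn\rfloor)$. By a union bound over the integer heights $h$ with $|h|\geq s\l^{2/3}$, it suffices to prove the pointwise estimate
\[\P[v_h\in\Gamma]\leq \exp\bigl(-c|h|/\l^{2/3}\bigr)\]
for all such $h$; summing the resulting geometric series in $|h|$ then yields the advertised $e^{-cs}$.

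Next I would make the key path comparison. If $\Gamma$ from $(0,0)$ to $v_n$ passes through $v_h$, then it splits as the concatenation of its two sub-geodesics, so
\[T_{(0,0),v_h}+T_{v_h,v_n}=T_{(0,0),v_n}\geq T_{(0,0),v_0}+T_{v_0,v_n}.\]
Subtracting means on both sides and using the first-order expectation $\E T_{u,u+(a,b)}\approx(\sqrt a+\sqrt b)^2$ from Theorem \ref{t:Jo99}, a Taylor expansion around $h=0$ gives
\[\E\bigl[T_{(0,0),v_h}+T_{v_h,v_n}\bigr]\leq \E\bigl[T_{(0,0),v_0}+T_{v_0,v_n}\bigr]-c_m h^2\!\left(\tfrac{1}{\l}+\tfrac{1}{n-\l}\right)\leq \E[\,\cdot\,]-c_m\tfrac{h^2}{\l},\]
assuming $\l\leq n/2$ (the opposite regime is symmetric). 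Consequently $\{v_h\in\Gamma\}$ is contained in the event that the sum of four centered passage-time fluctuations exceeds $c_m h^2/\l$, and it remains to bound this event.

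The idea for bounding the fluctuations is to apply Theorem \ref{t:moddevdiscrete} to each of the four passage times, which gives concentration at scales $\l^{1/3}$ for the pair based at $(0,0)$, and $(n-\l)^{1/3}$ for the pair based at $v_n$. For $|h|\geq s\l^{2/3}$ the deterministic deficit $c_m h^2/\l$ equals $c_m s\,|h|/\l^{2/3}\cdot\l^{1/3}$, which in units of the $\l^{1/3}$ scale is of order $s\,|h|/\l^{2/3}\geq s^2\gg s_0$. A direct application of Theorem \ref{t:moddevdiscrete} to each fluctuation, followed by a union bound over the four fluctuations, then yields the required pointwise estimate above.

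The main obstacle is that when $\l\ll n$ the fluctuation scale $(n-\l)^{1/3}$ of the two ``far'' passage times $T_{v_h,v_n}$ and $T_{v_0,v_n}$ is much larger than the deficit $h^2/\l\approx s^2\l^{1/3}$ one is trying to dominate, so the naive estimate only gives $\exp(-cs^2(\l/n)^{1/3})$, which is much too weak. The remedy is to exploit the fact that the \emph{difference} $T_{v_h,v_n}-T_{v_0,v_n}$ has fluctuations on a scale much smaller than $(n-\l)^{1/3}$, because the geodesics emanating from $v_h$ and $v_0$ coalesce quickly (on a scale polynomial in $|h|$, in particular much less than $n-\l$). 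Quantitatively, this can be carried out either by invoking the coalescence estimate Theorem \ref{pathsmeet} directly to couple the two geodesics after a short distance, or through a multi-scale dyadic argument that reduces the comparison from scale $(n-\l)$ down to an intermediate scale where the source and target are both close to the geodesic's typical location. Once this reduction is in place, the effective scale of the combined fluctuation becomes $\l^{1/3}$, and Theorem \ref{t:moddevdiscrete} delivers the tail bound $e^{-c|h|/\l^{2/3}}$, completing the proof.
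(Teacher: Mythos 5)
There is a genuine gap, and it sits exactly where you flag the ``main obstacle.'' (Note also that the paper does not prove this statement at all: it is imported verbatim from the companion paper \cite{BSS17++}, so the burden of the local-scale argument is real and cannot be waved off.) Your skeleton --- union bound over heights, concatenation inequality $T_{(0,0),v_h}+T_{v_h,v_n}\geq T_{(0,0),v_0}+T_{v_0,v_n}$, and a second-order expectation deficit of order $h^2/\l$ --- is the standard and correct opening, but the step that makes the theorem \emph{local} (tail in units of $\l^{1/3}$, uniformly in $n$) is precisely the one you do not carry out. Of the two remedies you name, the first cannot work as stated: Theorem \ref{pathsmeet} only guarantees coalescence with probability $1-Cn^{-c}$ (polynomial error), so any event on which you couple $T_{v_h,v_n}$ and $T_{v_0,v_n}$ via coalescence contributes an error that is polynomially, not exponentially, small in the relevant scale; this can never produce the claimed $e^{-c|h|/\l^{2/3}}$, and moreover controlling the fluctuation of the difference up to the coalescence point with exponential tails is essentially the statement being proved, so the argument risks circularity. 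The second remedy (a multi-scale/dyadic reduction from scale $n-\l$ down to scale $\l$) is indeed how results of this type are actually proved, but you only name it; carrying it out is the entire technical content of \cite{BSS17++}'s Corollary 2.4 and cannot be treated as a routine remark.

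Two smaller but real issues. First, your reduction ``it suffices to prove $\P[v_h\in\Gamma]\leq e^{-c|h|/\l^{2/3}}$ and sum'' produces a prefactor of order $\l^{2/3}$ (there are $\l^{2/3}$ integer heights per unit of the fluctuation scale), which cannot be absorbed into $e^{-cs}$ uniformly in $\l$ for fixed $s\geq s_0$; the standard fix is to group heights into blocks of width $\l^{2/3}$ and bound each block event by $e^{-ck}$, which in turn requires control of the \emph{supremum} of $T_{(0,0),v_h}+T_{v_h,v_n}$ over a block (e.g.\ estimates in the spirit of Propositions 10.1 and 10.5 of \cite{BSS14}), not just pointwise moderate deviations. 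Second, the Taylor expansion giving the deficit $c_m h^2/\l$ is only valid for $|h|$ small compared to $\l$ (and the slope of $(0,0)\to v_h$ must stay in a compact set for Theorem \ref{t:moddevdiscrete} to apply); the regime $|h|\gtrsim \l$, where the slope degenerates, needs a separate crude linear-in-$|h|$ estimate. These are fixable, but together with the unexecuted multi-scale step they mean the proposal is an outline of the known strategy rather than a proof.
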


We end this subsection with the following lemma that is used to get the uniform integrability conditions of random variables used earlier. This is a direct consequence of Theorem \ref{t:carsestimate} (or Theorem $2$ of \cite{BSS17++}).
\begin{lemma}\label{uniformintegrable} Let $k,R\in \N$ and $T_m^0$ denotes the length of the geodesic from $(0,0)$ to $(m,m)$ in the Exponential DLPP. Then 
\[\sup_{k,R} \E\left(\frac{T^{0}_{k+R}-T^{0}_k}{R}\right)^2<C^*<\infty.\]
\end{lemma}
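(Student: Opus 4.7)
The strategy is to reduce the uniform $L^2$ bound on $(T^{0}_{k+R}-T^{0}_k)/R$ to a uniform-in-$m$ $L^2$ bound on the unit increments $\delta_m := T^{0}_m - T^{0}_{m-1}\geq 0$, and then to bound these via a local analysis of the geodesic $\Gamma^{0}_m$ near its endpoint.

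First, writing the telescoping sum $T^{0}_{k+R}-T^{0}_k=\sum_{m=k+1}^{k+R}\delta_m$ and applying Cauchy--Schwarz,
\[
\left(\frac{T^{0}_{k+R}-T^{0}_k}{R}\right)^2 = \left(\frac{1}{R}\sum_{m=k+1}^{k+R}\delta_m\right)^2 \leq \frac{1}{R}\sum_{m=k+1}^{k+R}\delta_m^2,
\]
so that taking expectations reduces the claim to showing $\sup_m \E\,\delta_m^2<\infty$.

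To bound $\E\,\delta_m^2$, I would localise the comparison at the endpoint. Let $V_m$ be the last vertex of $\Gamma^{0}_m$ that is coordinatewise $\preceq (m-1,m-1)$; by the upright-path structure, $V_m$ is either of the form $(m-1,b)$ with $b\leq m-1$ or $(a,m-1)$ with $a\leq m-1$, and after $V_m$ the geodesic stays in the ``L-corner'' $\{x=m\}\cup\{y=m\}$. In the first case, replacing the portion of $\Gamma^{0}_m$ after $V_m$ by the straight vertical segment to $(m-1,m-1)$ gives a path admissible for $T^{0}_{m-1}$; subtraction yields the pathwise bound
\[
\delta_m \leq \sum_{j=b}^{m}\xi_{(m,j)},
\]
a sum of at most $m-b+1$ i.i.d.\ $\mathrm{Exp}(1)$ weights, with a symmetric statement in the other case. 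In particular, on $\{\|V_m-(m-1,m-1)\|_1 \leq s\}$ the upper bound has second moment $O(s^2)$.

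The proof is then completed by a uniform-in-$m$ exponential tail bound of the form $\P(\|V_m-(m-1,m-1)\|_1 \geq s)\leq Ce^{-cs}$ for $s\geq 1$, after which splitting into shells gives $\E\,\delta_m^2 \leq \sum_{s\geq 0} Ce^{-cs}\cdot O(s^2) < \infty$ uniformly in $m$. This endpoint tail estimate is the expected main obstacle: it says that one step from its endpoint the geodesic fluctuates only on the $O(1)$ scale. The statement of Theorem \ref{t:carsestimate} gives only $\ell^{2/3}$-scale fluctuations at position $\ell=m-1$, which is far too coarse; however, the sharper local estimate at scale $(\ell(n-\ell)/n)^{2/3}$, which at $(\ell,n)=(m-1,m)$ reduces to $O(1)$, is supplied by Theorem~2 of \cite{BSS17++} to which the lemma's statement explicitly appeals, and this refinement is what powers the argument.
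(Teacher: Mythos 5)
Your argument is correct in substance but takes a genuinely different route from the paper. The paper works directly with the single geodesic $\Gamma^0_{k+R}$: it records where this geodesic crosses the lines $x=k$, $y=k$, uses geodesic splitting to dominate $T^0_{k+R}-T^0_k$ by (a copy of) $T^0_{R+\ell}$ on the event that this crossing is at distance $\ell$ from $(k,k)$, and then combines a crude bound for the upper tail of $T^0_{2R+m^{1/3}}$ with the local fluctuation estimate of Theorem \ref{t:carsestimate} applied to the geodesic viewed backwards from $(k+R,k+R)$, summing over $\ell$, to get a summable tail for $(T^0_{k+R}-T^0_k)/R$. You instead telescope into unit increments $\delta_m$, reduce by Cauchy--Schwarz to $\sup_m\E\,\delta_m^2<\infty$, and bound each $\delta_m$ pathwise by the boundary weights collected after the geodesic's last entrance into the final row/column; this is arguably cleaner and avoids the paper's $m^{1/3}$ splitting, at the price of needing an $O(1)$-scale endpoint fluctuation estimate.

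Two small caveats. First, in your shell decomposition the event $\{\|V_m-(m-1,m-1)\|_1\in[s,s+1)\}$ is not independent of the boundary weights entering your upper bound, so one cannot literally multiply $Ce^{-cs}$ by the $O(s^2)$ second moment; this is repaired by dominating $\delta_m$ on the shell by the deterministic sum of the last $s+2$ weights of row $m$ together with those of column $m$ and applying Cauchy--Schwarz, which still yields $\sum_s O(s^2)e^{-cs/2}<\infty$. Second, your claim that Theorem \ref{t:carsestimate} is ``far too coarse'' holds only if it is applied at distance $m-1$ from the starting point; applied to the time-reversed geodesic (legitimate by the symmetry of the model, and exactly the reversal the paper's own proof uses) at a fixed distance $\ell_0$ from the endpoint, it gives $\P\left(\|V_m-(m-1,m-1)\|_1\geq C(\ell_0+t)\right)\leq 2e^{-ct}$ uniformly in $m$, which is all your argument needs; the sharper scale from Theorem 2 of \cite{BSS17++} (which the paper also cites parenthetically) is an alternative but not necessary input.
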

\begin{proof}Let 
\[X=\min\{x|(x,k)\in \Gamma_{k+R} \mbox{\ \ or\ \ } (k,x)\in \Gamma_{k+R}\}.\]
If $X=k$, then $\Gamma_{k+R}$ passes through $(k,k)$, hence, $T_{k+R}^0-T_{k}^0=T^0_{(k,k),(k+R),k+R)}\overset{d}{=}T^0_R$. Similarly, if $X=k-\l$, then $T^0_{k+R}-T^0_{k}\prec T^0_{R+\l}$. We need to bound $\P(\frac{T^0_{k+R}-T^0_k}{R}>\sqrt{m})$ for each $m$ by a term not depending on $k,R$, such that the bound is summable. Hence for $\l$ small (i.e., $X$ large), we bound the probability by $\P(\frac{T^0_{R+\l}}{R}>\sqrt{m})$, and for $\l$ large, we bound the probability by the above Theorem \ref{t:carsestimate}. Let $\Gamma'$ be the geodesic $\Gamma_{R+k}$ viewed from $(R+k,R+k)$ to $(0,0)$, i.e.,  $\Gamma'(\l)=\Gamma_{k+R}(k+R-\l)$. We shall apply Theorem \ref{t:carsestimate} to the geodesic $\Gamma'$. Then, for $k\geq R$, we get for any $m\geq 1$,
\begin{eqnarray*}
&&\P\left(\frac{T^0_{k+R}-T^0_k}{R}>\sqrt{m}\right)\\
&\leq & \P\left(\frac{T^0_{k+R}-T^0_k}{R}>\sqrt{m}, X\geq (k-(R+m^{1/3}))_+\right)+\sum_{\l=R+m^{1/3}}^\infty \P\left(\frac{T^0_{k+R}-T^0_k}{R}>\sqrt{m}, k-X=\l\right)\\
&\leq & \P\left(\frac{T^0_{2R+m^{1/3}}}{R}\geq \sqrt{m}\right)+\sum_{\l=R+m^{1/3}}^\infty \P\left(\frac{T^0_{k+R}-T^0_k}{R}>\sqrt{m}, k-X=\l\right)\\
&\leq & \P\left(\frac{T^0_{2R+m^{1/3}}}{2R+m^{1/3}}\geq \frac{m^{1/6}}{2}\right)+\sum_{\l=R+m^{1/3}}^\infty \P\left(|\Gamma'(R+\l)-(R+\l)|\geq \l\right)\\
&\leq & 2^{2R+m^{1/3}}Ce^{-\frac{1}{4}(2R+m^{1/3})m^{1/6}}+
\sum_{s=m^{1/3}}^\infty \P\left(|\Gamma'_s-s|\geq \frac{s}{2}\right)\\
&\leq & Ce^{-cm^{1/3}}+Ce^{-cm^{1/10}}\leq Ce^{-cm^{1/10}}.
\end{eqnarray*}
The result follows immediately.
\end{proof}

\section{Appendix B: A central limit theorem for the slow bond model}
\label{s:clt}
As remarked before, here we provide a proof of a central limit theorem for the last passage time in the slow bond model. Recall that this is a consequence of the path getting pinned to the diagonal at a constant rate; and using Theorem \ref{meetondiagonal} one can argue that $T_n$ can be approximated by partial sums of stationary processes. This argument was outlined in \cite{BSS14}; we provide a complete proof here for the sake of completeness. Also observe that we do not really need this central limit theorem for the other results in this paper, however we believe it is an interesting result in its own right, hence the proof.

\begin{theorem}
\label{l:clt}
For any $r<1$; we have 
$$\frac{T_n^{(r)}-\E T_n^{(r)}}{\sqrt{{\rm Var}~T_n^{(r)}}}\Rightarrow N(0,1).$$
Furthermore, there exists $\sigma=\sigma(r)\in (0,\infty)$ such that $\lim_{n\to \infty} \frac{{\rm Var}~T_n^{(r)}}{n}=\sigma^2$.
\end{theorem}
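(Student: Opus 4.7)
The plan is to reduce to a classical CLT for i.i.d.\ sums via an $m$-dependent block decomposition, exploiting the simultaneous coalescence of geodesics at the diagonal established in Theorem \ref{meetondiagonal}. Fix a scale $m = m(n)$ tending to infinity slowly with $n$, and divide the diagonal segment from $\mathbf{0}$ to $\mathbf{n}$ into $N = \lfloor n/(m + m^{1/3})\rfloor$ consecutive blocks of diagonal length $m$, separated by buffer gaps of diagonal length $m^{1/3}$. Let $X_i$ be the passage time across the $i$th block (from the diagonal entry point to the diagonal exit point) and $G_j$ the passage time across the $j$th gap. Since the strips containing distinct blocks and gaps are disjoint, the families $\{X_i\}$ and $\{G_j\}$ are i.i.d., with $X_i \stackrel{d}{=} T_m$ and $G_j \stackrel{d}{=} T_{m^{1/3}}$.

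Using Corollary \ref{c:meetdiagpar} applied at each of the $O(N)$ interfaces and taking a union bound, on an event $\mathcal{E}_{n,m}$ of probability at least $1 - (n/m)e^{-cm^{\ell}}$ the geodesic $\Gamma_n$ visits the diagonal at a common point with all the natural nearby block and gap geodesics at every interface. On $\mathcal{E}_{n,m}$ we get an exact decomposition
\[
T_n \;=\; \sum_{i=1}^{N} X_i \;+\; \sum_{j=1}^{N-1} G_j \;+\; B_n,
\]
where $B_n$ is a boundary remainder supported near the two endpoints of size controlled by $T_m + T_{m^{1/3}}$. An $\mathcal{L}^2$ bound on $T_n\mathbf{1}_{\mathcal{E}_{n,m}^c}$, obtained along the lines of Lemma \ref{uniformintegrable} together with Lemma \ref{l:Tndev}, shows that the complement event does not contaminate variance computations. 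The classical CLT for i.i.d.\ sums then gives $(\sum_i X_i - N\E X_1)/\sqrt{N\,\mathrm{Var}(T_m)} \Rightarrow N(0,1)$, and Lemma \ref{l:Tndev} ensures the gap sum contributes variance of order only $n/m^{2/3+o(1)}$, hence is negligible relative to the block sum provided $\mathrm{Var}(T_m)/m$ stays bounded below.

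For the variance limit, the same decomposition gives, after passing $n\to\infty$ with $m$ fixed and large, $\mathrm{Var}(T_n)/n = \mathrm{Var}(T_m)/m + o_m(1)$. Applying this with two pairs $m_1\ll m_2\ll n$ shows that the sequence $a_m := \mathrm{Var}(T_m)/m$ is Cauchy, so $a_m \to \sigma^2$ for some $\sigma^2 \in [0,\infty)$. Finiteness comes from the same block bound $\mathrm{Var}(T_n) \leq N\,\mathrm{Var}(T_m) + O(N\,\mathrm{Var}(T_{m^{1/3}})) + O(1)$; combining this variance convergence with the block CLT via Slutsky then yields $(T_n - \E T_n)/\sqrt{n} \Rightarrow N(0,\sigma^2)$, which is equivalent to the claim after normalising by $\sqrt{\mathrm{Var}(T_n)}$ (provided $\sigma^2 > 0$).

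The main obstacle is showing $\sigma^2>0$, i.e.\ a matching lower bound $\mathrm{Var}(T_m) \geq cm$ for large $m$. I would adapt the argument of Lemma \ref{l:Gvar}: pick a set $S$ of $\Theta(m)$ well-separated diagonal vertices in $\llbracket 0,m\rrbracket$ and condition on the environment off $S$; by Theorem \ref{meetondiagonal} together with an FKG / monotonicity argument exactly as in Lemma \ref{l:Gvar}, the geodesic $\Gamma_m$ passes through each vertex of $S$ with conditional probability bounded away from zero, giving a conditional variance contribution of order $1$ per vertex of $S$ and hence $\mathrm{Var}(T_m) \geq \E[\mathrm{Var}(T_m\mid \mathcal F_S)] \geq cm$. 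The remaining technical work is standard bookkeeping: choosing $m(n)$ growing slowly enough that both the coalescence failure probability and the gap variance are negligible, while $m(n)\to\infty$ so that $\mathrm{Var}(T_{m(n)})/m(n)\to\sigma^2$.
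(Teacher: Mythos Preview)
Your overall strategy differs from the paper's. The paper writes $T_n - T_{n^{1/3}}$ as a sum of unit increments $X_i = T_{n^{1/3}+i} - T_{n^{1/3}+i-1}$, shows via coalescence that this sequence is (up to an event of negligible probability) stationary and $\alpha$-mixing with stretched-exponential rate, and then invokes a CLT for stationary mixing sequences (Billingsley, Theorem 27.4). Your block decomposition into i.i.d.\ pieces is a legitimate alternative route and in some ways more elementary, since it sidesteps the mixing machinery entirely.

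That said, the decomposition step as written has a real gap. The coalescence results (Theorem \ref{meetondiagonal}, Corollary \ref{c:meetdiagpar}) only guarantee that $\Gamma_n$ and the local block/gap geodesics meet at \emph{some} common diagonal point in each interval, not at the specific interface vertices you have fixed in advance. So on $\mathcal{E}_{n,m}$ you do \emph{not} get $T_n = \sum_i X_i + \sum_j G_j + B_n$ with $B_n$ supported only near the two endpoints: there is an additional error at each of the $N$ internal junctions (compare the proof of Lemma \ref{l:expdiff}, where exactly this discrepancy is controlled by a local $T_{2m^{1/3}}$-type term). These $N$ interface errors live in disjoint strips and have bounded second moments, so their total variance is $O(N)=O(n/m)=o(n)$ and the argument can be repaired, but this needs to be said explicitly. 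A related technical point: since $m=m(n)\to\infty$, the ``classical CLT for i.i.d.\ sums'' you invoke is really a triangular-array statement, and you must verify a Lindeberg condition (which does follow from Lemma \ref{l:Tndev}).

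For $\sigma^2>0$, your plan is essentially the paper's Proposition \ref{p:lb}, which reveals diagonal vertex weights one at a time via a Doob martingale and shows each contributes $\Omega(1)$ to the variance. Note that Lemma \ref{l:Gvar} itself handles a single vertex; summing the $\Theta(m)$ contributions requires the martingale decomposition, not merely a single application of the law of total variance to $\mathcal{F}_S$.
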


\begin{proof} We suppress the dependence on $r$, and write $T_n^{(r)}$ simply as $T_n$.
Note that $\frac{T_n-\E(T_n)}{\sqrt{n}}=\frac{T_n-T_{n^{1/3}}-\E(T_n-T_{n^{1/3}})}{\sqrt{n}}+R$, where $R:=\frac{T_{n^{1/3}}-\E(T_{n^{1/3}})}{\sqrt{n}}\overset{\P}{\rightarrow}0$. Also $T_n-T_{n^{1/3}}=\sum_{i=n^{1/3}}^{n-1}\left(T_{i+1}-T_i\right)$. Define
 \[X_i:=T_{n^{1/3}+i}-T_{n^{1/3}+i-1} \mbox{ for } i=1,2,\ldots,t,\]
 where $t=n-n^{1/3}$. Then enough to show $\frac{\sum_{i=1}^{t}(X_i-\E(X_i))}{\sqrt{t}}\Rightarrow N(0,\sigma^2)$. As stated earlier, we would apply central limit theorem for stationary processes.

To this end, we first show that $X_1,X_2,\ldots, X_t$ is equal to a stationary sequence with high probability. Fix $1\leq k\leq t$. Then fix $\l\geq 0$ such that $k+\l\leq t$, let $\Gamma_\l$ be the geodesics from $(0,0)$ to $(n^{1/3}+k+\l,n^{1/3}+k+\l)$, and $\Gamma^k_\l$ be the geodesics from $(k,k)$ to  $(n^{1/3}+k+\l,n^{1/3}+k+\l)$. Let $E_k$ denote the event that there exists some $u\in \llbracket k,n^{1/3}+k\rrbracket$ such that $(u,u)\in \bigcap_{\l=0}^{t-k} (\Gamma_\l\cap \Gamma^k_\l)$. That is, $E_k$ denotes the event that all these paths meet together on the diagonal. Then by Corollary \ref{c:meetdiagpar}, $\P(E_k)\geq 1-e^{-cn^{1/12}}$. Let $Y_i^k:= T_{(k,k),(n^{1/3}+k+i,n^{1/3}+k+i)}-T_{(k,k),(n^{1/3}+k+i-1,n^{1/3}+k+i-1)}$. Clearly $(Y_{1}^k,Y_{2}^k,\ldots,Y_{t-k}^k)\overset{d}{=}(X_1,X_2,\ldots,X_{t-k})$. Note that on $E_k$, the differences in the lengths of geodesics starting from $(0,0)$ coincide with those starting from $(k,k)$. Hence, on $E_k$,

\[(X_{k+1},X_{k+2},\ldots,X_{t})=(Y_{1}^k,Y_{2}^k,\ldots,Y_{t-k}^k).\]
Let $E=\bigcap_{k=1}^{t} E_k$. Then $\P(E)\geq 1-ne^{-cn^{1/12}}$. And, for all $1\leq k\leq t$, there exist random variables $Y_{1}^k,Y_{2}^k,\ldots,Y_{t-k}^k$ such that $(Y_{1}^k,Y_{2}^k,\ldots,Y_{t-k}^k)\overset{d}{=}(X_1,X_2,\ldots,X_{t-k})$; and on $E$, $(X_{k+1},X_{k+2},\ldots,X_{t})=(Y_{1}^k,Y_{2}^k,\ldots,Y_{t-k}^k)$. 

Next we show that the sequence is $\alpha$-mixing. For this, we consider two sets $(X_1,X_2,\ldots,X_\l)$ and $(X_{\l+m+1},X_{\l+m+2},\ldots)$ such that the indices are separated by a distance of $m$. For any $\l,s\geq 1$ and $m\geq n^{1/3}$ such that $n^{1/3}+\l+m+s\leq n$, let $F$ denote the event that all the geodesics from $(0,0)$ to $(n^{1/3}+\l+m+s,n^{1/3}+\l+m+s)$, and all geodesics from $(n^{1/3}+\l+1,n^{1/3}+\l+1)$ to $(n^{1/3}+\l+m+s,n^{1/3}+\l+m+s)$ meet the diagonal simultaneously in the interval $\llbracket n^{1/3}+\l+1,n^{1/3}+\l+m \rrbracket$. Then as in previous paragraph, using Corollary \ref{c:meetdiagpar} and union bound (and the fact that $m\geq n^{1/3}$), it follows that $\P(F)\geq 1-e^{-cm^{1/4}}$ for some absolute positive constant $c$. For $j\geq \l+2$, define $Z_s=T_{(n^{1/3}+\l+1,n^{1/3}+\l+1),(n^{1/3}+j,n^{1/3}+j)}-T_{(n^{1/3}+\l+1,n^{1/3}+\l+1),(n^{1/3}+j-1,n^{1/3}+j-1)}$ to be the difference in the lengths of the corresponding geodesics starting from $(n^{1/3}+\l+1,n^{1/3}+\l+1)$ instead of $(0,0)$. Then, as before, on $F$, 
\[(X_{\l+m},X_{\l+m+1},\ldots)=(Z_{\l+m},Z_{\l+m+1},\ldots).\] 
Now for $A=f(X_1,X_2,\ldots,X_{\l})$,  $B=g(X_{\l+m+1},X_{\l+m+2},\ldots)$, and $B':=g(Z_{\l+m+1},Z_{\l+m+2},\ldots)$,
\begin{eqnarray*}
&&\left|\P(A\cap B)-\P(A)\P(B)\right|\\
&\leq&|\P(A\cap B\cap F)-\P(A)\P(B\cap F)|+2\P(F^c)\\
&=&|\P(A\cap B'\cap F)-\P(A)\P(B'\cap F)|+2\P(F^c)\\
&\leq &|\P(A\cap B')-\P(A)\P(B')|+4\P(F^c)\\
&=& 4\P(F^c)\leq 4e^{-cm^{1/4}},
\end{eqnarray*}
where we have used the fact that $A$ and $B'$ are independent. 

It is easy to see using Proposition \ref{deviationfromdiagonal} and Theorem \ref{meetondiagonal} that the geodesics $\Gamma_n$ and $\Gamma_{n-1}$ meet the diagonal simultaneously in the interval $\llbracket n-h,n \rrbracket$ with probability atleast $1-e^{-ch^{1/4}}$. From this it is not too hard to see that $\sup_n\E(T_n-T_{n-1})^{12}<\infty$. Hence, following the proof of Central Limit Theorem for stationary processes, (see e.g. Theorem 27.4 in \cite{billingsley}), with obvious modifications, the theorem follows. That $\sigma>0$ follows from the following Proposition \ref{p:lb}. This completes the proof.
\end{proof}

The following proposition shows that $\sigma>0$ in Theorem \ref{l:clt}.
\begin{proposition}
\label{p:lb}
Let $T_{n}$ denote the last passage time from $(0,0)$ to $(n,n)$ in the slow bond model. There exists $C>0$ such that ${\rm Var}~T_n\geq Cn$ for all $n$.
\end{proposition}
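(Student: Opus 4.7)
The plan is to combine an ANOVA-style lower bound for $\Var(T_n)$ with a hydrodynamic input showing that the slow bond geodesic $\Gamma_n$ visits $\Omega(n)$ diagonal sites in expectation. Since the weights $\{\xi_v\}$ are independent, the first-order Hoeffding components $\E[T_n\mid\xi_v]-\E[T_n]$ are pairwise orthogonal and hence
\[
\Var(T_n)\ \geq\ \sum_{v}\Var\!\big(\E[T_n\mid\xi_v]\big),
\]
where the sum ranges over lattice vertices. For each individual term I would apply the inequality $\Var(\E[T_n\mid\xi_v])\geq\mathrm{Cov}(\xi_v,T_n)^2/\Var(\xi_v)$. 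Since $T_n$ is a maximum of linear functionals of $\xi$, the envelope theorem yields $\partial T_n/\partial\xi_v=\mathbf{1}_{v\in\Gamma_n}$ almost surely, and integration by parts against the exponential density of $\xi_v$ gives $\mathrm{Cov}(\xi_v,T_n)=\lambda_v^{-1}\E[\xi_v\mathbf{1}_{v\in\Gamma_n}]$, where $\lambda_v$ is the rate of $\xi_v$ (so $\lambda_v=r$ on the diagonal and $\lambda_v=1$ off it). Using $\Var(\xi_v)=\lambda_v^{-2}$, this collapses to $\Var(\E[T_n\mid\xi_v])\geq\E[\xi_v\mathbf{1}_{v\in\Gamma_n}]^2$.

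Since $\{v\in\Gamma_n\}$ is monotone increasing in $\xi_v$, a direct computation (or FKG applied to $\xi_v$ alone) gives $\E[\xi_v\mathbf{1}_{v\in\Gamma_n}]\geq\E[\xi_v]\,\P(v\in\Gamma_n)$. Restricting the outer sum to the diagonal vertices $v=(i,i)$ for $i\in\{0,1,\ldots,n\}$ (on which $\E[\xi_v]=1/r$) and applying Cauchy--Schwarz in $i$, I would obtain
\[
\Var(T_n)\ \geq\ \frac{1}{r^2}\sum_{i=0}^n\P\!\big((i,i)\in\Gamma_n\big)^2\ \geq\ \frac{1}{r^2(n+1)}\Big(\E\,|\Gamma_n\cap D|\Big)^2,
\]
where $D:=\{(i,i):0\leq i\leq n\}$. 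It therefore suffices to prove the hydrodynamic lower bound $\E|\Gamma_n\cap D|\geq c_0 n$ for some absolute $c_0>0$.

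For this last step I would use the monotone coupling with unreinforced exponential DLPP: take $\xi_v^0$ i.i.d.\ $\mathrm{Exp}(1)$ and set $\xi_v=\xi_v^0$ off the diagonal, $\xi_v=(1/r)\xi_v^0$ on the diagonal. Then
\[
T_n\ =\ \sum_{v\in\Gamma_n}\xi_v^0+\big(\tfrac{1}{r}-1\big)\!\!\sum_{v\in\Gamma_n\cap D}\!\!\xi_v^0\ \leq\ T_n^0+\big(\tfrac{1}{r}-1\big)\!\!\sum_{v\in\Gamma_n\cap D}\!\!\xi_v^0,
\]
because $T_n^0\geq\sum_{v\in\Gamma_n}\xi_v^0$. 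Taking expectations and invoking Lemma~\ref{l:expdiff} together with the upper bound $\E T_n^0\leq 4n+O(n^{1/3})$ (a direct consequence of Theorem~\ref{t:moddevdiscrete}) yields $\E\!\big[\sum_{v\in\Gamma_n\cap D}\xi_v^0\big]\geq Cn$ for some $C>0$ and all large $n$. A final Cauchy--Schwarz step, using $\E[(\xi_v^0)^2]=2$, then gives
\[
Cn\ \leq\ \sum_{i=0}^n\E\!\big[\xi_{(i,i)}^0\mathbf{1}_{(i,i)\in\Gamma_n}\big]\ \leq\ \sum_{i=0}^n\sqrt{2\,\P((i,i)\in\Gamma_n)}\ \leq\ \sqrt{2(n+1)\,\E|\Gamma_n\cap D|},
\]
so that $\E|\Gamma_n\cap D|\geq C^2n^2/(2(n+1))\geq c_0 n$, and chaining with the previous display completes the proof. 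The main obstacle is precisely this hydrodynamic input $\E|\Gamma_n\cap D|=\Omega(n)$; conceptually it says that the $\varepsilon n$ excess of $\E T_n$ over $\E T_n^0$ must be paid for by linearly many diagonal visits, and the coupling above is a clean way to extract it from the already established first-order gap.
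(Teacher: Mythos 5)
Your argument is correct, and it takes a genuinely different route from the paper's. The paper proves Proposition \ref{p:lb} by revealing the weights one at a time and using the Doob-martingale decomposition ${\rm Var}(T_n)=\E\big[\sum_i {\rm Var}(M_i\mid \cF_{i-1})\big]$ restricted to diagonal vertices; the key technical input is the resampling Lemma \ref{l:lb}, which bounds each conditional variance below by a function of $\P[\pi(i)\in\gamma\mid\cF_{i-1}]$ that is nondegenerate only when this conditional probability is bounded away from $0$, so a truncation/level-set argument plus the linear expected number of diagonal visits (asserted at the start of Section \ref{s:GeodSlow} by comparing \eqref{e:lln} with Theorem \ref{t:lln}) finishes the proof. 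You instead use the unconditional first-order Hoeffding projection bound ${\rm Var}(T_n)\geq\sum_v{\rm Var}(\E[T_n\mid\xi_v])$ and evaluate each projection through the exponential covariance identity $\mathrm{Cov}(\xi_v,T_n)=\lambda_v^{-1}\E[\xi_v\ind_{v\in\Gamma_n}]$ (which indeed holds: conditionally on $\{\xi_u:u\neq v\}$, $T_n$ is a convex $1$-Lipschitz function of $\xi_v$ with a.e.\ derivative $\ind_{v\in\Gamma_n}$, the geodesic being a.s.\ unique, and the identity follows by integration by parts, then tower); combined with Harris and Cauchy--Schwarz this reduces everything to $\E|\Gamma_n\cap D|\geq c_0n$, and your monotone-coupling derivation of that pinning bound from Lemma \ref{l:expdiff} and Theorem \ref{t:moddevdiscrete} is essentially a quantitative, self-contained version of the comparison the paper only states. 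What your route buys: it avoids Lemma \ref{l:lb} and its level-set argument entirely, produces explicit constants, and makes transparent that the $\varepsilon n$ excess in $\E T_n$ forces linearly many (weighted) diagonal visits. What the paper's route buys: the conditional-variance/resampling lemma uses only that the weight distribution has unbounded support, so it is more robust to the law of the weights, and it sits naturally next to the martingale framework of Appendix B. Two routine points you should make explicit in a full write-up: the covariance identity must be applied conditionally on the off-$v$ weights (using independence and a.s.\ uniqueness of the geodesic), and the claim ``for all $n$'' rather than ``for all large $n$'' follows by adjusting $C$, since ${\rm Var}(T_n)>0$ for each fixed $n$.
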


Recall that the individual passage time of vertex $v$ is denoted by $\xi_{v}$. We shall decompose ${\rm Var} (T_n)$ by revealing vertex weights in $[0,n]^2$ in some order. First fix a bijection $\pi: [n^2]\rightarrow [0,n]\times [0,n]$. Let $\cf_{i}$ denote the $\sigma$-field generated by $\{ \xi_{\pi(1)}, \xi_{\pi(2)}, \ldots , \xi_{\pi(i)}\}$. Considering the Doob martingale $M_i:=\E[T \mid \cf_i]$, it follows that we have 

\begin{equation}
\label{e:decomposev}
{\rm Var}(T)= \E \left[ \sum_{i=1}^{n^2} {\rm Var} \left(M_i\middle | \cf_{i-1}\right) \right]
\end{equation}

Also let $D\subseteq [n^2]$ denote the set such that $\pi(D)$ is the set of all vertices on the diagonal. Clearly 

\begin{equation}
\label{e:decomposev2}
{\rm Var}(T)\geq  \E \left[ \sum_{i\in D} {\rm Var} \left(M_i\middle | \cf_{i-1}\right) \right].
\end{equation}

The proposition will follow from the next lemma which provides a lower bound on the individual terms in the above sum. 

\begin{lemma}
\label{l:lb}
Let $M_i$ be as above and let $\gamma$ denote the geodesic from $(0,0)$ to $(n,n)$. Then for each $i\in D$ 
$${\rm Var} \left(M_i\middle | \cf_{i-1}\right) \geq h\biggl(\P [\pi(i)\in \gamma \mid \cf_{i-1}]\biggr)$$
where $h: \R_{+}\rightarrow \R_{+}$ is a function such that  $h(t)$ is bounded away from $0$ (uniformly in $n$) as soon as $t$ is bounded away from $0$.
\end{lemma}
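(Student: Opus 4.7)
My approach is to observe that, conditional on $\cf_{i-1}$, the random variable $M_i$ is a deterministic function of $\xi_v$ alone, where $v := \pi(i)$, and then lower bound the variance of this function under $\xi_v \sim \text{Exp}(r)$ in terms of $p := \P[v\in \gamma \mid \cf_{i-1}]$. For any fixed configuration of $\{\xi_w : w\neq v\}$, every up-right path from $(0,0)$ to $(n,n)$ either avoids $v$ or passes through $v$, so $T = \max(A, B+\xi_v)$ where $A$ is the maximum weight of up-right paths avoiding $v$ and $B$ is the maximum weight of paths through $v$ minus $\xi_v$. Averaging over the unrevealed weights other than $\xi_v$, we get $M_i = g(\xi_v)$ with $g(x) := \E[\max(A, B+x)\mid \cf_{i-1}]$. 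This $g$ is convex, non-decreasing, with derivative $g'(x) = \P[B+x > A\mid \cf_{i-1}]$ taking values in $[0,1]$ and tending to $1$ as $x\to\infty$. The key identity, obtained by integrating over $\xi_v \sim \text{Exp}(r)$, is $p = \int_0^\infty g'(x)\, re^{-rx}\, dx$.

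Next I would locate the threshold where $g'$ becomes large. Define $y := \inf\{x\geq 0 : g'(x)\geq p/2\}$, which is finite because $g'(\infty)=1$. Splitting the integral at $y$ and using $g'\leq p/2$ below $y$ gives $p \leq (p/2)(1-e^{-ry}) + e^{-ry}$, hence $e^{-ry}\geq p/(2-p)\geq p/2$. I then apply the elementary two-event variance lower bound: if $X\leq u$ on an event $E_1$ and $X\geq v$ on a disjoint event $E_2$, then ${\rm Var}(X)\geq \frac{\P(E_1)\P(E_2)}{\P(E_1)+\P(E_2)}(v-u)^2$. I case-split. If $y\geq 1$, take $E_1 = \{\xi_v\leq y\}$ and $E_2 = \{\xi_v\in[y+1,y+2]\}$; then $\P(E_1)\geq 1-e^{-r}$, $\P(E_2)\geq e^{-r}(1-e^{-r})\cdot (p/2)$, and $g(y+1)-g(y)\geq \int_y^{y+1} g'(x)\,dx \geq p/2$ since $g'\geq p/2$ on $[y,\infty)$, giving ${\rm Var}(g(\xi_v)\mid\cf_{i-1})\geq c(r)\, p^3$. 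If $y<1$, take $E_1=\{\xi_v\leq 1\}$ and $E_2=\{\xi_v\in[2,3]\}$ (both have positive probability depending only on $r$); then $g(2)-g(1)\geq p/2$ by the same convexity fact since $[1,2]\subseteq[y,\infty)$, giving ${\rm Var}(g(\xi_v)\mid\cf_{i-1})\geq c(r)\, p^2 \geq c(r)\, p^3$. In either case the bound is $c(r)\, p^3$, so I can set $h(p) := c(r)\, p^3$, which is positive for $p>0$ and bounded away from $0$ when $p$ is bounded away from $0$, with constant depending only on $r$ and not on $n$, $i$, or $\cf_{i-1}$.

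I do not expect a real obstacle: the entire argument reduces to a one-dimensional analysis of a convex monotone function, and the geometric content of the slow bond model is irrelevant at this step (it enters in the next stage, via Proposition \ref{p:lb}, where one uses Theorem \ref{meetondiagonal} and Lemma \ref{i:davoid} to argue that a positive fraction of diagonal vertices have $p$ bounded away from $0$ with positive probability). The only care needed is to verify that $g$, viewed conditionally on $\cf_{i-1}$, is genuinely a function of $\xi_v$ alone and that the decomposition $T=\max(A,B+\xi_v)$ is measurable with respect to the weights other than $\xi_v$; this holds for interior diagonal vertices, and the two endpoints $v=(0,0)$ and $v=(n,n)$ can be handled directly (there $g(x) = x+$ const so $g'\equiv 1$ and $p=1$, making the bound trivial).
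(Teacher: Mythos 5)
Your proof is correct, and it takes a genuinely different (more self-contained) route than the paper's. The paper argues via resampling: it writes $M_i-M_{i-1}=\E[T-T^*\mid \cf_i]$, where $T^*$ is the passage time with $\xi_{\pi(i)}$ replaced by an independent copy, uses monotonicity of $W(x)=\P[\pi(i)\in\gamma\mid \xi_{\pi(i)}=x,\cf_{i-1}]$ to choose $x_0=x_0(p)$ with $w_0=W(x_0)$ comparable to $p$, and shows that on the tail event $\{\xi_{\pi(i)}>(\ell+2)x_0\}$ the increment is at least $x_0w_0$, whence $\mathrm{Var}(M_i\mid\cf_{i-1})\geq x_0^2w_0^2\,\P[\xi_{\pi(i)}\geq(\ell+2)x_0]$. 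You instead analyse the one-dimensional convex function $g(x)=\E[\max(A,B+x)\mid\cf_{i-1}]$ directly, using the identity $p=\int_0^\infty g'(x)\,re^{-rx}\,dx$ to produce a threshold $y$ with $e^{-ry}\geq p/(2-p)\geq p/2$ and $g'\geq p/2$ beyond $y$, and then a two-event variance inequality (which is valid with the constant you state) to obtain the explicit bound $h(p)=c(r)p^3$. Both arguments rest on the same two facts --- conditionally on $\cf_{i-1}$ the only relevant randomness in $M_i$ is $\xi_{\pi(i)}$, and the conditional probability that the geodesic uses $\pi(i)$ is monotone in that weight --- but your version dispenses with the resampled environment and yields an explicit polynomial $h$, while the paper's resampling identity is the more standard device and adapts more readily to situations where the increment is not a function of a single coordinate. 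Two minor points, both handled or harmless: your argument implicitly uses the standard passage-time convention in which both endpoints' weights count (under the paper's footnoted modified convention the single vertex $(n,n)$ would be a degenerate exception, which is immaterial for Proposition \ref{p:lb}), and the derivative $g'$ should be read as the a.e./one-sided derivative of the Lipschitz convex function $g$, which is all your integral manipulations require.
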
 

We postpone the proof of this lemma for the moment and first show how this implies Proposition \ref{p:lb}.

\begin{proof}[Proof of Proposition \ref{p:lb}]
First observe that since the expected number of vertices on the diagonal that $\gamma$ intersects in linear it follows by Cauchy-Schwarz inequality that 
\begin{equation}
\label{e:e1}
\E\biggl[\sum_{i\in D}\P[\pi(i)\in \gamma \mid \cf_{i-1}]^2\biggr] \geq \delta_1 n
\end{equation}
for all $n$ sufficiently large for some $\delta_1>0$. Notice further that for any $\delta_2>0$, we have 
\begin{eqnarray*}
\E\biggl[\sum_{i\in D}\P[\pi(i)\in \gamma \mid \cf_{i-1}]^2\biggr] &\leq & \delta_2\E\biggl[\sum_{i\in D}\P[\pi(i)\in \gamma \mid \cf_{i-1}]\biggr] \\
&+& \E\biggl[\sum_{i\in D}\P[\pi(i)\in \gamma \mid \cf_{i-1}]^2I(\P[\pi(i)\in \gamma \mid \cf_{i-1}]> \delta_2)\biggr]\\
& \leq & 2\delta_2 n + \E\biggl[\sum_{i\in D}I(\P[\pi(i)\in \gamma \mid \cf_{i-1}]> \delta_2)\biggr].
\end{eqnarray*}
By choosing $\delta_2$ sufficiently small compared to $\delta_1$ we get 
$$\E\biggl[\sum_{i\in D}I(\P[\pi(i)\in \gamma \mid \cf_{i-1}]> \delta_2)\biggr] \geq \frac{\delta_1 n}{2}$$
which implies the desired linear lower bound on $V_n$ using Lemma \ref{l:lb}.
\end{proof}

\begin{proof}[Proof of Lemma \ref{l:lb}]
Fix $i\in D$ and condition on $\cf_{i-1}$. Let $T^*$ be the last passage time in the environment where $\xi_{\pi(i)}$ is resampled by an independent copy $Z_{\pi(i)}$. Observe that 
$$M_i-M_{i-1}= \E[T-T^*\mid \cf_{i}].$$ Let the optimizing paths in the two environments be denote by $\gamma_1$ and $\gamma_2$ respectively. Let $x_0>0$ and set $w_0:=W(x_0)$ where $W(x):= \P[\pi(i)\in \gamma \mid \xi_{\pi(i)}=x, \cf_{i-1}]$
is an increasing function of $x$. Define the events
$$A_1:=\{\pi(i)\in \gamma_1~\text{if}~\xi_{\pi(i)}\geq x_0\};\quad A_2:=\{\pi(i)\in \gamma_2~\text{if}~Z_{\pi(i)}\geq x_0\}.$$
Clearly, $A_1=A_2:=A$ as the environments differ only in the weight of vertex $\pi(i)$  and also notice that $A$ is independent of $\xi_{\pi(i)}, Z_{\pi(i)}$. Further observe that $\P[A\mid \cf_{i-1}]\geq w_0$. Indeed, $A\cap \{\xi_{\pi(i)}\geq x_0\}=\{\pi(i)\in \gamma_1, \xi_{\pi(i)}\geq x_0\}$ and hence  $\P[A\cap \{\xi_{\pi(i)}\geq x_0\}] \geq w_0\P[\xi_{\pi(i)}\geq x_0]$. The desired inequality follows from observing that $A$ and $\{\xi_{\pi(i)}\geq x_0\}$ are conditionally independent given $\cf_{i-1}$. Observe that on $\{\xi_{\pi(i)}> (\ell+2)x_0\}$, we have 
$$T-T^* \geq \ell x_01_{\{Z_{\pi(i)}\in [x_0,2x_0], A\}}+ ((\ell +2)x_0-Z_{\pi(i)})1_{\{Z_{\pi(i)}>(\ell +2) x_0\}}$$
and hence 
$$\E[T-T^*\mid \cf_{i}] \geq \ell x_0w_0\P\left[Z_{\pi(i)}\in [x_0,2x_0]\right]- q(x_0,\ell)$$
where $q(x_0,\ell):=\E (Z_{\pi(i)}-(\ell +2)x_0)1_{\{Z_{\pi(i)}>(\ell +2) x_0\}}$ decreases to $0$ as $\ell$ increases, hence by choosing $\ell=\ell(x_0)$ sufficiently large , on $\{\xi_{\pi(i)}> (\ell+2)x_0\}$, we have  
$$M_i-M_{i-1} \geq x_0w_0.$$
It follows that 
$${\rm Var} \left(M_i\middle | \cf_{i-1}\right)= \E[(M_i-M_{i-1})^2\mid \cf_{i-1}]\geq x_0^2w_0^2 \P[\xi_{\pi(i)}\geq (\ell+2)x_0].$$
The proof of the lemma is completed by observing that exponential distribution has unbounded support and hence if $\mathbf{p}:=\P[\pi(i)\in \gamma \mid \cf_{i-1}]$ is bounded away from $0$, then one can choose $x_0=x_0(\mathbf{p})$ and $w_0=w_0(\mathbf{p})$ to be bounded away from $0$ as well.  
\end{proof} 

\end{document}